\newenvironment{myeq}[1][]
{\stepcounter{theorem}\begin{equation}\tag{\thetheorem}{#1}}
{\end{equation}}
\newenvironment{mysubsection}[2][]
{\begin{subsec}\begin{upshape}\begin{bfseries}{#2.}
\end{bfseries}{#1}}
{\end{upshape}\end{subsec}}
\newcommand{\uM}{\underline{M}}
\newcommand{\uN}{\underline{N}}
\newcommand{\uQ}{\underline{\mathbb{Q}}} 
\newcommand{\uQZ}{\underline{\mathbb{Q}/\Z}}
\newcommand{\Z}{\mathbb Z}
\newcommand{\JJ}{\mathcal J}
\newcommand{\RR}{\mathcal R}
\newcommand{\bZ}{\langle \Z \rangle}
\newcommand{\bbZ}{\langle \langle \Z \rangle \rangle}
\newcommand{\bZp}{\langle \Z/p \rangle}
\newcommand{\bZpi}{\langle \Z/p_i \rangle}
\newcommand{\bZpk}{\langle \Z/p_k \rangle}
\newcommand{\bZq}{\langle \Z/q \rangle}
\newcommand{\BB}{\mathcal B}
\newcommand{\MM}{\mathcal{M}}
\newcommand{\CC}{\mathcal{C}}
\newcommand{\KK}{\mathcal{K}}
\newcommand{\UU}{\mathcal{U}}
\newcommand{\II}{\mathcal{I}}
\newcommand{\fI}{\mathscr{I}}
\newcommand{\fS}{\mathscr{S}}
\newcommand{\TT}{\mathcal{T}}
\newcommand{\uZ}{\underline{\mathbb{Z}}}
\newcommand{\R}{\mathbb R}
\newcommand{\C}{\mathbb C}
\newcommand{\Q}{\mathbb Q}
\newcommand{\res}{\mathit{res}}
\newcommand{\Hom}{\mathit{Hom}}
\newcommand{\Ext}{\mathit{Ext}}
\newcommand{\coker}{\mbox{coker}}
\newcommand{\Rep}{\mathit{Rep}}
\newcommand{\uH}{\underline{H}}
\newcommand{\uA}{\underline{A}}
\newcommand{\ua}{\underline{a}}
\newcommand{\uk}{\underline{k}}
\newcommand{\upi}{\underline{\pi}}
\newcommand{\tH}{\tilde{H}}
\newcommand{\bs}{\bigstar}
\newcommand{\tHbs}{\tilde{H}^\bigstar_G}
\newcommand{\tHal}{\tilde{H}^\alpha_G}
\newcommand{\tHp}{\tilde{H}^+_G}
\newcommand{\tHpl}{\tilde{H}^{>0}_G}
\newcommand{\tHm}{\tilde{H}^-_G}
\newcommand{\tHz}{\tilde{H}^{(0)}_G}
\newcommand{\tHr}{\tilde{H}^{\Rep}_G}
\newcommand{\tHrz}{\tilde{H}^{\Rep_0}_G}
\newcommand{\uHbs}{\underline{H}^\bigstar_G}
\newcommand{\uHal}{\underline{H}^\alpha_G}
\newcommand{\uHald}{\underline{H}^{\alpha^{C_d}}_G}
\newcommand{\uHalxi}{\underline{H}^{\alpha-\xi}_G}
\theoremstyle{plain}
\newtheorem{theorem}{Theorem}[section]
\newtheorem{thm}[theorem]{Theorem}
\newtheorem{lemma}[theorem]{Lemma}
\newtheorem{cor}[theorem]{Corollary}
\newtheorem{prop}[theorem]{Proposition}
\newtheorem{subsec}[theorem]{}
\theoremstyle{definition}
\newtheorem{notation}[theorem]{Notation}
\newtheorem{defn}[theorem]{Definition}
\newtheorem{rmk}[theorem]{Remark}
\newtheorem{remark}[theorem]{Remark}
\newtheorem{exam}[theorem]{Example}
\newtheorem*{theorem*}{Theorem}
\newcommand{\subjclass}[4]{%
  \let\@oldtitle\@title%
  \gdef\@title{\@oldtitle\footnotetext{2010 \emph{Mathematics subject classification.} #1, #2 (primary); #3, #4 (secondary).}}%
}
\newcommand{\keywords}[4]{%
  \let\@@oldtitle\@title%
  \gdef\@title{\@@oldtitle\footnotetext{\emph{Key words and phrases.} #1, #2, #3, #4.}}%
}
\title{ Equivariant cohomology for cyclic groups of square-free order}
\author{Samik Basu, Surojit Ghosh}
\date{ }
\begin{document}
 \subjclass{55N91}{55P91}{57S17}{55Q91}
 \keywords{Equivariant cohomology theory}{Mackey functor}{freeness theorem}{Grassmann manifolds} 

  \maketitle
 \begin{abstract}

The main objective of this paper is to compute $RO(G)$-graded cohomology of $G$-orbits for the group $G=C_n$, where $n$ is a product of distinct primes. We compute these groups for the constant Mackey functor $\uZ$ and for the Burnside ring Mackey functor $\uA$. Among other things, we show that the groups $\uHal(S^0)$ are mostly determined by the fixed point dimensions of the virtual representations $\alpha$, except in the case of $\uA$ coefficients when the fixed point dimensions of $\alpha$ have many zeros. In the case of $\uZ$ coefficients, the ring structure on the cohomology is also described. The calculations are then used to prove freeness results for certain $G$-complexes. 
\end{abstract}

\section{Introduction}

The celebrated  solution to the Kervaire invariant one problem \cite{HHR16} demonstrated how techniques in equivariant stable  homotopy theory may be used in the computation of homotopy groups. One of the steps in the computation involves calculating Bredon cohomology groups for the group $C_8$. This has given rise to a renewed interest in equivariant cohomology computations after these were initially defined \cite{Bre67} some years back. 

Bredon cohomology was defined as a natural generalization of ordinary cohomology in the context of obstruction theory. This is graded over integers and was defined for {\it ``coefficient systems''} \cite[Section I.4]{May96}, and they naturally carried suspension isomorphisms $\tH^k_G(X;\uM) \cong \tH^{k+n}_G(\Sigma^n X; \uM)$ for based $G$-spaces $X$. However, the integer graded cohomology theory proved to be inadequate for Poincar\'{e} duality. 

Let $G$ be a finite group and $V$ an orthogonal $G$-representation. One defines $S^V$ to be the one-point compactification of $V$, and uses the orthogonal structure to define $G$-spaces 
$$D(V) :=\{ v \in V : ||v|| \leq 1\}$$ 
and  
$$S(V) :=\{ v \in V : ||v|| =1\}.$$
The $G$-sphere $S^V$, which is $G$-homeomorphic to $D(V)/S(V)$, has a $G$-fixed basepoint (the $\infty$ point). The equivariant stable homotopy category \cite{LMS86} is constructed so that the smash product with $S^V$ (denoted by $\Sigma^V$) is an isomorphism. The category then carries certain strong change of groups isomorphisms such as the Wirthm\"{u}ller isomorphism and the Adams' isomorphism (see \cite{Ada84}, and \cite[Chapter II]{LMS86}). One of the models representing this homotopy category is equivariant orthogonal spectra \cite{MM02} (see also \cite{Sch18} for a slightly different but equivalent category). We write $\{-,-\}^G$ to denote the maps in the equivariant stable homotopy category. 

Let $E$ be a $G$-spectrum.\footnote{From this point onwards, this means an orthogonal $G$-spectrum.} Then, the cohomology theory represented by $E$ is graded over $RO(G)$, the representation ring of $G$ over $\R$. For $\alpha \in RO(G)$ written as $V - W$, we write $S^\alpha = \Sigma^{-W} S^V$, and define $E^\alpha(X) $ as $\{X, S^\alpha \wedge E\}^G$. The analogue of {\it `` ordinary cohomology''} theories in this context are the cohomology theories represented by spectra whose (integer-graded) homotopy groups are concentrated in degree $0$. These carry more structure than {\it ``coefficient systems''} and are called Mackey functors \cite{Dre72}. In fact, for any $G$-Mackey functor $\uM$, there is an equivariant Eilenberg-MacLane spectrum $H\uM$ \cite{LMM81}, and this gives a cohomology theory $X \mapsto \tH^\alpha_G(X;\uM)$ on based $G$-spaces $X$. One may further extend this construction so  that there is a Mackey functor $\uHal(X;\uM)$. The groups in grading $n-V$ compute the integer-graded Bredon cohomology of $S^V$, and those in grading $V-n$ compute the integer-graded Bredon homology of $S^V$.  

The category of $G$-Mackey functors has a symmetric monoidal structure $\Box$, with the unit being the Burnside ring Mackey functor $\uA$. Another important Mackey functor from the point of view of computations is $\uZ$ which is also a monoid under $\Box$. In these cases, the $RO(G)$-graded cohomology is a graded commutative ring. Remarkably, the calculation of $\uHal(S^0; \uA)$ and $\uHal(S^0;\uZ)$ have proved to be difficult in general.  For the group $C_p$, the additive structure was described by Stong and the ring structure by Lewis \cite{Lew88}. For the group $C_p$ and the coefficients $\underline{\Z/p}$, the ring structure was described by Stong  (see Appendix of \cite{Car00}). For the group $C_{pq}$, the additive structure for $\uA$ and $\uZ$ coefficients is described in \cite{BG19}. There have been significant partial computations for $\uZ$ coefficients for the group $C_{p^n}$, and in the case of other groups. In this paper, we make calculations for $G=C_n$, where $n$ is odd and square free. Most of our results also work if $n$ is even and the sign representation occurs an even number of times. In the following, we describe our results in more detail. 

\begin{mysubsection}{Computations for $\uH^\alpha_{C_n}(S^0;\uZ)$}
The complex irreducible representations of $C_n$ are given by homomorphisms from $C_n$ to $S^1$, and they are of the form $\xi^r$ ($1\leq r \leq n$) where $\xi$ is the multiplication by the $n^{th}$-roots of $1$. The ring $RO(C_n)$ has a basis given by the realizations of $\xi^r$, for which we retain the same notation. In the computation of $\uH^\alpha_{C_n}(S^0;\uZ)$, we describe the Mackey functor for every $\alpha$ that is a linear combination of $\xi^r$ for $1\leq r \leq n$. One has the easily proved equivalence 
$$H\uZ \wedge S^{\xi^r} \simeq H\uZ \wedge S^{\xi^{rs}}$$ 
if $s$ is relatively prime to $n$. This equivalence implies that it suffices to compute only for $\alpha$ which is a linear combination of $\xi^d$ for $d | n$. 

The next observation is that a linear combination of $\xi^d$ for $d | n$ is determined by the collection of fixed point dimensions $(|\alpha^{C_d}| ~|~ d | n)$. It turns out that the formula for the $RO(C_n)$-graded cohomology has a convenient expression when described according to the fixed point dimensions. We compute \eqref{Zval}
$$\tH^\alpha_{C_n}(S^0; \uZ) = \begin{cases} \Z & \text{if} \; |\alpha| =0 \\ 
                                                 \Z/m(\alpha) & \text{if} \; |\alpha| >0 \mbox{ even} \\ 
                                                \Z/m(\alpha) & \text{if} \; |\alpha| <0 \mbox{ odd} \\ 
                                                            0 & \text{otherwise,} \end{cases} $$
where $m(\alpha)$ is an integer depending on the fixed point dimensions of $\alpha$ (see Definition \ref{malph}). This describes the cohomology groups with $\uZ$-coefficients. The Mackey functor values are described as  (Theorems \ref{zero},\ref{non-zero}, and \ref{zercoh})
$$\uH^\alpha_{C_n}(S^0; \uZ) = \begin{cases} \uZ^{\JJ(\alpha)} & \text{if} \; |\alpha| =0 \\ 
                                                 \bigoplus_{|\alpha^{C_{p_i}}|\leq 0}\KK_i\bZpi. & \text{if} \; |\alpha| >0 \mbox{ even} \\ 
                                                \bigoplus_{|\alpha^{C_{p_i}}|>1} \KK_i\bZpi & \text{if} \; |\alpha| <0 \mbox{ odd} \\ 
                                                            0 & \text{otherwise.} \end{cases} $$

For the ring structure, we note that the even degree classes are in non-negative total degree. They form a subring which we denote by $\tH^+_{C_n}(S^0;\uZ)$. The odd degree classes are in negative total degree and are denoted by $\tH^-_{C_n}(S^0;\uZ)$. This forms a square $0$ module over $\tH^+_{C_n}(S^0;\uZ)$. We describe the ring structure by writing $\tH^+_{C_n}(S^0;\uZ)$ via generators and relations, and then describing $\tH^-_{C_n}(S^0;\uZ)$ as a module over it. 

One defines classes $a_V \in \tH^V_{C_n}(S^0;\uZ)$ and $u_V \in \tH^{V - \dim(V)}_{C_n}(S^0;\uZ)$ \cite{HHR16} for $C_n$-representations $V$, which satisfies 
$$a_Va_W=a_{V\oplus W},~ u_Vu_W=u_{V\oplus W}.$$
One also has the relations 
$$\frac{n}{d} a_{\xi^d}=0, ~ \frac{d}{\gcd(d,s)}a_{\xi^s}u_{\xi^d}= \frac{s}{\gcd(d,s)}u_{\xi^s}a_{\xi^d}.$$
In terms of these generators, we prove 
\begin{thm}
The ring $\tH^+_{C_n}(S^0;\uZ)$ is a subring of $\Z[u_{\xi^d}^{\pm}\mid d| n]\otimes \Z[a_\xi]/(na_\xi)$. 
\end{thm}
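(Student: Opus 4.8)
The plan is to realise the asserted inclusion as the ring homomorphism $c^{\ast}\colon\tH^{\star}_{C_n}(S^{0};\uZ)\to\tH^{\star}_{C_n}((EC_n)_{+};\uZ)$ induced by the collapse $c\colon(EC_n)_{+}\to S^{0}$, once the target has been identified with $R:=\Z[u_{\xi^{d}}^{\pm}\mid d\mid n]\otimes\Z[a_{\xi}]/(na_{\xi})$.

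First I would identify the target. Since each $\xi^{d}$ is a complex, hence $H\uZ$-orientable, $C_n$-representation, smashing with $(EC_n)_{+}$ makes every orientation class $u_{\xi^{d}}$ invertible: multiplication by $u_{\xi^{d}}$ is the Thom isomorphism $\tH^{\beta}_{C_n}((EC_n)_{+};\uZ)\xrightarrow{\sim}\tH^{\beta+\xi^{d}-2}_{C_n}((EC_n)_{+};\uZ)$. Hence $\bigoplus_{\alpha}\tH^{\alpha}_{C_n}((EC_n)_{+};\uZ)$ is free over the Laurent ring $\Z[u_{\xi^{d}}^{\pm}\mid d\mid n]$ on $H^{\ast}(BC_n;\Z)$, and as $H^{\ast}(BC_n;\Z)\cong\Z[a_{\xi}]/(na_{\xi})$ with $a_{\xi}$ the Euler class of $\xi$, this ring is exactly $R$. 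Under $c^{\ast}$ one has $u_{\xi^{d}}\mapsto u_{\xi^{d}}$ and $a_{\xi}\mapsto a_{\xi}$, hence also $a_{\xi^{d}}\mapsto d\,u_{\xi}^{-1}u_{\xi^{d}}a_{\xi}$ by the relation $d\,a_{\xi}u_{\xi^{d}}=u_{\xi}a_{\xi^{d}}$. Because $\tH^{\beta}_{C_n}((EC_n)_{+};\uZ)\cong H^{|\beta|}(BC_n;\Z)$ vanishes unless $|\beta|$ is non-negative and even, $R$ is concentrated in the positive cone, so $c^{\ast}$ restricts to a ring homomorphism $\tH^{+}_{C_n}(S^{0};\uZ)\to R$; this is the candidate inclusion.

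It then remains to show $c^{\ast}$ is injective on $\tH^{+}_{C_n}(S^{0};\uZ)$. Here I would invoke the cofiber sequence $(EC_n)_{+}\to S^{0}\to S^{\infty\xi}$; the space $S^{\infty\xi}$ is the cofiber of the collapse because $S(\infty\xi)$ is a model for $EC_n$, $\xi$ having no nonzero fixed vectors under any nontrivial subgroup of $C_n$. The long exact sequence in $\tH^{\star}_{C_n}(-;\uZ)$ shows that, in grading $\alpha$, the kernel of $c^{\ast}$ equals the image of $\tH^{\alpha}_{C_n}(S^{\infty\xi};\uZ)\to\tH^{\alpha}_{C_n}(S^{0};\uZ)$. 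Writing $S^{\infty\xi}=\operatorname{hocolim}_{m}S^{m\xi}$ and using the suspension isomorphisms $\tH^{\alpha}_{C_n}(S^{m\xi};\uZ)\cong\tH^{\alpha-m\xi}_{C_n}(S^{0};\uZ)$, with transition maps given by multiplication by $a_{\xi}$, the Milnor sequence shows this map factors through $\varprojlim_{m}\tH^{\alpha-m\xi}_{C_n}(S^{0};\uZ)$. When $|\alpha|$ is even, $|\alpha-m\xi|=|\alpha|-2m$ is even and eventually negative, so by the additive computation \eqref{Zval} --- which gives $\tH^{\beta}_{C_n}(S^{0};\uZ)=0$ whenever $|\beta|$ is negative and even --- the terms $\tH^{\alpha-m\xi}_{C_n}(S^{0};\uZ)$ vanish for all $m\gg 0$, hence so does the inverse limit. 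Therefore $c^{\ast}$ is injective in every even grading, in particular on the positive cone, and it identifies $\tH^{+}_{C_n}(S^{0};\uZ)$ with the subring of $R$ generated by $a_{\xi}$ together with the units $u_{\xi^{d}}^{\pm1}$ (the image of $a_{\xi^{d}}$ being $d\,u_{\xi}^{-1}u_{\xi^{d}}a_{\xi}$).

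The genuinely homotopy-theoretic steps --- orientability, the cofiber sequence, the Milnor sequence, and quoting the vanishing $\tH^{\beta}_{C_n}(S^{0};\uZ)=0$ for $|\beta|$ negative even --- are all routine, so the one point demanding care is the precise identification of the $RO(C_n)$-graded Borel cohomology $\tH^{\star}_{C_n}((EC_n)_{+};\uZ)$ with $R$: one must check that the Thom classes $u_{\xi^{d}}$, $d\mid n$, multiply freely to a Laurent polynomial subring, that the convention placing $a_{\xi}$ in degree $\xi$ is compatible with $a_{\xi}$ mapping to $u_{\xi}$ times a generator of $H^{2}(BC_n;\Z)=\Z/n$, and that each $a_{\xi^{d}}$ maps accordingly to $d$ times that generator. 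I expect this bookkeeping to be the main obstacle; past it the proof is formal.
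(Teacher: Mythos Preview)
Your argument is correct and uses a genuinely different strategy from the paper. The paper proceeds degree by degree: for $|\alpha|=0$ it embeds $\tHz(S^0;\uZ)$ into $\Z[u_{\xi^d}^{\pm}]$ via the restriction map $\res^G_e$, using the explicit computation that $\uHal(S^0;\uZ)\cong\uZ^{\JJ(\alpha)}$ makes $\res^G_e$ multiplication by $n/m(\alpha)$; then for $|\alpha|>0$ it writes $\alpha=c\xi+(\alpha-c\xi)$ and uses the cofibre sequence $S(\xi)_+\to S^0\to S^\xi$ to exhibit the generator of $\tHal(S^0;\uZ)$ as $a_\xi^c$ times a degree-zero class. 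Your route instead packages everything into the single ring map $c^*$ to Borel cohomology and proves injectivity in one stroke via the isotropy separation sequence $(EC_n)_+\to S^0\to S^{\infty\xi}$, invoking the additive vanishing $\tH^{\alpha-m\xi}_G(S^0;\uZ)=0$ for $m\gg 0$ when $|\alpha|$ is even.

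Your method is more conceptual and transports more readily to other groups or coefficient systems, and it sidesteps the need to name generators in each degree. The paper's approach, on the other hand, is more explicit: it immediately produces the precise image (multiplication by $n/m(\alpha)$) and the generator $a_\xi^c[\frac{n}{m(\alpha)}\prod u_{\xi^d}^{c_d}]$, which feeds directly into the subsequent ring presentation (Theorems \ref{ringp} and \ref{ringrep}). Your flagged concern about the bookkeeping in identifying $\tH^{\star}_{C_n}((EC_n)_+;\uZ)$ with $R$ is apt but routine: since $(EC_n)_+$ is built from free cells, $\tH^\alpha_G((EC_n)_+;\uZ)\cong H^{|\alpha|}(BC_n;\Z)$, under which each $u_{\xi^d}$ restricts to a unit in $H^0$ and $a_\xi$ restricts to the Euler class generating $H^2(BC_n;\Z)\cong\Z/n$; the relation $u_\xi a_{\xi^d}=d\,a_\xi u_{\xi^d}$ then forces $a_{\xi^d}\mapsto d\cdot(\text{generator})$.
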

In fact, we also clearly determine the image of the degree-wise inclusions in the theorem above (Theorems \ref{ringz} and \ref{ringp}). For $\alpha$ such that $|\alpha|=0$, both 
$\tH^\alpha_{C_n}(S^0; \uZ)$ and $\Z[u_{\xi^d}^{\pm}\mid d| n]\otimes \Z[a_\xi]/(na_\xi)$ at degree $\alpha$ is isomorphic to $\Z$, and the inclusion is multiplication by $\frac{n}{m(\alpha)}$. In the case $|\alpha|>0$, the left hand side is $\Z/m(\alpha)$, the right hand side is $\Z/n$, and the inclusion is also multiplication by $\frac{n}{m(\alpha)}$. 

We denote by $\tH^{>0}_{C_n}(S^0;\uZ)$ the part of $\tH^\bs_{C_n}(S^0;\uZ)$ in positive total grading. This is $\tH^+_{C_n}(S^0;\uZ)$-module, and we prove (Theorem \ref{ringodd})
\begin{theorem} 
As a $\tH^+_{C_n}(S^0;\uZ)$-module, the negative part of $\tH^\bs_{C_n}(S^0;\uZ)$ (denoted $\tH^-_{C_n}(S^0;\uZ)$) is isomorphic to $\Sigma^{3-\xi}\Hom(\tH^{>0}_{C_n}(S^0;\uZ),\Q/\Z)$.
\end{theorem}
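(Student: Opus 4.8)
The plan is to compare the two sides grading by grading using the explicit answers already obtained, and then to promote the resulting family of abelian-group isomorphisms to an isomorphism of $\tH^+_{C_n}(S^0;\uZ)$-modules by matching the two module structures on generators. First I fix the bookkeeping of gradings. By construction $\tH^-_{C_n}(S^0;\uZ)$ is concentrated in gradings $\beta$ with $|\beta|<0$ odd, whereas $\tH^{>0}_{C_n}(S^0;\uZ)$ is concentrated in gradings $\gamma$ with $|\gamma|>0$ even. Since $|\xi^{C_d}|=0$ for every $d\mid n$ with $d>1$ while $|\xi|=2$, the assignment $\gamma\mapsto 3-\xi-\gamma$ carries an integral combination of the $\xi^{d}$, $d\mid n$, to another such combination, with $|3-\xi-\gamma|=1-|\gamma|$, and it restricts to a bijection between these two sets of gradings; by \eqref{Zval} it also matches up the gradings in which the two sides are nonzero. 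After unwinding the suspension convention, the assertion to prove therefore reads: for each pair $(\beta,\gamma=3-\xi-\beta)$ there is an isomorphism $\tH^\beta_{C_n}(S^0;\uZ)\cong \Hom\bigl(\tH^\gamma_{C_n}(S^0;\uZ),\Q/\Z\bigr)$, and these can be chosen compatibly with multiplication by $\tH^+_{C_n}(S^0;\uZ)$.

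For the underlying groups, \eqref{Zval} identifies $\tH^\beta_{C_n}(S^0;\uZ)$ with $\Z/m(\beta)$ and $\Hom(\tH^\gamma_{C_n}(S^0;\uZ),\Q/\Z)$ with $\Hom(\Z/m(\gamma),\Q/\Z)\cong \Z/m(\gamma)$, so the degreewise isomorphism amounts to the numerical identity $m(\beta)=m(3-\xi-\beta)$. I would verify this directly from Definition \ref{malph}: passing from $\beta$ to $3-\xi-\beta$ replaces the fixed-point datum $|\beta^{C_d}|$ by $3-|\beta^{C_d}|$ for each $d\mid n$ with $d>1$ and $|\beta|$ by $1-|\beta|$, and the recipe for $m$ in terms of these numbers is invariant under this involution on the relevant range of gradings.

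To upgrade the isomorphism to one of $\tH^+_{C_n}(S^0;\uZ)$-modules I would use the presentations of $\tH^+_{C_n}(S^0;\uZ)$ and of its submodule $\tH^{>0}_{C_n}(S^0;\uZ)$ provided by Theorems \ref{ringz} and \ref{ringp}, together with the $\tH^+_{C_n}(S^0;\uZ)$-module structure of $\tH^-_{C_n}(S^0;\uZ)$ read off from Theorems \ref{zero}, \ref{non-zero} and \ref{zercoh}. The Pontryagin dual $\Hom(\tH^{>0}_{C_n}(S^0;\uZ),\Q/\Z)$ carries the transposed action $(\phi\cdot f)(x)=f(\phi x)$, and its grading-$\gamma$ part is spanned by the functionals detecting the monomials in the $u_{\xi^d}$ and $a_\xi$ in that grading. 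I would define the candidate map by sending each canonical negative-cone generator in grading $\beta=3-\xi-\gamma$ to the functional dual to the corresponding positive-cone monomial in grading $\gamma$, and then check (i) that the relations among the negative-cone generators — those inherited from $\tfrac{n}{d}a_{\xi^d}=0$ and $\tfrac{d}{\gcd(d,s)}a_{\xi^s}u_{\xi^d}=\tfrac{s}{\gcd(d,s)}u_{\xi^s}a_{\xi^d}$ — are sent to the corresponding relations among the dual functionals, and (ii) that the map is bijective, which holds in each grading by the count of the previous paragraph.

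The main obstacle is step (i): showing that the multiplications $u_{\xi^d}\cdot\colon\tH^\beta_{C_n}(S^0;\uZ)\to\tH^{\beta+\xi^d-2}_{C_n}(S^0;\uZ)$ and $a_\xi\cdot\colon\tH^\beta_{C_n}(S^0;\uZ)\to\tH^{\beta+\xi}_{C_n}(S^0;\uZ)$ on the negative cone are the exact transposes of the corresponding multiplications on the positive cone, including the precise integer multiples (the $\tfrac{n}{m(\alpha)}$ and $\gcd$ coefficients from Theorems \ref{ringz} and \ref{ringp}), which carry the arithmetic content. A convenient way to organize this is to exhibit, for each pair $\gamma+\beta=3-\xi$, a perfect $\Q/\Z$-valued pairing $\tH^\gamma_{C_n}(S^0;\uZ)\otimes\tH^\beta_{C_n}(S^0;\uZ)\to\Q/\Z$ compatible with the $\tH^+_{C_n}(S^0;\uZ)$-actions on the two factors; since there is no literal cup-product target — the group $\tH^{3-\xi}_{C_n}(S^0;\uZ)$ vanishes — this pairing must be produced from the explicit generators or realized by a connecting homomorphism in one of the cofiber sequences used in the computation (alternatively one may run the whole comparison by induction on the number of prime divisors of $n$, matching the two sides across those cofiber sequences). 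Once such a pairing is known to be perfect and compatible with the module structures, the asserted isomorphism of $\tH^+_{C_n}(S^0;\uZ)$-modules — with the grading shift recorded by $\Sigma^{3-\xi}$ — follows formally; this exhibits $\tH^\bs_{C_n}(S^0;\uZ)$ as a square-zero extension that is self-dual up to the shift $3-\xi$, recovering the duality of Lewis for $C_p$.
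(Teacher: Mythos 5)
Your degree-wise check is sound: for $\beta$ odd with $|\beta|<0$ and $\gamma=3-\xi-\beta$, the fixed-point dimensions satisfy $|\gamma^{C_{p_i}}|=3-|\beta^{C_{p_i}}|$ (using $|\xi^{C_d}|=0$ for $d>1$) and $|\gamma|=1-|\beta|>0$ even, so from Definition~\ref{malph} indeed $m(\gamma)=\prod_{|\beta^{C_{p_i}}|\ge 3}p_i=m(\beta)$ (the last equality because $n$ odd forces all $|\beta^{C_{p_i}}|$ to be odd). But the rest of the argument has a genuine gap precisely where you flag it yourself. You correctly observe that there is no cup-product pairing into $\tH^{3-\xi}_{C_n}(S^0;\uZ)$ (it vanishes), so a perfect $\Q/\Z$-valued pairing must come from somewhere else, and you defer its construction and the verification that the $u_{\xi^d}$- and $a_\xi$-multiplications on the negative cone are exact transposes of those on the positive cone. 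That verification is the whole content of the theorem, and you do not carry it out; you also have no independent description of ``canonical negative-cone generators'' to send to dual functionals, since the paper's explicit generators-and-relations presentations (Theorems~\ref{ringz}, \ref{ringp}) describe only the non-negative cone.

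The paper avoids all of this with equivariant Anderson duality (Section~\ref{eqand}). By Theorem~\ref{anders}, for $|\alpha|<0$ odd the $\Hom_L$ term vanishes and one gets $\tHal(S^0;\uZ)\cong\Ext\bigl(\tH^{3-\xi-\alpha}_G(S^0;\uZ),\Z\bigr)\cong\Hom\bigl(\tH^{3-\xi-\alpha}_G(S^0;\uZ),\Q/\Z\bigr)$, which is your degree-wise identification all at once. The crucial extra ingredient the paper supplies — and your proposal lacks — is that the Anderson duality cofiber sequence \eqref{and-cof} is induced by the short exact sequence of Mackey functors $0\to\uZ^\ast\to\uQ^\ast\to\uQZ^\ast\to 0$, which after the identification $H\uZ^\ast\simeq\Sigma^{2-\xi}H\uZ$ becomes a sequence of $\uZ$-modules; consequently the resulting long exact sequence on $\uHbs(S^0;-)$ is one of $\tHbs(S^0;\uZ)$-modules, and the desired module-structure compatibility follows formally. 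Unless you are prepared to construct the pairing by hand from the cofiber sequences \eqref{cofd2} and verify the transposition identities degree by degree, you should import Anderson duality rather than re-derive it; as written, your proof reduces to a plan for producing the pairing but does not produce it.
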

One is usually also interested in understanding the part of $\tH^\bs_{C_n}(S^0;\uZ)$ in gradings $V-n$ for $V$ a $G$-representation and $n \in \Z$. We denote this as $\tH^{Rep}_{C_n}(S^0;\uZ)$, and prove (Theorem \ref{ringrep}) 
\begin{thm}
The ring $\tHr(S^0;\uZ)$ is generated by the classes $u_{\xi^d}$, $a_{\xi^d}$ for divisors $d$ of $n$ such that $d\neq n$, subject to the relations 
$$ \frac{n}{d}a_{\xi^d}=0, ~~ \frac{d}{\gcd(d,s)}a_{\xi^s}u_{\xi^d}= \frac{s}{\gcd(d,s)}u_{\xi^s}a_{\xi^d}.$$
\end{thm}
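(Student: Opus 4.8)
The plan is to bootstrap from Theorems \ref{ringz} and \ref{ringp}, which realize $\tH^+_{C_n}(S^0;\uZ)$ as a subring of $R:=\Z[u_{\xi^d}^{\pm}\mid d\mid n]\otimes\Z[a_\xi]/(na_\xi)$ with a known degree-wise image. First I would show that $\tHr(S^0;\uZ)$ is exactly the part of $\tH^+_{C_n}(S^0;\uZ)$ concentrated in $\Rep$-gradings. Using $H\uZ\wedge S^{\xi^r}\simeq H\uZ\wedge S^{\xi^{rs}}$, a grading $V-m$ with $V$ an honest representation can be written $\alpha=\sum_{d\mid n,\,d\neq n}c_d\xi^d-2k$ with every $c_d\geq 0$; since $(\xi^d)^{C_{p_i}}$ is a subrepresentation of $\xi^d$ we have $|\alpha^{C_{p_i}}|\leq|\alpha|$ for all $i$, and feeding this into \eqref{Zval} and into the Mackey functor descriptions (Theorems \ref{zero}, \ref{non-zero}, \ref{zercoh}) kills the odd positive classes and the negative classes $\tH^-_{C_n}(S^0;\uZ)$ on $\Rep$-gradings. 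Hence $\tHr(S^0;\uZ)$ injects into $R$; its nonzero gradings lie among the $\alpha$ above with $0\leq k\leq\sum_d c_d$, where $\tH^\alpha_{C_n}(S^0;\uZ)\cong\Z$ if $k=\sum_d c_d$ and $\cong\Z/m(\alpha)$ otherwise, and Definition \ref{malph} reduces on this range to $m(\alpha)=\prod\{p_i:|\alpha^{C_{p_i}}|\leq 0\}$.

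Next I would check that the classes $u_{\xi^d}$ (in grading $\xi^d-2$) and $a_{\xi^d}$ (in grading $\xi^d$), for $d\mid n$ with $d\neq n$, generate. These gradings are of $\Rep$-type, and the relation $\tfrac{d}{\gcd(d,s)}a_{\xi^s}u_{\xi^d}=\tfrac{s}{\gcd(d,s)}u_{\xi^s}a_{\xi^d}$ with $s=1$ gives $a_{\xi^d}=d\,a_\xi u_{\xi^d}u_\xi^{-1}$ in $R$. So, writing $j:=\tfrac12|\alpha|$, the general $u,a$-monomial in grading $\alpha$ is $\prod_d u_{\xi^d}^{\,c_d-y_d}a_{\xi^d}^{\,y_d}$ with $0\leq y_d\leq c_d$ and $\sum_d y_d=j$, and it maps in $R_\alpha$ to $\bigl(\prod_d d^{\,y_d}\bigr)g_\alpha$ for a generator $g_\alpha$ of $R_\alpha$. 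When $j=0$ there is a single such monomial, mapping to $g_\alpha$; when $j>0$, $R_\alpha\cong\Z/n$ is cyclic, so these monomials span a cyclic subgroup of order $n/\gcd\bigl(n,\{\prod_d d^{\,y_d}\}\bigr)$, and a short combinatorial step — all of the weight $j$ can be kept off the slots $\xi^d$ with $p_i\mid d$ precisely when $\sum_{p_i\mid d}c_d\leq k$, i.e.\ when $|\alpha^{C_{p_i}}|\leq 0$ — identifies this order with $m(\alpha)$, which is the order of the image of $\tH^+_{C_n}(S^0;\uZ)$ in $R_\alpha$. Since $\tHr(S^0;\uZ)$ injects into $R$, these monomials therefore exhaust $\tHr(S^0;\uZ)$.

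It then remains to see that the two families of relations in the statement — both of which hold already in $\tH^\bs_{C_n}(S^0;\uZ)$ — give a complete presentation. Writing $P:=\Z[u_{\xi^d},a_{\xi^d}\mid d\mid n,\,d\neq n]/(\text{these relations})$, there is a degree-preserving surjection $P\twoheadrightarrow\tHr(S^0;\uZ)$, and I would establish injectivity by matching orders in each $\Rep$-grading $\alpha$. The monomials of $P$ in grading $\alpha$ are again the $\prod_d u_{\xi^d}^{\,c_d-y_d}a_{\xi^d}^{\,y_d}$ above; the relations $\tfrac{d}{\gcd(d,s)}a_{\xi^s}u_{\xi^d}=\tfrac{s}{\gcd(d,s)}u_{\xi^s}a_{\xi^d}$ transport $a$-weight between slots and collapse $P_\alpha$ onto a cyclic group, while the relations $\tfrac{n}{d}a_{\xi^d}=0$ cut its order down to $m(\alpha)$ (resp.\ leave $P_\alpha\cong\Z$ when $k=\sum_d c_d$). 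Comparing with $\tH^\alpha_{C_n}(S^0;\uZ)$ through \eqref{Zval} then forces $P\to\tHr(S^0;\uZ)$ to be an isomorphism.

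The hard part is this last step — proving the relations are \emph{complete}, not merely valid. Because the $a$--$u$ relations carry the coefficients $\tfrac{d}{\gcd(d,s)}$ and $\tfrac{s}{\gcd(d,s)}$, reducing $P_\alpha$ to a cyclic group of the exact order $m(\alpha)$ requires care; I expect the bookkeeping to become transparent after fixing one prime $p_i\mid n$ and tracking $p_i$-parts, and it is eased by the observation exploited above that $R_\alpha$ is already cyclic, so that only the kernel of $P_\alpha\to R_\alpha$ needs to be controlled.
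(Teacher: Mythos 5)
Your proposal follows the same overall two-step route as the paper's proof: first show the monomials in $u_{\xi^d},a_{\xi^d}$ span $\tHr(S^0;\uZ)$ in each $\Rep$-grading, then show the two families of relations really present $P_\alpha$ as a cyclic group of order exactly $m(\alpha)$. Your generation step is done cleanly and by a somewhat different tactic from the paper's: you pass to the Laurent ring $R$, compute the images of the monomials as $(\prod_d d^{y_d})g_\alpha$, and use the combinatorial observation that the $a$-weight can be moved entirely off the $p_i$-divisible slots precisely when $|\alpha^{C_{p_i}}|\leq 0$; the paper instead inducts on the number of $u$-factors, using the formula for multiplication by $u_{\xi^d}$ on $m(\alpha)$. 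Both are correct, and I verified your combinatorial criterion — it does identify the order of the image with $m(\alpha)$. The completeness step, however, is the crux, and you explicitly concede that you have only a plan for it ("reducing $P_\alpha$ to a cyclic group of the exact order $m(\alpha)$ requires care"). As written this is a genuine gap: a proof must actually carry out that bookkeeping, since it is here — and nowhere else — that one verifies the stated relations are complete rather than merely valid. For what it is worth, the paper's own treatment of this point is also terse (it asserts "using the gold relation" that $\RR_\alpha$ is cyclic of order $\operatorname{lcm}\gcd_l(n/\delta_l)$ without elaborating), so the burden is real in either version.

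The localization plan you sketch does close the gap, along the following lines. Write $\alpha=\sum_d c_d\xi^d-2k$ with $j=\sum_d c_d-k>0$, and index monomials by tuples $(y_d)$ with $0\leq y_d\leq c_d$, $\sum_d y_d=j$. Consider first the free $\Z$-module on the $M_{(y)}$ modulo only the gold relations. Over $\Z_{(p)}$, the relation that transports one unit of $a$-weight from slot $d$ to slot $s$ reads $M_{(y)}=(\text{unit})\cdot p\cdot M_{(y')}$ when $p\mid d$ and $p\nmid s$, and is a $p$-unit relation in the other three divisibility patterns; so at each prime this quotient is free of rank one, generated by a monomial of minimal $p$-height $h_p(y)=\sum_{p\mid d}y_d$. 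Hence the quotient by the gold relations alone is $\cong\Z$, via $M_{(y)}\mapsto\bigl(\prod_d d^{y_d}\bigr)/g$ with $g=\gcd_y\prod_d d^{y_d}$. Imposing the remaining relations $\tfrac{n}{d}M_{(y)}=0$ for $y_d\geq 1$ then presents $P_\alpha$ as $\Z$ modulo the ideal generated by the integers $\tfrac{n}{d}\bigl(\prod_e e^{y_e}\bigr)/g$, and a short $p$-valuation count — using $v_p(g)=\max\bigl(0,\tfrac12|\alpha^{C_p}|\bigr)$, which is precisely the quantity you already computed in your combinatorial step — shows the generator of that ideal is $\pm m(\alpha)$. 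Writing out this case analysis, together with the observation that it also handles the primes $p\nmid n$ trivially, is the missing piece.
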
 

\end{mysubsection}

\begin{mysubsection}{Computations for $\uH^\alpha_{C_n}(S^0;\uA)$} 
We compute the additive structure of the Mackey functor valued cohomology groups $\uH^\alpha_{C_n}(S^0;\uA)$ in most cases. For the Eilenberg-MacLane spectrum $H\uA$, we no longer have the equivalence of $H\uA \wedge S^{\xi^r}$ and $H\uA \wedge S^{\xi^{rs}}$, so now we would have to compute $\uH^\alpha_{C_n}(S^0;\uA)$ for the full $RO(C_n)$-grading and not just the linear combination of $\xi^d$ for divisors $d$ of $n$. However, we prove that the groups $\tH^\alpha_{C_n}(S^0;\uA)$, up to isomorphism, depend only on the fixed point dimensions of $\alpha$ (Corollary \ref{mzcalc}). 
\begin{thm}
Let $n=p_1\cdots p_k$. Then, 
$$\tH^{\alpha}_{C_n}(S^0;\uA) \cong \Z^{\# \{d|n ~\mid ~|\alpha^{C_d}|=0 \}} \oplus \bigoplus_{i=1}^k (\Z/p_i)^{\# \{d|\frac{n}{p_i} ~\mid ~|\alpha^{C_d}|>0, ~|\alpha^{C_{dp_i}}| <0 \}}.$$
\end{thm}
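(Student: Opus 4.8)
The plan is to reduce to a reduced Bredon-cohomology computation on a representation sphere and then to induct on the number $k$ of distinct primes dividing $n$. Writing $\alpha=V-W$ with $V,W$ actual $C_n$-representations gives a natural isomorphism $\tH^{\alpha}_{C_n}(S^0;\uA)\cong\tH^{V}_{C_n}(S^W;\uA)$, and by Corollary~\ref{mzcalc} these groups depend up to isomorphism only on the fixed-point dimension function $d\mapsto|\alpha^{C_d}|$ on divisors of $n$; so I may assume $\alpha=\sum_{d\mid n} m_d\,\xi^d$. Using the invertible classes $u_{\xi^d}$ to move underlying dimensions and the cofibre sequences $S(\xi^d)_+\to S^0\to S^{\xi^d}$, I may normalise $\alpha$ further so that along every chain $C_1\le C_{p_i}\le\cdots\le C_n$ of subgroups the integers $|\alpha^{C_d}|$ change sign at most once, from positive to negative, which puts the right-hand side of the claimed formula into the most transparent combinatorial shape.

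For the induction, the base case $n=p$ is the Stong--Lewis computation, which one can also read off the two-term Mackey-functor cochain complex of $S^{m\xi}$ (the feature that $\tH^{m\xi}_{C_p}(S^0;\uA)\cong\Z$, rather than $\Z/p$, is the ``many zeros'' phenomenon mentioned in the abstract). For the inductive step, fix $p=p_k$, use the splitting $C_n\cong C_{n/p}\times C_p$, and let $\mathcal{F}=\{H\le C_n:C_p\not\le H\}$, which is precisely the family of subgroups of $C_{n/p}$, so that $\tilde{E}\mathcal{F}\simeq S^{\infty\,\xi^{n/p}}$. Smashing the isotropy-separation cofibre sequence $(E\mathcal{F})_+\to S^0\to\tilde{E}\mathcal{F}$ with $H\uA$ and applying $\pi^{C_n}_{-\alpha}$ produces a long exact sequence with $\tH^{\alpha}_{C_n}(S^0;\uA)$ in the middle. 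The geometric term is $\pi^{C_n}_{-\alpha}(\tilde{E}\mathcal{F}\wedge H\uA)\cong\pi^{C_{n/p}}_{-\alpha^{C_p}}\big(\Phi^{C_p}H\uA\big)$, with $\Phi^{C_p}H\uA$ regarded as a $C_{n/p}$-spectrum and $\alpha^{C_p}$ as a $C_{n/p}$-representation; the Borel term is $\pi^{C_n}_{-\alpha}((E\mathcal{F})_+\wedge H\uA)$, and since $E\mathcal{F}$ is pulled back from $EC_p$ along $C_n\twoheadrightarrow C_n/C_{n/p}$ and its cells are induced from $C_{n/p}$, it is computed by a homotopy-orbit spectral sequence with $E_2$-term $H_i\big(C_p;\tH^{\alpha|_{C_{n/p}}+i}_{C_{n/p}}(S^0;\uA)\big)$.

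Both outer terms are then fed through the inductive hypothesis for $C_{n/p}$. For the Borel term one uses that the relevant Weyl action on these cohomology groups is trivial, so that the spectral sequence is built from honest group homology of $C_p$; its $i=0$ line is $\tH^{\alpha|_{C_{n/p}}}_{C_{n/p}}(S^0;\uA)$. For the geometric term one must identify $\Phi^{C_p}H\uA$: I expect its zeroth homotopy Mackey functor to be the Burnside Mackey functor of $C_{n/p}$ and all its higher homotopy Mackey functors to be $\Z/p$-modules (geometric fixed points of an Eilenberg--MacLane spectrum; this can be pinned down using $0\to\mI\to\uA\to\uZ\to0$ together with the already-known $\Phi^{C_p}H\uZ$ extracted from the $\uZ$-computation). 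Running its Postnikov tower and smashing with $S^{\alpha^{C_p}}$ then produces $\tH^{\alpha^{C_p}}_{C_{n/p}}(S^0;\uA)$ plus a finite collection of $\Z/p$-summands. Splitting the divisors of $n$ as $\{d\mid n/p\}\sqcup\{pd:d\mid n/p\}$, the $\Z$-summands in the claimed formula indexed by the second block come from the Burnside part of the geometric term (using $|\alpha^{C_{pd}}|=|(\alpha^{C_p})^{C_d}|$), those indexed by the first block come from the $i=0$ line of the Borel term, the inductive $\Z/p_1,\dots,\Z/p_{k-1}$-summands reappear in both terms, and the new $\Z/p_k$-summands come from the higher homotopy of $\Phi^{C_p}H\uA$ together with the higher-degree lines of the Borel spectral sequence.

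The main obstacle is the bookkeeping inside the long exact sequence: one has to compute the connecting homomorphism $\pi^{C_n}_{-\alpha}(\tilde{E}\mathcal{F}\wedge H\uA)\to\pi^{C_n}_{-\alpha-1}((E\mathcal{F})_+\wedge H\uA)$, and the differentials of the Borel spectral sequence, precisely enough to see that after all cancellation exactly one $\Z/p_k$ survives for each divisor $d\mid n/p_k$ with $|\alpha^{C_d}|>0>|\alpha^{C_{dp_k}}|$ and that the $\Z$-summands are neither created nor destroyed, i.e.\ that the long exact sequence splits into short exact sequences of the expected shape. This is where the preliminary normalisation of $\alpha$ and the precise $\uZ$-answer \eqref{Zval}, with $m(\alpha)$ as in Definition~\ref{malph}, do the real work, since the $\uZ$-computation is exactly what controls the relevant $a_{\xi^d}$- and $u_{\xi^d}$-multiplications and hence both the differentials and the connecting map. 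Controlling the higher homotopy Mackey functors of $\Phi^{C_p}H\uA$ and any $\lim^1$-terms, and ruling out unexpected extensions, is the part most likely to require a careful case analysis.
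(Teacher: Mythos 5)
Your proposal takes a genuinely different route from the paper, but as written it contains a circularity and leaves the load-bearing computations undone.

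The circularity is at the very start: you invoke Corollary~\ref{mzcalc} to reduce to $\alpha$ a $\Z$-linear combination of the $\xi^d$, $d\mid n$. But Corollary~\ref{mzcalc} is exactly the statement you are trying to prove, and unlike the $\uZ$-coefficient case there is no a priori reason $\tHal(S^0;\uA)$ should depend only on fixed-point dimensions -- the paper explicitly notes that $H\uA\wedge S^{\xi^r}$ and $H\uA\wedge S^{\xi^{rs}}$ are not equivalent, so the classes $u_{\xi^d}$ are not invertible with $\uA$-coefficients, and the reduction cannot be obtained that way either. (Your subsequent ``normalise so that the fixed-point dimensions change sign at most once along each chain'' step has the same problem: the $m_d$'s are already determined by the fixed-point dimensions via M\"obius inversion, so there is no further $\alpha'$ with the same fixed-point data to pass to; you would instead have to track the cohomology along cofibre sequences as $\alpha$ changes, which you have not done.)

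Beyond the circularity, your approach replaces the paper's algebraic reduction by isotropy separation with respect to the family $\mathcal F$ of subgroups not containing $C_{p_k}$. The setup is right -- $\tilde{E}\mathcal{F}\simeq S^{\infty\xi^{n/p}}$ and the geometric term is $\pi^{C_{n/p}}_{-\alpha^{C_p}}(\Phi^{C_p}H\uA)$ -- but this is exactly where the hard analysis starts. Geometric fixed points do not preserve Eilenberg--MacLane spectra: $\Phi^{C_p}H\uA$ has infinitely many nonzero homotopy Mackey functors, which you only ``expect'' to be $\Z/p$-modules, and pinning them down as a $C_{n/p}$-spectrum is itself a substantial task. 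You then need the Borel homotopy-orbit spectral sequence, the connecting homomorphism, all differentials, and the extension problems -- you flag all of these yourself as ``the main obstacle'' and ``most likely to require a careful case analysis'' without resolving any of them. In this form the argument is a plan, not a proof.

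By contrast, the paper's proof avoids geometric fixed points entirely. It writes $\uA$ as an iterated extension using the Mackey-functor short exact sequences $0\to\bZ_p\to\uA_p\to\uZ_p\to 0$ and $0\to\uZ^*_p\to\uA_p\to\bZ_p\to 0$, tensored up to $C_n$ via $\boxtimes$. The key reduction is Proposition~\ref{bZM}, which shows $\uH^\alpha_G(S^0;\bZ_p\boxtimes\uM)\cong\bZ_p\boxtimes\uH^{\alpha^{C_p}}_{C_{n/p}}(S^0;\uM)$ for all $\alpha$; this uses only \emph{genuine} (categorical) fixed points of $H\uN$, which are again Eilenberg--MacLane, and the observation via the cofibre sequences for $S(\xi^r)$ that adding $\xi^r$ with $p\nmid r$ does not change the answer. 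Combined with the torsion-control result (Proposition~\ref{ptorsion}, forcing the relevant long exact sequences to split into short ones), this gives a clean induction on the number of primes and yields the torsion-free rank (Theorem~\ref{mz-free}) and the $p_i$-torsion (Theorem~\ref{mz-tors}) simultaneously, with no spectral sequences, no differentials and no $\lim^1$ or extension issues. If you want to pursue the isotropy-separation route you will need to (i) remove the circularity by proving the result for arbitrary $\alpha$ directly, (ii) compute $\pi_*\Phi^{C_p}H\uA$ as a $C_{n/p}$-spectrum, and (iii) actually determine the connecting map and the Borel differentials; none of this is present in the proposal.
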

The Mackey functor valued cohomology $\uH^\alpha_{C_n}(S^0;\uA)$ does in fact, depend on the actual representation. This has been observed in \cite{Lew88} for the group $C_p$, and in \cite{BG19} for the group $C_{pq}$. We say that $\alpha$ is {\it non-zero} if all the fixed point dimensions are non-zero, and {\it mostly non-zero} if $|\alpha^{C_d}|=0$ implies that $|\alpha^{C_{dp}}| \neq 0$ for divisors $p$ of $n$ such that $p \nmid d$. We prove that the Mackey functors $\uH^\alpha_{C_n}(S^0;\uA)$ depend only on the fixed point dimensions of $\alpha$, if $\alpha$ is mostly non-zero. Examples from \cite{BG19} demonstrate that if $\alpha$ is not mostly non-zero, the Mackey functor does not depend solely on the fixed point dimensions. This is proved by showing that $\uH^\alpha_{C_n}(S^0;\uA)$ decomposes into sum of Mackey functors related to cohomology with constant coefficients. In the non-zero case, we prove (Theorem \ref{coh_ar})
\begin{thm}
For $\alpha$ non-zero, there is an isomorphism 
$$\uH^\alpha_{C_n}(S^0;\uA) \cong \bigoplus_{d\mid n} \bZ_d\boxtimes \uH^{\alpha^{C_d}}_{C_{\frac{n}{d}}}(S^0;\uZ).$$ 
\end{thm}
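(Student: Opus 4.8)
The plan is to deduce the statement from a more general one proved by induction on the number $\omega(M)$ of prime divisors of $M$: for every coprime factorisation $n=LM$ and every non-zero $\alpha\in RO(C_n)$,
\[
\uH^\alpha_{C_n}(S^0;\uZ_{C_L}\boxtimes\uA_{C_M})\ \cong\ \bigoplus_{d\mid M}\bZ_d\boxtimes\uH^{\alpha^{C_d}}_{C_{n/d}}(S^0;\uZ),
\]
the coefficient $\uZ$ on the right being the constant Mackey functor for $C_{n/d}=C_L\times C_{M/d}$ via $\uZ_{C_L}\boxtimes\uZ_{C_{M/d}}\cong\uZ_{C_{n/d}}$; the case $L=1$ is the theorem, and $\omega(M)=0$ is immediate. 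I would use three structural facts. First, since $n$ is squarefree, every subgroup of a product of cyclic factors of pairwise coprime order splits as a product of subgroups; hence over such a product the external box product of Mackey functors is computed objectwise by $\otimes_\Z$, the Burnside and constant Mackey functors are multiplicative ($\uA_{C_M}\cong\boxtimes_{p\mid M}\uA_{C_p}$, $\uZ_{C_M}\cong\boxtimes_{p\mid M}\uZ_{C_p}$), and $-\boxtimes\uN$ is exact when $\uN$ has torsion-free values. Second, for each prime $p$ there is the classical ``mark homomorphism'' short exact sequence of $C_p$-Mackey functors
\[
0\longrightarrow \uA_{C_p}\longrightarrow \bZ_p\oplus\uZ_{C_p}\longrightarrow \underline{C}_p\longrightarrow 0,
\]
with $\bZ_p$ the Mackey functor $\uZ$ supported at the top subgroup and $\underline{C}_p$ annihilated by $p$ (cf.\ \cite{Lew88}). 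Third, for a $C_{n/d}$-Mackey functor $\uN$ the $C_n$-Mackey functor $\bZ_d\boxtimes\uN$ is inflated from $C_{n/d}$ in the geometric sense (supported on subgroups containing $C_d$, with $C_d$-geometric fixed points $\uN$), so that $\uH^\beta_{C_n}(S^0;\bZ_d\boxtimes\uN)\cong\bZ_d\boxtimes\uH^{\beta^{C_d}}_{C_{n/d}}(S^0;\uN)$ for all $\beta$, and likewise $\uH^\beta_{C_n}(S^0;\underline{C}_p\boxtimes\uN)\cong\bZ_p\boxtimes\uH^{\beta^{C_p}}_{C_{n/p}}(S^0;\uN/p\uN)$.

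For the inductive step, fix $p\mid M$ and write $M=pM'$. Applying $-\boxtimes(\uZ_{C_L}\boxtimes\uA_{C_{M'}})$ to the mark sequence for $p$ — this is exact by the first fact, and uses $\uA_{C_p}\boxtimes\uA_{C_{M'}}\cong\uA_{C_M}$ — exhibits $\uZ_{C_L}\boxtimes\uA_{C_M}$ as the kernel of a map whose target is $\bigl(\bZ_p\boxtimes(\uZ_{C_L}\boxtimes\uA_{C_{M'}})\bigr)\oplus\bigl(\uZ_{C_{Lp}}\boxtimes\uA_{C_{M'}}\bigr)$ and whose cokernel is $\underline{C}_p\boxtimes(\uZ_{C_L}\boxtimes\uA_{C_{M'}})$. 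Taking $RO(C_n)$-graded cohomology of $S^0$ yields a long exact sequence of Mackey functors. For the first summand of the target, the third fact together with the inductive hypothesis applied to $C_{n/p}=C_L\times C_{M'}$ (note that $\alpha^{C_p}$ is again non-zero) gives cohomology $\bigoplus_{d'\mid M'}\bZ_{pd'}\boxtimes\uH^{\alpha^{C_{pd'}}}_{C_{n/(pd')}}(S^0;\uZ)$, i.e.\ precisely the summands of the desired answer indexed by the divisors $d\mid M$ with $p\mid d$. The inductive hypothesis applied to $C_n=C_{Lp}\times C_{M'}$ identifies the cohomology of the second summand with the summands indexed by $d\mid M$ with $p\nmid d$. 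So it remains only to see that the cokernel term has vanishing cohomology in gradings $\alpha$ and $\alpha-1$, for then the long exact sequence collapses and the induction closes.

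That vanishing is the heart of the matter, and is where the hypotheses on $\alpha$ and $n$ are used. By the third fact, the cohomology of $\underline{C}_p\boxtimes(\uZ_{C_L}\boxtimes\uA_{C_{M'}})$ in grading $\beta$ equals $\bZ_p\boxtimes\uH^{\beta^{C_p}}_{C_{n/p}}(S^0;(\uZ_{C_L}\boxtimes\uA_{C_{M'}})/p)$, and the Bockstein long exact sequence for multiplication by $p$ reduces its vanishing (for $\beta=\alpha$ and $\beta=\alpha-1$) to: for $\gamma=\alpha^{C_p}$ and $\gamma=\alpha^{C_p}\pm1$, the group $\uH^\gamma_{C_{n/p}}(S^0;\uZ_{C_L}\boxtimes\uA_{C_{M'}})$ has no $p$-torsion and no free summand. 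By the inductive hypothesis this group is $\bigoplus_{d'\mid M'}\bZ_{d'}\boxtimes\uH^{\gamma^{C_{d'}}}_{C_{n/(pd')}}(S^0;\uZ)$, and by the computation of $\uZ$-cohomology recalled earlier each $\uZ$-summand is a copy of $\uZ$ exactly when the fixed-point dimension $|\gamma^{C_{d'}}|$ vanishes, and otherwise a sum of modules $\underline{\Z/q}$ with $q\mid n/(pd')$. Since $p\nmid n/(pd')$ there is no $p$-torsion; and since $|\gamma^{C_{d'}}|$ equals $|\alpha^{C_{pd'}}|$ or $|\alpha^{C_{pd'}}|\pm1$, with $|\alpha^{C_{pd'}}|\neq0$ (because $\alpha$ is non-zero) and even (because $n$ is odd, so every fixed-point dimension in sight is even), none of these dimensions vanishes and there is no free summand. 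Hence multiplication by $p$ is an isomorphism on $\uH^\gamma_{C_{n/p}}(S^0;\uZ_{C_L}\boxtimes\uA_{C_{M'}})$, the Bockstein argument kills the cokernel term, and we are done. I expect the principal difficulty to be organisational rather than conceptual: keeping the iterated box products, geometric inflations, and fixed-point gradings straight through the induction and confirming at each stage that ``non-zero'' is inherited by every grading that occurs; the genuine inputs are the computation of $\uH^\bullet(S^0;\uZ)$, the behaviour of geometric fixed points on inflated Mackey functors, and the $C_p$ mark sequence.
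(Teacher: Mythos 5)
Your approach is a genuine alternative to the paper's. The paper works with the two short exact sequences $0 \to \bZ_p \to \uA_p \to \uZ_p \to 0$ and $0 \to \uZ^*_p \to \uA_p \to \bZ_p \to 0$, exploiting that the composite $\bZ_p \to \uA_p \to \bZ_p$ is multiplication by $p$, together with the ``no $p_j$-torsion'' statement (the analogue of Theorem~\ref{ptorsion}) and the fact that the non-zero hypothesis forces all groups to be torsion. This gives, via Propositions~\ref{cohses} and~\ref{bZtors}, a split short exact sequence regardless of the parity of $\alpha$, and the induction closes directly. You instead use a single mark-homomorphism sequence $0 \to \uA_p \to \bZ_p \oplus \uZ_p \to \bZp \to 0$, pay for it by needing vanishing of the cohomology of the cokernel in two adjacent gradings, and then try to supply that vanishing via a Bockstein computation with the inductive hypothesis.

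Here is where your argument breaks down. You assert that ``every fixed-point dimension in sight is even,'' attributing this to $n$ being odd. That is false: $n$ odd ensures that all fixed-point dimensions of $\alpha$ share the parity of $|\alpha|$, but $|\alpha|$ may perfectly well be odd (the non-zero hypothesis does not exclude this, and Theorem~\ref{coh_ar} is stated for both parities). If $\alpha$ is odd, the gradings $\gamma = \alpha^{C_p}\pm 1$ have even fixed-point dimensions $|\alpha^{C_{pd'}}|\pm 1$, which can vanish (for instance when some $|\alpha^{C_{pd'}}|=1$). This has two consequences: first, $\gamma$ is then no longer non-zero, so the inductive hypothesis you invoke to write $\uH^\gamma_{C_{n/p}}(S^0;\uZ_{C_L}\boxtimes\uA_{C_{M'}})$ as $\bigoplus_{d'}\bZ_{d'}\boxtimes\uH^{\gamma^{C_{d'}}}(S^0;\uZ)$ is not applicable; second, and more to the point, that group genuinely does acquire a free summand (compare Theorem~\ref{mz-free}), so $\uH^{\alpha-1}\bigl(S^0;\underline{C}_p\boxtimes\uZ_{C_L}\boxtimes\uA_{C_{M'}}\bigr)$ need not vanish. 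You can already see this in the smallest case: take $G=C_3$ and $\alpha=1-\xi$, which is non-zero but odd with $|\alpha^{C_3}|=1$; then the cokernel term in grading $\alpha-1$ is $\bZp$, not $0$. So as written, the long exact sequence does not collapse, and the induction does not close in the odd case.

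The theorem is of course still true there, and your route can be repaired: for $\alpha$ odd one should not aim for vanishing of $\uH^{\alpha-1}$ of the cokernel, but instead observe that the connecting map out of it is zero. Since $\uH^{\alpha^{C_p}}_{C_{n/p}}(S^0;\uZ_{C_L}\boxtimes\uA_{C_{M'}})$ has no $p$-torsion (this uses the extension of Theorem~\ref{ptorsion} to these box-product coefficients, not the inductive hypothesis), the Bockstein extension degenerates to $\uH^{\alpha-1}(\mathrm{coker}) \cong \bZ_p\boxtimes\bigl(\uH^{\alpha^{C_p}-1}_{C_{n/p}}(S^0;\uN)/p\bigr)$, onto which the map from $\uH^{\alpha-1}(\bZ_p\boxtimes\uN)$ is manifestly surjective; hence the boundary to $\uH^\alpha(\uZ_{C_L}\boxtimes\uA_{C_M})$ vanishes, and the vanishing of $\uH^\alpha(\mathrm{coker})$ (which does hold, by the same no-$p$-torsion argument applied at $\alpha^{C_p}$ and $\alpha^{C_p}+1$) finishes the step. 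But this is an additional argument you did not make; your proof as submitted only covers the even case. The paper's choice of short exact sequences sidesteps the parity issue entirely, which is why it reads more cleanly.
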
 
In the mostly non-zero case, we prove (Theorem \ref{indzero}) 
\begin{thm}
Let $n=p_1\cdots p_k$. For $\alpha$ mostly non-zero, there is an isomorphism 
$$\uH^\alpha_{C_n}(S^0;\uA) \cong \bigoplus_{d\mid n} \bZ_d\boxtimes \uH^{\alpha^{C_d}}_{C_{\frac{n}{d}}}(S^0;\uZ)[\zeta_\alpha(\{i~|~p_i|d\})].$$ 
\end{thm}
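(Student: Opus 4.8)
The plan is to prove this by induction on the number $k$ of prime factors of $n$, running the decomposition behind the non-zero case (Theorem~\ref{coh_ar}) while keeping careful track of the single place where it degenerates, namely a vanishing fixed point dimension. Fix a prime $p=p_j$ dividing $n$ and let $\mathcal F$ be the family of subgroups of $C_n$ not containing $C_p$. Smashing the isotropy separation cofibre sequence with $H\uA\wedge S^\alpha$ and applying $\underline{\pi}^{C_n}_0$ yields a long exact sequence of $C_n$-Mackey functors
$$\cdots\to\underline{\pi}^{C_n}_0\bigl((E\mathcal F)_+\wedge H\uA\wedge S^\alpha\bigr)\to\uH^\alpha_{C_n}(S^0;\uA)\to\underline{\pi}^{C_n}_0\bigl(\widetilde{E\mathcal F}\wedge H\uA\wedge S^\alpha\bigr)\to\cdots.$$
The $\widetilde{E\mathcal F}$-term is governed by $C_{n/p}$-equivariant data: by the standard reduction (geometric $C_p$-fixed points) it is built from $\uH^{\alpha^{C_p}}_{C_{n/p}}(S^0;\uA)$, which by the inductive hypothesis decomposes into summands $\bZ_{d'}\boxtimes\uH^{(\alpha^{C_p})^{C_{d'}}}_{C_{n/pd'}}(S^0;\uZ)[\zeta_{\alpha^{C_p}}(\cdots)]$, and after reindexing $d=pd'$ these give exactly the summands of the claimed formula with $p\mid d$. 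The $(E\mathcal F)_+$-term sees only the proper family $\mathcal F$; filtering $(E\mathcal F)_+$ by its skeleta, each a wedge of orbits $C_n/H$ with $H\in\mathcal F$, and using the cohomology of such orbits together with the inductive hypothesis for the relevant subquotients, it supplies the summands with $p\nmid d$, again with the appropriate shifts.

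Granting this, what remains is to show the connecting homomorphism of the long exact sequence vanishes, so that $\uH^\alpha_{C_n}(S^0;\uA)$ is the direct sum of the two halves, and to pin down the shifts $\zeta_\alpha$. On the associated graded of the cell filtration this connecting map is multiplication by a product of Euler classes $a_{\xi^e}$ over divisors $e$ determined by $\alpha$. Whenever $|\alpha^{C_e}|\neq 0$ the $\uZ$-computation \eqref{Zval} and Theorems~\ref{zero}--\ref{zercoh} show this multiplication is invertible or zero in the relevant range, so the sequence splits with no correction; this reproves the non-zero case with all $\zeta_\alpha(\cdots)=0$. When $\alpha$ is only mostly non-zero, at each $e$ with $|\alpha^{C_e}|=0$ the pertinent $\uZ$-cohomology of $C_{n/e}$ is a single copy of $\Z$ (the $|\beta|=0$ clause of \eqref{Zval}), so the connecting map there is, up to a unit, either $0$ or an isomorphism between two such copies in adjacent total degrees. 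The mostly non-zero hypothesis forces the adjacent divisors $ep_i$ (for $p_i\nmid e$) to have $|\alpha^{C_{ep_i}}|\neq 0$, which is precisely what prevents these isomorphisms from chaining together or from coupling different $d$-summands: the net effect on a given summand is a single grading shift, and collecting these over the zero divisors $e$ dividing a given $d$ --- indexed exactly by $\{i:p_i\mid d\}$ --- is how I would define $\zeta_\alpha(\{i:p_i\mid d\})$. A last check, using the module structure over $\tH^+_{C_{n/d}}(S^0;\uZ)$ from Theorems~\ref{ringz} and~\ref{ringodd}, exhibits a section of each summand, so the extension is split.

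The hard part will be this last analysis: showing that near a zero fixed point dimension the perturbation is a clean regrading of one summand rather than a non-split extension entangling several $d$-summands. This is where ``mostly non-zero'' must be used in full strength --- the $C_{pq}$ examples of \cite{BG19} show the conclusion genuinely fails otherwise --- and making it precise means comparing the $C_n$-level long exact sequence with the $C_{n/e}$-level $\uZ$-coefficient computation not merely on underlying groups but on the Mackey-functor restriction and transfer maps, using the rigidity of $\uZ^{\JJ(\beta)}$ to identify the surviving Mackey functor on the nose and to check that the bookkeeping of shifts is internally consistent, so that $\zeta_\alpha(\emptyset)=0$ and the $d=1$ summand $\bZ_1\boxtimes\uH^\alpha_{C_n}(S^0;\uZ)$ is unshifted, in agreement with Theorem~\ref{coh_ar}. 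Ruling out hidden extensions across summands is where I expect the bulk of the effort to go.
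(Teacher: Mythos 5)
Your plan diverges from the paper's proof in a way that introduces a genuine gap, and it also rests on a misreading of the statement. First, the misreading: the notation $\uH^{\alpha^{C_d}}_{C_{n/d}}(S^0;\uZ)[\zeta_\alpha(\cdots)]$ is not a grading shift. By Notation \ref{perpnot} it means: take $\uH^{\alpha^{C_d}}_{C_{n/d}}(S^0;\uZ)$ and \emph{delete} the summands $\KK_{p_i}\bZpi$ for each $i$ in the bracketed set (and it is $0$ unless $|\alpha^{C_d}|>0$). Your interpretation of the perturbation near a vanishing fixed point as ``a single grading shift'' therefore does not match what the theorem asserts; it asserts that certain torsion summands disappear, not that something is regraded. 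Reworking your argument to produce a deletion of summands rather than a suspension would already require a different endgame than the one you sketch.

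Second, and more fundamentally, your isotropy-separation reduction of the $\widetilde{E\mathcal F}$-term is wrong as stated. Geometric fixed points $\Phi^{C_p}H\uA$ is \emph{not} the Eilenberg--MacLane $C_{n/p}$-spectrum $H\uA$: its homotopy groups are concentrated in infinitely many nonnegative even degrees (Brauer quotient in degree $0$ plus nontrivial higher contributions), so the $\widetilde{E\mathcal F}\wedge H\uA\wedge S^\alpha$ term contributes far more than $\uH^{\alpha^{C_p}}_{C_{n/p}}(S^0;\uA)$. This is precisely why the paper avoids isotropy separation on the sphere and instead filters the coefficient Mackey functor via the short exact sequences $0\to\bZ_p\to\uA_p\to\uZ_p\to 0$ and $0\to\uZ^*_p\to\uA_p\to\bZ_p\to 0$: the key fact is that for $\uN=\bZ_p\boxtimes\uM$ the \emph{categorical} fixed points $H\uN^{C_p}\simeq H\uM$ is again Eilenberg--MacLane (Proposition \ref{bZM}), because $\bZ_p\boxtimes\uM$ vanishes at every orbit not containing $C_p$. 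That special feature does not hold for $\uA$ itself, which is why your approach would need to contend with the non-formality of $\Phi^{C_p}H\uA$. The paper then rules out the ``hidden extensions across summands'' you correctly flag as the hard part via an explicit Mackey-functor map computation (Proposition \ref{mapzI}) identifying the kernel of the connecting map summand by summand, together with Propositions \ref{maps} and \ref{bZtors} to produce the splittings; your sketch gestures at a module-structure argument over $\tH^+_{C_{n/d}}(S^0;\uZ)$ but does not supply the analogue of \ref{mapzI}, and it is exactly there that the ``mostly non-zero'' hypothesis is used. As written, the proposal does not close.
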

It follows that we have an  {\it ``independence theorem''}
\begin{thm}
1) For all $\alpha \in RO(C_n)$, the groups $\tH^\alpha_{C_n}(S^0;\uA)$ depend only on the fixed point dimensions of $\alpha$. \\
2) For $\alpha$ mostly non-zero, the Mackey functors $\uH^\alpha_{C_n}(S^0;\uA)$ depend, up to isomorphism, only on the fixed point dimensions of $\alpha$. 
\end{thm}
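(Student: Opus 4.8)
The plan is to deduce both parts from the structural computations already stated, so that essentially no new work is required. For part 1, I would simply invoke the closed additive formula for $\tH^\alpha_{C_n}(S^0;\uA)$ recorded above (Corollary \ref{mzcalc}),
$$\tH^{\alpha}_{C_n}(S^0;\uA) \cong \Z^{\# \{d|n ~\mid ~|\alpha^{C_d}|=0 \}} \oplus \bigoplus_{i=1}^k (\Z/p_i)^{\# \{d|\frac{n}{p_i} ~\mid ~|\alpha^{C_d}|>0, ~|\alpha^{C_{dp_i}}| <0 \}},$$
and observe that the right-hand side is manifestly a function of the tuple of fixed point dimensions $(|\alpha^{C_d}| \mid d\mid n)$ alone; so if $\alpha$ and $\alpha'$ have equal fixed point dimensions the groups agree, and there is nothing more to do.

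For part 2, I would start from Theorem \ref{indzero}. Since the property of being \emph{mostly non-zero} is itself a condition purely on the fixed point dimensions, a representation $\alpha'$ sharing the fixed point dimensions of a mostly non-zero $\alpha$ is again mostly non-zero, so the isomorphism
$$\uH^\alpha_{C_n}(S^0;\uA) \cong \bigoplus_{d\mid n} \bZ_d\boxtimes \uH^{\alpha^{C_d}}_{C_{\frac{n}{d}}}(S^0;\uZ)[\zeta_\alpha(\{i~|~p_i|d\})]$$
applies to both, and the task reduces to matching the two right-hand sides summand by summand over $d \mid n$. For a fixed $d$: the Mackey functor $\bZ_d$ depends only on $d$; the fixed point dimensions of the $C_{n/d}$-representation $\alpha^{C_d}$ at the subgroups $C_e \le C_{n/d}$ are exactly the integers $|\alpha^{C_{de}}|$, a subfamily of the fixed point dimensions of $\alpha$, hence equal to those of ${\alpha'}^{C_d}$; and the closed formulas of Theorems \ref{zero}, \ref{non-zero} and \ref{zercoh} present $\uH^\beta_{C_m}(S^0;\uZ)$, up to isomorphism, as a function of the $|\beta^{C_e}|$ alone, so the $\uZ$-coefficient factors match. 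It then remains to check, directly from its definition, that the degree shift $\zeta_\alpha(\{i\mid p_i|d\})$ is a combinatorial quantity determined by the fixed point dimensions of $\alpha$, hence equal to $\zeta_{\alpha'}(\{i\mid p_i|d\})$; granting this, every summand for $\alpha$ is isomorphic to the corresponding summand for $\alpha'$.

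The main obstacle --- really the only non-formal point --- is this last one: one must confirm that $\zeta_\alpha$ truly depends only on the fixed point dimensions, which has to come out of unwinding its definition and is not a consequence of any general principle. A secondary, routine but worth recording, point is that the isomorphisms produced by Theorems \ref{zero}--\ref{zercoh} and \ref{indzero} are not asserted to be natural, so the whole argument must be phrased up to isomorphism, and one should note that both the operation $\bZ_d \boxtimes (-)$ and the degree shift preserve isomorphism type. Part 1 avoids all of this, being a direct reading of a single explicit formula.
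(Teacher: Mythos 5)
Your proposal is correct and, as you anticipated, it is exactly how the paper intends this theorem to be read: part 1 is immediate from the explicit additive formula in Corollary \ref{mzcalc}, and part 2 follows from Theorem \ref{indzero} once one observes that every ingredient on the right-hand side is determined by the tuple $\bigl(|\alpha^{C_d}|\bigr)_{d\mid n}$. Two small remarks. First, the quantity $\zeta_\alpha(\II)=\{i\notin\II : |\alpha^{C_{\II p_i}}|=0\}$ is, by its very definition, a function of the fixed point dimensions, so the point you flag as ``the main obstacle'' is in fact a one-line inspection, not a non-trivial check. Second, you refer to $[\zeta_\alpha(\cdot)]$ as a ``degree shift''; it is not a shift in grading but the truncation operation of Notation \ref{perpnot}, which deletes from $\uH^{\beta}_{C_{n/d}}(S^0;\uZ)$ the summands $\KK_{p_i}\langle \Z/p_i\rangle$ for $i\in\zeta_\alpha(\II)$. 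That terminological slip does not affect the logic, since the argument only uses that this operation is determined by $\zeta_\alpha(\II)$ together with the fixed point dimensions, but the correct description is worth keeping in mind.
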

\end{mysubsection}

\begin{mysubsection}{Freeness theorems}
The cohomology $\uHbs(X_+;\uA)$ of a $G$-space $X$ is a module over $\uHbs(S^0;\uA)$. As an application of the calculation of $\uHbs(S^0;\uA)$, we provide conditions under which the cohomology is a free module. In the non-equivariant case, one knows that if $X$ has even dimensional cells, the cohomology is torsion-free. In the equivariant case, one considers cell complexes with cells formed out of disks in orthogonal representations. For the group $C_p$, it was shown \cite{Lew88} that the cohomology is free if all the cells are even dimensional, and the cells are attached under some additional fixed point dimension restrictions. The proof involved showing that the attaching maps induce the zero map on  cohomology. The additional restrictions were removed in \cite{Fer00,FL04} where the attaching maps were not zero, yet the resulting space had free cohomology (with a basis unrelated to the cell structure). 

The freeness result of \cite{Lew88} was generalized in \cite{BG19} for the group $G=C_{pq}$. We further generalize this for the group $C_n$ (Theorem \ref{free}). 
\begin{thm}
Suppose that $X$ is a $C_n$-cell complex with only even cells satisfying the condition : if $G\times_H D(V)$ is attached after $G\times_K D(W)$, then $W \ll V$. Then, $\uH^\bs_{C_n}(X_+;\uA)$ is a free $\uH^\bs_{C_n}(S^0;\uA)$-module with one generator for each cell of $X$. 
\end{thm}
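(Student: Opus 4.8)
The plan is to construct $X$ one cell at a time, in its attachment order, and to argue by induction on the number of cells, in the spirit of Lewis \cite{Lew88} for $C_p$ and of \cite{BG19} for $C_{pq}$. After attaching all but the final cell we have a $C_n$-subcomplex $X'$, each of whose cells is $G\times_K D(W)$ with $W\ll V$, an attaching map $\phi\colon G\times_H S(V)\to X'$, and $X=X'\cup_\phi(G\times_H D(V))$; inductively, $\uH^{\bs}_{C_n}(X'_+;\uA)$ is a free $\uH^{\bs}_{C_n}(S^0;\uA)$-module with one generator per cell of $X'$, namely the direct sum over those cells of the cohomologies $\uH^{\bs}_{C_n}(G_+\wedge_K S^W;\uA)$ of the cell spectra. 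The cofiber sequence $X'_+\to X_+\to G_+\wedge_H S^V$ gives a long exact sequence of $\uH^{\bs}_{C_n}(S^0;\uA)$-modules
\[\cdots\to\uH^{\bs}_{C_n}(G_+\wedge_H S^V;\uA)\to\uH^{\bs}_{C_n}(X_+;\uA)\to\uH^{\bs}_{C_n}(X'_+;\uA)\xrightarrow{\partial}\uH^{\bs+1}_{C_n}(G_+\wedge_H S^V;\uA)\to\cdots,\]
and the whole argument reduces to two things: that $\partial=0$, and that the resulting short exact sequence splits as $\uH^{\bs}_{C_n}(S^0;\uA)$-modules. Granting these, $\uH^{\bs}_{C_n}(X_+;\uA)\cong\uH^{\bs}_{C_n}(X'_+;\uA)\oplus\uH^{\bs}_{C_n}(G_+\wedge_H S^V;\uA)$, and together with the inductive hypothesis and the computation of the single cell cohomology this exhibits $\uH^{\bs}_{C_n}(X_+;\uA)$ as free with one generator for each cell of $X$; the base case $X=\mathrm{pt}$ is immediate.

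For the single cell, every representation $V$ of a subgroup $H$ of $C_n$ is the restriction of some $C_n$-representation $\widetilde V$, so $G_+\wedge_H S^V\simeq G/H_+\wedge S^{\widetilde V}$ and $\uH^{\bs}_{C_n}(G_+\wedge_H S^V;\uA)\cong\uH^{\bs-\widetilde V}_{C_n}(G/H_+;\uA)$, which by the Wirthm\"{u}ller isomorphism is the induction to $C_n$ of $\uH^{(\bs-\widetilde V)|_H}_{H}(S^0;\uA)$, a degree shift of the cohomology of $S^0$ for the subgroup $H$. The additive computation of $\uH^{\bullet}_{H}(S^0;\uA)$ obtained earlier (Theorem \ref{coh_ar} and its companions) then describes this completely, and since $V$ is even it supplies the single generating class --- in the grading attached to $\widetilde V$ --- that the theorem assigns to the cell.

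The heart of the proof, and the step I expect to be the main obstacle, is the vanishing of $\partial$; this is the $C_n$-analog of Lewis's fact that the attaching maps act trivially in cohomology. Since $\partial$ is $\uH^{\bs}_{C_n}(S^0;\uA)$-linear and its source is generated by the classes of the cells of $X'$, it suffices to show $\partial$ annihilates each of these generators. On the classes coming from zero cells it vanishes for free, since those lift along $\uH^{\bs}_{C_n}(X_+;\uA)\to\uH^{\bs}_{C_n}(X'_+;\uA)$ and so lie in $\ker\partial$; so the content is the vanishing of $\partial$ on the generators coming from the positive-dimensional cells $G\times_K D(W)$ with $W\ll V$. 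For these I would factor the connecting map $G_+\wedge_H S^V\to\Sigma X'_+$ through the attaching map $\phi$ and the cofiber sequence of the pair $(D(V),S(V))$, and then use the explicit descriptions of $\uH^{\bs}_{C_n}(S^0;\uA)$, of the cohomologies of the cells of $X'$, and of $\uH^{\bs}_{C_n}(G_+\wedge_H S(V)_+;\uA)$ to show that the image of such a generator under $\partial$ lands in a group which the fixed-point-dimension inequalities packaged in $W\ll V$ --- together with the even-cell hypothesis, which upgrades the strict inequalities to a gap that absorbs the one-dimensional suspension shift --- force to vanish; failing outright vanishing of the group, one argues instead that the relevant $\uH^{\bs}_{C_n}(S^0;\uA)$-module map must be zero, exactly as in Lewis's argument. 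Making this precise --- in particular handling the bookkeeping for cells with isotropy smaller than $C_n$ and matching the numerology of $\ll$ against the supports of the computed cohomologies --- is where essentially all of the preceding computation is brought to bear and where the real difficulty lies; the even cells are exactly what make these margins come out.

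Finally, once $\partial=0$ the long exact sequence breaks into short exact sequences
\[0\to\uH^{\bs}_{C_n}(G_+\wedge_H S^V;\uA)\to\uH^{\bs}_{C_n}(X_+;\uA)\to\uH^{\bs}_{C_n}(X'_+;\uA)\to0\]
of $\uH^{\bs}_{C_n}(S^0;\uA)$-modules. The quotient $\uH^{\bs}_{C_n}(X'_+;\uA)$ is free by the inductive hypothesis, hence projective, so the sequence splits; this gives the direct sum decomposition, and assembling the inductive step shows that $\uH^{\bs}_{C_n}(X_+;\uA)$ is a free $\uH^{\bs}_{C_n}(S^0;\uA)$-module with one generator for each cell of $X$.
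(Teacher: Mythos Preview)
Your proposal is correct and follows essentially the same route as the paper: induction on cells via the cofibre sequence of the last attachment, with the argument hinging on the vanishing of the connecting map, which you correctly reduce to the vanishing of $\uHbs(S^0;\uA)$-module maps $\uH^{\bs-W}_G(G/K_+;\uA)\to\uH^{\bs+1-V}_G(G/H_+;\uA)$ for $W\ll V$. The paper isolates exactly this as Lemma~\ref{ll}, and what makes it go through is that such a map is determined by a class in $\tH^{W+1-V}_{H'}(S^0;\uA)$ for a suitable subgroup $H'$, and this group vanishes outright by Corollary~\ref{Aodd} since $W+1-V$ is odd with every fixed-point dimension $\leq 1$ --- so the ``failing outright vanishing'' contingency you mention does not arise.
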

 From \cite[Remark 2.2]{Lew92} and \cite{FL04}, we know that generalizations of the freeness theorem do not exist for the  groups $C_{p^2}$ and $C_p\times C_p$, and hence, any groups containing these. Thus, this completely answers the freeness question among Abelian groups. As an application, we verify that the hypothesis is satisfied by projective spaces and the Grassmanians over certain $C_n$-representations (Theorems \ref{cohcp} and \ref{cohgr}). 
\end{mysubsection}

\begin{mysubsection}{Organization of the paper}
We start by introducing certain preliminaries on equivariant Eilenberg-MacLane spectra and $RO(G)$-graded cohomology in Section \ref{EqEM}. In section \ref{GMack}, we introduce the Mackey functors which turn up in the calculation, and study the set of maps between them in certain cases. Section \ref{cohrep} proves some general results about torsion in $\uHal(S^0)$ of order prime to $|G|$, and the cohomology of spheres in the $1$-dimensional complex $G$-representations. In Section \ref{cohZ}, we describe the computation for the constant Mackey functor $\uZ$. First, the additive structure is computed, and then the ring structure is described. In Section \ref{cohA}, the additive structure of the cohomology with $\uA$ coefficients is computed. Firstly, it is computed in the non-zero fixed point case, and then in the case where some fixed points are zero. Finally, the group values are described in the case when many fixed points are zero-dimensional. Section \ref{freenessthm} proves freeness results for certain $G$-complexes with applications to Grassmannians and projective spaces. 
\end{mysubsection}

\begin{notation}
Throughout the document, $G$ denotes the group $C_n$ with generator $\rho $, for $n=p_1\cdots p_k$ where the $p_i$ are distinct odd primes. The notation $\xi$ stands for the $2$-dimensional representation of $G$ in which the chosen generator of $G$ acts by the rotation of angle $\frac{2\pi}{n}$. An element $\alpha \in RO(G)$ is called {\it even} (respectively, {\it odd}) if its dimension $|\alpha|$ is even (respectively, odd).\footnote{Since $G=C_n$ for $n$ odd, $\alpha$ even (respectively, odd) implies that all the fixed point dimensions are even (respectively, odd).}  
\end{notation}

\begin{mysubsection}{Acknowledgements} 
The research of the first author was supported by the SERB MATRICS grant 2018/000845. 
\end{mysubsection}

\section{Equivariant Eilenberg-MacLane spectra}\label{EqEM}
In this section, we recall certain definitions and notations from equivariant homotopy theory and equivariant stable homotopy theory, with a view toward laying down the important ideas and constructions behind the equivariant Eilenberg-MacLane spectra for use in later sections. 

The category of based $G$-spaces and $G$-equivariant maps is denoted by $G\TT$. We also have an enriched category $\TT_G$ with based $G$-spaces and all maps with $G$ acting on the mapping spaces by conjugation. We use the notation $[ -, -]^G$ to denote the $G$-homotopy classes of maps between two $G$-spaces. The $G$-CW-complexes are defined as spaces obtained by iteratively attaching cells of the type $G/H\times D^m$ along an attaching map out of $G/H\times \partial D^m$. As an example, $S(\xi)$ is the $G$-CW-complex 
$$G/e\times D^0\cup_f G/e \times D^1$$
with $ f:G/e\times \{\pm 1\} \to G/e$ given by $(g,-1)\mapsto g$, $(g,1)\mapsto \rho g$. Analogously, $S(\xi^d)$ is the $G$-CW-complex 
$$G/C_d\times D^0\cup_f G/C_d \times D^1$$
with $ f:G/C_d\times \{\pm 1\} \to G/C_d$ given by $([g],-1)\mapsto [g]$, $([g],1)\mapsto \rho [g]$. As a map in the equivariant stable homotopy category, $f: {G/C_d}_+ \to {G/C_d}_+$ is $\simeq \rho - id$. Hence, we have a cofibre sequence 
\begin{myeq}\label{cofd} 
{G/C_d}_+ \stackrel{\rho - id}{\to} {G/C_d}_+ \to S(\xi^d)_+.
\end{myeq}

\begin{mysubsection}{Equivariant orthogonal spectra} The construction of the equivariant stable homotopy category referred to above for the cofibre sequences such as \eqref{cofd}, is the homotopy category of equivariant orthogonal spectra \cite[Chapter II]{MM02} using the positive stable model category structure. We briefly recall the definition of the objects of this category. 
\begin{defn}
1) A $G$-universe $\UU$ is a countably infinite dimensional representation of $G$ with an inner product such that $\UU$ contains countably many copies of each finite dimensional $G$-representation. \\ 
2) The category $\fI_G$ (enriched over $G\TT$) contains all finite dimensional subspaces of $\UU$ as objects, and the morphisms are given by linear isometric isomorphisms on which $G$ acts by conjugation. \\
3) An $\fI_G$-space $X$ is a $G$-functor $X: \fI_G \to \TT_G$. 
\end{defn}
An example of an $\fI_G$-space is the functor $S_G$ which sends $V$ to $S^V$. The smash product of based $G$-spaces gives an external smash product $\bar{\wedge}$ of two $\fI_G$-spaces as an $\fI_G\times \fI_G$-space. 
\begin{defn}
An orthogonal $G$-spectrum is an $\fI_G$-space $X$ together with a natural $G$-map $X\bar{\wedge} S_G \to X \circ \oplus$ such that unit and associativity diagrams of \cite[\S 1, \S 8]{MMSS01} commute.  
\end{defn}
An equivariant orthogonal spectrum $X$, thus, has based $G$-spaces $X(V)$ with $O(V)$-action together with $G$-equivariant structure maps 
$$\sigma: X(V)\wedge S^W \to X(V\oplus W)$$
that are $O(V)\times O(W)$-equivariant. One uses the notation $\fI_G\fS$ for the category of $G$-equivariant orthogonal spectra, and the notation $\{-,-\}^G$ for the homotopy classes of maps. The spectrum $S_G$ stands for the sphere spectrum which we from now on denote by $S^0$. Using smash products with $G$-spaces, we define the $G$-spectra $S^V$ for representations $V$ of $G$, and using the shift desuspension functors of \cite[Definition II.4.6]{MM02}, we define $S^\alpha$ for $\alpha \in RO(G)$. The equivariant cohomology theory represented by a $G$-spectrum $E$, therefore, carries an $RO(G)$-grading via 
$$E^\alpha(X) = \{X,\Sigma^\alpha E\}^G.$$

For a subgroup $H\leq G$, with the inclusion functor denoted by $i$, one has an induced forgetful functor $i^\ast$ on $G$-spaces whose left adjoint sends a based $H$-space $Y$ to $G_+\wedge_H Y$. For $G$-spectra, this construction is carried out in \cite[\S V.2]{MM02}, and one has (the $H$-universe being $i^*$ applied to the $G$-universe)
$$\fI_G\fS(G_+\wedge_H Y, X) \cong \fI_H\fS(Y,i^\ast X).$$
If $X$ is an $\Omega$-$G$-spectrum, then it is positive, stable fibrant. Further, if $Y$ is an $H$-CW-complex, then it is stable cofibrant, and we use an analogue of  \cite[Lemma 14.3]{MMSS01} to obtain an isomorphism on homotopy classes  
$$\{G_+\wedge_H Y, X\}^G \cong \{Y,i^\ast X\}^H.$$

We also use the construction of the fixed point  spectra \cite[\S V.3]{MM02}. In this respect, we see that for a normal subgroup $K$, write $\epsilon : G \to G/K$, and note that $\UU^K$ is a $G/K$-universe.  The $K$-fixed point spectrum construction takes an object $X \in \fI_G\fS$ to the object $X^K$ which evaluated on a subspace $W \subset \UU^K$ is $X(W)^K$. For $G/K$-spaces $Y$ one has the isomorphism \cite[Proposition V.3.10]{MM02}
$$\fI_G\fS(\epsilon^\ast Y, X) \cong \fI_{G/K}\fS(Y, X^K).$$
In addition, if we assume that $X$ is an $\Omega$-$G$-spectrum, so that it is fibrant in the model structure on $\fI_G\fS$, the equivalence passes to homotopy classes and we have 
$$\{\epsilon^\ast Y, X\}^G \cong \{Y, X^K\}^{G/K}.$$
In the above, we are again using an analogue of \cite[Lemma 14.3]{MMSS01} to compute the homotopy classes using the stable model structure where the $G$-CW-complexes are cofibrant. 
\end{mysubsection}

 \begin{mysubsection}{Mackey Functors} Equivariant Eilenberg-MacLane spectra are spectra whose integer-graded homotopy groups are non-zero only in degree $0$. Some of them may be constructed from $\Z[G]$-modules, but the more general Mackey functors also yield these spectra. 
 
Mackey functors are defined as contravariant additive functors from the Burnside category (denoted $\BB_G$) to Abelian groups (denoted $\mathcal{A} b$). If $S,T$ are finite $G$-sets we write $C(S,T)$ as the isomorphism classes of diagrams of the type
   $$\xymatrix@R-=.25cm@C-=.25cm{ &  &U \ar[dl] \ar[dr]  \\ &T  &  &S}.$$ 
Two diagrams $T \leftarrow U \to S$ and $T \leftarrow V \to S$ are isomorphic if there is a commutative diagram  
    $$\xymatrix@R-=.25cm@C-=.25cm{&  &U \ar[dl] \ar[dd]^{\cong} \ar[dr]  \\ &T  &  &S \\& &V \ar[ul] \ar[ur]}.$$
The set $C(S,T)$ is a commutative monoid under disjoint union. Using this notation, we define
\begin{defn}
The Burnside category $\BB_G$ is a category whose objects are finite $G-$sets, and the morphisms between two object $S$ and $T$ is the group completion of $C(S,T)$. The composition is defined as the pullback.  
\end{defn}
   Let  $f:S\to T$ be a $G$-map. The two morphisms $f_!$ and $f$ in $\BB_G$ as depicted in the pictures
$$\xymatrix@R-=.25cm@C-=.25cm{ &  &S \ar[dl]_{=} \ar[dr]^{f}  \\ &S  &  &T}  \; \; \text{and} \; \; \xymatrix@R-=.25cm@C-=.25cm{ &  &S \ar[dl]_{f} \ar[dr]^{=}  \\ &T  &  &S}$$
 generate all the morphisms of the Burnside category by taking disjoint union and compositions. 
\begin{defn}\cite{Dre72,GM95}
A Mackey functor $\uM $ consists of a pair $ (\uM_\ast , \uM^\ast )$ of functors from the category of finite $G$-sets to $\mathcal{A} b$ taking disjoint unions to direct sums, with $\uM_\ast$ covariant and $\uM^\ast$ contravariant. On every object $S$, $\uM^\ast(S)=\uM_\ast(S)$. For every pullback diagram  
$$\xymatrix{ P \ar[r]^\delta    \ar[d]^\gamma                             & X \ar[d]^\alpha \\ 
                        Y \ar[r]^\beta                                                     &  Z,}$$
of finite $G$-sets, the square 
$$\xymatrix{ \uM(P) \ar[r]^{\uM_\ast(\delta)}                                & \uM(X)  \\ 
                        \uM(Y) \ar[u]^{\uM^\ast(\gamma)}  \ar[r]_{\uM_\ast(\beta)}                                                     &  \uM(Z)\ar[u]_{\uM^\ast(\alpha)}}$$
commutes, that is, $\uM^\ast(\alpha) \circ \uM_\ast(\beta) = \uM_\ast(\delta) \circ \uM^\ast(\gamma).$ 
\end{defn}

Any finite $G$-set can be  written as disjoint union of orbits $G/H$, which leads to an equivalent formulation of Mackey functors is given as follows\footnote{Observe in our case $G=C_n$ is Abelian, so we have incorporated certain simplifications.}. A Mackey functor $\uM$ comprises \\ 
I) For every $H\leq G$, an Abelian group $\uM(G/H)$. \\
II) For $H\leq K$, homomorphisms $\res^K_H : \uM(G/K) \to \uM(G/H)$ (the restriction map)  and  $tr^K_H : \uM(G/H) \to \uM(G/K) $ (the transfer map). \\ 
III) For every $g\in G$,  $c_g : \uM(G/H) \to \uM(G/H)$. \\
These are required to satisfy\\
1) $tr^H_H,$ $\res^H_H$, $c_h: \uM(G/H) \to \uM(G/H)$ are identity maps  for $H \leq G$ and $h \in H$. \\ 
2) If $L\leq H \leq K$ then $tr^K_{L} = tr^K_H tr^H_L$ and $\res^K_L = \res^K_H \res^H_L.$ \\ 
3) $c_g c_h = c_{gh}$ for $g,h \in G.$\\
4) If $H\leq K$ and $g \in G$ then  $tr^K_Hc_g = c_g tr^K_H$ and $\res^K_Hc_g = c_g \res^K_H.$\\
5) $\res^K_J tr ^K_H = (\sum_{x\in [J \setminus K /H]} c_x) tr^{J}_{J\cap H}  \res^H_{J\cap H}$ 
for all subgroups $J,H \leq K.$
 
\vspace*{0.5cm}

We denote the category of $G$-Mackey functors by $\MM_G$. The objects of this category are the Mackey functors defined above, and the morphisms are Mackey functor homomorphisms (that is, groups homomorphisms which commute with all the restriction and transfer maps, and the maps $c_g$). 

The Mackey functors are contravariant functors from $\BB_G$ to Abelian groups, so the representable functors are examples of Mackey functors. The Mackey functor $\BB_G( -, G/G)$ is called the Burnside ring Mackey functor and denoted by $\uA$. For a finite $G$-set $S$, the Mackey functor $\BB_G(-,S)$ is denoted  by $\uA_S$ for any finite $G$-set $S$. The Mackey functor $\uA$ can be described in more explicit terms as $\uA(G/H)=A(H)$\footnote{$A(G)$ is the Burnside ring of $G$, which additively is the group completion of the commutative monoid of finite $G$-sets under disjoint union.}, with restriction maps given by realizing a $K$-set as an $H$-set, and transfer by the functor $K\times_H -$. If $M$ is a $G$-module, it gives a Mackey functor $\uM$ by 
$$\uM(G/H)= M^H, ~\res^K_H = \{M^K \subset M^H\},~tr^K_H(m)= \sum_{x\in K/H} xm.$$
One also notes that the category $\BB_G$ is it's own opposite. Thus, a Mackey functor $\uM$ gives a Mackey functor $\uM^\ast$ by flipping the arrows. The $G$-module $\Z$ with trivial action gives a Mackey functor $\uZ$ (the constant Mackey functor with $\Z$-coefficients), and by flipping arrows we get a Mackey functor $\uZ^\ast$. More explicitly, the Mackey functor $\uZ$ is given by 
$$\uZ(G/H)=\Z, ~\res^K_H = id,~tr^K_H=[K:H],$$
and $\uZ^\ast$ is given by 
$$\uZ(G/H)=\Z, ~\res^K_H = [K:H],~tr^K_H=id.$$
\end{mysubsection}

\begin{mysubsection}{$RO(G)$-graded cohomology} It turns out that the category with finite $G$-sets as objects, and homotopy classes of spectrum maps as morphisms is naturally isomorphic to the Burnside category. Therefore, the homotopy groups of $G$-spectra are naturally Mackey functors. Lewis, May and McClure \cite{LMM81} construct $RO(G)$-graded cohomology theories associated to Mackey functors, which on the objects $G/H$ gave the underlying Mackey functor. In fact, one has 
\begin{thm}\cite[Theorem 5.3]{GM95a} 
 For a Mackey functor $\uM$, there is an Eilenberg-MacLane $G$-spectrum $H\uM$ which is unique up to isomorphism in the equivariant stable homotopy category. For Mackey functors $\uM$ and $\uM'$, $\{H\uM,H\uM'\}^G \cong \Hom_{\MM_G}(\uM,\uM')$.
\end{thm}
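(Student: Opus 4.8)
The plan is to construct $H\uM$ by the classical ``killing homotopy groups'' procedure and then to deduce both the uniqueness of $H\uM$ and the computation $\{H\uM,H\uM'\}^G\cong\Hom_{\MM_G}(\uM,\uM')$ simultaneously, by an obstruction-theoretic argument whose only non-formal input is that maps out of a wedge of orbit suspension spectra are detected on $\upi_0$. I work in $\fI_G\fS$ with the positive stable model structure, so that ``attaching a cell'' and ``taking a sequential colimit'' are meaningful and homotopically well-behaved, and I use the equivariant Whitehead theorem: the spectra $\Sigma^\infty(G/H)_+$, $H\le G$, form a set of compact generators, so a map of $G$-spectra is an equivalence precisely when it induces isomorphisms on all $\pi^H_n$, i.e. on the homotopy Mackey functors $\upi_n$. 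Two elementary facts are used repeatedly, both recorded in the excerpt: first, $\upi_0\bigl(\Sigma^\infty(G/H)_+\bigr)\cong\BB_G(-,G/H)=\uA_{G/H}$, since the homotopy category of orbit suspension spectra is $\BB_G$; second, the Yoneda isomorphism $\Hom_{\MM_G}(\uA_{G/H},\uN)\cong\uN(G/H)$ for any Mackey functor $\uN$. Combined with $[\Sigma^\infty(G/H)_+,F]^G\cong\pi^H_0(F)=\upi_0(F)(G/H)$, these give: for any wedge $W=\bigvee_i\Sigma^\infty\Sigma^{k_i}(G/H_i)_+$ and any $G$-spectrum $F$, the natural map $[W,F]^G\to\prod_i\pi^{H_i}_{k_i}(F)$ is an isomorphism, and in particular, when all $k_i=0$, the $\upi_0$-map $[W,F]^G\to\Hom_{\MM_G}(\upi_0W,\upi_0F)$ is an isomorphism.

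\emph{Existence.} Pick generators of $\uM$ over $\BB_G$: elements $x_i\in\uM(G/H_i)$, equivalently maps $\uA_{G/H_i}\to\uM$, whose images span. Set $Y_0:=\bigvee_i\Sigma^\infty(G/H_i)_+$; it is connective and the $x_i$ assemble to a surjection $\upi_0Y_0\twoheadrightarrow\uM$. Represent a set of $\BB_G$-generators of $\ker(\upi_0Y_0\to\uM)$ by a map $W_1\to Y_0$ out of a wedge $W_1$ of orbit suspension spectra, and let $Y_1$ be its cofibre; the long exact sequence of homotopy Mackey functors gives $\upi_0Y_1\cong\uM$. Inductively, given $Y_n$ with $n\ge 1$, $\upi_0Y_n\cong\uM$ and $\upi_iY_n=0$ for $1\le i\le n-1$, choose a map $W_{n+1}\to Y_n$ from a wedge of $\Sigma^\infty\Sigma^{n}(G/H)_+$'s surjecting onto $\upi_nY_n$ and let $Y_{n+1}$ be its cofibre; since $\upi_iW_{n+1}=0$ for $i<n$, the long exact sequence shows $\upi_0Y_{n+1}\cong\uM$ and $\upi_iY_{n+1}=0$ for $1\le i\le n$. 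Put $H\uM:=\operatorname{hocolim}_nY_n$; then $\upi_iH\uM=0$ for $i\ne 0$, $\upi_0H\uM\cong\uM$, and $H\uM$ is a $G$-CW spectrum with cells only in non-negative dimensions. (Once this construction yields the Postnikov truncations $\tau_{\le n},\tau_{\ge n}$, existence is equivalently the statement that the heart of the homotopy $t$-structure on $\fI_G\fS$ is $\MM_G$, using that $\upi_\ast$ of $G$-spectra are Mackey functors.)

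\emph{The map $\Phi$, bijectivity, and uniqueness.} Define $\Phi\colon\{H\uM,H\uM'\}^G\to\Hom_{\MM_G}(\uM,\uM')$ by $f\mapsto\upi_0(f)$; it is natural. Given $\phi\colon\uM\to\uM'$, build a realizing map skeleton by skeleton on the model above. The composite $\upi_0Y_0\twoheadrightarrow\uM\xrightarrow{\phi}\uM'$ is realized, via $[Y_0,H\uM']^G\cong\Hom_{\MM_G}(\upi_0Y_0,\uM')$, by a map $f_0\colon Y_0\to H\uM'$. The composite $W_1\to Y_0\xrightarrow{f_0}H\uM'$ is, on $\upi_0$, the restriction of $\upi_0Y_0\to\uM\xrightarrow{\phi}\uM'$ to $\ker(\upi_0Y_0\to\uM)$, hence zero; since $W_1$ is a wedge of orbit suspension spectra, $[W_1,H\uM']^G$ is $\upi_0$-detected, so this composite is null and $f_0$ extends over $Y_1$. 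For $k\ge 2$, the obstruction to extending over the $k$-cells is represented by a map out of a wedge of $\Sigma^\infty\Sigma^{k-1}(G/H)_+$'s, lying in $\prod\pi^H_{k-1}(H\uM')=0$ since $k-1\ge 1$; so the extension exists. The colimit gives $f$ with $\upi_0(f)=\phi$, so $\Phi$ is surjective. The same bookkeeping gives injectivity: if $\upi_0(f)=\upi_0(g)$ then $f|_{Y_0}\simeq g|_{Y_0}$ ($\upi_0$-detection on $Y_0$), and the obstructions to extending this homotopy over a $k$-cell lie in $\pi^H_k(H\uM')$, zero for $k\ge 1$, so $f\simeq g$. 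For uniqueness: if $E$ has $\upi_iE=0$ for $i\ne 0$ and $\upi_0E\cong\uM$, the surjectivity construction (which only used that the target is coconnective and that $[W,-]^G$ is $\upi_0$-detected for wedges $W$ of orbit spectra) with target $E$ produces a map $H\uM\to E$ inducing $\mathrm{id}_{\uM}$ on $\upi_0$; by the Whitehead theorem it is an equivalence, and by injectivity of $\Phi$ it is unique up to homotopy. Hence $H\uM$ is unique up to isomorphism in the equivariant stable homotopy category.

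\emph{Expected main obstacle.} The genuinely non-formal points are two. First, making ``attach a cell, then take the colimit'' rigorous and checking it is homotopically correct — this is what the positive stable model structure on $\fI_G\fS$ (or an $\infty$-categorical enhancement) provides, together with the Whitehead theorem from the compact generators $\Sigma^\infty(G/H)_+$. Second, the $k=1$ step of the obstruction argument: here the ambient group $\pi^H_0(H\uM')\cong\uM'(G/H)$ is \emph{not} zero, so vanishing of the obstruction is not automatic from coconnectivity; one must instead invoke the Yoneda fact that maps from a wedge of orbit suspension spectra into an arbitrary $G$-spectrum are computed exactly by $\upi_0$, so that the specific obstruction class — which is zero on $\upi_0$ by the way $f_0$ was chosen — is genuinely null. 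The remainder is a routine chase with long exact sequences of homotopy Mackey functors.
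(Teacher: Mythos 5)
The paper does not actually prove this statement: it is quoted verbatim from \cite[Theorem 5.3]{GM95a}, with the reference standing in for the proof. So there is no ``paper's own proof'' to compare against; what you have written is a proof of the cited theorem.

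Your argument is correct, and it is the standard one — killing homotopy groups to build $H\uM$ as a $G$-CW spectrum with all cells in dimension $\ge 0$, then an obstruction-theoretic chase where the only delicate step is the $k=1$ extension, resolved exactly as you say by the Yoneda identification $[\bigvee\Sigma^\infty(G/H_i)_+,F]^G\cong\Hom_{\MM_G}(\bigoplus\uA_{G/H_i},\upi_0F)$. This is, as far as one can tell, the same line of reasoning as in \cite{GM95a} (which in turn rests on \cite{LMM81}). One point you leave implicit that deserves a sentence in a clean write-up: passing from maps out of the skeleta $Y_n$ to a map out of $H\uM=\operatorname{hocolim}Y_n$ involves a $\lim^1$ term in the Milnor sequence. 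It vanishes here because the tower $\bigl[\Sigma Y_n,H\uM'\bigr]^G$ is eventually constant — indeed, the cofibre sequence $\Sigma W_{n+1}\to\Sigma Y_n\to\Sigma Y_{n+1}$ and the vanishing $\bigl[\Sigma^{j}W_{n+1},H\uM'\bigr]^G\cong\prod\pi^H_{n+j}(H\uM')=0$ for $j\ge1$ make each restriction map an isomorphism — so the Mittag-Leffler condition holds and your injectivity/uniqueness conclusions do transfer to the colimit. Otherwise the proof is complete and rigorous.
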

For a $G$-Mackey functor $\uM$, we fix a model of the Eilenberg-MacLane spectrum $H\uM$ which is an $\Omega$-$G$-spectrum  and fibrant in the model structure on $\fI_G\fS$. This allows us to compute homotopy classes of maps to it's fixed point spectra via adjunctions. 

\begin{notation}
We use the notation $\tHal(X;\uM)$, $\alpha \in RO(G)$, for the reduced cohomology of a based $G$-space $X$ for the cohomology theory represented by $H\uM$. That is, 
$$ \tHal(X;\uM)\cong \{ X, S^\alpha \wedge H \uM\}^G.$$
\end{notation}

\vspace*{0.5cm}

The change of groups functors for $H\leq G$ induce an isomorphism for cohomology with Mackey functor coefficients 
$$\tilde{H}^\alpha_G(G_+\wedge_H X ; \uM)\cong \tilde{H}^\alpha_H(X; i^\ast(\uM)).$$
We will later use the notation $\downarrow^G_H$ for the functor $i^\ast$ on Mackey functors. For the Mackey functors $\uA$ and $\uZ$, we have $i^\ast(\uA)=\uA$ and $i^\ast(\uZ)=\uZ$.  Therefore, we have
\begin{myeq}\label{rest}
{H}^\alpha_G(G/H_+\wedge X ; \uA)\cong \tilde{H}^\alpha_H(X; \uA).
\end{myeq}
In particular, for $H=e$, we have $\tilde{H}^\alpha(G_+\wedge X;\uA) \cong \tilde{H}^{|\alpha|}(X; \Z)$. 

The $RO(G)$-graded theories may be further equipped with a Mackey functor-valued structure as in the definition below.   
\begin{defn}
Let $X$ be a based $G$-space, $\uM$ a Mackey functor, and $\alpha \in RO(G)$. The Mackey functor valued cohomology $\uH^{\alpha}_{G}(X;\uM)$ is defined as 
$$\uH^{\alpha}_{G}(X;\uM)(G/K) := \tilde{H}^{\alpha}_{G}({G/K}_+ \wedge X;\uM).$$
The conjugation, restriction, and transfer maps are induced by the appropriate maps of $G$-spectra. 
\end{defn}

\begin{exam}\label{Z*xi}
Let us compute the Mackey functor valued homotopy groups $\upi_k^G(S^\xi \wedge H\uZ)$.  We have cofibre sequences 
$$S(\xi)_+\to S^0 \to S^{\xi}, ~ G/e_+\to G/e_+ \to S(\xi)_+.$$
 From the second cofibre sequence we compute 
$$\upi_k(S(\xi)_+ \wedge H\uZ) \cong \begin{cases} \uZ &\mbox{ if } k=0\\
                                                       \uZ^\ast &\mbox{ if } k=1\\
                 0 &\mbox{ otherwise.}\end{cases}$$
Now applying the first cofibre sequence, we obtain 
$$\upi_k(S^\xi \wedge H\uZ) \cong \begin{cases} 
                                                       \uZ^\ast &\mbox{ if } k=2\\
                 0 &\mbox{ otherwise.}\end{cases}$$
It follows that $S^\xi \wedge H\uZ \simeq \Sigma^2 H\uZ^\ast$. 
\end{exam}

\end{mysubsection}

\begin{mysubsection}{Equivariant Anderson duality} \label{eqand} 
There is a generalization of the universal coefficient theorem 
$$0 \to \Ext(H_{m-1}(X),\Z) \to H^m(X;\Z) \to \Hom(H_m(X),\Z)\to 0$$ 
to Bredon cohomology using Anderson duality \cite{And}. Equivariant Anderson duality was studied for the group $C_2$ in \cite{Ric16}, and formulated for general finite groups in \cite{HM17}. We briefly recall the results here. Let $I^G_\Q$ (respectively $I^G_{\Q/\Z}$) be the $G$-spectrum representing the cohomology theory $\Hom(\pi_\ast^G(-), \Q)$ (respectively $\Hom(\pi_\ast^G(-),\Q/\Z)$) in the category of $G$-spectra (using the Brown representability theorem \cite[Chapter XIII.3.3]{May96}). Let $I_\Z^G$ be the homotopy fibre of $I^G_\Q\to I^G_{\Q/\Z}$. For a $G$-spectrum $E$, we write $I^G_\Z(E)$ for the function spectrum $F^G(E,I^G_\Z)$. We have the following result from the cofibre sequence of spectra 
\begin{myeq} \label{and-cof} 
I^G_\Z(E)\to F^G(E, I^G_\Q) \to F^G(E,I^G_{\Q/\Z}).
\end{myeq}  
\begin{thm}
For every $G$-spectrum $E$ and grading in $RO(G)$, there is a short exact sequence for $G$-spectra $X$
$$0 \to \Ext_L(E_{\ast -1}(X),\Z) \to I^G_\Z(E)^\ast(X) \to \Hom_L(E_\ast(X),\Z) \to 0. $$ 
\end{thm}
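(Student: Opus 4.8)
The plan is to deduce the short exact sequence from the cofibre sequence \eqref{and-cof} by taking $E$-homology/cohomology and analyzing the resulting long exact sequence. First I would observe that the two outer terms of \eqref{and-cof} represent Anderson-type cohomology theories that are \emph{exact} functors after applying $\pi^G_\ast$: indeed $F^G(E, I^G_\Q)^\ast(X) = \Hom(E_\ast(X), \Q)$ and $F^G(E, I^G_{\Q/\Z})^\ast(X) = \Hom(E_\ast(X), \Q/\Z)$, because $\Q$ and $\Q/\Z$ are injective $\Z$-modules, so these functors send exact sequences of spectra to exact sequences of abelian groups. More precisely, I would use the defining adjunction $\{X, F^G(E, I^G_R)\}^G \cong \{X \wedge E, I^G_R\}^G \cong \Hom_L(\pi^G_\ast(X \wedge E), R)$ in each $RO(G)$-grading, where $L$ denotes the relevant module structure over the Burnside Mackey functor (or $\pi^G_\ast S^0$), and the last isomorphism is the definition of $I^G_R$ via Brown representability.

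Next I would apply $\{X, - \}^G$ (in a fixed $RO(G)$-degree $\ast$) to the cofibre sequence \eqref{and-cof}. This yields a long exact sequence, but because the $I^G_\Q$ and $I^G_{\Q/\Z}$ terms are $\Hom(-, \Q)$ and $\Hom(-, \Q/\Z)$ applied to $E_\ast(X)$, and the connecting map in the short exact sequence $0 \to \Z \to \Q \to \Q/\Z \to 0$ of coefficient groups realizes the boundary, the long exact sequence breaks into short exact pieces. Concretely, the fragment around degree $\ast$ reads
$$
\Hom_L(E_{\ast-1}(X), \Q) \to \Hom_L(E_{\ast-1}(X), \Q/\Z) \to I^G_\Z(E)^\ast(X) \to \Hom_L(E_\ast(X), \Q) \to \Hom_L(E_\ast(X), \Q/\Z),
$$
and the cokernel of the first map is $\Ext_L(E_{\ast-1}(X), \Z)$ while the kernel of the last map is $\Hom_L(E_\ast(X), \Z)$, by the usual derived-functor description of $\Ext$ from the injective resolution $0 \to \Z \to \Q \to \Q/\Z \to 0$. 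Splicing these identifications into the long exact sequence gives exactly the claimed short exact sequence
$$
0 \to \Ext_L(E_{\ast-1}(X), \Z) \to I^G_\Z(E)^\ast(X) \to \Hom_L(E_\ast(X), \Z) \to 0.
$$

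The main obstacle I anticipate is bookkeeping the module structures correctly: $E_\ast(X)$ is naturally a module over $\pi^G_\ast(S^0)$ (or the Burnside Mackey functor $\uA$), the functors $\Hom_L$ and $\Ext_L$ must be taken in the appropriate module category, and one must check that the connecting homomorphism in the long exact sequence is compatible with these structures and with the $\Ext$-identification. This is essentially the content of the cited work \cite{HM17} (and \cite{Ric16} for $C_2$), so I would cite those for the precise formalism rather than re-deriving it; the only genuinely new point to verify is that the identification of the outer terms as $\Hom(-, \Q)$ and $\Hom(-, \Q/\Z)$ applied to $E_\ast(X)$ is valid in every $RO(G)$-grading, which follows from the Brown representability construction of $I^G_\Q$ and $I^G_{\Q/\Z}$ together with the injectivity of $\Q$ and $\Q/\Z$. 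No finiteness or dimension hypotheses on $X$ or $E$ are needed because injectivity makes the relevant $\lim^1$ terms vanish.
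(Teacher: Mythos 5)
Your outline is correct and is in fact the standard argument for the Anderson-duality universal coefficient sequence. Note that the paper does not prove this statement itself: it is recalled from \cite{Ric16} and \cite{HM17}, so there is no in-paper proof to compare against. Your steps — identify $F^G(E,I^G_\Q)^\alpha(X)\cong \Hom(E_\alpha(X),\Q)$ and $F^G(E,I^G_{\Q/\Z})^\alpha(X)\cong \Hom(E_\alpha(X),\Q/\Z)$ from the Brown representability definition of $I^G_R$ together with the adjunction $\{X,\Sigma^\alpha F^G(E,I^G_R)\}^G\cong\{X\wedge\Sigma^{-\alpha}E,I^G_R\}^G$, run the long exact sequence obtained from \eqref{and-cof}, and splice it into short exact sequences using the injective resolution $0\to\Z\to\Q\to\Q/\Z\to 0$ — give exactly the stated conclusion, with the kernel of $\Hom(E_\ast(X),\Q)\to\Hom(E_\ast(X),\Q/\Z)$ being $\Hom(E_\ast(X),\Z)$ and the cokernel of the corresponding map in degree $\ast-1$ being $\Ext(E_{\ast-1}(X),\Z)$.

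One small correction: you gloss the subscript $L$ in $\Hom_L$ and $\Ext_L$ as ``the relevant module structure over the Burnside Mackey functor (or $\pi^G_\ast S^0$)''; that is not what the paper means. The paper states explicitly that $\Hom_L$ and $\Ext_L$ denote the \emph{level-wise} (point-wise) $\Hom$ and $\Ext$ applied to the Mackey-functor valued groups, i.e.\ one applies $\Hom(-,\Z)$ and $\Ext(-,\Z)$ at each orbit $G/H$, obtaining new Mackey functors with restriction and transfer interchanged. This does not affect the validity of your argument, because every step in it is natural with respect to maps in the Burnside category and hence works level-wise; but the ``module structure'' framing is a misreading of the notation. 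Also, you do not actually need $\Q$ and $\Q/\Z$ to be injective to make the identification of the outer terms — that identification is by the definition of $I^G_\Q$ and $I^G_{\Q/\Z}$ via Brown representability; injectivity is what you use to recognize the kernel and cokernel of the induced map as $\Hom(-,\Z)$ and $\Ext(-,\Z)$.
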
 
The notations $\Ext_L$ and $\Hom_L$ denotes the point-wise $\Ext$ and $\Hom$ for the Mackey functor valued objects. We specialize to the case $E=H\uZ$. From the definition of $I^G_\Q$, (respectively $I^G_{\Q/\Z}$) we have $\pi^G_\ast F^G(E,I^G_\Q) \cong \Hom(\pi^G_\ast(E),\Q)$ (respectively $\pi^G_\ast F^G(E,I^G_{\Q/\Z}) \cong \Hom(\pi^G_\ast(E),\Q/\Z)$). Therefore, we observe that $F^G(H\uZ, I^G_\Q) \simeq H\uQ^\ast$,  $F^G(H\uZ, I^G_{\Q/\Z})\simeq H\uQZ^\ast$, and $I^G_\Z H\uZ\simeq H\uZ^\ast$. Moreover, the cofibre sequence \eqref{and-cof} is induced by the short exact sequence of Mackey functors $0\to \uZ^\ast \to \uQ^\ast \to \uQZ^\ast \to 0$. 

Specializing to the group $C_n$, and using the fact $H\uZ^\ast \simeq \Sigma^{2-\xi} H\uZ$, the above theorem reduces to   
\begin{thm}\label{anders}
There is a short exact sequence 
$$0 \to \Ext_L(\uH^{3-\xi-\alpha}_G(S^0;\uZ), \Z) \to \uHal(S^0;\uZ) \to \Hom_L( \uH^{2-\xi-\alpha}_G(S^0;\uZ), \Z)\to 0.$$
\end{thm}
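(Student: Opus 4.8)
The plan is to derive Theorem \ref{anders} directly from the general Anderson duality short exact sequence established just above, by specializing $E = H\uZ$ and then rewriting all the terms using the identifications that have already been recorded in Section \ref{eqand}. First I would take the general short exact sequence
$$0 \to \Ext_L(E_{\ast -1}(X),\Z) \to I^G_\Z(E)^\ast(X) \to \Hom_L(E_\ast(X),\Z) \to 0$$
and substitute $E = H\uZ$ and $X = S^0$. Since $I^G_\Z H\uZ \simeq H\uZ^\ast$, the middle term $I^G_\Z(H\uZ)^\alpha(S^0)$ becomes $\tH^\alpha_G(S^0; \uZ^\ast)$, which is the Mackey-functor-valued $\uHal(S^0;\uZ^\ast)$ after promoting to Mackey functor coefficients (replacing $X$ by ${G/K}_+ \wedge S^0$ throughout, which is legitimate since the whole construction is natural and the duality sequence is a sequence of $G$-spectra).

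Next I would use the equivalence $H\uZ^\ast \simeq \Sigma^{2-\xi} H\uZ$ (Example \ref{Z*xi}) to rewrite the middle term: $\uHal(S^0;\uZ^\ast) \cong \uHal(S^0; \Sigma^{\xi - 2}H\uZ) \cong \uH^{\alpha - \xi + 2}_G(S^0;\uZ)$. Wait — I must be careful with the direction of the shift. Since $S^\xi \wedge H\uZ \simeq \Sigma^2 H\uZ^\ast$, we get $H\uZ^\ast \simeq \Sigma^{-2} S^\xi \wedge H\uZ = \Sigma^{\xi - 2} H\uZ$, hence maps into $\Sigma^\alpha H\uZ^\ast$ are maps into $\Sigma^{\alpha + \xi - 2} H\uZ$. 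To match the statement of the theorem I would instead index so that $\uHal(S^0;\uZ)$ appears on the middle: replacing $\alpha$ by $\alpha - \xi + 2$ throughout, the middle term becomes $\uHal(S^0;\uZ)$, and the outer two terms pick up the corresponding shift. For the outer terms I use that $E_\ast(X) = \pi^G_\ast(H\uZ \wedge X)$ is Anderson-dual data: $E_n(S^0)$ in Mackey-functor form is $\uH^{-n}_G(S^0;\uZ)$ (homology of $S^0$ with $\uZ$ coefficients equals cohomology in the negated degree, since $S^0$ is self-dual). Tracking the reindexing, $\Hom_L(E_\ast(X),\Z)$ at the relevant degree becomes $\Hom_L(\uH^{2-\xi-\alpha}_G(S^0;\uZ),\Z)$ and $\Ext_L(E_{\ast-1}(X),\Z)$ becomes $\Ext_L(\uH^{3-\xi-\alpha}_G(S^0;\uZ),\Z)$, which is exactly the claimed sequence.

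The main obstacle I anticipate is purely bookkeeping: getting the grading shifts and the homology-versus-cohomology degree conversions exactly right, including the sign conventions in $RO(G)$-gradings and the self-duality of $S^0$, so that the shift $2 - \xi$ (and the $3 - \xi$ in the $\Ext$ term, reflecting the degree drop by one in the universal coefficient sequence) lands precisely as written. A secondary point to check carefully is that the general theorem's sequence, stated for $G$-spectra $X$, may be upgraded to a sequence of Mackey functors by applying it with $X = {G/K}_+$ for all $K$ and checking naturality in $K$ — but this is immediate since $I^G_\Z(E)$, $F^G(E,I^G_\Q)$, $F^G(E,I^G_{\Q/\Z})$ are all honest $G$-spectra and the cofibre sequence \eqref{and-cof} is a cofibre sequence of $G$-spectra, so the induced long exact sequences assemble into a sequence of Mackey functors and the $\Ext$/$\Hom$ description is pointwise by hypothesis. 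No genuinely hard homotopy-theoretic input is needed beyond what is already assembled in Section \ref{eqand}; the content is the specialization and the careful reindexing.
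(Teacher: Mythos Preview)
Your proposal is correct and follows exactly the paper's approach: the paper's entire argument for Theorem~\ref{anders} is the single sentence ``Specializing to the group $C_n$, and using the fact $H\uZ^\ast \simeq \Sigma^{2-\xi} H\uZ$, the above theorem reduces to [Theorem~\ref{anders}]'', which is precisely your plan of specializing the general Anderson duality sequence to $E=H\uZ$, $X=S^0$, and reindexing via the equivalence from Example~\ref{Z*xi}. Your flagged concern about the direction of the shift ($\xi-2$ versus $2-\xi$) is well-placed---the paper itself is terse on this point and one must be careful with the homology/cohomology grading conventions in the general sequence to land on the stated indices $3-\xi-\alpha$ and $2-\xi-\alpha$---but this is exactly the bookkeeping you anticipated, not a gap in the strategy.
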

\end{mysubsection}

\section{The category of $G$-Mackey functors} \label{GMack}
We have already defined the Mackey functors $\uA$, $\uA_S$, $\uZ$ and $\uZ^\ast$. Using these we generate a number of $G$-Mackey functors, and in some cases, computations of the set of Mackey functor homomorphisms between them.  

We start with the case $C_p$. Using the notation of \cite{Lew88}, a Mackey functor $\uM$ is described by the following diagram.
$$\xymatrix@R=0.1cm{ & \uM(C_p/C_p) \ar@/_.5pc/[dd]_{\res^{C_p}_e} \\   
\uM : \\ 
& \uM(C_p/e) \ar@/_.5pc/[uu]_{tr^{C_p}_e}}$$
In terms of these diagrams, we have the following depictions for some Mackey functors.
$$\xymatrix@R=0.1cm{    & \Z \oplus \Z \ar@/_.5pc/[dd]_{[\begin{smallmatrix} 1 & p \end{smallmatrix}]}   && & & \Z  \ar@/_.5pc/[dd]_{Id}    \\  
  \uA :      &                                                                                                                              && &\uZ :   \\ 
  & \Z \ar@/_.5pc/[uu]_{\left[ \substack{0 \\ 1}\right]}  &&& & \Z \ar@/_.5pc/[uu]_{p}}$$

$$\xymatrix@R=0.1cm{   & \Z  \ar@/_.5pc/[dd]_{p} &&&  & C  \ar@/_.5pc/[dd] \\ 
\uZ^\ast :                                                     &&&& \langle C \rangle :       \\
 & \Z \ar@/_.5pc/[uu]_{Id}                    &&&   & 0 \ar@/_.5pc/[uu]      }$$ 
The right bottom diagram defines the Mackey functor $\langle C \rangle$ for any Abelian group $C$. The Mackey functor $\uA_{C_p/e}$ is described by the diagram
$$\xymatrix@R=0.1cm{ & \Z \ar@/_.5pc/[dd]_{\Delta} \\   
\uA_{C_p/e} : \\ 
& \Z^{ p} \ar@/_.5pc/[uu]_{\nabla}}$$

\begin{exam}\label{multp}
From the Yoneda lemma, we have $\Hom_{\MM_G}(\uA,\uM) \cong \uM(G/G)$. For $G=C_p$, consider the map $\uA \to \bZ$ corresponding to the element $1\in \Z$. In explicit terms, this map is 
$$\uA(C_p/C_p) = \Z\oplus \Z \stackrel{[1~0]}{\to} \Z = \bZ(C_p/C_p) $$ 
$$\uA(C_p/e) = \Z \stackrel{0}{\to} 0 = \bZ(C_p/e).$$
The kernel of this map is $\uZ^\ast$.  Consider the map $\uA$ to $\uZ$ corresponding to $1\in \Z \cong \uZ(C_p/C_p)$. This map is given by 
$$\uA(C_p/C_p) = \Z\oplus \Z \stackrel{[1 ~p]}{\to} \Z = \uZ(C_p/C_p) $$ 
$$\uA(C_p/e) = \Z \stackrel{id}{\to} \Z = \uZ(C_p/e).$$
The kernel of this map is $\bZ$. Finally, the map $\uA$ to $\uZ^\ast$ corresponding to $1\in \Z \cong \uZ^\ast(C_p/C_p)$ is described by 
$$\uA(C_p/C_p) = \Z\oplus \Z \stackrel{[1 ~p]}{\to} \Z = \uZ^\ast(C_p/C_p)$$ 
$$\uA(C_p/e) = \Z \stackrel{p}{\to} \Z = \uZ^\ast(C_p/e).$$
Observe that the composites 
$$\bZ \to \uA \to \bZ,$$
and 
$$\uZ^\ast \to \uA \to \uZ^\ast$$ 
are multiplication by $p$. 
\end{exam}

\begin{defn}
Let $H$ be a subgroup of $G$. The restriction  
$$\downarrow^G_H : \MM_G \to \MM_H$$
and the induction
$$\uparrow^G_H: \MM_H \to \MM_G$$ 
are given by 
 $$\uparrow^G_H \uM(T) = \uM(i^*(T)),~~\downarrow^G_H\uN(S) = \uN (G\times_H S) $$ 
for each finite $G$-set T, and $H$-set $S$. The structure maps are evident as the constructions $i^\ast$ and $G\times_H -$ induce functors between the Burnside categories. 
\end{defn}
These two functors are adjoint as noted in the following proposition. 
\begin{prop}\label{epadj}\cite[Proposition 4.2]{TW90}
For $H\leq G$, the functors $\downarrow^G_H$ is a right adjoint to the functor $\uparrow^G_H$. 
\end{prop}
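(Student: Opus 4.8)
The statement to prove is Proposition~\ref{epadj}: for $H \leq G$, the restriction functor $\downarrow^G_H : \MM_G \to \MM_H$ is right adjoint to the induction functor $\uparrow^G_H : \MM_H \to \MM_G$. The plan is to exhibit a natural isomorphism $\Hom_{\MM_G}(\uparrow^G_H \uN, \uM) \cong \Hom_{\MM_H}(\uN, \downarrow^G_H \uM)$ for $\uN \in \MM_H$ and $\uM \in \MM_G$, and to do this I would pass through the representable Mackey functors, since those control everything by the Yoneda lemma.

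First I would recall that Mackey functors are additive contravariant functors from the Burnside category, and that the constructions $i^\ast(-)$ (restriction of $G$-sets to $H$-sets) and $G \times_H (-)$ (induction of $H$-sets to $G$-sets) are functors between the Burnside categories $\BB_G$ and $\BB_H$ that are themselves adjoint: there is a natural bijection $\BB_G(G\times_H S, T) \cong \BB_H(S, i^\ast T)$ for a finite $H$-set $S$ and finite $G$-set $T$. This is the double-coset / Frobenius-reciprocity statement at the level of spans, and it is standard (it follows by inspecting spans and using that $G\times_H(-)$ is left adjoint to $i^\ast$ on the level of $G$-sets, together with the compatibility of this adjunction with pullbacks, which is exactly what makes it descend to the Burnside categories). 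Given the definitions $\uparrow^G_H\uN(T) = \uN(i^\ast T)$ and $\downarrow^G_H\uM(S) = \uM(G\times_H S)$, this shows that $\uparrow^G_H$ is precomposition with $i^\ast$ and $\downarrow^G_H$ is precomposition with $G\times_H(-)$.

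Next I would use the adjunction between the two functors on Burnside categories to build the unit and counit. Concretely: for a finite $H$-set $S$ there is a canonical span morphism $S \to i^\ast(G\times_H S)$ in $\BB_H$ (the unit of the $G$-set adjunction, viewed as a span), and for a finite $G$-set $T$ a canonical span morphism $G\times_H i^\ast T \to T$ in $\BB_G$ (the counit). Applying $\uN$ and $\uM$ contravariantly, these give natural transformations $\eta : \uN \to \downarrow^G_H \uparrow^G_H \uN$ and $\varepsilon : \uparrow^G_H \downarrow^G_H \uM \to \uM$. Then I would check the two triangle identities; these reduce to the triangle identities for the $G$-set adjunction $G\times_H(-) \dashv i^\ast$, which hold because the Burnside-category morphisms in question are images of the corresponding $G$-set maps under the "graph" functor $f \mapsto f_!$ (and that functor is a functor, so it preserves the composites appearing in the triangle identities). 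Equivalently, and perhaps more cleanly, I would directly define the adjunction bijection on a homomorphism $\phi : \uparrow^G_H\uN \to \uM$ by sending it to the composite $\uN \xrightarrow{\eta} \downarrow^G_H\uparrow^G_H\uN \xrightarrow{\downarrow^G_H\phi} \downarrow^G_H\uM$, and check naturality in $\uN$ and $\uM$ and bijectivity by writing down the inverse using $\varepsilon$.

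The main obstacle, and the only genuinely content-bearing step, is verifying that the adjunction $G\times_H(-) \dashv i^\ast$ on finite $G$-sets lifts to an adjunction on the Burnside categories — i.e. that the bijection $\BB_G(G\times_H S, T) \cong \BB_H(S, i^\ast T)$ is well-defined on isomorphism classes of spans, is additive, and respects composition of spans (which is computed by pullback). This is where one must actually use that induction and restriction of $G$-sets commute with pullbacks in the appropriate sense (a Beck--Chevalley-type compatibility). Everything after that — the triangle identities, naturality, additivity — is formal. Since the paper cites \cite[Proposition 4.2]{TW90} for this statement, I would present the argument at the level of "the $G$-set adjunction descends to spans, hence to Mackey functors by Yoneda," and refer to the cited source for the bookkeeping, rather than grinding through the double-coset formula explicitly.
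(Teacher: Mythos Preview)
The paper does not prove this proposition; it simply cites \cite[Proposition 4.2]{TW90}. So there is no ``paper's own proof'' to compare against, and your task was really to supply a correct argument.

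Your strategy---lift the $G$-set adjunction to the Burnside categories and then pass to Mackey functors---is the right idea, but you have the direction wrong at a key step, and as written you prove the \emph{other} adjunction. You invoke $\BB_G(G\times_H S,T)\cong \BB_H(S,i^\ast T)$, i.e.\ $G\times_H(-)\dashv i^\ast$ on Burnside categories, and then claim that applying $\uN$ contravariantly to the unit $S\to i^\ast(G\times_H S)$ produces $\eta:\uN\to \downarrow^G_H\uparrow^G_H\uN$. It does not: since $\uN$ is contravariant, you get a map $\uN(i^\ast(G\times_H S))\to \uN(S)$, i.e.\ $(\downarrow^G_H\uparrow^G_H\uN)(S)\to \uN(S)$, pointing the other way. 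Tracing this through, your unit/counit pair witnesses $\downarrow^G_H \dashv \uparrow^G_H$, not the stated $\uparrow^G_H \dashv \downarrow^G_H$.

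The fix is to use the \emph{other} adjunction at the Burnside level, $\BB_H(i^\ast T,S)\cong \BB_G(T,G\times_H S)$, i.e.\ $i^\ast \dashv G\times_H(-)$. This does not descend from the $G$-set adjunction alone; it uses that $\BB_G$ is self-dual (spans can be reversed), which is exactly the extra structure that makes induction and coinduction of Mackey functors coincide. With that in hand, $\uparrow^G_H$ carries the representable $\BB_H(-,S)$ to $\BB_G(-,G\times_H S)$, and Yoneda gives
\[
\Hom_{\MM_G}(\uparrow^G_H\BB_H(-,S),\uM)\cong \uM(G\times_H S)=(\downarrow^G_H\uM)(S)\cong \Hom_{\MM_H}(\BB_H(-,S),\downarrow^G_H\uM),
\]
which then extends to all $\uN$ since every Mackey functor is a colimit of representables and precomposition preserves colimits. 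So your outline survives once you swap to the correct Burnside-level adjunction and correct the arrow directions.
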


The group $G$ equals $C_n$ for $n$ square-free with $k$ distinct odd prime factors $p_1,\cdots, p_k$. Subgroups of $G$ correspond to divisors of $n$, which in turn are in one-to-one correspondence with subsets $I \subset \uk=\{1,\cdots,k\}$ via the correspondence 
$$ I \leftrightarrow d = \prod_{i\in I} p_i.$$
\begin{notation}
For $I\subset \uk$, write $|I|=\prod_{i\in I} p_i$, and $\# I$ for the number of elements of $I$. We write $C_I$ for the subgroup $C_{|I|}$ of $G$ ( $=C_n$), and $C_{\frac{n}{I}}$ denotes the subgroup $C_{\frac{n}{|I|}}$ of $G$. For $p~|~n$ and $p\nmid |I|$, $C_{Ip}$  denotes the subgroup $C_{|I|p}$, and $C_{\frac{n}{Ip}}$ denotes the subgroup $C_{\frac{n}{|I|p}}$. 

If $\uM$ is a Mackey functor among $\{\uA$, $\uZ$, $\uZ^\ast,\bZ\}$, we write $\uM_p$ for the Mackey functor $\uM$ defined over $C_p$, and $\uM_I$ ( respectively, $\uM_{\frac{n}{I}}$, $\uM_{Ip}$, $\uM_{\frac{n}{Ip}}$) for the corresponding Mackey functor over the group $C_I$ (respectively, $C_{\frac{n}{I}}$, $C_{I p}$, $C_{\frac{n}{I p}}$). 
\end{notation}

\vspace*{0.5cm}

In the following definition, note that for $d | n$, any transitive $G$-set may be expressed as a product of a $C_d$-set and a $C_{\frac{n}{d}}$-set. For a subgroup $H$ of $G$ write $H_1 = C_d \cap H$ and $H_2= C_{\frac{n}{d}}\cap H$, and observe that  $G/H = C_d/H_1 \times C_{\frac{n}{d}}/H_2$. The maps between two transitive $G$-sets\footnote{A $G$-map $G/H\to G/K$ exists if $H\leq K$, and is given by multiplication by $g \in G$.} also split as a product of maps between the two factors.  
\begin{defn}
Define $\boxtimes : \MM_{C_d} \times \MM_{C_{\frac{n}{d}}} \to \MM_G$ denoted by $(\uM,\uN)\mapsto \uM\boxtimes \uN$, on objects $G/H = C_d/H_1 \times C_{\frac{n}{d}}/H_2$ as 
$$\uM\boxtimes \uN (G/H) = \uM(C_d/H_1) \otimes \uN(C_{\frac{n}{d}}/H_2).$$
The conjugation maps $c_g$ are given by $c_{g_1}\otimes c_{g_2}$, where $g=(g_1,g_2)$ under the isomorphism $G\cong C_d \times C_{\frac{n}{d}}$. For an inclusion $H \leq K$, define 
$$\res^K_H= \res^{K_1}_{H_1}\otimes \res^{K_2}_{H_2}, ~~~ tr^K_H= tr^{K_1}_{H_1}\otimes tr^{K_2}_{H_2}.$$
\end{defn}

\vspace*{0.5cm}

It is easy to observe that $\uZ_d \boxtimes \uZ_{\frac{n}{d}}\cong \uZ$,   and $\uA_d \boxtimes \uA_{\frac{n}{d}}\cong \uA$. The following Mackey functors also have special importance in our computations. 
\begin{defn} \label{submacks}
For $I \subseteq \underline{k}$ and  a $C_I$-Mackey functor $\uM$, define $C_{n}$-Mackey functors
$$ \CC_{n,I}\uM: = \uM \boxtimes \uZ^*_{I^c}, \,  \KK_{n,I}\uM : = \uM \boxtimes \uZ_{I^c}.$$
If $I$ is a singleton $\{i\}$, we also use the notation $\CC_{n,p_i}$, $\CC_i$, or $\CC_{p_i}$ for $\CC_{n,\{i\}}$,  and $\KK_{n,p_i}$, $\KK_i$, or $\KK_{p_i}$ for $\KK_{n,\{i\}}$. For $d=|I|$, we also use the notation $\CC_{n,d}$ for $\CC_{n,I}$, and $\KK_{n,d}$ for $\KK_{n,I}$.  
\end{defn}

\begin{rmk}\label{Z*cn}
The computation of Example \ref{Z*xi} generalizes to 
$$H\CC_{n,d}\uM \simeq \Sigma^{2-\xi^d}\KK_{n,d}\uM$$
 for $d|n$. 
\end{rmk}

\vspace*{0.5cm}

In terms of the notation above, we also note the formula
$$\downarrow^{G}_{C_I}\CC_{{n},I} \uM \cong \downarrow^{G}_{C_I}\KK_{{n},I}\uM \cong \uM.$$

\begin{prop}\label{maps}
Let $\uM \in \MM_G$ and $\uN \in \MM_{C_I}$ for some $I \subseteq {\uk}$. Assume that $\uN$ is such that all the groups in the Mackey functor are torsion, and it has $p$-torsion only for $p \mid |I|$. We then have\\
a) A map of Mackey functors $\phi: \uM \to \KK_{n,I}(\uN)$ such that $\downarrow^G_{C_I}\phi$ is a split surjection,  is a split surjection.\\
b)  A map of Mackey functors $\psi: \CC_{n,I}(\uN) \to \uM$  such that $\downarrow^G_{C_I}\phi$ is a split injection, is a split injection.
\end{prop}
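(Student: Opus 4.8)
The plan is to reduce both statements to the structure of $\KK_{n,I}$ and $\CC_{n,I}$ as box products with $\uZ_{I^c}$ (resp.\ $\uZ^*_{I^c}$), and then exploit the adjunction between $\downarrow^G_{C_I}$ and $\uparrow^G_{C_I}$ (Proposition \ref{epadj}) together with a careful bookkeeping of torsion primes. For part (a), suppose $\phi\colon \uM \to \KK_{n,I}(\uN)$ restricts to a split surjection over $C_I$; I will produce an explicit splitting over $G$. The natural thing is to adjoint the identity: there is a counit-type map $\uparrow^G_{C_I}\downarrow^G_{C_I}\KK_{n,I}(\uN)\to \KK_{n,I}(\uN)$, and since $\downarrow^G_{C_I}\KK_{n,I}(\uN)\cong \uN$, a chosen $C_I$-splitting $s\colon \uN \to \downarrow^G_{C_I}\uM$ adjoints to a $G$-map $\tilde s\colon \uparrow^G_{C_I}\uN \to \uM$. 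The issue is that $\uparrow^G_{C_I}\uN$ is not $\KK_{n,I}(\uN)$; the box-product factor $\uZ_{I^c}$ only sees the constant-$\Z$ part. So instead I would work levelwise: at each orbit $G/H$ with $H = H_1\times H_2$ ($H_1\le C_I$, $H_2 \le C_{\frac nI}$), $\KK_{n,I}(\uN)(G/H) = \uN(C_I/H_1)\otimes \uZ_{I^c}(C_{\frac nI}/H_2) = \uN(C_I/H_1)$ since $\uZ$ takes the value $\Z$ everywhere and the restriction maps in the $\uZ_{I^c}$-direction are all identities. Thus $\KK_{n,I}(\uN)$ is, as a functor, the "inflation then restrict back'' of $\uN$ along the two projections, with all transfers in the $I^c$-direction being multiplication by the index $[C_{\frac nI} : H_2]$.

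The key calculation is then the following: given the levelwise isomorphisms $\KK_{n,I}(\uN)(G/H)\cong \uN(C_I/H_1)$, a Mackey-functor section of $\phi$ amounts to choosing, compatibly, group-theoretic sections of $\phi(G/H)\colon \uM(G/H)\to \uN(C_I/H_1)$ commuting with all restrictions, transfers, and conjugations. Over $C_I$ (i.e.\ $H_2 = C_{\frac nI}$, giving $G/H = C_I/H_1$) we are handed such a section $s$ by hypothesis. I claim $s$ extends uniquely to all of $G$: for a general $H = H_1\times H_2$, define the section $s_H$ on $\uN(C_I/H_1)$ by composing $s_{H_1}$ (the given $C_I$-section at level $C_I/H_1$) with the restriction $\res\colon \uM(G/C_I) \to \uM(G/H)$ — wait, the indices go the wrong way, so rather: use that $\uparrow$-$\downarrow$ adjunction sends the $C_I$-splitting to a natural transformation out of $\uparrow^G_{C_I}\uN$, and then check that this transformation factors through $\KK_{n,I}(\uN)$ precisely because the transfers of $\uN$ in directions transverse to $C_I$ are forced (by torsion considerations) to be compatible. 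The torsion hypothesis enters here: $\uN$ has only $p$-torsion for $p\mid |I|$, while the transfer/restriction maps in the $I^c$-direction multiply by integers coprime to $|I|$, hence act invertibly on $\uN$-values after the requisite identifications; this rigidity is what pins down the unique $G$-section and forces naturality. I expect this compatibility verification — that the levelwise sections assemble into a Mackey-functor map, using the coprimality to invert the "wrong-direction'' structure maps — to be the main obstacle, and it is essentially a diagram chase over the poset of subgroups $H = H_1 \times H_2$.

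Part (b) is formally dual: $\CC_{n,I}(\uN) = \uN\boxtimes \uZ^*_{I^c}$, and by Remark \ref{Z*cn} (or directly, since $\uZ^*$ is the Mackey-functor dual of $\uZ$) the same levelwise description holds with transfers and restrictions in the $I^c$-direction swapped. A map $\psi\colon \CC_{n,I}(\uN)\to \uM$ with $\downarrow^G_{C_I}\psi$ split injective should be handled by applying part (a) to a suitable dual, or more cleanly by running the same argument with all arrows reversed: one chooses a $C_I$-retraction $r\colon \downarrow^G_{C_I}\uM \to \uN$, adjoints it via the adjunction of Proposition \ref{epadj} (now using $\downarrow^G_{C_I}$ as a right adjoint, so that $C_I$-maps into $\uN$ correspond to $G$-maps into $\uparrow^G_{C_I}\uN$), and checks that the resulting $G$-map $\uM\to \uparrow^G_{C_I}\uN$ corestricts to $\CC_{n,I}(\uN)$ using the same coprime-invertibility of the $I^c$-direction structure maps of $\uN$. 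The upshot is that in both parts the torsion hypothesis reduces the global (over $G$) splitting problem to the already-solved local (over $C_I$) one, with no obstruction because the "extra'' primes dividing $n/|I|$ act invertibly on everything in sight.
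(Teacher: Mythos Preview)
Your core insight is right --- the torsion hypothesis makes the $I^c$-direction transfer and restriction maps act invertibly on $\uN$-values, and this is exactly what propagates the local section globally. But your route through the $\uparrow^G_{C_I}\dashv\downarrow^G_{C_I}$ adjunction is a detour you yourself partly abandon: as you note, $\uparrow^G_{C_I}\uN$ is not $\KK_{n,I}(\uN)$ (at $G/C_J$ it is $\uN(C_I/C_{I\cap J})^{n/|I\cup J|}$, a direct sum rather than a single copy), and identifying the correct map between $\uparrow^G_{C_I}\uN$ and $\KK_{n,I}(\uN)$ through which the adjoint factors is extra work that the problem does not need.

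The paper bypasses all of this with an explicit levelwise formula. For part (a), given the $C_I$-section $s$, the section at $G/C_J$ is simply
\[
\tfrac{|J\cap I|}{|J|}\cdot tr^{C_J}_{C_{J\cap I}} \circ s_{I\cap J},
\]
with the transfer taken in $\uM$. That this is a section follows from the commuting square relating $\phi(G/C_J)$, $\phi(G/C_{J\cap I})$, and the transfers $tr^{C_J}_{C_{J\cap I}}$ on each side: on the $\KK_{n,I}(\uN)$ side that transfer is multiplication by $|J|/|J\cap I|$, which is coprime to $|I|$ and hence a unit on $\uN$-values. Part (b) is the same sentence with $\res^{C_J}_{C_{J\cap I}}$ in place of the transfer (the restriction on the $\CC_{n,I}(\uN)$ side being multiplication by the same coprime integer). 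No adjunction, no auxiliary object, no passage through duality.

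You are right to flag that one must check these levelwise sections assemble into a Mackey-functor map; the paper does not spell this out, but it is a routine double-coset verification using that $s$ already commutes with the $C_I$-structure maps and that $G$ is abelian. So your plan is not wrong, just circuitous: the ``compatibility verification'' you anticipate as the main obstacle becomes transparent once you write the section as a transfer (resp.\ restriction) of the given one, rather than trying to manufacture it from an adjoint.
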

\begin{proof}
We start with the proof of part a). For $J\subset \uk$,  we have the following square 
$$\xymatrix{\uM(G/C_J) \ar[rr]^{\phi(G/C_J)} &&\KK_{n,I}(\uN)(G/C_J) \\ 
\uM(G/C_{J\cap I}) \ar[u]^{tr^{C_J}_{C_{J\cap I}}} \ar[rr]^{\phi(G/C_{J\cap I})} & &\KK_{n,I}(\uN)(G/C_{J\cap I})\ar[u]_{tr^{C_J}_{C_{J\cap I}}} }.$$ 
The transfer map $tr^{C_J}_{C_{J\cap I}}$ for the Mackey functor $\KK_{n,I}(\uN)$ equals multiplication by $\frac{|J|}{|J\cap I|}$, which is relatively prime to $|I|$. The given hypothesis on $\uN$ implies that this is an isomorphism.  Suppose $s_{I\cap J}$ is  a splitting for the map $\phi(G/C_{J\cap I})$, then, $\frac{|J\cap I|}{|J|}\circ tr^{C_J}_{C_{J\cap I}} \circ s_{I \cap J}$ is a splitting for the map $\phi(G/C_J)$. 

For b),  we use the commutative diagram for $J \subset \uk$
$$\xymatrix{ \CC_{n,I}(\uN)(G/C_I)\ar[d]_{\res^{C_J}_{C_{J\cap I }}} \ar[rr]^{\psi(G/C_J)}& &\uM(G/C_J)\ar[d]_{\res^{C_J}_{C_{J\cap I}}} \\ 
\CC_{n,I}(\uN)(G/C_{J\cap I}) \ar[rr]_-{\psi(G/C_{J\cap I})} &&\uM(G/C_{J\cap I})}$$
The hypothesis on $\uN$ implies that the left vertical restriction is an isomorphism. Suppose that $\gamma(G/C_{J\cap I})$ is the splitting for the map $\psi(G/C_{J\cap I})$, then, $\frac{|J\cap I|}{|J|} \circ \gamma(G/C_{J\cap I}) \circ \res^{C_J}_{C_{J\cap I}}$ is a splitting for the map $\psi(G/C_J)$.
\end{proof}

\vspace*{0.5cm}

For a prime $p,$ note that there is a Mackey functor inclusion $\uZ^* \to \uZ$ which is an isomorphism at $C_p/e$ and is multiplication by $p$ at $C_p/C_p.$ As a consequence, the cokernel of the map is $\langle \Z/p \rangle.$ This fact generalizes as below for inclusions $\uZ^*$ to $\uZ$ of $G$-Mackey functors.
\begin{prop}\label{exact_pn}
There is a unique map $\uZ^* \stackrel{\iota}{\to} \uZ$ such that $\iota(G/e)$ is the identity. Further, we have a short exact sequence $$0 \to \uZ^* \stackrel{\iota}{\to} \uZ \to \bigoplus\limits_{i=1}^k \KK_{n, p_i} \bZpi \to 0.$$
\end{prop}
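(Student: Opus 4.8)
The plan is to build the map $\iota$ explicitly, identify its cokernel componentwise, and then deduce exactness by comparing Mackey functor values at each orbit $G/C_J$. First I would establish uniqueness and existence: since $\uZ$ is the constant Mackey functor and $\uZ^\ast$ has $\uZ^\ast(G/e) = \Z$ with all restriction maps injective (indeed $\res^{C_J}_{C_{J'}}$ on $\uZ^\ast$ is multiplication by the index $[C_J:C_{J'}]$, while on $\uZ$ it is the identity), any Mackey functor map is determined by its value on $G/G$ by Frobenius-type reciprocity, or more concretely: a map $\uZ^\ast\to\uZ$ restricted to $G/e$ being the identity forces, by compatibility with transfers on $\uZ^\ast$ (which are identities) and transfers on $\uZ$ (which are indices), the value at $G/C_J$ to be multiplication by $[C_J:e]=|J|$. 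One checks this assignment $\iota(G/C_J) = |J|\cdot(-)$ is compatible with all restrictions and transfers, giving existence; uniqueness is immediate from the $G/e$ normalization and injectivity of the restriction $\uZ^\ast(G/C_J)\hookrightarrow \uZ^\ast(G/e)$... which actually I should phrase via: $\iota$ is injective so determined by $\iota\circ\res = \res\circ\iota$.

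Next I would compute the cokernel. At $G/e$, $\iota$ is an isomorphism, so the cokernel vanishes there. At $G/C_J$ for $J\neq\emptyset$, $\iota(G/C_J)$ is multiplication by $|J| = \prod_{i\in J}p_i$ on $\Z$, so the cokernel is $\Z/|J|\cong \bigoplus_{i\in J}\Z/p_i$ by CRT. I claim this matches $\bigl(\bigoplus_{i=1}^k \KK_{n,p_i}\bZpi\bigr)(G/C_J)$: using $\KK_{n,p_i}\bZpi = \bZpi\boxtimes\uZ_{\{i\}^c}$ and the formula $\bZp(C_{p}/C_{p})=\Z/p$, $\bZp(C_p/e)=0$ together with $\uZ(-) = \Z$, the box product evaluates at $G/C_J = C_{p_i}/(C_{p_i}\cap C_J)\times C_{n/p_i}/(C_{n/p_i}\cap C_J)$ to $\bZpi(C_{p_i}/(C_{p_i}\cap C_J))\otimes\Z$, which is $\Z/p_i$ if $p_i\mid |J|$ (so $C_{p_i}\cap C_J = C_{p_i}$) and $0$ otherwise. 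Summing over $i$ gives exactly $\bigoplus_{i\in J}\Z/p_i$, matching the cokernel.

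Then I would assemble the surjection $\uZ\to\bigoplus_i\KK_{n,p_i}\bZpi$ as the direct sum of the quotient maps $\uZ\to \KK_{n,p_i}\bZpi$ (which exist since $\KK_{n,p_i}\bZpi$ is $\uZ_{\{i\}^c}\boxtimes\bZpi$ and there is an evident surjection $\uZ = \uZ_{p_i}\boxtimes\uZ_{\{i\}^c}\to \bZpi\boxtimes\uZ_{\{i\}^c}$ from the $C_{p_i}$-level surjection $\uZ_{p_i}\to\bZpi$), and check that the composite $\uZ^\ast\xrightarrow{\iota}\uZ\to\bigoplus_i\KK_{n,p_i}\bZpi$ is zero — this holds because at $G/C_J$ the first map has image $|J|\Z$ which is killed by reduction mod each $p_i$ dividing $|J|$, and for $p_i\nmid|J|$ the target component is already $0$. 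Finally, exactness in the middle: at each $G/C_J$ we have the sequence $\Z\xrightarrow{|J|}\Z\to\bigoplus_{i\in J}\Z/p_i\to 0$, which is exact, and $\iota(G/C_J)$ is injective, so the whole sequence of Mackey functors is short exact.

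The main obstacle is bookkeeping rather than conceptual: correctly identifying the box-product Mackey functor $\KK_{n,p_i}\bZpi$ at a general orbit $G/C_J$ via the decomposition $G\cong C_{p_i}\times C_{n/p_i}$, and verifying that the candidate quotient map is genuinely a map of Mackey functors (compatible with all transfers — the transfers on $\bZpi$ are zero, which is what makes the surjectivity statement in Proposition \ref{maps}(a) relevant, though here direct verification suffices). Once the orbitwise picture is pinned down, exactness follows from the elementary exactness of $\Z\xrightarrow{|J|}\Z\to\Z/|J|\to 0$ and the Chinese Remainder Theorem.
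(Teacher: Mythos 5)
Your proposal is correct, but it takes a genuinely different route from the paper's. The paper argues by induction on the number $k$ of prime factors: it starts from the base case $0 \to \uZ^*_{p_i} \to \uZ_{p_i} \to \bZpi \to 0$, applies the exact functors $\uZ^*_{p_k}\boxtimes(-)$ and $\CC_{n,\{k\}}$, assembles a $3\times 3$ diagram of chain complexes to identify $\coker(\iota)$ as an extension of $\CC_{n,\{k\}}\langle\Z/p_k\rangle$ by $\bigoplus_{i<k}\KK_{n,\{i\}}\bZpi$, and then invokes Proposition~\ref{maps} to split this extension. You instead work entirely orbitwise: you pin down $\iota(G/C_J)$ as multiplication by $|J|$, identify $\coker(\iota)(G/C_J)\cong\Z/|J|\cong\bigoplus_{i\in J}\Z/p_i$ by CRT, check that this matches $\bigl(\bigoplus_i\KK_{n,p_i}\bZpi\bigr)(G/C_J)$ after unwinding the box-product definition, and conclude exactness level by level. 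This avoids both the induction and the splitting lemma entirely, at the cost of the box-product bookkeeping you flag (in particular one should explicitly verify that the CRT isomorphism $\coker(\iota)(G/C_J)\cong\bigoplus_{i\in J}\Z/p_i$ is compatible with the restriction and transfer maps, which it is since under CRT the restriction $\Z/|J|\to\Z/|J'|$ becomes projection onto the factors $i\in J'$, and the transfer $\Z/|J'|\to\Z/|J|$ becomes a unit times the inclusion). The paper's inductive strategy is arguably better suited to the surrounding context, since the same $\boxtimes$-inductive pattern recurs throughout Sections 5 and 6; your direct argument is shorter and more elementary for this single statement. One minor inaccuracy in your write-up: you say ``the transfers on $\bZpi$ are zero,'' which is true at the $C_{p_i}$-level, but the transfers on the induced $G$-Mackey functor $\KK_{n,p_i}\bZpi$ between orbits $G/C_{J'}\to G/C_J$ with $p_i\mid |J'|$ are multiplication by $|J|/|J'|$, a unit mod $p_i$; this does not affect your argument since you ultimately rely on direct verification, but the aside about Proposition~\ref{maps}(a) is somewhat off.
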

\begin{proof}
The uniqueness of $\iota$ follows from the fact that $\iota(G/e)$ is the identity, and all the transfer maps of $\uZ^*$ are isomorphisms. As the restriction maps of $\uZ^*$ are injective, $\iota$ is injective at each orbit. Therefore, it is enough to compute the cokernel of $\iota$. We proceed by induction on $k$. For each $i$, there is a short exact sequence 
\begin{myeq}\label{exact_p}
0 \to \uZ_{p_i}^* \to \uZ_{p_i} \to  \bZpi \to 0
\end{myeq} 
This proves the case $k=1.$ By the induction hypothesis, we have the short exact sequence 
$$ 0 \to \uZ^*_{\uk\setminus \{k\}} \to \uZ_{\uk\setminus \{k\}} \to \bigoplus\limits_{i=1}^{k-1}\KK_{\frac{n}{p_i}, \{i\}}\bZpi \to 0.$$
 We apply the exact functor $\uZ^*_{p_k} \boxtimes -$ to obtain 
$$0 \to \uZ^* \to \uZ_{\frac{n}{p_k}}\boxtimes \uZ^*_{p_k} \to \bigoplus\limits_{i=1}^{k-1}\KK_{\frac{n}{p_k}, \{i\}}\bZpi \boxtimes \uZ^*_{p_k}\to 0.$$
Observe that for two distinct primes $p_i$ and $p_j$, $\uZ^*_{p_i} \boxtimes \langle \Z/p_j \rangle \cong \uZ_{p_i} \boxtimes \langle \Z/p_j \rangle$. 
As a consequence, we can identify the Mackey functors $\KK_{\frac{n}{p_i}, \{i\}}\bZpi \boxtimes \uZ^*_{p_k} \cong \KK_{n, \{i\}}\bZpi$.

We also apply the exact functor $\CC_{n, \{k\}}$ to \eqref{exact_p}, and get 
$$0 \to \uZ^\ast \to \uZ^\ast_{\frac{n}{p_k}}\boxtimes \uZ_{p_k} \to \CC_{n, \{k\}} \langle \Z/p_k \rangle \to 0 .$$ 
Using the above short exact sequences we consider the following short exact sequence of chain complexes 
$$\xymatrix{&0  \ar[d] & 0  \ar[d] & 0 \ar[d]\\
0 \ar[r]  &\uZ^* \ar[r] \ar[d]_{id}   &\uZ_{\frac{n}{p_k}}\boxtimes \uZ^*_{p_k} \ar[r]\ar[d] & \bigoplus\limits_{i=1}^{k-1} \KK_{n,\{i\}}\bZpi \ar[d] \ar[r] &0 \\
0 \ar[r] & \uZ^* \ar[r]^{\iota} \ar[d] & \uZ \ar[r] \ar[d] & \coker (\iota) \ar[d] \ar[r] &0 \\
& 0  \ar[r] & \CC_{n,\{k\}}\langle \Z/p_k \rangle \ar[r]^{\cong} \ar[d] & \CC_{n,\{k\}}\langle \Z/p_k \rangle \ar[r]\ar[d] &0 
\\ & & 0 &0}$$
As the homologies of the first and middle vertical complexes are zero, therefore the sequence 
$$0 \to \bigoplus\limits_{i=1}^{k-1} \KK_{n, \{i\}}\bZpi \to \coker(\iota) \to \CC_{n, \{k\}}\langle \Z/p_k \rangle \to 0$$  
is exact. Observe as above that $\KK_{n, p_k} \langle \Z/p_k \rangle \cong \CC_{n, p_k} \langle \Z/p_k \rangle$, as, $\gcd(p_k , \frac{n}{p_k}) =1$. Proposition  \ref{maps} implies that the sequence splits. 
\end{proof}

We make one more computation of Mackey functor maps in the proposition below. 
\begin{prop}\label{mapzI}
Let $\phi : \bZ_\II \boxtimes \uZ_{J_1}\boxtimes \uZ^\ast_{J_2} \to \bZ_{\II'} \boxtimes \KK_{p_k}\bZpk$ be a non-zero map where $k\in \II$. Then $\II'= \II\setminus \{k\}$, and up to an isomorphism of the Mackey functors, $\phi$ is the unique map obtained as $\boxtimes$ of the identity on $\bZ_{\II'}\boxtimes \uZ_{J_1}$, the map $\bZ_{p_k} \to \bZpk$, and the usual inclusion $\uZ^\ast_{J_2} \to \uZ_{J_2}$.  The kernel of $\phi$ is $p_k \cdot \bZ_\II \boxtimes \uZ_{J_1}\boxtimes \uZ^\ast_{J_2} \subset \bZ_\II \boxtimes \uZ_{J_1}\boxtimes \uZ^\ast_{J_2}$.   
\end{prop}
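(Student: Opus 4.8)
The plan is to reduce the statement to the single-prime building block $\bZ_{p_k} \to \bZpk$ and the basic inclusion $\uZ^\ast \to \uZ$, and then transport this across the $\boxtimes$-decomposition. First I would identify the source and target as Mackey functors built out of tensor factors indexed by the primes dividing $n$: writing $G \cong C_{p_k} \times C_{n/p_k}$ (and iterating), both $\bZ_\II \boxtimes \uZ_{J_1}\boxtimes \uZ^\ast_{J_2}$ and $\bZ_{\II'}\boxtimes \KK_{p_k}\bZpk$ decompose as an external $\boxtimes$ over the individual primes. The key structural input is that $\KK_{p_k}\bZpk$, restricted along $\downarrow^G_{C_{p_k}}$, is $\bZpk$, and that the only primes appearing as torsion in the target are $p_k$ (coming from the $\bZpk$ factor) together with the primes in $\II'$; meanwhile $\bZpk$ is supported only at the orbit $G/H$ with $H$ containing $C_{p_k}$ in the relevant factor. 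I would use the Yoneda-type and hom computations of the style of Example \ref{multp} and Proposition \ref{maps}: a Mackey functor map out of $\bZ_\II$ (which has its nonzero group only at the top orbit of the $C_\II$-factor) is determined by where that single group goes, and the target group there must absorb it.

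Next I would pin down $\II'$. Since $\bZ_\II$ has a single nonzero value, at $C_\II/C_\II$, and all its lower values are $0$, a nonzero map forces the target to have a nonzero value at the orbit $G/C_\II$; but $\bZ_{\II'}\boxtimes \KK_{p_k}\bZpk$ evaluated at $G/C_\II$ is $\bZ_{\II'}(C_{\II'}/C_{\II' \cap \II}) \otimes (\KK_{p_k}\bZpk)(\cdots)$, and for this to be nonzero we need $\II' \subseteq \II$ (so the $\bZ_{\II'}$-factor is at its top) and $p_k \in \II$ (so the $\bZpk$-factor is at $C_{p_k}/C_{p_k}$, where it is $\Z/p_k$, rather than at $C_{p_k}/e$, where it is $0$). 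Comparing dimensions/indices of the factors and using that $\KK_{p_k}\bZpk$ has underlying value $\bZpk$ which vanishes at $e$, one gets $\II' = \II \setminus \{k\}$: the $\bZpk$ factor is what converts the ``$k$-th slot'' of the source into torsion, so the $k$-th prime must be stripped out of the $\bZ$-part and moved into the $\bZpk$-part. Once $\II'$ is fixed, Hom between the two Mackey functors, computed slot by slot via the $\boxtimes$ formula, is a cyclic group: on the $\bZ_{\II'}\boxtimes\uZ_{J_1}$ part the only endomorphisms compatible with all transfers/restrictions are multiplication by integers, on the $\bZ_{p_k}\to\bZpk$ part the hom group is $\Z/p_k$ generated by the evident quotient map, and on $\uZ^\ast_{J_2}\to\uZ_{J_2}$ the hom group is $\Z$ generated by the standard inclusion $\iota$ from Proposition \ref{exact_pn}. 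So up to an automorphism of source and target (rescaling the free factors by units) any nonzero $\phi$ is the $\boxtimes$ of the identity on $\bZ_{\II'}\boxtimes\uZ_{J_1}$, the canonical surjection $\bZ_{p_k}\twoheadrightarrow\bZpk$, and $\iota\colon\uZ^\ast_{J_2}\hookrightarrow\uZ_{J_2}$, as claimed.

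For the kernel: $\boxtimes$ is exact in each variable (this is used repeatedly in the proof of Proposition \ref{exact_pn}), so $\ker\phi$ is the $\boxtimes$ of the kernels of the three factor maps. The identity on $\bZ_{\II'}\boxtimes\uZ_{J_1}$ contributes kernel $0$; the inclusion $\iota$ contributes kernel $0$; and $\bZ_{p_k}\to\bZpk$ has kernel exactly $p_k\cdot\bZ_{p_k}$, since this map is the identity at the orbit $e$ and reduction mod $p_k$ on $\Z$ at the top orbit (cf.\ the description of $\KK_{p_k}\bZpk$ and the short exact sequence \eqref{exact_p}). Tensoring, $\ker\phi = p_k\cdot\bZ_\II \boxtimes \uZ_{J_1}\boxtimes\uZ^\ast_{J_2}$ as a submackey functor, which is the asserted submodule.

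The main obstacle I anticipate is the bookkeeping in the ``up to isomorphism'' clause: showing that \emph{every} nonzero map, not just the obvious one, is equivalent to the standard one under automorphisms of the source and target. This requires knowing the automorphism groups of $\bZ_{\II'}\boxtimes\uZ_{J_1}$ (units acting diagonally), of $\bZpk$ ($(\Z/p_k)^\times$), and that a unit scaling on the source can be matched by a unit scaling on the target — i.e.\ that the Hom group, while possibly having several generators as an abelian group, has a transitive action of $\mathrm{Aut}(\text{source})\times\mathrm{Aut}(\text{target})$ on its nonzero elements that lie in the ``correct'' component. Equivalently, one must check that a nonzero $\phi$ cannot be, say, $p_k$ times the standard map composed with something — but that composite is zero, so nonzero already forces the ``unit'' component. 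This is where the torsion hypotheses on the factors (only $p_k$-torsion in the $\bZpk$ slot, torsion-free elsewhere) do the real work, and I would isolate it as the one genuinely delicate point; the rest is the exactness of $\boxtimes$ and the slot-by-slot Hom computation already modeled in Examples \ref{multp} and Propositions \ref{maps}, \ref{exact_pn}.
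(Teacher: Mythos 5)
Your slot-by-slot strategy and the kernel argument are broadly sound, but the determination of $\II'$ has a genuine gap. The easier direction, $\II' \subseteq \II\setminus\{k\}$, does follow from the support considerations you sketch (although the claim that a nonzero $\phi$ must be nonzero at $G/C_\II$ itself requires the transfer argument the paper makes explicit: the transfers in the source from $G/C_\II$ upward have image of index coprime to $p_k$, so $\phi(G/C_\II)=0$ forces $\phi=0$ everywhere). The harder and omitted direction is $\II\setminus\{k\}\subseteq\II'$. You dispatch it with ``comparing dimensions/indices of the factors,'' but that is not a proof. The concrete obstruction is that if some $i\in\II\setminus\{k\}$ is missing from $\II'$, then the $i$-th tensor slot of any $\phi$ is a map $\bZ_{p_i}\to\uZ_{p_i}$, and $\Hom_{\MM_{C_{p_i}}}(\bZ_{p_i},\uZ_{p_i})=0$: restriction to the trivial subgroup is the zero map on $\bZ_{p_i}$ but the identity on $\uZ_{p_i}$, which kills the top value. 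This vanishing is exactly what the paper's first commutative square (with restriction maps) encodes, and it is the crux of the argument; your proposal never states or proves it.

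A secondary issue is your tacit use of a slot-by-slot Hom decomposition, i.e.\ that $\Hom$ of a $\boxtimes$ is the tensor of the slot-wise $\Hom$'s. This is not an automatic fact for Mackey functors and would require its own justification. The paper avoids it entirely by working directly on $G$-orbits: once $\II'=\II\setminus\{k\}$ is established, both source and target are nonzero exactly at $G/C_d$ with $|\II|\mid d$, the target $\bZ_{\II'}\boxtimes\KK_{p_k}\bZpk$ has all restrictions and transfers among those orbits isomorphisms, so $\phi$ is determined by its value at any single orbit, namely a map $\Z\to\Z/p_k$, hence standard up to a unit of $\Z/p_k$. Filling these two holes would make your approach rigorous, but as written they are the load-bearing parts of the proof.
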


\begin{proof}
The description of the kernel is immediate from the description of $\phi$, so it suffices to prove that statement. Note that  $\bZ_{\II'} \boxtimes \KK_{p_k}\bZpk(G/C_d)= 0$ if $p_k \nmid d$. Assume on the contrary that $\II' \neq \II\setminus \{k\}$. If $i\neq k$ belongs to $\II$ but not $\II'$, then note that $ \bZ_\II \boxtimes \uZ_{J_1}\boxtimes \uZ^\ast_{J_2}(G/C_d)=0$ if $p_i \nmid d$. For other divisors we use the square (for $p_i \nmid d$, and $p_k \mid d$) 
$$\xymatrix@C=1.5cm{\bZ_I\boxtimes \uZ_{J_1}\boxtimes \uZ^\ast_{J_2}(G/C_d) \ar[d]^{\res^{C_{dp_i}}_{C_d}} \ar[r]^-{\phi(G/C_{dp_i})}  & \bZ_{\II'} \boxtimes \KK_{p_k}\bZpk(G/C_{dp_i})  \cong \Z/p_k \ar[d]^{\res^{C_{dp_i}}_{C_d}}_{\cong}\\
\bZ_I\boxtimes \uZ_{J_1}\boxtimes \uZ^\ast_{J_2}(G/C_d)=0 \ar[r]^-{\phi(G/C_d)}   & \bZ_{\II'} \boxtimes \KK_{p_k}\bZpk(G/C_d)\cong \Z/{p_k},}$$
to deduce that $\phi=0$ at all the $G$-orbits. If $i$ belongs to $\II'$  but not $\II$, we have $\bZ_{\II'} \boxtimes \KK_{p_k}\bZpk(G/C_d)=0$ if $p_i\nmid d$. For the other divisors we use the square (for $p_i\nmid d$) 
 $$\xymatrix@C=1.5cm{\bZ_I\boxtimes \uZ_{J_1}\boxtimes \uZ^\ast_{J_2}(G/C_d) \cong \Z \ar[r]^-{\phi(G/C_{dp_i})}  & \bZ_{\II'} \boxtimes \KK_{p_k}\bZpk(G/C_{dp_i})  \cong \Z/p_k \\
\bZ_I\boxtimes \uZ_{J_1}\boxtimes \uZ^\ast_{J_2}(G/C_d)\cong \Z \ar[u]^{tr^{C_{dp_i}}_{C_d}} \ar[r]^-{\phi(G/C_d)}   & \bZ_{\II'} \boxtimes \KK_{p_k}\bZpk(G/C_d)\cong 0 \ar[u]^{tr^{C_{dp_i}}_{C_d}},}$$ 
to deduce $\phi(G/C_{dp_i})=0$ since the transfer map is either the identity or multiplication by $p_i$ which is relatively prime to $p_k$. Hence, $\II'=\II \setminus \{k\}$. This implies either both $\bZ_I\boxtimes \uZ_{J_1}\boxtimes \uZ^\ast_{J_2}(G/C_d)$ and $\bZ_{\II'} \boxtimes \KK_{p_k}\bZpk(G/C_d)$ are $0$, or $\bZ_I\boxtimes \uZ_{J_1}\boxtimes \uZ^\ast_{J_2}(G/C_d)\cong \Z$ and   $\bZ_{\II'} \boxtimes \KK_{p_k}\bZpk(G/C_d)\cong \Z/p_k$. Among the latter values of $G/C_d$, the restriction and transfer maps for $\bZ_{\II'} \boxtimes \KK_{p_k}\bZpk$ are isomorphisms. It follows that $\phi$ is determined by its value on any of the orbits. The result now follows from this. 
\end{proof}

\section{Torsion in cohomology groups and applications for $G$-spheres}\label{cohrep}
The advantage of using $RO(G)$-graded cohomology against integer-graded cohomology of $G$-spheres, is that we have the isomorphism $\uH^\bs_G(S^V;\uM) \cong \uH^{\bs - V}_G(S^0;\uM)$. We may try to understand $\uH^\bs_G(S^V;\uM)$ in a different way through cell structures for specific $V$, and then, this gives us a relation in the $RO(G)$-graded cohomology leading to effective calculations. In our case we use $V=\xi^d$, and note the cofibre sequence 
\begin{myeq}\label{cofd2}
S(\xi^d)_+ \to S^0 \to S^{\xi^d}.
\end{myeq} 
The cohomology of $S(\xi^d)_+$ may be computed using the cofibre sequence \eqref{cofd}. This provides a relation in $\uHbs(S^0;\uM)$ which we use to prove results inductively.
 
\begin{mysubsection}{Cohomology of transitive $G$-orbits} 
The spectrum $S(\xi^d)_+$ is obtained as a cofibre of ${G/C_d}_+ \to {G/C_d}_+$, so we may compute $\uHbs(S(\xi^d)_+; 
\uM )$ from the values 
$$\tHbs({G/C_d}_+;\uM) \cong \tH^\bs_{C_d}(S^0; \downarrow^G_{C_d}\uM).$$ 
In fact, we have 
\begin{prop} \label{orbit}
Let $\uM$ be a $G$-Mackey functor and $d$ be a divisor of $n$. For all $\alpha \in RO(G)$,   
$$ \uH^{\alpha}_G({G/C_d}_+; \uM) \cong \uparrow^G_{C_d}\uH^{\alpha}_{C_d}(S^0; \downarrow^G_{C_d}\uM).$$
\end{prop}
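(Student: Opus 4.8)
The plan is to identify the cohomology of ${G/C_d}_+$ with the cohomology of $S^0$ computed over the subgroup $C_d$, and then to package the whole Mackey-functor structure by showing that the $G$-Mackey functor $\uH^\alpha_G({G/C_d}_+;\uM)$ satisfies the defining property of an induced Mackey functor $\uparrow^G_{C_d}(-)$. The starting point is the change-of-groups isomorphism recalled in Section~\ref{EqEM}: for any subgroup $H \leq G$ and based $H$-space $X$, one has $\tilde H^\alpha_G(G_+\wedge_H X;\uM)\cong \tilde H^\alpha_H(X;\downarrow^G_H\uM)$. Applying this with $X=S^0$ and $H=C_d$ gives, on the orbit $G/G$, the isomorphism $\tilde H^\alpha_G({G/C_d}_+;\uM)\cong \tilde H^\alpha_{C_d}(S^0;\downarrow^G_{C_d}\uM)$, which is the underlying group of the claimed right-hand side evaluated at $G/G$.

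The main point is to do this compatibly at \emph{all} orbits $G/C_J$ simultaneously, together with the restriction, transfer, and conjugation maps. By definition, $\uH^\alpha_G({G/C_d}_+;\uM)(G/C_J) = \tilde H^\alpha_G({G/C_J}_+\wedge {G/C_d}_+;\uM)$. The key computation is the decomposition of the $G$-set ${G/C_J}\times {G/C_d}$ into orbits: since $G=C_n$ is abelian with $n$ square-free, $G/C_J \times G/C_d \cong \coprod G/C_{J\cap \langle d\rangle}$ with the number of summands being the index $[\langle d\rangle : C_{J\cap\langle d\rangle}]$ (here $\langle d\rangle$ denotes $C_d$ and the count is $|d|/|J\cap d|$ in the notation of the paper). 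On the other hand, $\uparrow^G_{C_d}\uN(G/C_J) = \uN(i^*(G/C_J)) = \uN(C_d/C_{J\cap d}^{\,})^{\oplus(\#\text{orbits})}$ by definition of $\uparrow^G_{C_d}$ and the fact that $i^*(G/C_J)$ as a $C_d$-set is a disjoint union of copies of $C_d/(C_d\cap C_J)$. Thus both sides, evaluated at $G/C_J$, are direct sums of copies of $\tilde H^\alpha_{C_d}(S^0;\downarrow^G_{C_d}\uM)(C_d/C_{J\cap d})$ indexed by the same orbit set, and one checks the indexing sets are canonically identified.

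What remains is to verify that the structure maps match. I would do this abstractly rather than by chasing formulas: the assignment $X \mapsto \tilde H^\alpha_G(X\wedge {G/C_d}_+;\uM)$ as a functor of finite $G$-sets $X$ is precisely $\tilde H^\alpha_G((G\times_{C_d} i^*X)_+;\uM)$ using the projection formula ${G/C_d}_+\wedge X_+ \cong (G\times_{C_d} i^*X)_+$, which is an isomorphism of $G$-sets natural in $X$; composing with the change-of-groups isomorphism $\tilde H^\alpha_G((G\times_{C_d}Y)_+;\uM)\cong \tilde H^\alpha_{C_d}(Y_+;\downarrow^G_{C_d}\uM)$ (natural in the $C_d$-set $Y$) exhibits the functor $G/C_J \mapsto \uH^\alpha_G({G/C_d}_+;\uM)(G/C_J)$ as the composite of $i^*$ with the Mackey functor $\uH^\alpha_{C_d}(S^0;\downarrow^G_{C_d}\uM)$, which is exactly the definition of $\uparrow^G_{C_d}\uH^\alpha_{C_d}(S^0;\downarrow^G_{C_d}\uM)$. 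Because the change-of-groups and projection-formula isomorphisms are compatible with the Burnside-category maps $f_!$ and $f$ generating all morphisms, they automatically intertwine restrictions, transfers, and conjugations.

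The main obstacle I expect is bookkeeping rather than anything deep: one must be careful that the naturality of the change-of-groups isomorphism holds at the level of the Burnside category (i.e.\ for transfers as well as restrictions), which requires knowing that the Wirthm\"uller-type adjunction $G_+\wedge_H(-)\dashv i^*$ is compatible with the self-duality of $\BB_G$ — equivalently, that $\uparrow^G_H$ as defined via $\uM\mapsto \uM\circ i^*$ genuinely computes the left adjoint used implicitly, which is the content of Proposition~\ref{epadj} (the adjunction $\uparrow^G_H \dashv \downarrow^G_H$). Invoking that adjunction, together with the Yoneda-style identification of $i^*$ on representable Mackey functors, should let me avoid any explicit double-coset computations.
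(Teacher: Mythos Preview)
Your abstract argument via the projection formula and change-of-groups (third paragraph) is exactly the paper's proof, just spelled out more fully: the paper applies the adjunction $\{G_+\wedge_{C_d} Y,\Sigma^\alpha H\uM\}^G \cong \{Y, \Sigma^\alpha i^*H\uM\}^{C_d}$ with $Y=i^*(G/C_m)_+$ to obtain $\uH^\alpha_G({G/C_d}_+;\uM)(G/C_m) \cong \uH^\alpha_{C_d}(S^0;\downarrow^G_{C_d}\uM)(i^*(G/C_m))$, which is by definition the value of the induced Mackey functor at $G/C_m$, and then invokes naturality in $G/C_m$. Your concern about compatibility with transfers is legitimate but is absorbed by precisely this Burnside-category naturality of the adjunction, which the paper takes for granted in the sentence ``the result follows.''

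There is, however, a bookkeeping error in your explicit orbit decomposition in the second paragraph: the number of $G$-orbits in $G/C_J \times G/C_d$ is $n/\operatorname{lcm}(|C_J|,d)$ (in the paper's subset notation this is $n/|I\cup J|$, see \eqref{r-equi}), not $[C_d : C_d\cap C_J]$. For instance, with $G=C_{pq}$ and $C_J=C_d=C_p$, the product $G/C_p\times G/C_p$ has $q$ orbits, not $1$. This slip is harmless for your actual argument, which never uses the explicit count, but you should correct it if you retain that illustration.
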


\begin{proof}
Write $i$ for the inclusion $C_d \to G$ and  note that $i^\ast(H \uM) \simeq H(\downarrow^G_{C_d}\uM)$. Evaluating the Mackey functor $\uHal({G/C_d}_+; \uM)$ at $G/C_m$, we obtain 
\begin{align*}
\uHal({G/C_d}_+; \uM)(G/C_m) &=\{{G/C_m}_+ \wedge {G/C_d}_+, S^{\alpha} \wedge H\uM \}^G\\
                                               &\cong  \{i^\ast(G/C_m)_+), S^{\alpha}\wedge i^\ast(H\uM) \}^{C_d}  \\ 
                                               &\cong \uH^{\alpha}_{C_d}(S^0; \downarrow^G_{C_d}\uM)(i^*(G/C_m)).
\end{align*}
The result follows. 
\end{proof}

We write down explicitly the restriction and transfer maps of the Mackey functor $\uH^{\alpha}_G({G/C_d}_+ ; \uM)$ above.  These turn out to be appropriate diagonal and fold maps on the corresponding maps of the Mackey functor $\uH^{\alpha}_{C_d}(S^0)$. Let $d$ correspond the $I\subset \uk$. For $m \mid n$ which corresponds to $J\subset \uk$,  as a $C_I$-set $G/ C_J$ splits as a disjoint union of $n/|I\cup J|$-copies of $C_I/C_{I\cap J}$. Hence we write 
\begin{myeq}\label{r-equi}
G / C_J \cong C_I/ C_{I\cap J} \times G/ C_{I\cup J}
\end{myeq}
 where the action of $C_I$ on the second factor is understood to be trivial.

Now consider $J'\subset J$, and consider  $\pi^{C_J}_{C_{J^{\prime}}} : G/C_{J'} \to G/C_J$. Then, $i^\ast(\pi^{C_J}_{C_{J'}})$ using the notation of \eqref{r-equi} may be expressed as
$$ \coprod\limits_{\frac{n}{I\cup J}} \coprod\limits_{\frac{I\cup J}{I\cup J'}} C_I/C_{I\cap J' } \to \coprod\limits_{\frac{n}{I\cup J}} C_I/C_{I\cap J},$$ 
given by the canonical projection $\pi^{C_{I\cap J}}_{C_{I\cap J'}}$ on each factor. Observe that the restriction map $\res^{C_J}_{C_{J'}}$ is given by the following composite
$$\xymatrix{\bigoplus_{\frac{n}{I\cup J}} \tH^{\alpha}_{C_I}({C_I/C_{I\cap J}}_+) \ar[drrr]_{\res^{C_J}_{C_{J'}}} \ar[rrr]^{\bigoplus_{\frac{n}{I\cap J}} \res^{C_{I\cup J}}_{C_{I\cap J'}}} & && \bigoplus_{\frac{n}{I\cup J}} \tH^{\alpha}_{C_I}({C_I/C_{I\cap J'}}_+) \ar[d]^{\text{diagonal}} \\ 
& && \bigoplus_{\frac{n}{I\cup J'}} \tH^{\alpha}_{C_I}({C_I/C_{I\cap J'}}_+)}.$$
Analogously, the transfer map $tr^{C_J}_{C_{J^{\prime}}}$ for the Mackey functor $\uHal({G/C_I}_+)$ is obtained by the following commutative diagram (the fold map $A^k \to A$ is given by the sum)
$$\xymatrix{\bigoplus_{\frac{n}{I\cup J'}} \tH^{\alpha}_{C_I}({C_I/C_{I\cap J'}}_+)  \ar[drrr]_{tr^{C_J}_{C_{J'}}} \ar[rrr]^{\bigoplus_{\frac{n}{I\cup J'}} tr^{C_{I\cap J}}_{C_{I\cap J'}}} & &&\bigoplus_{\frac{n}{I\cup J'}} \tH^{\alpha}_{C_I}({C_I/C_{I\cap J}}_+) \ar[d]^{\text{fold}} \\
 & && \bigoplus_{\frac{n}{I\cup J}} \tH^{\alpha}_{C_I}({C_I/C_{I\cap J}}_+)}.$$ 

 \end{mysubsection}

 \begin{mysubsection}{Cohomology of representation spheres}
The real irreducible $G$-representations are given by the restrictions of $\xi^d$ for $d\leq n$, where $\xi^d$ and $\xi^{n-d}$ have the same underlying real representation. We compute the cohomology of $S(\xi^d)$ for $d | n$\footnote{The computations for other $d$ are similar. The cofibre sequence for $S(\xi^{dk})$ is obtained by replacing $\rho$ by $\rho^k$ if $k$ is relatively prime to $n$.}, via the following cofibre sequence of $G$-spectra \eqref{cofd} 
$${G/C_d}_+ \stackrel{\rho-1}{\to} {G/C_d}_+ \to S(\xi^d)_+.$$
As a consequence, one obtains the cohomology long exact sequence (with coefficients in any Mackey functor $\uM$)
$$\cdots \to  \uH^{\alpha -1}_G({G/C_d}_+) \to \uHal(S(\xi^d)_+) \to \uHal({G/C_d}_+) \stackrel{1-\uHal(\rho)}{\to} \uHal({G/C_d}_+) \to \cdots$$
which induces the short exact sequences
\begin{myeq}\label{ses}
0 \to \coker (1-\uH_G^{\alpha -1}(\rho)) \to \uHal(S(\xi^r)_+) \to \ker(1-\uHal(\rho)) \to 0,
 \end{myeq} 
for every $\alpha \in RO(G)$.

We now write down the structure of the Mackey functors $\coker (1-\uHal(\rho))$ and $\ker (1-\uHal(\rho))$. The following diagram of stable homotopy classes allows us to compute $\uHal(\rho)$ at the orbit $G/C_m$. In the following we assume that $d$ corresponds to $I\subset \uk$ and $m$ to $J\subset \uk$. We use the notation $i$ for the inclusion $C_d \to G$, and the bijection of $C_I$-sets $i^\ast(G/C_J) \cong \sqcup_{\frac{n}{I\cup J}} C_I/C_{I\cap J}$. 
    $$\xymatrix{\{{G/C_I}_+ \wedge{G/C_J}_+, S^{\alpha} \wedge H\uM\}^G \ar[rr]^{\uHal(\rho)(G/C_J)}\ar[d]_{i^*}^{\cong}    && \{{G/C_I}_+ \wedge {G/C_J}_+, S^{\alpha}\wedge H\uM\}^G \ar[d]^{i^*}_{\cong} \\
\{{G/C_J}_+, i^\ast S^\alpha \wedge H\uM \}^{C_I} \ar[rr]^{\widehat{\uHal(\rho)}(G/C_J)} \ar[d]_{\cong} & &\{{G/C_J}_+, i^\ast S^\alpha\wedge H\uM\}^{C_I} \ar[d]^{\cong} \\ 
\bigoplus \limits_{\frac{n}{I \cup J}}\{{C_I/C_{I\cap J}}_+, \sum^{i^\ast \alpha)}H\downarrow^G_{C_I}\uM\}^{C_I} \ar[rr]^{\widehat{\widehat{\uHal(\rho)}}(G/C_J)} &&\bigoplus \limits_{\frac{n}{I\cup J}} \{{C_I/C_{I\cap J}}_+, \sum^{i^\ast \alpha}H\downarrow^G_{C_I}\uM\}^{C_I}}$$
 In the above, the isomorphisms $i^*$ are given by \eqref{rest}, and the bottom vertical isomorphisms come from  \eqref{r-equi}. In the top row, the map represents the action of $\rho$ on cohomology. We now make an assumption on the coefficients $\uM$ that for a proper subgroup $H$, the Mackey functor values for $\uH^\alpha_H(S^0;\downarrow^G_H\uM)$ satisfy that the homomorphisms $c_h$ are trivial for $h\in H$. If we are trying to compute the cohomology with $\uA$ or $\uZ$-coefficients inductively, we may make this assumption on subgroups and prove it for $G$\footnote{The reader may note that the inductive proof of this fact is achieved through the final computation of $\uHal(S^0)$ in the later sections.}. We write $\uH^\alpha_{C_I}(S^0;\uM)(C_I/C_{I\cap J})=A$. Under this assumption, the map in the bottom row permutes the various factors in the direct sum cyclically. Let $\sigma_r$ denote the standard $r$-cycle $(1,2, \cdots, r)$ of the permutation group $\Sigma_r$. Therefore, we have 
$$ \widehat{\widehat{\uHal(\rho)}}(G/C_J) = \sigma_{\frac{n}{I\cup J}},$$
 after possibly changing the coordinates in the direct sum. Note that $\ker (1- \sigma_r: A^r \to A^r) \cong A$, the diagonal copy of $A$, and $\coker(1 - \sigma_r : A^r \to A^r) \cong A$, generated by the image of $(0 , \cdots, A , \cdots,0)$. Using this description, we deduce \\
1) For $J'\subset J\subset \uk$, the transfer map $tr^{C_J}_{C_{J'}}$ of $\ker (1-\uH_G^{\alpha -1}(\rho))$ is given by $|I\cup J|/|I\cup J'|$ times the transfer map $tr^{C_{I\cap J}}_{C_{I\cap J'}}$ of the Mackey functor $\uH^{\alpha}_{C_I}(S^0)$.  The restriction map $\res^{C_J}_{C_{J'}}$ of $\ker (1-\uH_G^{\alpha -1}(\rho))$ equals the restriction map $\res^{C_{J\cap I}}_{C_{J'\cap I}}$ of the Mackey functor $\uH^{\alpha}_{C_I}(S^0)$.\\
2) For $J'\subset J\subset \uk$, the restriction map $\res^{C_J}_{C_{J'}}$ of $\coker (1-\uH_G^{\alpha -1}(\rho))$ is given by $|I\cup J|/|I\cup J'|$ times the restriction map $\res^{C_{I\cap J}}_{C_{I\cap J'}}$ of the Mackey functor $\uH^{\alpha}_{C_I}(S^0)$.  The transfer map $tr^{C_J}_{C_{J'}}$ of $\ker (1-\uH_G^{\alpha -1}(\rho))$ equals the transfer map $tr^{C_{J\cap I}}_{C_{J'\cap I}}$ of the Mackey functor $\uH^{\alpha}_{C_I}(S^0)$.

This implies

\begin{prop} \label{coker}
With respect to the above notation, \\
1) $\ker (1-\uHal(\rho)) = \KK_{n,I}\uH^{\alpha}_{C_I}(S^0; \uM),$\\
2) $\textup{coker} (1-\uHal(\rho)) = \CC_{n,I}\uH^{\alpha}_{C_I}(S^0; \uM)$.
\end{prop}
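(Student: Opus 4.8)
\noindent\emph{Proof strategy.} The plan is to treat this as a direct comparison with Definition \ref{submacks}: essentially all the content has already been extracted in the paragraphs preceding the statement, where $\uHal(\rho)$ is identified at each orbit $G/C_J$ with a cyclic permutation $\sigma_{n/|I\cup J|}$ of a direct sum of copies of $\uH^\alpha_{C_I}(S^0;\uM)(C_I/C_{I\cap J})$, and where the induced restriction and transfer maps on $\ker(1-\uHal(\rho))$ and $\coker(1-\uHal(\rho))$ are written down in items 1) and 2) above. Recalling that $\KK_{n,I}\uN = \uN\boxtimes\uZ_{I^c}$ and $\CC_{n,I}\uN = \uN\boxtimes\uZ^\ast_{I^c}$, with $\uN = \uH^\alpha_{C_I}(S^0;\uM)$, it then suffices to check that the objectwise groups, the conjugation maps, and the restriction/transfer maps agree on both sides at every orbit.

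First I would match the object values. Writing $J\subseteq\uk$ for the divisor indexing $G/C_J$ and using $G\cong C_I\times C_{I^c}$, so that $C_J$ corresponds to $C_{I\cap J}\times C_{I^c\cap J}$, the computation $\ker(1-\sigma_r)\cong\coker(1-\sigma_r)\cong A$ with $A=\uH^\alpha_{C_I}(S^0;\uM)(C_I/C_{I\cap J})$ identifies both $\ker(1-\uHal(\rho))(G/C_J)$ and $\coker(1-\uHal(\rho))(G/C_J)$ with $\uH^\alpha_{C_I}(S^0;\uM)(C_I/C_{I\cap J})$; on the other side $(\uN\boxtimes\uZ_{I^c})(G/C_J)=\uN(C_I/C_{I\cap J})$ and likewise with $\uZ^\ast_{I^c}$, since both $\uZ_{I^c}$ and $\uZ^\ast_{I^c}$ are $\Z$ on every orbit. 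Next I would match the structure maps, using that for $\boxtimes$ a restriction (resp. transfer) along $C_{J'}\leq C_J$ is the tensor product of the corresponding map on the $C_I$-factor with that on the $C_{I^c}$-factor. For $\uZ_{I^c}$ the $C_{I^c}$-restrictions are identities and the $C_{I^c}$-transfers are multiplication by the index $|I^c\cap J|/|I^c\cap J'| = |I\cup J|/|I\cup J'|$, the last equality because $|I\cup J| = |I|\cdot|I^c\cap J|$; this is exactly the recipe of item 1), giving $\ker(1-\uHal(\rho)) = \KK_{n,I}\uH^\alpha_{C_I}(S^0;\uM)$. Dually, for $\uZ^\ast_{I^c}$ the $C_{I^c}$-transfers are identities and the restrictions scale by $|I\cup J|/|I\cup J'|$, matching item 2) and giving $\coker(1-\uHal(\rho)) = \CC_{n,I}\uH^\alpha_{C_I}(S^0;\uM)$. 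Finally the conjugations $c_g$ are trivial on both descriptions: on the $\boxtimes$ side because $\uZ$ and $\uZ^\ast$ have trivial conjugation and $\uH^\alpha_{C_I}(S^0;\uM)$ does by the running assumption on the coefficients (applied to the proper subgroup $C_I$), and on the kernel/cokernel side because $\uHal(\rho)$ and the permutation identification are compatible with this trivial conjugation.

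I do not anticipate a genuine obstacle here: the heavy lifting — identifying $\uHal(\rho)$ with a cyclic permutation and computing its kernel, cokernel, and the induced restriction/transfer maps — is done before the statement, so the proposition is a bookkeeping verification. The only points that need care are the elementary index identity $|I^c\cap J|/|I^c\cap J'| = |I\cup J|/|I\cup J'|$ and keeping straight the pairing: the kernel goes with the constant Mackey functor $\uZ_{I^c}$ (whose transfers are the indices, matching item 1)), while the cokernel goes with $\uZ^\ast_{I^c}$ (matching item 2)); swapping these would give the opposite, and incorrect, conclusion.
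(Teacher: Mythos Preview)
Your proposal is correct and mirrors the paper's approach exactly: the paper presents no separate proof of the proposition, stating only ``This implies'' after the preceding paragraphs that identify $\uHal(\rho)$ with a cyclic permutation and record the induced restriction and transfer maps on its kernel and cokernel. Your write-up simply makes explicit the bookkeeping comparison with Definition~\ref{submacks} that the paper leaves implicit, including the elementary index identity $|I^c\cap J|/|I^c\cap J'| = |I\cup J|/|I\cup J'|$.
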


Incorporating the results of Proposition \ref{coker} in \eqref{ses} we obtain the short exact sequence 
\begin{myeq}\label{exsphere}
0 \to \CC_{n,d}\uH^{\alpha-1}_{C_d}(S^0; \uM) \to \uHal(S(\xi^d)_+; \uM) \to \KK_{n,d}\uH^{\alpha}_{C_d}(S^0; \uM) \to 0.
\end{myeq}
\end{mysubsection}

\begin{mysubsection}{Torsions in $\uHal(S^0)$} We now use \eqref{exsphere} to deduce some general results about $\uHal(S^0)$. We assume throughout this section that the coefficient Mackey functor is either $\uA$ or $\uZ$, and drop the coefficients from the notation in this case. 
\begin{prop}\label{all}
a) Among the subset of $\alpha \in RO(G)$ such that  $|\alpha^H| >0$ for all $H \neq G$, the Mackey functor $\uHal(S^0)$ depends only on the value of $|\alpha^G|$. \\
b)  Among the subset of $\alpha \in RO(G)$ such that $|\alpha^H|<0$ for all $H \neq G$, the Mackey functor $\uHal(S^0)$ depends only on the value of $|\alpha^G|.$
\end{prop}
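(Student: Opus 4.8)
The plan is to prove both parts simultaneously by induction on $|G|=n$, carrying along the auxiliary vanishing statement that, for $\uM\in\{\uA,\uZ\}$, one has $\uHal(S^0;\uM)=0$ whenever \emph{all} fixed-point dimensions of $\alpha$ (including $|\alpha^G|$) are positive, and likewise whenever they are all negative. This auxiliary statement is precisely parts (a) and (b) restricted to the ranges $|\alpha^G|>0$ resp.\ $|\alpha^G|<0$, and it is what actually feeds the induction. The base case $G=\{e\}$ is immediate: $\uH^\gamma_{\{e\}}(S^0;\uM)$ equals $\uM(\ast)$ when $|\gamma|=0$ and $0$ otherwise, so all assertions hold (vacuously, for the ``depends only on $|\alpha^G|$'' part).

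\textbf{Combinatorial reduction.} Given $\alpha,\alpha'$ with $|\alpha^H|>0$ for all $H\neq G$ and $|\alpha^G|=|\alpha'^G|$, expand $\alpha-\alpha'$ in the $\Z$-basis of $RO(G)$ consisting of the trivial representation together with the nontrivial real irreducibles; the coefficient of the trivial summand equals $|(\alpha-\alpha')^G|=0$, so $\alpha-\alpha'$ is a $\Z$-combination of nontrivial irreducibles $\eta$. Adding such an $\eta$ to an element of our set keeps it in the set (only fixed-point dimensions are increased) and leaves $|\alpha^G|$ unchanged (since $\eta^G=0$); splitting $\alpha-\alpha'$ into positive and negative parts, one sees that $\alpha$ and $\alpha'$ can both be moved, by finitely many such additions, to a common representation. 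Hence part (a) reduces to: for $\alpha$ in the set and $\eta$ a nontrivial real irreducible, the map $\uHal(S^0)\to\uH^{\alpha+\eta}_G(S^0)$ induced by $S^0\to S^{\eta}$ (multiplication by the Euler class $a_\eta$) is an isomorphism. Part (b) reduces identically, except one \emph{subtracts} nontrivial irreducibles (to stay in the region where all fixed-point dimensions are negative), reducing to $\uH^{\alpha-\eta}_G(S^0)\to\uHal(S^0)$ being an isomorphism.

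\textbf{Reduction to vanishing over proper subgroups, and closing the induction.} Apply $\uHbs(-)$ to the cofibre sequence \eqref{cofd2} (written for $\eta$) and use the suspension isomorphism $\uHbs(S^{\eta})\cong\uH^{\bs-\eta}_G(S^0)$; the map $a_\eta$ then fits into a long exact sequence with flanking terms $\uH^{\alpha+\eta-1}_G(S(\eta)_+)$ and $\uH^{\alpha+\eta}_G(S(\eta)_+)$ (for part (b): $\uH^{\alpha-1}_G(S(\eta)_+)$ and $\uH^{\alpha}_G(S(\eta)_+)$), so it is an isomorphism once those two Mackey functors vanish. By the short exact sequence \eqref{exsphere} (written for $\eta$, with $C_d:=\ker(\eta)$ a proper subgroup, on which $\eta$ restricts to the trivial $2$-dimensional representation), each of them is built out of $\uH^{\downarrow^G_{C_d}\alpha+j}_{C_d}(S^0)$ for three consecutive integers $j$ near $0$; since $C_d\neq G$, all these gradings have positive (resp.\ negative) fixed-point dimensions over $C_d$, so their vanishing is exactly the auxiliary statement over $C_d$, available by induction. (The hypothesis needed to apply \eqref{exsphere} — triviality of the Weyl action on the cohomology of proper subgroups — is automatic here, $G$ being abelian.) This gives parts (a) and (b) for $G$. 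To close the induction one upgrades the $|\alpha^G|>0$ (resp.\ $<0$) case to vanishing: by part (a)/(b) just proved, $\uHal(S^0)$ is then isomorphic to $\uH^V_G(S^0)$ for any \emph{actual} representation $V$ with $|V^H|\geq 2$ for all $H$ and $|V^G|=|\alpha^G|$, which is $0$ because $S^V$ is equivariantly connected and $H\uM$ is connective; in the negative case it is isomorphic to $\uH^0_G(S^W;\uM)$ for an actual $W$ with $|W^H|\geq 2$ for all $H$ and $|W^G|=-|\alpha^G|$, which vanishes for the same reason. (Alternatively the negative range follows from the positive one by Anderson duality, Theorem~\ref{anders}, since $3-\xi-\alpha$ and $2-\xi-\alpha$ then have all fixed-point dimensions positive.)

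\textbf{Main obstacle.} The delicate point is this auxiliary vanishing assertion in the inductive step — turning ``all fixed-point dimensions positive (or negative)'' into something manifestly zero. The usable input is soft: a representation sphere $S^V$ with all fixed-point dimensions positive is equivariantly connected, hence has trivial reduced $H\uM$-cohomology. The work lies in transporting an arbitrary \emph{virtual} grading onto such an $S^V$ (or $S^W$) along the Euler-class isomorphisms coming from the cofibre sequences \eqref{cofd2}, keeping $|\alpha^G|$ fixed while the remaining fixed-point dimensions are adjusted, and in organizing this consistently with the outer induction on $n$ and with both coefficient systems $\uZ$ and $\uA$.
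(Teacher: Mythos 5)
Your proof is correct and follows essentially the same route as the paper's: reduce the assertion to invariance of $\uHal(S^0)$ under adding a nontrivial irreducible $\eta$ (keeping $|\alpha^G|$ fixed), use the cofibre sequence $S(\eta)_+\to S^0\to S^\eta$, observe that the flanking terms $\uHal(S(\eta)_+)$ are controlled by the cohomology of proper subgroups, and close by induction on the number of prime factors of $n$, pairing the ``depends only on $|\alpha^G|$'' statement with the companion vanishing statement. The paper is terser and grounds the induction at $n=p$ using Lewis's calculation rather than at the trivial group, and it states the vanishing only afterwards as Corollary~\ref{sign}; your version makes the simultaneous induction fully explicit, which is a genuine clarification of a step the paper leaves implicit. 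Your closing argument (transport to an actual $V$ with all positive fixed-point dimensions and use connectivity of $S^V$) is the same idea the paper packages as ``the vanishing of $\uH^m_G(S^0)$ for $m\neq 0$.''

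One small inaccuracy worth flagging: your parenthetical remark that the hypothesis needed for \eqref{exsphere} --- triviality of the Weyl action on $\uH^\bullet_{C_d}(S^0)$ --- is ``automatic, $G$ being abelian'' is not right. Even for abelian $G$ the conjugation maps $c_g$ for $g\notin K$ act on $\uM(H/K)$ through the Weyl group $H/K$, and this action need not be trivial (e.g.\ $\uA_{C_p/e}$ at $C_p/e$). The paper flags this as an assumption to be verified inductively alongside the main computation. That said, the hypothesis is in fact unnecessary for the purposes of this proposition: you only need the \emph{vanishing} of $\uH^{\alpha+\eta-1}_G(S(\eta)_+)$ and $\uH^{\alpha+\eta}_G(S(\eta)_+)$, and for that you can bypass \eqref{exsphere} entirely and use the long exact sequence of the cofibration $G/C_d{}_+\to G/C_d{}_+\to S(\eta)_+$ together with Proposition~\ref{orbit}: if $\uH^\gamma_{C_d}(S^0)=0$ for the three relevant restricted degrees $\gamma$, the orbit cohomologies $\uH^\gamma_G(G/C_d{}_+)=\uparrow^G_{C_d}\uH^\gamma_{C_d}(S^0)$ vanish, and so do the terms for $S(\eta)_+$, with no identification of kernel and cokernel required. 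Phrased this way, your argument is clean and self-contained.
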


\begin{proof}
We prove $a)$ by induction on the number of prime factors of $n.$ A similar argument works for $b).$ For $n=p$, a prime, this is a consequence of the computations in \cite{Lew88}, and for $n =pq$ a product of two distinct primes, this appears in the proof of Proposition $5.1$ of \cite{BG19}. We apply the same technique to complete the proof in the general case.

  Assume $\alpha_1$, $\alpha_2 \in RO(G)$ such that  $|\alpha_i^H| > 0$ for $H \neq G$ and, $|\alpha_1^G| = |\alpha_2^G|$. Then for some integers $j_i$ and $l_s$, 
$$\alpha_1 + \sum_{i} \xi^{j_i} = \alpha_2 + \sum_{s} \xi^{l_s}.$$ 
 For $d \mid n,$ we have the following long exact sequence associated to \eqref{cofd2},
\begin{myeq} \label{exr}
\cdots \uH^{\alpha + \xi^d -1}_G(S(\xi^d)_+) \to \uHal(S^0) \to \uH^{\alpha +\xi^d}_G(S^0) \to \uH^{\alpha +\xi^d}_G(S(\xi^d)_+) \to \cdots 
\end{myeq}
Using the short exact sequence \eqref{ses}, and the induction hypothesis, we obtain that $\uH^{\alpha +\xi^d}_G(S(\xi^d)_+)$ and $\uH^{\alpha +\xi^d-1}_G(S(\xi^d)_+)$ are zero. Therefore, $\uHal(S^0) \cong \uH^{\alpha +\xi^d}_G(S^0)$. Thus, we deduce 
$$\uH^{\alpha_1}_G(S^0) \cong \uH^{\alpha_1 + \sum_{i} \xi^{j_i}}_G(S^0) \cong \uH^{\alpha_2 + \sum_{k} \xi^{l_k}}_G(S^0)\cong \uH^{\alpha_2 }_G(S^0).$$
Hence the result follows.
\end{proof}

From Proposition \ref{all}, and the vanishing of $\uH^{m}_G(S^0)$ for $m \neq 0$, we deduce
\begin{cor}\label{sign}
Let $\alpha \in RO(G)$. Suppose $|\alpha^H|>0$ or $|\alpha^H|<0$ for all $H\leq G$. Then $\uHal(S^0) =0$.
\end{cor}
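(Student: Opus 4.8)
The plan is to reduce Corollary~\ref{sign} to Proposition~\ref{all} together with the vanishing of the integer-graded cohomology $\uH^m_G(S^0)$ for $m \neq 0$. First I would observe that the hypothesis splits $RO(G)$ into two cases, handled symmetrically: either $|\alpha^H| > 0$ for all $H \leq G$, or $|\alpha^H| < 0$ for all $H \leq G$. (Note that the case of mixed signs is excluded by the hypothesis "or", which forces one inequality to hold uniformly over all subgroups, including $H = G$.)

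For the case $|\alpha^H| > 0$ for all $H \leq G$: by Proposition~\ref{all}(a), $\uHal(S^0)$ depends only on $|\alpha^G|$. The strategy is then to find an integer $m$ with the same value $|\alpha^G| = m > 0$ but realized by a \emph{genuine} integer grading — that is, take $\beta = m \in RO(G)$ (the trivial $m$-dimensional representation), for which $|\beta^H| = m > 0$ for every $H$. Both $\alpha$ and $\beta$ lie in the region covered by Proposition~\ref{all}(a) and have equal $G$-fixed dimension, so $\uHal(S^0) \cong \uH^m_G(S^0)$. Since $m \neq 0$, the right-hand side vanishes, giving $\uHal(S^0) = 0$. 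The case $|\alpha^H| < 0$ for all $H$ is identical, using Proposition~\ref{all}(b) and comparing with $\beta = m < 0$, again an integer grading with $|\beta^H| = m < 0$ for all $H$, so $\uHal(S^0) \cong \uH^m_G(S^0) = 0$.

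The one point requiring a little care is checking that the constant integer grading $m$ genuinely satisfies the hypothesis of Proposition~\ref{all}, i.e.\ that $|m^H| = m$ for every subgroup $H$ (which is immediate, since the trivial representation restricts to the trivial representation on every subgroup and fixed points of a trivial representation are the whole space). One must also confirm that $m$ can be chosen with the correct sign and nonzero: since $|\alpha^G|$ is itself a nonzero integer of the required sign by hypothesis, we may simply take $m = |\alpha^G|$. I do not anticipate a serious obstacle here; the content is entirely in Proposition~\ref{all}, and Corollary~\ref{sign} is a formal consequence. If anything, the only subtlety is making sure the quoted vanishing "$\uH^m_G(S^0) = 0$ for $m \neq 0$" is being invoked for the \emph{Mackey functor}-valued cohomology (not merely the group-valued one), which is true because $H\uM$ for $\uM \in \{\uA, \uZ\}$ has homotopy Mackey functors concentrated in degree $0$, so $\uH^m_G(S^0;\uM) = \upi_{-m}^G(H\uM)$ vanishes for $m \neq 0$.
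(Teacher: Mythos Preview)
Your proposal is correct and takes essentially the same approach as the paper: the paper derives Corollary~\ref{sign} in one line from Proposition~\ref{all} together with the vanishing of $\uH^{m}_G(S^0)$ for $m\neq 0$, exactly as you outline. Your additional remarks (checking that the trivial grading $m=|\alpha^G|$ satisfies the hypotheses of Proposition~\ref{all}, and that the vanishing is at the Mackey-functor level) make explicit what the paper leaves implicit.
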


We have the injective map of  Mackey functors $\CC_{n,d}(\uH^{\alpha-1}_{C_d}(S^0)) \to \uHal(S(\xi^d)_+)$ \eqref{exsphere}. From the cohomology exact sequence \eqref{exr}, we have the boundary map $\delta : \uH^{\alpha +\xi^d -1}_G(S(\xi^d)_+) \to \uHal(S^0)$. We now prove that the composite (in appropriate degrees) is injective. 
\begin{prop}\label{sus-r}
The composite 
$$ \CC_{n,d}(\uH^{\alpha+\xi^d-2}_{C_d}(S^0)) \to \uH^{\alpha +\xi^d-1}_G(S(\xi^d)_+) \stackrel{\delta}{\to} \uHal(S^0)$$
 is an isomorphism at $G/K$ for all $K \leq C_d$. It follows that the composite is injective at all $G/H$. 
\end{prop}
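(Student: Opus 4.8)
The plan is to analyze the composite orbit-wise, restricting attention to $G/K$ for $K \leq C_d$, since on such orbits all the Mackey functors in sight are controlled by $C_d$-equivariant data. The first step is to recall from Proposition \ref{orbit} and the change-of-groups isomorphism \eqref{rest} that evaluating $\uHal(S^0)$, $\uHal(S(\xi^d)_+)$, and the various cohomology groups in the long exact sequence \eqref{exr} at $G/K$ with $K\leq C_d$ is the same as restricting everything to $C_d$ and evaluating at $C_d/K$; moreover $\downarrow^G_{C_d}\CC_{n,d}\uN \cong \uN$ by the formula recorded just after Definition \ref{submacks}. So at these orbits the composite in question becomes, after restriction to $C_d$, a map
$$\uH^{\alpha^{C_d}+\xi^d-2}_{C_d}(S^0) \to \uH^{\alpha^{C_d}+\xi^d-1}_{C_d}(S(\xi^d)_+) \stackrel{\delta}{\to} \uH^{\alpha^{C_d}}_{C_d}(S^0),$$
where now $\xi^d$ is the \emph{faithful} representation of $C_d$ (so $S(\xi^d)$ is a free $C_d$-space). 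Thus it suffices to prove the statement when $G = C_d$ and $\xi^d$ is replaced by a faithful one-dimensional complex representation.

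The second step handles this reduced situation. Here $S(\xi^d)$ is $C_d$-free, so $\uH^\beta_{C_d}(S(\xi^d)_+;\uM)$ depends only on $|\beta|$ and equals the Bredon cohomology of the free $C_d$-CW-complex $S(\xi^d)$, computed from the cofibre sequence \eqref{cofd} as the cohomology of the two-term complex ${G/C_d}_+ \xrightarrow{\rho-1} {G/C_d}_+$. Combining \eqref{exsphere} with its analogue one dimension up, one sees that the map $\CC_{n,d}\uH^{\alpha+\xi^d-2}_{C_d}(S^0) \to \uHal(S(\xi^d)_+)$ lands in the subobject that the boundary map $\delta$ of \eqref{exr} carries \emph{isomorphically} onto its image in $\uHal(S^0)$: indeed, in the long exact sequence \eqref{exr} the term $\uH^{\alpha+\xi^d}_G(S^0)$ and the map preceding $\delta$ are controlled by the cohomology of $S^0$ in the shifted grading, and using Corollary \ref{sign} together with the vanishing of $\uH^m_G(S^0)$ for $m\neq 0$ one checks that the relevant connecting homomorphism is injective on the summand coming from $\CC_{n,d}$. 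The cleanest way to see this is the suspension/Anderson-duality bookkeeping: the class in $\CC_{n,d}\uH^{\alpha+\xi^d-2}_{C_d}(S^0)$ corresponds under $\delta$ to multiplication by the Euler class $a_{\xi^d}$ combined with the periodicity $u_{\xi^d}$, and on $C_d$-orbits this operation is the identity on the underlying group.

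The third step assembles these: having identified the composite restricted to $C_d$ with the identity map on $\uH^{\alpha^{C_d}}_{C_d}(S^0)$ (up to the isomorphism $\downarrow^G_{C_d}\CC_{n,d}\uH^{\alpha+\xi^d-2}_{C_d}(S^0)\cong \uH^{\alpha^{C_d}+\xi^d-2}_{C_d}(S^0) \cong \uH^{\alpha^{C_d}}_{C_d}(S^0)$ coming from the faithful-representation suspension isomorphism $H\uZ\wedge S^{\xi^d}\simeq \Sigma^2 H\uZ^\ast$ of Example \ref{Z*xi} and Remark \ref{Z*cn}), we conclude the composite is an isomorphism at every $G/K$ with $K\leq C_d$. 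For injectivity at an arbitrary orbit $G/H$, one uses the restriction map to $C_d\cap H$: restricting along $C_{H\cap I}\hookrightarrow C_H$ (notation as in \eqref{r-equi}) identifies the value at $G/H$ of $\CC_{n,d}(-)$ with its value at an orbit of the form $G/C_{H\cap d}$ with $H\cap d \leq C_d$, and on the target the restriction map $\res^{C_H}_{C_{H\cap d}}$ of $\uHal(S^0)$ intertwines with it; since the composite is already known to be injective (indeed iso) at $G/C_{H\cap d}$, a diagram chase forces injectivity at $G/H$.

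The main obstacle I anticipate is the second step — pinning down that the connecting map $\delta$ in \eqref{exr} is genuinely injective on the $\CC_{n,d}$-summand and not merely that \emph{some} map is. This requires knowing enough about $\uH^{\alpha+\xi^d}_G(S^0)$ and $\uH^{\alpha+\xi^d}_G(S(\xi^d)_+)$ in the relevant grading to rule out that part of $\CC_{n,d}\uH^{\alpha+\xi^d-2}_{C_d}(S^0)$ dies; the hypotheses forcing $\alpha+\xi^d-2$ (equivalently the fixed-point dimensions of $\alpha$) into a range where $\uH^{\alpha+\xi^d}_G(S^0)$ vanishes on the relevant orbits — via Corollary \ref{sign} and the inductive setup — are exactly what makes this go through, so the proof will need to spell out the degree bookkeeping carefully rather than wave at it.
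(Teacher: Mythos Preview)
Your proof has a critical error in the first and second steps: you claim that $\xi^d$ restricts to a \emph{faithful} representation of $C_d$, so that $S(\xi^d)$ is a free $C_d$-space. This is false. The subgroup $C_d \leq C_n$ is generated by $\rho^{n/d}$, which acts on $\xi^d$ via rotation by $\tfrac{2\pi d}{n}\cdot\tfrac{n}{d} = 2\pi$, i.e.\ trivially. Thus $i^\ast \xi^d$ is the trivial $2$-dimensional $C_d$-representation, and $i^\ast S(\xi^d)$ is $S^1$ with trivial $C_d$-action. (Relatedly, when you restrict to $C_d$ the grading becomes $i^\ast\alpha \in RO(C_d)$, not the fixed-point part $\alpha^{C_d}$; these are different objects.)

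This triviality is precisely what makes the argument elementary, and it is what the paper uses. The cofibre sequence $S(\xi^d)_+ \to S^0 \to S^{\xi^d}$ restricted to $C_d$ becomes $S^1_+ \stackrel{\pi}{\to} S^0 \to S^2$ with $\pi$ a retraction. Hence the associated long exact sequence splits at every orbit $G/K$ with $K\leq C_d$, giving $\uH^\beta_G(S(\xi^d)_+)(G/K) \cong \tH^\beta_{C_d}(S^0)\oplus \tH^{\beta-1}_{C_d}(S^0)$ and identifying $\delta$ on the $\CC_{n,d}$-summand with the identity. No vanishing input from Corollary \ref{sign} is needed --- indeed the proposition is stated for arbitrary $\alpha$, so there is no hypothesis that would force $\uH^{\alpha+\xi^d}_G(S^0)$ to vanish, and your second-step strategy cannot work as written.

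Your third step (deducing injectivity at an arbitrary orbit $G/H$ by restricting to $G/C_{\gcd(|H|,d)}$ and using that $\res$ is the identity on $\CC_{n,d}(-)$) is correct and is exactly the paper's argument for that part.
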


\begin{proof}
The second statement is clear from the fact that the restriction $\res^{C_m}_{C_{\gcd(m,d)}}$ is injective (in fact, $=$ identity) for the Mackey functor $\CC_{n,d}\uM$. Let $i$ denote the inclusion $C_d \to G$. The $C_d$-space $i^\ast S(\xi^d)_+$ is $S^1_+$ with trivial action. The cofibre sequence \eqref{cofd2} restricted to $C_d$ becomes the cofibre $S^1_+\stackrel{\pi}{\to} S^0 \to S^2$. As $\pi$ is a retraction, this induces an isomorphism $\uHal(S(\xi^d)_+)(G/K)\cong \tilde{H}^\alpha_{C_d}(S^0) \oplus \tilde{H}_{C_d}^{\alpha-1}(S^0)$ for $K \leq C_d$. We note that $G/K_+\simeq {G/C_d}_+\wedge C_d/K_+$, so that $\uHal(S^0)(G/K)\cong \uH^{\alpha}_{C_d}(S^0)(C_d/K)$.  It follows that the composite in the proposition is an isomorphism at $G/K$ for all $K \leq C_d$. 
\end{proof}

We now use the computational strategy to begin with an $\alpha$ as in Corollary \ref{sign}, and compute for general $\alpha$ by adding and subtracting multiples of $\xi^i$ via the exact sequence \eqref{exr}. We use this technique to prove that the odd degree elements  in $\uHal(S^0)$ are all torsion.
\begin{thm}\label{torsion}
If $\alpha$ is an odd element in $RO(G),$ then $\uHal(S^0) \otimes \Q =0.$
\end{thm}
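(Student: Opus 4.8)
The plan is to prove this by induction on the number of prime factors $k$ of $n$, using the same computational strategy outlined just before the statement: start from an $\alpha$ where the cohomology vanishes (Corollary \ref{sign}) and move to general $\alpha$ by adding and subtracting copies of $\xi^d$ via the long exact sequence \eqref{exr}. Since tensoring with $\Q$ is exact and kills all torsion, it suffices to track the rational (free) part through these exact sequences. First I would dispose of the base case: for $G=C_p$ with $p$ odd, the computations of Stong--Lewis \cite{Lew88} give that $\tH^\alpha_{C_p}(S^0)$ in odd total degree is finite, so rationally zero. (One can also argue directly: for $C_p$, $\alpha$ odd means $|\alpha|$ and $|\alpha^{C_p}|$ are both odd, and the relevant groups are $\Z/p$-modules or zero.)

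For the inductive step, fix an odd $\alpha \in RO(G)$. Using the equivalence $H\uZ \wedge S^{\xi^r} \simeq H\uZ \wedge S^{\xi^{rs}}$ for $(s,n)=1$ (and the fact that for $\uA$-coefficients the groups $\tH^\alpha_{C_n}(S^0;\uA)$ depend only on fixed-point dimensions, which is proved later but can be bypassed), I would reduce to the case where $\alpha$ is a linear combination of the $\xi^d$ for $d\mid n$. By adding a large multiple of $\xi$ (which does not change the parity of $|\alpha|$), I may further assume $|\alpha^{C_d}|$ is large and positive for every $d\neq n$ that is a product of the $\xi^d$-summands — but I must keep $|\alpha^G|$ fixed, and here $|\alpha^G|$ is odd, so $|\alpha^G| \neq 0$. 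If $|\alpha^G| > 0$ I can push all fixed-point dimensions positive and apply Corollary \ref{sign} directly to conclude $\uHal(S^0) = 0$; if $|\alpha^G| < 0$, I instead subtract copies of $\xi^d$ to make all fixed-point dimensions negative and again invoke Corollary \ref{sign}. So the real content is showing that \emph{rationally}, the groups do not change as I add/subtract the $\xi^d$'s — i.e. that the error terms in \eqref{exr} are rationally zero.

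The key mechanism: in the long exact sequence \eqref{exr} the error terms are $\uH^{\alpha+\xi^d}_G(S(\xi^d)_+)$ and $\uH^{\alpha+\xi^d-1}_G(S(\xi^d)_+)$, and by the short exact sequence \eqref{exsphere} each of these is an extension of $\KK_{n,d}\tH^\beta_{C_d}(S^0)$ by $\CC_{n,d}\tH^{\beta-1}_{C_d}(S^0)$ for suitable $\beta$. Now $\tH^\ast_{C_d}(S^0)$ is the cohomology for the \emph{smaller} group $C_d$ (which has fewer than $k$ prime factors since $d\mid n$, $d\neq n$ in the relevant range, or $d=n$ in which case $S(\xi^n)_+$ restricts to a trivial circle and the analysis of Proposition \ref{sus-r} applies), so by induction its odd part is rationally zero. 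Since $\KK_{n,d}$ and $\CC_{n,d}$ are built via $\boxtimes$ with $\uZ$ or $\uZ^\ast$ and $\boxtimes$ is defined levelwise by tensor product with $\Z$ (a free module), these operations preserve "rationally zero"; and crucially the parities work out — because $n$ is odd, $\alpha$ odd forces all fixed-point dimensions odd, so the relevant $\beta = \alpha + \xi^d$ restricted to $C_d$ is even while $\beta - 1$ restricted to $C_d$ is odd, and one checks (using that $\KK$ and $\CC$ only involve the value at $C_d/K$ for $K\le C_d$) that at least one of the two pieces in each error term is in odd $C_d$-degree, hence rationally zero, while the even piece contributes a possibly nonzero \emph{free} group — and this is precisely where I must be careful.

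The main obstacle is exactly this last point: the error terms in \eqref{exr} need not be torsion, because the \emph{even}-degree cohomology of $C_d$ can be free (e.g. in total degree $0$). So the naive claim "the error terms are rationally zero" is false, and I cannot simply say $\uHal(S^0)$ is rationally invariant under adding $\xi^d$. The fix is to be quantitative about the rank: I would show that adding $\xi^d$ to $\alpha$ changes $\mathrm{rk}_\Z \tH^\alpha_G(S^0)$ by a controlled amount governed by the $|\alpha^{C_e}|=0$ conditions, and then observe that when $\alpha$ is odd \emph{all} fixed-point dimensions $|\alpha^{C_e}|$ are odd, hence nonzero, so none of the boundary-rank-contributions occur — precisely, the free part of $\CC_{n,d}\tH^{\beta-1}_{C_d}(S^0)$ and $\KK_{n,d}\tH^\beta_{C_d}(S^0)$ sits where some $C_d$-fixed-point dimension of $\beta$ (resp. $\beta-1$) vanishes, which by oddness of $\alpha$ is impossible for $\beta-1$ and, for $\beta=\alpha+\xi^d$, happens only at the $C_d$-level where $|\alpha^{C_d}| = 0$ — again impossible. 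Thus every error term in \eqref{exr} is genuinely torsion along the path from the Corollary \ref{sign} starting point to our $\alpha$, tensoring with $\Q$ makes them vanish, and we conclude $\uHal(S^0)\otimes\Q \cong (\text{Corollary \ref{sign} value})\otimes \Q = 0$. I would write this carefully, isolating a lemma "if $\alpha$ is odd then the error terms $\uH^{\alpha+\xi^d-1}_G(S(\xi^d)_+)$ and $\uH^{\alpha+\xi^d}_G(S(\xi^d)_+)$ are torsion", which follows from \eqref{exsphere}, the inductive hypothesis for $C_d$, and the observation that oddness of $\alpha$ over the odd group $C_n$ forces all $C_d$-fixed dimensions of $\alpha+\xi^d$ and $\alpha+\xi^d-1$ to be odd except possibly the top one — and that a single odd-degree slot cannot carry a free summand by induction.
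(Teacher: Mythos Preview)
Your overall strategy---induction on $k$, moving through $RO(G)$ by adding copies of $\xi^d$ via \eqref{exr}, and invoking Corollary \ref{sign} as a starting point---matches the paper. The gap is in your handling of the error terms, and your own ``main obstacle'' paragraph correctly senses the problem but the proposed fix does not work.

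Concretely: the lemma you want to isolate, that for odd $\alpha$ both $\uH^{\alpha+\xi^d-1}_G(S(\xi^d)_+)$ and $\uH^{\alpha+\xi^d}_G(S(\xi^d)_+)$ are torsion, is \emph{false}. Restricted to $C_d$ the representation $\xi^d$ is trivial of dimension $2$, so in \eqref{exsphere} the piece $\CC_{n,d}\uH^{\alpha+\xi^d-1}_{C_d}(S^0)\cong \CC_{n,d}\uH^{\alpha+1}_{C_d}(S^0)$ sits in \emph{even} $C_d$-degree. All the $C_d$-fixed-point dimensions of $\alpha+1$ are $|\alpha^H|+1$, which are even and can perfectly well vanish (e.g.\ when $|\alpha|=-1$), so this piece can be free of positive rank. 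Your parity argument (``oddness of $\alpha$ forces all $C_d$-fixed dimensions \ldots\ to be odd'') applies to $\alpha+\xi^d$ and $\alpha+\xi^d-2$ but not to $\alpha+\xi^d-1$, which is exactly where the free contribution lives.

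What the paper does instead is not avoid this free piece but \emph{control where it goes}. After tensoring with $\Q$ and using the inductive hypothesis to kill the $\KK_{n,d}\uH^{\alpha+2}_{C_d}(S^0)$ piece (this one is in odd $C_d$-degree, so genuinely torsion), one has $\uH^{\alpha+\xi^d}_G(S(\xi^d)_+)\otimes\Q\cong \CC_{n,d}\uH^{\alpha+1}_{C_d}(S^0)\otimes\Q$. The point is then Proposition~\ref{sus-r}: the composite
\[
\CC_{n,d}\uH^{\alpha+1}_{C_d}(S^0)\hookrightarrow \uH^{\alpha+\xi^d}_G(S(\xi^d)_+)\stackrel{\delta}{\longrightarrow}\uH^{\alpha+1}_G(S^0)
\]
is injective (it is an isomorphism at $G/K$ for $K\leq C_d$, and the restriction maps of $\CC_{n,d}$ are injective). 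Hence $\delta\otimes\Q$ is injective, and from the exact sequence \eqref{exr} you get $\uH^{\alpha+\xi^d}_G(S^0)\otimes\Q=0$. You never invoke or need any independence statement from Section~\ref{cohA}, which would be circular.
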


\begin{proof}
We proceed by induction on $n$. We know that if all the fixed points of $\alpha$ are odd negative then $\uHal(S^0) =0$. Start with an $\alpha$ with $|\alpha|$ odd, and $\uHal(S^0) \otimes \Q =0$.  We show that for every $d$, $\uH^{\alpha+\xi^d}_G(S^0)\otimes \Q =0$. This will imply the result. From \eqref{exr}, we have the exact sequence 
$$0 \to \uH^{\alpha+\xi^d}_G(S^0)\otimes \Q  \to \uH^{\alpha +\xi^d}_G(S(\xi^d)_+)\otimes \Q  \stackrel{\delta}{\to} \uH^{\alpha+1}_{G}(S^0)\otimes \Q  \cdots.$$
We have another short exact sequence,
$$0 \to \CC_{n,d}\uH^{\alpha+1}_{C_d}(S^0)\otimes \Q \to \uH^{\alpha+\xi^d}_G(S(\xi^d)_+)\otimes \Q  \to \KK_{n,d}\uH^{\alpha+2}_{C_d}(S^0)\otimes \Q  \to 0$$
Using the induction hypothesis, we get  $\KK_{n,d}\uH^{\alpha+2}_{C_d}(S^0) \otimes \Q =0$. Thus, from the above short exact sequence we obtain 
$$\uH^{\alpha+\xi^d}_G(S(\xi^d)_+) \otimes \Q \cong \CC_{n,d}(\uH^{\alpha+1}_{C_d}(S^0)) \otimes \Q.$$ 
It follows then  from Proposition \ref{sus-r} that $\delta$ is injective. 
\end{proof}

Theorem \ref{torsion} helps us to deduce that the short exact sequence \eqref{exsphere} splits. 
\begin{thm}\label{splitting}
For all $\alpha \in RO(G)$, the Mackey functor 
$$\uHal(S(\xi^d)_+) \cong \CC_{n,d}(\uH^{\alpha-1}_{C_d}(S^0)) \oplus \KK_{n,d}(\uH^{\alpha}_{C_d}(S^0)).$$ 
\end{thm}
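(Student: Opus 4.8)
The plan is to show that \eqref{exsphere} splits after restriction along $i\colon C_d\hookrightarrow G$, and then to upgrade this to a splitting over $G$ by means of Proposition \ref{maps}, whose hypotheses will be supplied by Theorem \ref{torsion}. The key elementary observation is that $i^\ast\xi^d$ is the trivial two-dimensional $C_d$-representation, so $i^\ast S(\xi^d)_+$ is $S^1_+$ with trivial $C_d$-action and hence splits stably as $S^0\vee S^1$ (this is exactly the computation used in the proof of Proposition \ref{sus-r}). Combined with the identities $\downarrow^G_{C_d}\CC_{n,d}\uN\cong\uN\cong\downarrow^G_{C_d}\KK_{n,d}\uN$, applying the exact functor $\downarrow^G_{C_d}$ to \eqref{exsphere} produces the short exact sequence
$$0\to \uH^{i^\ast(\alpha-1)}_{C_d}(S^0)\to \uH^{i^\ast\alpha}_{C_d}(S^1_+)\to \uH^{i^\ast\alpha}_{C_d}(S^0)\to 0,$$
and since the collapse $S^1_+\to S^0$ is split, so are both the inclusion and the surjection here; that is, $\downarrow^G_{C_d}$ carries both maps of \eqref{exsphere} to split maps. (Checking that these restrictions are literally the obvious split maps is a matter of tracing through the identifications of Propositions \ref{coker} and \ref{sus-r}.)

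Next I split into cases according to the parity of $\alpha$, recalling that since $n$ is odd this parity equals that of every fixed-point dimension $|\alpha^H|$. Suppose $|\alpha|$ is odd. Then $i^\ast\alpha$ is odd, so Theorem \ref{torsion} applied to the group $C_d$ shows that every group of $\uN:=\uH^{i^\ast\alpha}_{C_d}(S^0)$ is torsion, and this torsion is $p$-primary only for $p\mid d$ (either by the general torsion estimates of this section, or from the inductively available explicit form of $\uH^\bullet_{C_d}(S^0)$). Thus $\uN$ meets the hypotheses of Proposition \ref{maps}, and since $\downarrow^G_{C_d}$ of the surjection $\uHal(S(\xi^d)_+)\to \KK_{n,d}\uN$ is a split surjection, part (a) of that proposition shows the surjection itself is split; hence \eqref{exsphere} splits. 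If instead $|\alpha|$ is even, then $i^\ast(\alpha-1)$ is odd, so by the same reasoning $\uN':=\uH^{i^\ast(\alpha-1)}_{C_d}(S^0)$ is torsion with $p$-primary torsion only for $p\mid d$; since $\downarrow^G_{C_d}$ of the inclusion $\CC_{n,d}\uN'\to \uHal(S(\xi^d)_+)$ is a split injection, part (b) of Proposition \ref{maps} shows the inclusion is split and \eqref{exsphere} splits again. The degenerate case $d=n$ requires nothing, since then $\xi^n$ is trivial, $S(\xi^n)_+\simeq S^0\vee S^1$ already over $G$, and $\CC_{n,n}=\KK_{n,n}=\mathrm{id}$.

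I expect the main obstacle to be the verification of the hypotheses of Proposition \ref{maps}: one needs that the relevant coefficient Mackey functor over $C_d$ — whichever of $\uH^{i^\ast\alpha}_{C_d}(S^0)$ and $\uH^{i^\ast(\alpha-1)}_{C_d}(S^0)$ sits in odd degree — is not merely torsion (which is Theorem \ref{torsion}) but has its torsion supported at primes dividing $d$. This is the point at which the induction on $n$ (so that the groups $C_d$ with $d\mid n$, $d<n$ are already understood) and the general torsion results of Section \ref{cohrep} enter; once these are in place, the parity dichotomy and the two appeals to Proposition \ref{maps} are formal.
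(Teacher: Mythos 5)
Your proof is correct and follows essentially the same route as the paper's: restrict along $C_d\hookrightarrow G$ to see the sequence \eqref{exsphere} splits there (since $i^\ast S(\xi^d)_+\simeq S^1_+\simeq S^0\vee S^1$), then use Theorem \ref{torsion} to verify the hypotheses of Proposition \ref{maps}, applying part (a) for $\alpha$ odd and part (b) for $\alpha$ even. You are somewhat more explicit than the paper in flagging that the ``$p$-torsion only for $p\mid d$'' clause of Proposition \ref{maps} must be supplied (inductively, via Theorem \ref{ptorsion} for the smaller group $C_d$), a point the paper leaves implicit.
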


\begin{proof}
We divide the  proof into two cases: $\alpha$ is odd and even. Let us consider the short exact sequence,
$$0 \to \CC_{n,d}(\uH^{\alpha-1}_{C_d}(S^0)) \to \uHal(S(\xi^d)_+) \to \KK_{n,d}(\uH^{\alpha}_{C_d}(S^0)) \to 0.$$
First consider the case $\alpha$ odd. Theorem \ref{torsion} implies that $\KK_{n,d}\uH^{\alpha}_{C_d}(S^0)$ has only torsion. 
For $K \leq C_d,$   
$$\uHal(S(\xi^d)_+)(G/K) \cong \tilde{H}^{\alpha}_{K}(S^1_+) \cong \tilde{H}^{\alpha}_{K}(S^0) 
\oplus \tilde{H}^{\alpha-1}_{K}(S^0).$$
 So the above short exact sequence splits when evaluated at the orbit $G/K$ for all $K \leq C_d$. Thus the part a) of Proposition \ref{maps} implies the result.

For $\alpha$ even, the Mackey functor $\CC_{n,d}\uH^{\alpha-1}_{C_d}(S^0) \otimes \Q$  vanishes due to Theorem \ref{torsion}. Then, we use part b) of  Proposition \ref{maps} to conclude the result.
\end{proof}
Finally, we show that $\uHbs(S^0)$ has no torsion relatively prime to $n$. 
\begin{thm}\label{ptorsion}
The Mackey functor $\uHbs(S^0)$ has no $p$-torsion if $p$ does not divide $n.$
\end{thm}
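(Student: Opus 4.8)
My plan is to invert $n=|G|$: the absence of $p$-torsion for every $p\nmid n$ is exactly the statement that $\uHbs(S^0;\uM)[\tfrac1n]$ is torsion-free, and I would prove this for $\uM\in\{\uZ,\uA\}$ separately, the coefficient $\uZ$ being much easier. For $\uZ$ the Euler class $a_\xi\in\uH^\xi_G(S^0;\uZ)$ is $|G|$-torsion, because it restricts to $0$ on the trivial subgroup (there $\uH^\xi_e(S^0;\uZ)=\tilde H^2(S^0;\Z)=0$), so the projection formula gives $|G|\,a_\xi=\tr^G_e\res^G_e(a_\xi)=0$. Hence after inverting $n$ the operation ``cup with $a_\xi$'' is zero; since the maps $\uH^\bullet_G(S^0)\to\uH^{\bullet+\xi}_G(S^0)$ appearing in the long exact sequence \eqref{exr} with $d=1$ are precisely cup-with-$a_\xi$, that sequence degenerates after inverting $n$ into short exact sequences and yields an injection $\uHal(S^0;\uZ)[\tfrac1n]\hookrightarrow\uHal(S(\xi)_+;\uZ)[\tfrac1n]$ for every $\alpha$. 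Finally $\uHbs(S(\xi)_+;\uZ)$ is torsion-free: by the cofibre sequence \eqref{cofd} it is assembled from $\uHbs({G/e}_+;\uZ)$, which by Proposition \ref{orbit} is $\uparrow^G_e$ of the ordinary cohomology of $S^0$, hence concentrated in degree $0$ where it is the permutation $\Z[G]$-module $\uparrow^G_e\Z$; so each of its groups is either a kernel of $\rho-1$ inside a finitely generated free $\Z$-module, or a cokernel $\Z[G/K]/(\rho-1)\cong\Z$, in all cases torsion-free. This settles the $\uZ$-case.

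For $\uM=\uA$ the Euler classes are not torsion (already $\uH^\xi_{C_p}(S^0;\uA)\cong\Z$), so I would instead use isotropy separation. After inverting $n$ the Burnside ring of $G$ is $\prod_{d\mid n}\Z[\tfrac1n]$, and the corresponding idempotents split $H\uA[\tfrac1n]\simeq\bigvee_{d\mid n}e_{C_d}H\uA[\tfrac1n]$ with $e_{C_d}H\uA[\tfrac1n]$ concentrated at $C_d$. Under the equivalence between $G$-spectra concentrated at $C_d$ and $C_{n/d}$-spectra (via $\Phi^{C_d}$), the contribution of the $C_d$-summand to $\uHal(S^0;\uA)[\tfrac1n]$ is $[\,S^0,S^{\alpha^{C_d}}\wedge\Phi^{C_d}H\uA[\tfrac1n]\,]^{C_{n/d}}$. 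Now $\pi_0\Phi^{C_d}H\uA=\uA(G/C_d)/(\text{transfers})=\Z$, while the higher homotopy of $\Phi^{C_d}H\uA$ is torsion at the primes dividing $d$ and so is killed by inverting $n$; iterating the same splitting over $C_{n/d}$ (needed because $\Phi^{C_d}H\uA[\tfrac1n]$ is not concentrated at a single subgroup) one rewrites each contribution, after finitely many steps, as a Borel cohomology group $H^{|\gamma|}(BC_m;\Z)[\tfrac1n]$ — here the $RO$-grading collapses onto the integer grading because over a free complex such as $EC_m$ the Thom classes $u_V$ are invertible. Since $H^{>0}(BC_m;\Z)$ is finite of order dividing a power of $m$, these groups are $\Z[\tfrac1n]$ or $0$, so $\uHal(S^0;\uA)[\tfrac1n]$ is a finite direct sum of copies of $\Z[\tfrac1n]$ and is torsion-free. (The same splitting applied to $\uZ$ gives a quicker proof of the first case, since there $\Phi^{C_d}H\uZ[\tfrac1n]\simeq0$ for $d\neq1$.)

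The hard part is organising the $\uA$-case: that $\Phi^{C_d}H\uA[\tfrac1n]$ is an Eilenberg--MacLane spectrum of a free module, the precise identification of the idempotent summands, and the bookkeeping of the iterated splitting all require some care, though no deep new input. An alternative that avoids isotropy separation is to imitate the $\uZ$-argument by induction on the number of primes dividing $n$, feeding in the short exact sequences from Section \ref{GMack} that relate $\uA$ to $\uZ$, $\uZ^\ast$ and the $\bZ$'s; but then the long exact sequences throw up cokernels that are merely quotients of torsion-free groups, and one is forced to prove that the relevant boundary-map images are $p$-pure — isolating and proving that purity statement is, I expect, the real obstacle on that route.
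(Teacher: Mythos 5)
Your argument is genuinely different from the paper's, which treats $\uZ$ and $\uA$ uniformly by induction on the number of prime factors of $n$. Your $\uZ$-case is correct and is, in fact, a real simplification: since $\res^G_e(a_\xi)=0$ and \eqref{Z-mod} gives $n\,a_\xi=\tr^G_e\res^G_e(a_\xi)=0$, the class $a_\xi$ dies after inverting $n$, so the long exact sequence \eqref{exr} (with $d=1$) collapses into short exact sequences and yields an injection $\uHal(S^0;\uZ)[\tfrac1n]\hookrightarrow\uHal(S(\xi)_+;\uZ)[\tfrac1n]$ for every $\alpha$; the target is torsion-free because by \eqref{ses} and Proposition \ref{orbit} it is assembled from the invariants and coinvariants of $1-\rho$ on $\uparrow^G_e\Z$ evaluated at each orbit, all of which are free abelian. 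This sidesteps both Theorem \ref{splitting} and the cokernel analysis that the paper uses.

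The $\uA$-case, however, is only an outline with genuine gaps, which you partly acknowledge. Two assertions are left unestablished: (i) that $\Phi^{C_d}H\uA[\tfrac1n]$ is an Eilenberg--MacLane $C_{n/d}$-spectrum of a free module --- identifying $\pi_*\Phi^{C_d}H\uA$ and showing its positive-degree part is killed by inverting $n$ is a nontrivial calculation, not a formality; and (ii) the ``iterated splitting'' together with the claimed collapse to Borel cohomology is not carried out, and it is not clear a priori that the iteration lands in Borel rather than Tate-type objects (the latter would reintroduce negative-degree classes and invalidate the finiteness argument). By contrast, the paper's proof of Theorem \ref{ptorsion} controls $\uHal(S(\xi^d)_+)$ via Theorem \ref{splitting} and Theorem \ref{torsion}, reduces to analyzing $\coker\delta$ at $G/G$ by the inductive hypothesis, and then, after localizing at $p$, uses Proposition \ref{sus-r} together with the double coset formula to split $\coker\delta(G/G)$ off a $p$-torsion-free direct summand. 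That is precisely the ``purity of the boundary-map image'' that you flag at the end as ``the real obstacle on that route'' --- you have correctly identified the crux, but the route you set aside is the one the paper actually carries out, and the route you propose instead leaves a comparable amount of work undone.
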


\begin{proof} 
Write $n = p_1 \cdots p_k$, and proceed by induction on $k$. For $k =1$, this follows from the computations in \cite{Lew88}. Let $d$ be a divisor of $  n$. As the number of prime factors of $d$ is smaller than that of $n$, by induction hypothesis we have that $\CC_{n,d}\uH^{\alpha-1}_{C_d}(S^0)$ and $\KK_{n,d}\uH^{\alpha}_{C_d}(S^0; \uA)$ have no $p$-torsion. It follows from Theorem \ref{splitting} that $\uHal({S(\xi^d)}_+)$ has no $p$-torsion for every $\alpha$.

We know that if all fixed points of $\alpha$ are negative then $\uHal(S^0)=0$ (Corollary \ref{sign}). We now show that the process of adding a copy of $\xi^d$ to $\alpha$ does not pick up any $p$-torsion.  This will allow us to conclude the result. Consider the following exact sequence
$$\cdots \to \uH^{\bs-1}_G(S(\xi^d)_+) \stackrel{\delta}{\to} \uH^{\bs -\xi^d}_G(S^0) \to \uHbs(S^0) \to \uHbs(S(\xi^d)_+) \to \cdots .$$
As $\uHbs(S(\xi^r)_+)$ is $p$-torsion free, it is enough to check that the cokernel of $\delta$ does not contain any $p$-torsion.   Using the induction hypothesis, it suffices to prove that the cokernel of $\delta(G/G)$ has no $p$-torsion. For $\alpha$ odd, $\uHal(S^0)_{(p)} = 0$ by the assumption of the conclusion for $\alpha$ and Theorem \ref{torsion}. Hence, the statement follows in odd degrees. In the rest of the proof we assume that $\alpha$ is even.  In this case, we localize at $p$ to compute the following map  
$$\uH^{\alpha+\xi^d-1}_G(S(\xi^d)_+)_{(p)} \stackrel{\delta}{\to} \uHal(S^0)_{(p)} .$$
Applying Theorem \ref{splitting}, the left side is $\CC_{n,d}\uH^{\alpha + \xi^d -2}(S^0)_{(p)} \oplus \KK_{n,d}\uH^{\alpha + \xi^d -1}(S^0)_{(p)}$. The second term in the summand is $0$ by Theorem \ref{torsion}. It follows that $\delta$ is an isomorphism at $G/K$ for $K\leq C_d$ by Proposition \ref{sus-r}. This in turn implies that the action of the $c_g$ on  $\uHal(S^0)_{(p)}(G/K)$ is trivial for $K\leq C_d$. From the double coset formula, we conclude 
$$\res^G_{C_d} \circ tr^G_{C_d}(x) = \frac{n}{d}x.$$
Consider the commutative diagram
$$\xymatrix{\uH^{\alpha + \xi^d-1}_G(S(\xi^d)_+)_{(p)}(G/G) \ar@/_1pc/[d]_{\res^{G}_{C_d}} \ar[rr]^-{\delta(G/G)} &&\uHal(S^0)_{(p)}(G/G) \ar@/_1pc/[d]_{\res^{G}_{C_d}} \\ 
\uH^{\alpha + \xi^d-1}_G(S(\xi^d)_+)_{(p)}(G/C_d)\ar@/_1pc/[u]_{tr^{G}_{C_d}} \ar[rr]_-{\delta(G/C_d)} &&\uHal(S^0)_{(p)}(G/C_d)\ar@/_1pc/[u]_{tr^{G}_{C_d}}}$$
As $\frac{n}{d}$ is relatively prime to $p$, the right vertical restriction map $\res^{G}_{C_d}$ is surjective, and $\frac{d}{n}tr^{G}_{C_d}$ is a section. As a consequence, there is some Abelian group $C (= \ker(\res^G_{C_d}))$ such that 
$$\uHal(S^0)_{(p)}(G/G) = \uH^{\alpha}_G(S^0)_{(p)}(G/C_d) \oplus C.$$
As $\uHal(S^0)_{(p)}(G/G)$ does not contain any $p$-torsion, it turns out that the Abelian group $C$ has no $p$-torsion. Now,  the restriction map $\res^{G}_{C_d}$, and the transfer map $tr^{G}_{C_d}$ for the Mackey functor $\uH^{\alpha + \xi^d -1}_G(S(\xi^d)_+)_{(p)}$ are isomorphisms, and subsequently, 
$$\coker(\delta(G/G)) = \coker(\delta(G/C_d)) \oplus C.$$ 
Proposition \ref{sus-r} implies that the map $\delta(G/C_d)$ is an isomorphism. Therefore, $\coker(\delta)$ has no $p$-torsion.
\end{proof}
\end{mysubsection}

\section{Cohomology with $\uZ$ coefficients}\label{cohZ}
In this section, we discuss the computations of $\uHbs(S^0;\uZ)$. We use the iterated tensor products  
 $$\boxtimes : \MM_{C_{p_1}} \times \cdots \times \MM_{C_{p_k}} \to \MM_G.$$
We observe that the constant $G$-Mackey functor $\uZ$ is $\boxtimes(\uZ,\cdots,\uZ)$ and the pointwise dual $\uZ^\ast = \boxtimes(\uZ^\ast,\cdots,\uZ^\ast)$. 
\begin{notation} \label{tens}
For $J = (j_1,\cdots, j_k) \in \{0,1\}^k$, we write $\uZ^J = \boxtimes(\uN_1,\cdots, \uN_k)$ where 
$$\uN_i = \begin{cases} \uZ &\mbox{  if } j_i = 0 \\ 
                                    \uZ^\ast & \mbox{ if } j_i=1 . 
                \end{cases} $$ 
Hence, we have  $\uZ^{(0,\cdots ,0)} = \uZ$, and $\uZ^{(1,\cdots,1)} = \uZ^\ast$. In fact if $m= \prod\limits_{j_i=1} p_i$, then $\uZ^J = \KK_{n,m}\uZ^\ast$. 
\end{notation}

\begin{mysubsection}
{The additive structure of $\uHbs(S^0;\uZ)$} \label{addZ}
Recall that for $G =C_p$, with $p$ an odd prime, we have the following formula for $\uHbs(S^0;\uZ)$ \cite[Appendix B]{Fer00} 
 \begin{myeq} \label{fer} 
 \uH^{\alpha}_{C_p}(S^0; \uZ) = \begin{cases} \uZ & \text{if} \; |\alpha| =0, \; |\alpha^{C_p}| \leq 0 \\ 
                                                             \uZ^* & \text{if} \; |\alpha| =0, \; |\alpha^{C_p}| > 0 \\ 
                                                          \bZp & \text{if}\;|\alpha| >0,\; |\alpha^{C_p}| \leq 0, \; \text{and} \; \alpha \; \text{even} \\ 
                                                           \bZp & \text{if} \;|\alpha|<0, \; |\alpha^{C_p}| >1,\; \text{and} \; \alpha \; \text{odd} \\
                                                            0 & \text{otherwise.} \end{cases} 
\end{myeq} 

 The Mackey functors  $\uHal(S^0; \uZ)$ are all modules over $\uZ$, so they satisfy the relation   \cite[Proposition 16.3]{TW95}  
 \begin{myeq}\label{Z-mod}
tr^{G}_{H}\circ res^{G}_H (x) = [G:H] x.
\end{myeq}
Using \eqref{fer} and \eqref{Z-mod} together we observe that $\uHal(S^0;\uZ)$ is $0$ for a range of values of $\alpha$. 
\begin{thm}\label{zercoh}
Let  $\alpha \in RO(G).$ Then, \\
1) If $|\alpha|>0$ odd,  $\uHal(S^0; \uZ) =0.$\\
2) If $|\alpha|<0$ even,  $\uHal(S^0; \uZ) =0.$\\
3) If $|\alpha|<0$ odd and $|\alpha^{C_{p_i}}| \leq 1$ for all $1 \leq i \leq k$,  $\uHal(S^0; \uZ) =0.$
\end{thm}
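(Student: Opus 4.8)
The plan is to reduce each of the three cases to the known $C_p$ calculation~\eqref{fer} via the restriction maps $\res^G_{C_{p_i}}$ and the module relation~\eqref{Z-mod}, together with the vanishing results established in Section~\ref{cohrep} (Corollary~\ref{sign} and Theorem~\ref{torsion}). I would proceed by induction on the number $k$ of prime factors of $n$, the case $k=1$ being exactly~\eqref{fer}; but in fact the arguments below are uniform once one has $C_p$ in hand, so the induction is mostly bookkeeping through the splitting of Theorem~\ref{splitting} and the exact sequence~\eqref{exr}.

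First I would dispose of parts (1) and (2) simultaneously using parity. For $|\alpha|$ odd, Theorem~\ref{torsion} says $\uHal(S^0;\uZ)\otimes\Q=0$, so every value of the Mackey functor is torsion. Now I would run the computational strategy of Section~\ref{cohrep}: start from an $\alpha$ with all fixed-point dimensions negative (where the Mackey functor vanishes by Corollary~\ref{sign}) and add copies of $\xi^d$ one at a time via the long exact sequence~\eqref{exr}, using Theorem~\ref{splitting} to identify $\uH^\bs_G(S(\xi^d)_+;\uZ)$ with a sum of $\CC_{n,d}$ and $\KK_{n,d}$ applied to lower cohomology, which by the induction hypothesis vanish in the relevant degrees. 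This forces $\uHal(S^0;\uZ)\cong \uH^{\alpha+\xi^d}_G(S^0;\uZ)$ in a range, and tracking the parity/sign constraints shows that as long as $|\alpha|$ stays odd and positive (resp.\ even and negative) one never leaves the vanishing region. Concretely, for part (2): if $|\alpha|<0$ is even, then restricting to any $C_{p_i}$ gives $|\alpha^{C_{p_i}}|<0$ even (since $n$ is odd), so by~\eqref{fer} $\res^G_{C_{p_i}}\uHal(S^0;\uZ)(G/C_{p_i})=0$; combined with~\eqref{Z-mod}, $[G:C_{p_i}]$ annihilates $\uHal(S^0;\uZ)(G/G)$, and running this over all $i$ (whose indices are coprime) forces the $G/G$-value to be torsion-free, hence zero, and similarly all proper-orbit values vanish by downward comparison. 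A parallel argument with the odd/torsion input from Theorem~\ref{torsion} handles part (1).

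For part (3), the extra hypothesis $|\alpha^{C_{p_i}}|\le 1$ for all $i$ is exactly what is needed to kill the remaining potential contributions: by~\eqref{fer}, $\uH^{\alpha^{C_{p_i}}}_{C_{p_i}}(S^0;\uZ)=0$ when $|\alpha^{C_{p_i}}|<0$ is odd and $|\alpha^{C_{p_i}}|\le 1$ (the only nonzero odd-negative case in~\eqref{fer} requires $|\alpha^{C_p}|>1$). Hence $\res^G_{C_{p_i}}\uHal(S^0;\uZ)=0$ at the orbit $C_{p_i}$, and again~\eqref{Z-mod} shows $[G:C_{p_i}]$ kills $\uHal(S^0;\uZ)(G/G)$; since $\uHal(S^0;\uZ)$ is torsion (Theorem~\ref{torsion}, $|\alpha|$ odd) and the various $[G:C_{p_i}]=n/p_i$ are jointly coprime, the $G/G$-value must be $0$. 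For the proper orbits $G/C_J$ one uses Proposition~\ref{orbit} to identify $\uHal(S^0;\uZ)(G/C_J)$ with a value of $\uH^\bs_{C_J}(S^0;\uZ)$, and the hypothesis descends to $C_J$, so the inductive hypothesis (or a direct repeat of the argument inside $C_J$) finishes it.

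The main obstacle I anticipate is the careful handling of the induction for part (3) at the non-maximal orbits: one must check that the condition ``$|\alpha^{C_{p_i}}|\le 1$ for all $i$'' interacts correctly with the identification $\uHal(S^0;\uZ)(G/C_J)\cong \uH^{\alpha|_{C_J}}_{C_J}(S^0;\uZ)(C_J/C_J)$ and that the restricted class still satisfies the hypothesis at every prime dividing $|J|$ — which it does, since restricting only decreases the relevant fixed-point dimensions' indexing set. A secondary technical point is making the ``add $\xi^d$ step by step'' argument watertight: one needs to verify that the boundary maps in~\eqref{exr} vanish in the relevant degrees, which follows from Proposition~\ref{sus-r} together with the induction hypothesis forcing the source or target to be zero, but the degree bookkeeping (keeping $|\alpha|$ odd, controlling the signs $|\alpha^{C_d}|$) must be tracked explicitly. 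Once those two points are pinned down, the rest is a routine assembly of the cited results.
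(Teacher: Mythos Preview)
Your ``concrete'' argument via \eqref{Z-mod} is correct and is exactly the paper's proof: induct on $k$; by the induction hypothesis the Mackey functor vanishes at every proper orbit $G/H$ (the hypotheses in each of (1)--(3) descend to $H$), so it has the form $\langle A\rangle$; then \eqref{Z-mod} forces $A=0$ since coprime integers annihilate it. The paper runs this uniformly for all three parts in one paragraph, using $H=C_{n/p}$ so that $[G:H]=p$ and $pa=0$ for every $p\mid n$; your choice $H=C_{p_i}$ works equally well.

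Two remarks. First, the preliminary ``add $\xi^d$ via \eqref{exr} and Theorem~\ref{splitting}'' strategy you sketch for (1)--(2) is unnecessary and, as you acknowledge, would require nontrivial degree bookkeeping to make rigorous; the direct \eqref{Z-mod} argument you give afterwards already handles those cases, so just drop the longer route. Second, you conflate the restriction $i^*\alpha\in RO(C_{p_i})$ with the fixed points $\alpha^{C_{p_i}}$: when you write ``restricting to $C_{p_i}$ gives $|\alpha^{C_{p_i}}|<0$'' and later ``$\uH^{\alpha^{C_{p_i}}}_{C_{p_i}}(S^0;\uZ)$'', the index should be $i^*\alpha$, whose underlying dimension is $|\alpha|$ (unchanged). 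The $C_{p_i}$-fixed dimension $|\alpha^{C_{p_i}}|$ need not be negative under hypothesis (2); what makes \eqref{fer} give zero is that $|i^*\alpha|=|\alpha|$ already satisfies the relevant parity and sign condition. The logic survives the slip, but the notation should be corrected.
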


\begin{proof}
We use induction on $k$ to conclude the result. For $k=1,$ this follows readily from \eqref{fer}, and for $k=2$, this follows from \cite[Theorem 7.3]{BG19}. Using the induction hypothesis, we deduce that the Mackey functor $\uHal(S^0; \uZ)$ is $0$ at any $G/H$ for $H\neq G$, and hence is of the  form $\langle A \rangle$ for some Abelian group $A$. Now we note from \eqref{Z-mod} that $a\in A$ must satisfy $pa=0$ for any $p\mid n$. By taking two distinct prime factors $p$ and $q$ of $n$ we have $pa=0$ and $qa=0$ which together implies $a=0$. Hence the result follows.
\end{proof}

Theorem \ref{zercoh} provides a starting point for the computation of $\uHal(S^0; \uZ)$. We use the cofibre sequences \eqref{cofd2}
$$S(\xi)_+ \to S^0 \to S^{\xi},$$ 
$$S(\xi^{p_i})_+ \to S^0 \to S^{\xi^{p_i}}$$ 
for $1 \leq i \leq k,$ and associated long exact sequences 
\begin{myeq}\label{ex-e}
\cdots \uHalxi(S^0) \to \uHal(S^0) \to \uHal(S(\xi)_+) \to \uH^{\alpha+1-\xi}_G(S^0) \cdots
\end{myeq}
and
\begin{myeq}\label{shex-pi}
\cdots \uH^{\alpha-\xi^{p_i}}_G(S^0) \to \uHal(S^0) \to \uHal(S(\xi^{p_i})_+) \to \uH^{\alpha+1 -\xi^{p_i}}_G(S^0)\cdots.
\end{myeq}
We also recall from Proposition \ref{splitting} that for $\alpha \in RO(G)$, we have
\begin{myeq}\label{constsphere}
 \uHal(S(\xi^{p_i}); \uZ) \cong \KK_i\uH^{\alpha}_{C_{p_i}}(S^0; \uZ)\oplus \CC_i\uH^{\alpha-1}_{C_{p_i}}(S^0; \uZ).
\end{myeq}
Hence \eqref{constsphere} may be used together with \eqref{fer} to write down the cohomology of the representation sphere $S(\xi^{p_i})$. We deduce
\begin{thm}\label{non-zero}
For $\alpha \in RO(G)$ with $|\alpha|<0$ odd, 
$$\uHal(S^0; \uZ) \cong \bigoplus_{|\alpha^{C_{p_i}}|>1} \KK_i\bZpi. $$
For $|\alpha| >0$ even, 
$$\uHal(S^0; \uZ) \cong \bigoplus_{|\alpha^{C_{p_i}}|\leq 0}\KK_i\bZpi.$$
\end{thm}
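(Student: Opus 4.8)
The plan is to proceed by induction on $k$, the number of prime factors of $n$, starting from the base cases $k=1$ (which is \eqref{fer}) and $k=2$ (which is \cite[Theorem 7.3]{BG19}). The strategy, following the computational philosophy laid out after Corollary \ref{sign}, is to begin with an $\alpha$ all of whose fixed-point dimensions are negative (so $\uHal(S^0;\uZ)=0$ by Corollary \ref{sign}), and then add copies of $\xi^{p_i}$ one at a time using the long exact sequence \eqref{shex-pi}, tracking how the cohomology changes. By the equivalence $H\uZ\wedge S^{\xi^r}\simeq H\uZ\wedge S^{\xi^{rs}}$ for $s$ coprime to $n$, it suffices to treat $\alpha$ a linear combination of the $\xi^d$ for $d\mid n$, and such $\alpha$ is determined by the fixed-point dimensions $(|\alpha^{C_d}|\mid d\mid n)$; by Proposition \ref{all} we may further reduce to adding only $\xi^{p_i}$'s.

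First I would handle the odd case $|\alpha|<0$. Fix $\alpha$ with $|\alpha|<0$ odd, and suppose inductively (on how many $\xi^{p_i}$ have been added, within the induction on $k$) that $\uHal(S^0;\uZ)\cong\bigoplus_{|\alpha^{C_{p_i}}|>1}\KK_i\bZpi$. I then compute $\uH^{\alpha+\xi^{p_j}}_G(S^0;\uZ)$ from the piece of \eqref{shex-pi}
$$\cdots\to\uHal(S^0)\to\uH^{\alpha+\xi^{p_j}}_G(S^0)\to\uH^{\alpha+\xi^{p_j}}_G(S(\xi^{p_j})_+)\xrightarrow{\delta}\uH^{\alpha+1}_G(S^0)\to\cdots,$$
where $\uH^{\alpha+\xi^{p_j}}_G(S(\xi^{p_j})_+)$ is given by \eqref{constsphere} together with \eqref{fer}, restricted to $C_{p_j}$: explicitly it is $\KK_j\uH^{\alpha+\xi^{p_j}}_{C_{p_j}}(S^0;\uZ)\oplus\CC_j\uH^{\alpha+\xi^{p_j}-1}_{C_{p_j}}(S^0;\uZ)$, and since $|\alpha^{C_{p_j}}|<0$ odd and $|(\alpha+\xi^{p_j})^{C_{p_j}}|=|\alpha^{C_{p_j}}|$ (the $C_{p_j}$-fixed part of $\xi^{p_j}$ is zero-dimensional while the full dimension goes up by $2$), the relevant summand coming from \eqref{fer} is $\KK_j\bZpj$ precisely when $|\alpha^{C_{p_j}}|>1$, i.e.\ exactly when the new term should appear. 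Using Theorem \ref{torsion} to locate the torsion and Proposition \ref{sus-r} to show $\delta$ is injective on the $\CC$-summand and hence that the $\KK_j\bZpj$ summand injects, then combining with the incoming map from $\uHal(S^0)$ (which contributes the already-present summands $\bigoplus_{i\neq j,\,|\alpha^{C_{p_i}}|>1}\KK_i\bZpi$ — one checks adding $\xi^{p_j}$ does not change $|\alpha^{C_{p_i}}|$ for $i\neq j$), one assembles the desired direct sum decomposition. The splitting of the resulting extension is forced by Proposition \ref{maps}.

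The even case $|\alpha|>0$ is dual and is handled by Anderson duality (Theorem \ref{anders}): applying $\Hom_L(-,\Z)$ and $\Ext_L(-,\Z)$ to the already-established odd computation, using that $\Ext_L(\KK_i\bZpi,\Z)$ picks up the shift and converts $\bZ/p_i$ into the analogous Mackey functor, while the $\Hom_L$ term contributes the $|\alpha|=0$ boundary cases but not here. Concretely, $\uHal(S^0;\uZ)$ with $|\alpha|>0$ even is computed from $\uH^{3-\xi-\alpha}_G(S^0;\uZ)$, which has $|3-\xi-\alpha|=1-|\alpha|<0$ odd, so the odd formula applies; the condition $|\alpha^{C_{p_i}}|\leq 0$ translates under $\alpha\mapsto 3-\xi-\alpha$ into the condition $|(3-\xi-\alpha)^{C_{p_i}}|>1$ (since $|(3-\xi-\alpha)^{C_{p_i}}|=1-|\alpha^{C_{p_i}}|$, noting $\xi^{C_{p_i}}=0$ as $p_i\mid n$ means $C_{p_i}$ acts nontrivially on $\xi$ only if... one must be careful: $\xi$ restricted to $C_{p_i}$ has trivial fixed points since $n\nmid p_i$), giving exactly the index set $\{i\mid|\alpha^{C_{p_i}}|\leq 0\}$. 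The Ext computation $\Ext(\bZ/p,\Z)\cong\bZ/p$ and Remark \ref{Z*cn} reconcile the Mackey-functor-level statements.

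The main obstacle I expect is bookkeeping the fixed-point dimension conditions consistently across the induction and across Anderson duality — in particular verifying that adding $\xi^{p_j}$ changes the index set in \emph{exactly} the way the formula predicts, and that the boundary map $\delta$ in \eqref{shex-pi} is surjective onto the relevant torsion so that no spurious summands survive or get killed. This requires careful use of Proposition \ref{sus-r} (isomorphism at $G/K$ for $K\leq C_{p_j}$) together with the explicit restriction/transfer description from Proposition \ref{coker} to pin down that the cokernel of $\delta$ at $G/G$ contributes nothing new. A secondary subtlety is ensuring all the extensions that arise split as Mackey functors, which is exactly what Proposition \ref{maps} is designed to give, provided the torsion hypotheses there (torsion only at primes dividing the relevant index) are checked — these follow from Theorem \ref{ptorsion}.
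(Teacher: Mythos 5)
Your overall plan --- working \eqref{shex-pi}, \eqref{constsphere}, \eqref{fer} and Proposition \ref{maps} to track how $\uHal(S^0;\uZ)$ changes as $\alpha$ moves in steps of $\xi^{p_i}$, then dualizing via Theorem \ref{anders} for the even case --- is the paper's own strategy run in reverse: the paper \emph{subtracts} copies of $\xi^{p_i}$ from the target until Theorem \ref{zercoh}(3) applies, rather than adding them to a base point. As written, however, two steps fail.

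First, the central fixed-point computation is wrong. You assert $|(\alpha+\xi^{p_j})^{C_{p_j}}|=|\alpha^{C_{p_j}}|$ ``since the $C_{p_j}$-fixed part of $\xi^{p_j}$ is zero-dimensional,'' but $C_{p_j}$ is the \emph{kernel} of the character $\xi^{p_j}$: the generator $\rho^{n/p_j}$ of $C_{p_j}$ rotates $\xi^{p_j}$ by $2\pi$, so $C_{p_j}$ acts trivially and $|(\xi^{p_j})^{C_{p_j}}|=2$. Hence $|(\alpha+\xi^{p_j})^{C_{p_j}}|=|\alpha^{C_{p_j}}|+2$, and the new $\KK_j\bZpj$ summand is picked up precisely when $|\alpha^{C_{p_j}}|=1$, that is $|(\alpha+\xi^{p_j})^{C_{p_j}}|=3$ --- the pivot case the paper's proof isolates --- not under the condition ``$|\alpha^{C_{p_j}}|>1$'' you state, which in any case contradicts the ``$|\alpha^{C_{p_j}}|<0$'' written in the same sentence. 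A related slip appears in the Anderson duality step: $|(3-\xi-\alpha)^{C_{p_i}}|=3-|\alpha^{C_{p_i}}|$ (since $|\xi^{C_{p_i}}|=0$), not $1-|\alpha^{C_{p_i}}|$, and it is the $3$ that makes the threshold land at $|\alpha^{C_{p_i}}|\leq 0$ rather than $<0$. Second, your base case is too small. Starting from an $\alpha_0$ with \emph{all} fixed-point dimensions negative (Corollary \ref{sign}) and adding only $\xi^{p_i}$'s never changes $|\alpha^{C_d}|$ for $d$ with more than one prime factor, so you cannot reach a target $\alpha$ with some such $|\alpha^{C_d}|>0$; Proposition \ref{all} does not repair this, since it applies only to $\alpha$ whose non-full fixed points all share a sign. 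The correct base is Theorem \ref{zercoh}(3), which leaves those composite-index fixed points unconstrained and is connected to every target by moves in $\xi^{p_i}$.
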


\begin{proof}
We start with the case $|\alpha| < 0$ odd. Here we first observe 
 $$\uHal(S^0; \uZ) \cong \begin{cases} 
                      \uH^{\alpha -\xi^{p_i}}_G(S^0; \uZ) & \mbox{if}\; |\alpha^{C_{p_i}}| \neq 3 \\ 
                      \uH^{\alpha -\xi^{p_i}}_G(S^0; \uZ)\oplus \KK_i\bZpi & \mbox{if} \; |\alpha^{C_{p_i}}| =3 
                    \end{cases}$$ 
by working the exact sequence \eqref{shex-pi} in each case. 

If $|\alpha^{C_{p_i}}|\leq 1,$  \eqref{constsphere} implies 
$$\uH^{\alpha-1}_G(S(\xi^{p_i})_+; \uZ) =0~\mbox{and}~ \uHal(S(\xi^{p_i})_+; \uZ) =0,$$ 
and therefore,  
$$\uH^{\alpha-\xi^{p_i}}_G(S^0; \uZ) \cong \uHal(S^0; \uZ).$$
If $|\alpha^{C_{p_i}}|=3$,  \eqref{constsphere} gives  
$$\uH^{\alpha-1}_G(S(\xi^{p_i})_+; \uZ) =0, 	~ \uHal(S(\xi^{p_i})_+; \uZ) =\KK_i\bZpi.$$ 
Hence, we obtain the short exact sequence 
$$0 \to\uH^{\alpha-\xi^{p_i}}_G(S^0; \uZ) \to \uHal(S^0;\uZ) \to \KK_i\bZpi \to 0.$$ 
The map $\uHal(S^0;\uZ) \to \KK_i\bZpi$ above is split at $C_{p_i}$, because $\res_{C_{p_i}}(S(\xi^{p_i}))\simeq S^1$ with trivial action, and the map is induced by the inclusion of the base point which has a retract given by the projection $S^1_+ \to S^0$. We now apply Proposition \ref{maps} to deduce $\uHal(S^0;\uZ) \cong \uH^{\alpha -\xi^{p_i}}_G(S^0;\uZ) \oplus \KK_i\bZpi.$

If $|\alpha^{C_{p_i}}| >3$, \eqref{constsphere} implies 
$$\uH^{\alpha-1}_G(S(\xi^{p_i})_+; \uZ) =\CC_i\bZpi ~\mbox{and}~ \uHal(S(\xi^{p_i})_+;\uZ) =\KK_i\bZpi,$$ 
and Theorem \ref{zercoh} implies 
$$\uH^{\alpha - 1}_G(S^0;\uZ)=0,~~ \uH^{\alpha +1-\xi^{p_i}}_G(S^0;\uZ)=0.$$   
Inputting this data into \eqref{shex-pi} gives the exact sequence
 $$0 \to \CC_i\bZpi \stackrel{\phi_\alpha}{\to} \uH^{\alpha-\xi^{p_i}}_G(S^0;\uZ) \to \uHal(S^0;\uZ) \to \KK_i \bZpi \to 0.$$ 
It follows by restricting to $C_{p_i}$ as above that
$$\uHal(S^0;\uZ) \cong \coker(\phi_\alpha) \oplus \KK_i\bZpi $$ 
and
$$\uH^{\alpha -\xi^{p_i}}_G(S^0;\uZ) \cong \coker(\phi_\alpha) \oplus \CC_i\bZpi.$$
Finally observe that as $p_i$ and $n/p_i$ are relatively prime, $\KK_i\bZpi \cong \CC_i\bZpi$.  
This completes the proof in the odd case.

For $|\alpha|>0$ even we apply Theorem \ref{anders} to obtain the exact sequence 
$$0\to \Ext_L(\uH^{3-\xi-\alpha}_G(S^0;\uZ),\Z) \to \uHal(S^0;\uZ) \to \Hom_L(\uH^{2-\xi-\alpha}_G(S^0;\uZ), \Z) \to 0.$$
If $|\alpha|>0$ even, $|3-\xi-\alpha|<0$ odd and $|2-\xi-\alpha| <0$ even. Theorem \ref{zercoh} implies that the right group is $0$. The even case now follows from the odd one by the observation $\Ext_L(\KK_i\bZpi,\Z)\cong \KK_i\bZpi$. 
\end{proof}

 Let us consider the function 
$\JJ : RO(G) \to \{0,1\}^k$ with each component given by 
$$\JJ(\alpha)_i=\begin{cases} 0 &\mbox{if} \; |\alpha^{C_{p_i}}| \leq 0 \\ 
                                             1 & \mbox{if} \;|\alpha^{C_{p_i}}| >0 .
\end{cases}$$
This defines the Mackey functor $\uZ^{\JJ(\alpha)}$ (Notation \ref{tens}) which is the value of $\uHal(S^0;\uZ)$ for $|\alpha|=0$ by the theorem below. 
\begin{thm}\label{zero}
For $|\alpha|=0$, 
$$\uHal(S^0; \uZ) \cong \uZ^{\JJ(\alpha)}.$$
\end{thm}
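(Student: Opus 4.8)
The plan is to run the long exact sequence \eqref{ex-e} attached to the cofibre sequence $S(\xi)_+ \to S^0 \to S^\xi$ (the $d=1$ case of \eqref{cofd2}), with $\uZ$-coefficients:
$$\cdots \to \uHalxi(S^0) \to \uHal(S^0) \to \uHal(S(\xi)_+) \xrightarrow{\ \delta\ } \uH^{\alpha+1-\xi}_G(S^0) \to \uH^{\alpha+1}_G(S^0) \to \cdots .$$
The generator of $C_d$ acts on $\xi$ by a nontrivial rotation for every $d>1$, so $\xi^{C_d}=0$ for $d>1$. Hence for $|\alpha|=0$ the class $\alpha-\xi$ has dimension $-2$ (even, negative) and $\alpha+1$ has dimension $1$ (odd, positive), so Theorem \ref{zercoh} makes the two outer groups $\uHalxi(S^0;\uZ)$ and $\uH^{\alpha+1}_G(S^0;\uZ)$ vanish. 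This leaves a short exact sequence
$$0 \to \uHal(S^0;\uZ) \to \uHal(S(\xi)_+;\uZ) \xrightarrow{\ \delta\ } \uH^{\alpha+1-\xi}_G(S^0;\uZ) \to 0 .$$

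The next step is to identify the two outer terms. By Theorem \ref{splitting} with $d=1$, $\uHal(S(\xi)_+;\uZ) \cong \CC_{n,1}\uH^{\alpha-1}_{C_1}(S^0;\uZ) \oplus \KK_{n,1}\uH^{\alpha}_{C_1}(S^0;\uZ)$; since $C_1$ is trivial, $\uH^{\alpha}_{C_1}(S^0;\uZ)=\tH^{|\alpha|}(S^0;\Z)=\Z$ and $\uH^{\alpha-1}_{C_1}(S^0;\uZ)=\tH^{-1}(S^0;\Z)=0$, while $\KK_{n,1}(\Z)=\Z\boxtimes\uZ$ is the constant Mackey functor $\uZ$; thus $\uHal(S(\xi)_+;\uZ)\cong\uZ$. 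For the right-hand term, $|\alpha+1-\xi|=-1$ is odd and negative while $|(\alpha+1-\xi)^{C_{p_i}}|=|\alpha^{C_{p_i}}|+1$, so Theorem \ref{non-zero} gives $\uH^{\alpha+1-\xi}_G(S^0;\uZ)\cong\bigoplus_{|\alpha^{C_{p_i}}|>0}\KK_i\bZpi=\bigoplus_{\JJ(\alpha)_i=1}\KK_i\bZpi$. The sequence becomes $0 \to \uHal(S^0;\uZ) \to \uZ \xrightarrow{\ \delta\ } \bigoplus_{\JJ(\alpha)_i=1}\KK_i\bZpi \to 0$, and it remains to compute $\ker\delta$.

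To compute $\ker\delta$, I would first pin down $\delta$. A Mackey functor map $\uZ\to\KK_{n,p_i}\bZpi$ is determined by the image of the generator of $\uZ(G/C_{p_i})$ in $\Z/p_i$: every other orbit value is forced, because the restrictions of $\uZ$ are identities, the restrictions of $\KK_{n,p_i}\bZpi$ between orbits $G/C_d$ with $p_i\mid d$ are isomorphisms, and the remaining restrictions land in $0$. So $\Hom_{\MM_G}(\uZ,\KK_{n,p_i}\bZpi)\cong\Z/p_i$, with the surjections being the unit multiples of the standard projection of \eqref{exact_p} and Proposition \ref{exact_pn}. The sequence above shows $\delta$ is surjective, and evaluating at $G/C_n$ gives a surjection $\Z\twoheadrightarrow\bigoplus_{\JJ(\alpha)_i=1}\Z/p_i$ onto a product over distinct primes, so each component of $\delta$ must be surjective; hence, up to an automorphism of the target, $\delta$ is the direct sum of the standard projections. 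Writing $\uZ=\boxtimes(\uZ_{p_1},\dots,\uZ_{p_k})$ and noting that the $i$-th component of $\delta$ is $(\uZ_{p_i}\to\bZpi)$ in the $i$-th slot and the identity elsewhere — with kernel the subfunctor using $\uZ^\ast_{p_i}$ in the $i$-th slot, by exactness of $\boxtimes$ — one finds $\ker\delta=\boxtimes(\uN_1,\dots,\uN_k)$ with $\uN_i=\uZ^\ast_{p_i}$ if $\JJ(\alpha)_i=1$ and $\uN_i=\uZ_{p_i}$ otherwise, which is exactly $\uZ^{\JJ(\alpha)}$ of Notation \ref{tens}. This gives $\uHal(S^0;\uZ)\cong\uZ^{\JJ(\alpha)}$.

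The step I expect to be the main obstacle is the identification of $\delta$ in the last paragraph: the rigidity $\Hom_{\MM_G}(\uZ,\KK_{n,p_i}\bZpi)\cong\Z/p_i$ together with the surjectivity coming from the exact sequence is what forces $\delta$ to be the standard quotient rather than one with a larger kernel, and one has to check carefully that naturality genuinely produces the orbit-wise description of these homomorphisms and that the correspondence between $\{i:\JJ(\alpha)_i=1\}$ and the $\boxtimes$-factors is bookkept correctly — in particular that primes with $|\alpha^{C_{p_i}}|=0$ contribute to neither side. One could instead try Anderson duality, Theorem \ref{anders}, with $\beta=2-\xi-\alpha$, but $|\beta|=0$ as well and $\JJ(\beta)$ is complementary to $\JJ(\alpha)$, so that route is self-referential and does not by itself close the argument.
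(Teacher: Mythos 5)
Your proposal is correct and follows essentially the same route as the paper: derive the short exact sequence $0 \to \uHal(S^0;\uZ) \to \uZ \to \bigoplus_{\JJ(\alpha)_i=1}\KK_i\bZpi \to 0$ from the cofibre sequence $S(\xi)_+\to S^0\to S^\xi$ using Theorems \ref{zercoh}, \ref{non-zero}, and \ref{splitting}, and then identify the kernel as $\uZ^{\JJ(\alpha)}$. The only difference is that the paper appeals to Proposition \ref{exact_pn} together with exactness of $\KK_{n,m}$ to identify the kernel, while you re-derive the same content in place by computing $\Hom_{\MM_G}(\uZ,\KK_{n,p_i}\bZpi)\cong\Z/p_i$ and observing the forced rigidity of $\delta$; this makes explicit the uniqueness of the surjection that the paper uses implicitly, and your conclusion matches.
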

\begin{proof}
 We apply Corollary \ref{zercoh} and Theorem \ref{non-zero} to the exact sequence \eqref{ex-e}. It turns out the short exact sequence 
$$0 \to \uHal(S^0;\uZ) \to \uZ \to \bigoplus_{|\alpha^{C_{p_i}}|>0} \KK_{n, p_i}\bZpi \to 0.$$ 
Let $m = \prod\limits_{|\alpha^{C_{p_i}}|>0}p_i.$ We may rewrite the short exact sequence above as 
$$0 \to \uHal(S^0; \uZ) \to \KK_{n,m}\uZ \to \bigoplus\limits_{p_i \mid m}\KK_{n,m}(\KK_{m,p_i}\bZpi)\to 0$$ 
Note that the functor $\KK_{n,m}$ is exact and additive. Thus Proposition \ref{exact_pn} implies that $\uHal(S^0; \uZ) \cong  \KK_{n,m} \uZ^\ast.$ Hence the result follows. 
\end{proof}

\begin{exam}
The formulas in Theorems \ref{zercoh}, \ref{non-zero} and \ref{zero}, yield formulas for the homology and cohomology of $G$-spheres. Let $V$ be a $G$-representation. Observe that $|V - \dim(V)|=0$ and $\JJ(V - \dim(V))= (0,\cdots,0)$. Therefore, 
$$\uH^{V-\dim(V)}_G(S^0;\uZ) \cong \uH_{\dim(V)}^G(S^V;\uZ) \cong \uZ.$$
We also have $\JJ(\dim(V)-V)=(1,\cdots, 1)$, so that  
$$\uH^{\dim(V)-V}_G(S^0;\uZ) \cong \uH^{\dim(V)}_G(S^V;\uZ) \cong \uZ^\ast.$$
Let $V=\xi+ \sum\limits_{i=1}^{k-1} \xi^{p_1\cdots p_i}$. In this case we conclude, 
$$\uH_m^G(S^V;\uZ) \cong \begin{cases} \uZ &\mbox{ if } m=2k \\ 
                            \bigoplus\limits_{i=k-r}^{k}  \KK_i\bZpi &\mbox{ if }  m =2r, ~0\leq r < k \\ 
                            0   & \mbox{ otherwise,} \end{cases}$$
and, 
$$\uH^m_G(S^V;\uZ) \cong \begin{cases} \uZ^\ast &\mbox{ if } m=2k \\ 
                            \bigoplus\limits_{i=k-r+1}^{k}  \KK_i\bZpi &\mbox{ if }  m =2r+1, ~1\leq r \leq k-1 \\ 
                            0   & \mbox{ otherwise.} \end{cases}$$                                             
                                                           
\end{exam}

\end{mysubsection}

\begin{mysubsection}{The ring structure for $\tHbs(S^0;\uZ)$}\label{ringZ} 
In this section, we compute the ring structure on the cohomology with $\uZ$ coefficients. For $G=C_p$, this is computed in \cite{Lew88}. We calculate the ring structure on $\tHbs(S^0;\uZ)= \uHbs(S^0;\uZ)(G/G)$. 
\begin{defn}\label{malph}
For $\alpha \in RO(G)$, we define $m(\alpha)$ as 
$$ m(\alpha) := \begin{cases} 
      \prod \limits_{|\alpha^{C_{p_i}}| \leq 0} p_i & \mbox{if } |\alpha| \mbox{ is even} \\ 
      \prod \limits_{|\alpha^{C_{p_i}}|>1} p_i   & \mbox{if } |\alpha| \mbox{ is odd}. \end{cases}$$ 
\end{defn}

\begin{remark}
If $\alpha$ is of the form $V-\dim(V)$ for a $G$-representation $V$, then $m(\alpha)=n$. 
\end{remark}

We summarize the values of $\tHbs(S^0;\uZ)= \uHbs(S^0;\uZ)(G/G)$ from Theorems \ref{zero}, \ref{non-zero} and \ref{zercoh}  
\begin{myeq}\label{Zval}
 \tHal(S^0; \uZ) = \begin{cases} \Z & \text{if} \; |\alpha| =0 \\ 
                                                 \Z/m(\alpha) & \text{if} \; |\alpha| >0 \mbox{ even} \\ 
                                                \Z/m(\alpha) & \text{if} \; |\alpha| <0 \mbox{ odd} \\ 
                                                            0 & \text{otherwise.} \end{cases} 
\end{myeq}

Suppose that $r$ and $n$ are relatively prime. For $d|n$, by smashing \eqref{cofd} and \eqref{cofd2} with $H\uZ$ we obtain 
$$H\uZ \wedge S^{\xi^d} \simeq H\uZ \wedge S^{\xi^{rd}}.$$ 
This implies we have a unit in $\tH_G^{\xi^{rd}-\xi^d}(S^0;\uZ)$. Therefore in the ring structure, up to units, we may restrict the gradings $\alpha\in RO(G)$ to linear combinations of  $\xi^d$ as $d$ runs over divisors of $n$. For the rest of the section we assume that the grading is of this form. 

\begin{notation}\label{pos-neg}
We denote by $\tHp(S^0;\uZ)$ (respectively $\tHm(S^0;\uZ)$, $\tHpl(S^0;\uZ)$, and $\tHz(S^0;\uZ)$) the non-negative dimensional part (respectively negative dimensional, positive dimensional, and zero dimensional part) of $\tHbs(S^0;\uZ)$ (that is, the part of $\tHal(S^0;\uZ)$ for $|\alpha| \geq 0$ (respectively $|\alpha|<0$, $|\alpha|>0$, and $|\alpha|=0$)). It readily follows that $\tHp(S^0;\uZ)$ and $\tHz(S^0;\uZ)$ are commutative rings (as they are concentrated in even degrees by \eqref{Zval}) and, $\tHm(S^0;\uZ)$ and $\tHpl(S^0;\uZ)$ are $\tHp(S^0;\uZ)$-modules.  We denote by $\tHr(S^0;\uZ)$ (respectively $\tHrz(S^0;\uZ)$) the part of $\tHbs(S^0;\uZ)$ in gradings of the form $V-m$ (respectively of the form $V-\dim(V)$) for $G$-representations $V$.  
\end{notation}

 We will first describe $\tHp(S^0;\uZ)$ using generators and relations. The generators used are defined  in \cite [Section 3]{HHR16} which we now recall. 
\begin{defn}\label{uagen}
Let $V$ be a $G$-representation. We have the $G$-map $S^0 \to S^V$ which induces 
$$S^0 \to S^V\wedge S^0 \to S^V \wedge H\uZ$$ 
which we call $a_V\in \tH_G^V(S^0;\uZ)$. If $V$ is an oriented $G$-representation, 
$$\tH_G^{V-\dim(V)}(S^0;\uZ)\cong \tH^G_{\dim(V)}(S^V;\uZ) \cong \Z$$
 and the choice of generator is defined as $u_V$.  
\end{defn}
We also have relations among these generators namely $u_V u_W = u_{V\oplus W}$ and $a_V a_W = a_{V\oplus W}$. It follows that these classes are products of $u_{\xi^d}$ and $a_{\xi^d}$ as $d<n$ varies over divisors of $n$.  

We note that $\tH_G^{V-m}(S^0;\uZ) \cong \tH_m^G(S^V;\uZ)$ which implies that the group is $0$ if $m>\dim(V)$. It follows that $\tHal(S^0;\uZ)$ for $\alpha$ of the form $V-m$ is a subring of $\tHp(S^0;\uZ)$. This has been computed for $n=pq$ in \cite{Gh19}.  We note from \eqref{Zval} that  $\tH_G^{\xi^d}(S^0;\uZ) \cong \Z/(\frac{n}{d})$ so that 
\begin{myeq} \label{areln} 
\frac{n}{d} a_{\xi^d} = 0. 
\end{myeq}

Write $RO_0(G)\subset RO(G)$ to be the elements of dimension $0$. For $\alpha\in RO_0(G)$, $\tH^\alpha_G(S^0;\uZ) \cong \Z$ \eqref{Zval} and we easily deduce from Theorem \ref{zero} that 
\begin{myeq}\label{zerres}
\res^G_e = \frac{n}{m(\alpha)} : \Z \cong \tH_G^\alpha(S^0;\uZ) \to \tH_G^\alpha({G/e}_+;\uZ) \cong \Z. 
\end{myeq}
The cohomology ring of ${G/e}_+$ may be computed from non-equivariant cohomology and we have the formula 
$$\tHz({G/e}_+;\uZ) \cong \Z[\mu_{\xi^d}^\pm \mid d| n] $$
where $\mu_{\xi^d}=\res^G_e(u_{\xi^d})$. We now deduce the ring structure of $\tHz(S^0;\uZ)$ from the fact that $\res^G_e$ is a ring map which is injective by \eqref{zerres}. We note that $\alpha\in RO_0(G)$ is uniquely determined by its collection $\{|\alpha^{C_s}| \mid s\neq 1 \}$ of fixed points dimensions.  Therefore we have the following computation of $\tHz(S^0;\uZ)$. 
\begin{theorem}\label{ringz} 
The ring $\tHz(S^0;\uZ)$ is a subring of $\Z[u_{\xi^d}^\pm \mid d|n]$ such that the degree $\alpha$ part includes in the right hand side by multiplication by $n/m(\alpha)$. In  particular $\tHrz(S^0;\uZ)$  is isomorphic to  $\Z[u_{\xi^d} \mid d|n ]$.  
\end{theorem}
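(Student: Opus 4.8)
The plan is to run everything through the restriction homomorphism $\res^G_e : \tHz(S^0;\uZ) \to \tHz({G/e}_+;\uZ)$, which is a homomorphism of graded commutative rings since $\uZ$ is a commutative ring object. First I would check that $\res^G_e$ is injective. Recall that for the purpose of the ring structure we have restricted the gradings $\alpha$ to $\Z$-linear combinations of the $\xi^d$ with $d \mid n$, so that $\tHz(S^0;\uZ)$ is graded over $RO_0(G)$; it therefore suffices to prove injectivity in each fixed degree $\alpha$. By \eqref{Zval} the $\alpha$-summand of $\tHz(S^0;\uZ)$ is $\cong \Z$, and by \eqref{zerres} the map $\res^G_e$ on this summand is multiplication by $n/m(\alpha)$ into $\tH^\alpha_G({G/e}_+;\uZ)\cong\Z$. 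Since $n/m(\alpha)\neq 0$ this is injective in every degree, hence $\res^G_e$ is injective.

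Next I would identify the target. Via the change-of-groups isomorphism \eqref{rest} one has $\tH^\alpha_G({G/e}_+;\uZ)\cong\tH^{|\alpha|}(\mathrm{pt};\Z)$, so $\tHz({G/e}_+;\uZ)$ is the Laurent ring $\Z[\mu_{\xi^d}^\pm\mid d\mid n]$ with $\mu_{\xi^d}=\res^G_e(u_{\xi^d})$ placed in degree $\xi^d-\dim(\xi^d)$ (algebraic independence of the $\mu_{\xi^d}$ being just the linear independence of the classes $\xi^d-\dim(\xi^d)$ in $RO_0(G)$, with $\mu_{\xi^n}$ a unit in degree $0$). Thus in degree $\alpha=\sum_{d\mid n}a_d(\xi^d-\dim(\xi^d))$ the target is freely generated by the monomial $\prod_d\mu_{\xi^d}^{a_d}$, and \eqref{zerres} says exactly that the image of the $\alpha$-summand of $\tHz(S^0;\uZ)$ is $\tfrac{n}{m(\alpha)}\Z\cdot\prod_d\mu_{\xi^d}^{a_d}$. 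Identifying $\mu_{\xi^d}$ with $u_{\xi^d}$ yields the embedding $\tHz(S^0;\uZ)\hookrightarrow\Z[u_{\xi^d}^\pm\mid d\mid n]$ with the stated degreewise behaviour.

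For the final assertion I would invoke the remark after Definition \ref{malph}: $m(V-\dim(V))=n$ for every $G$-representation $V$, so on $\tHrz(S^0;\uZ)$ the inclusion is multiplication by $n/n=1$, i.e.\ an isomorphism in each relevant degree. Up to the units in $\tH^{\xi^{rd}-\xi^d}_G(S^0;\uZ)$ we may take $V=\sum_{d\mid n}c_d\,\xi^d$ with $c_d\geq 0$, so the gradings of the form $V-\dim(V)$ are precisely $\sum_{d\mid n}c_d(\xi^d-\dim(\xi^d))$ with $c_d\geq 0$, which are the degrees of the monomials spanning $\Z[u_{\xi^d}\mid d\mid n]\subset\Z[u_{\xi^d}^\pm\mid d\mid n]$; moreover $u_V=\prod_d u_{\xi^d}^{c_d}$ restricts to $\prod_d\mu_{\xi^d}^{c_d}$. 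Hence $\tHrz(S^0;\uZ)\cong\Z[u_{\xi^d}\mid d\mid n]$.

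The main obstacle is not conceptual: once \eqref{zerres} and the non-equivariant description of $\tHz({G/e}_+;\uZ)$ are available the argument is essentially bookkeeping. The point that needs care is matching generators — verifying that the chosen $\Z$-generator of each $\tH^\alpha_G(S^0;\uZ)$ corresponds under $\res^G_e$ to a genuine $\Z$-multiple of the monomial basis element of the Laurent ring, so that ``multiplication by $n/m(\alpha)$'' holds on the nose — together with cleanly handling the reduction to gradings in the $\Z$-span of $\{\xi^d\mid d\mid n\}$ and the degenerate generator $u_{\xi^n}$ in degree $0$.
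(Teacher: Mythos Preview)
Your proposal is correct and follows essentially the same approach as the paper: the paper also deduces the result from the injectivity of the ring map $\res^G_e$ (via \eqref{zerres}) together with the identification $\tHz({G/e}_+;\uZ)\cong \Z[\mu_{\xi^d}^\pm \mid d\mid n]$. Your write-up is simply more explicit about the bookkeeping in each graded piece and about the final assertion for $\tHrz$.
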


We make the following notation in view of Theorem \ref{ringz}.
\begin{notation}
For $\alpha = \sum \limits_{d \mid n} c_d (\xi^d-2)$, we write $[\frac{n}{m(\alpha)} \prod \limits_{d \mid n} u_{\xi^d}^{c_d}]$ for the generator of $\tHal(S^0;\uZ)$. This restricts to $\frac{n}{m(\alpha)} \prod \limits_{d \mid n} \mu_{\xi^d}^{c_d} $ in $\tHal({G/e}_+;\uZ)$. 
\end{notation}

For $|\alpha| > 0$ even, we write 
$$\alpha =  c \xi + \sum_{d \mid n} c_d (\xi^d-2),$$ 
and we apply \eqref{ex-e} to deduce that $\tHal(S^0;\uZ)$ is $\Z/m(\alpha)$ generated by the class 
$a_\xi^c [\frac{n}{m(\alpha)} \prod \limits_{d \mid n} u_{\xi^d}^{c_d}]$. Hence, we deduce 
\begin{theorem} \label{ringp}
The ring $\tHp(S^0;\uZ)$ is generated over $\tHz(S^0;\uZ)$ by the class $a_\xi$ subject to the relations $na_\xi = 0$, and for $\alpha = \sum\limits_{d\mid n} c_d(\xi^d-2)$,
$$ m(\alpha) a_\xi [\frac{n}{m(\alpha)} \prod \limits_{d \mid n} u_{\xi^d}^{c_d}] =0.$$
\end{theorem}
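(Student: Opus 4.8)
The plan is to bootstrap from the additive computation in \eqref{Zval} together with the already-established structure of $\tHz(S^0;\uZ)$ in Theorem \ref{ringz}, using the long exact sequence \eqref{ex-e} associated to the cofibre sequence $S(\xi)_+ \to S^0 \to S^\xi$ as the main engine. First I would fix an even $\alpha$ with $|\alpha|>0$ and write it in the form $\alpha = c\xi + \sum_{d\mid n} c_d(\xi^d-2)$ with $c\geq 1$; this is possible after absorbing units (the equivalence $H\uZ\wedge S^{\xi^d}\simeq H\uZ\wedge S^{\xi^{rd}}$ for $\gcd(r,n)=1$ lets us restrict gradings to $\Z$-linear combinations of the $\xi^d$, $d\mid n$). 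The goal is to show the class $a_\xi^{\,c}\,[\tfrac{n}{m(\alpha)}\prod_d u_{\xi^d}^{c_d}]$ generates $\tHal(S^0;\uZ)\cong \Z/m(\alpha)$, and that the only new relation beyond those already recorded (namely $na_\xi=0$ from \eqref{areln} at $d=1$, and the relations inherited from $\tHz(S^0;\uZ)$) is the stated annihilation relation.

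The key steps, in order: (1) Show multiplication by $a_\xi$ realizes the boundary/connecting behaviour in \eqref{ex-e}: smashing with $H\uZ$, the map $S^0\to S^\xi$ induces $a_\xi\colon \tH^{\beta-\xi}_G(S^0;\uZ)\to \tH^\beta_G(S^0;\uZ)$, and the cofibre sequence gives exactness with the cohomology of $S(\xi)_+$, which by \eqref{constsphere} (applied with $d$ replaced appropriately, i.e. the $S(\xi)$ version) and \eqref{fer}-type input is controlled. (2) Argue by downward induction on $c$ (or equivalently induction moving $\alpha$ toward the region where $|\alpha^{C_{p_i}}|\le 0$ for all $i$, where $m(\alpha)=n$): when $c=1$ and all $c_d$ chosen so $m(\alpha)=n$, the class is $a_\xi\,[\prod u_{\xi^d}^{c_d}]$ and $\tHal(S^0;\uZ)\cong\Z/n$, matching $na_\xi=0$; for general $\alpha$, each time one increments a coefficient by a unit multiple of $\xi^{p_i}$, the exact sequence \eqref{ex-e} (or \eqref{shex-pi}) shows $\tHal(S^0;\uZ)$ is a quotient of $\tH^{\alpha-\xi}_G(S^0;\uZ)$ by an image whose size is exactly the ratio $m(\alpha-\xi)/m(\alpha)$, pinning down the generator and the order. (3) Extract the relation: since $\tHal(S^0;\uZ)\cong\Z/m(\alpha)$ is generated by $a_\xi^{\,c}[\tfrac{n}{m(\alpha)}\prod u_{\xi^d}^{c_d}]$, the element $m(\alpha)$ times this generator is $0$; rewriting $m(\alpha)\cdot a_\xi^{\,c}[\tfrac{n}{m(\alpha)}\prod u_{\xi^d}^{c_d}] = a_\xi^{\,c-1}\cdot\big(m(\alpha)\,a_\xi[\tfrac{n}{m(\alpha)}\prod u_{\xi^d}^{c_d}]\big)$ and checking that the displayed relation at exponent $1$ in $a_\xi$ is the "universal" one (all higher ones follow by multiplying by powers of $a_\xi$ and by elements of $\tHz(S^0;\uZ)$) gives the presentation. (4) Finally, verify completeness of the relations: any relation in $\tHp(S^0;\uZ)$ restricts, under $\res^G_e$, to a relation in $\tHz({G/e}_+;\uZ)\cong\Z[\mu_{\xi^d}^\pm\mid d\mid n]$, which is a (Laurent) polynomial ring with no relations; combined with \eqref{zerres} (that $\res^G_e$ is multiplication by $n/m(\alpha)$ in degree $\alpha$, hence injective on the torsion-free part), this forces every relation to be a consequence of $na_\xi=0$ and the degreewise annihilation relations.

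The main obstacle I expect is Step (2): controlling precisely how the order of $\tHal(S^0;\uZ)$ changes as one moves $\alpha$ by $\pm\xi$ — that is, showing the connecting maps in \eqref{ex-e} have the expected images and cokernels so that the generator is exactly the claimed product of $a_\xi$'s and $u_{\xi^d}$'s (not merely some class of the right order). This requires carefully tracking, through \eqref{constsphere} and Theorems \ref{non-zero}, \ref{zero}, \ref{zercoh}, which summands of $\uHal(S(\xi)_+;\uZ)$ survive and identifying the multiplication-by-$a_\xi$ map with the relevant map in the sequence; once the generator is correctly identified at each stage, Steps (3) and (4) are essentially bookkeeping with the ring map $\res^G_e$ and the relations \eqref{areln}, $u_Vu_W=u_{V\oplus W}$, $a_Va_W=a_{V\oplus W}$.
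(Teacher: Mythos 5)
Your proposal is correct and follows essentially the same route as the paper: the paper likewise writes $\alpha = c\xi + \sum_d c_d(\xi^d-2)$, applies the long exact sequence \eqref{ex-e} together with the degree-zero ring structure from Theorem \ref{ringz}, and identifies $a_\xi^c[\frac{n}{m(\alpha)}\prod_d u_{\xi^d}^{c_d}]$ as the generator of $\tHal(S^0;\uZ)\cong\Z/m(\alpha)$, from which the stated presentation follows. One small simplification worth noting: since $\xi^{C_{p_i}}=0$ for all $i$, one has $m(\alpha)=m(\alpha-\xi)$, so the induction on $c$ in your Step~(2) is immediate once the $c=1$ case is settled, and the detour through $\xi^{p_i}$ and \eqref{shex-pi} is not needed.
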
 
From Theorems \ref{ringp} and \ref{ringz}, we readily deduce that the multiplication by $u_{\xi^d}$ is injective on $\tHp(S^0;\uZ)$. On $\tHal(S^0;\uZ)$, this is multiplication by $\frac{m(\alpha+\xi^d - 2)}{m(\alpha)}$. We then apply this to the commutative square (for divisors $d,s$ of $n$)
$$\xymatrix{ \tH^0_G(S^0;\uZ) \ar[r]^{a_{\xi^d}} \ar[d]^{u_{\xi^d}} & \tH_G^{\xi^d}(S^0;\uZ)  \ar[d]^{u_{\xi^s}}  \\ 
   \tH^{\xi^d -2}_G(S^0;\uZ) \ar[r]^{a_{\xi^s}}  & \tH^{\xi^s+\xi^d -2}_G(S^0;\uZ) }$$
to deduce 
\begin{myeq} \label{aureln}
\frac{d}{\gcd(d,s)}a_{\xi^s}u_{\xi^d}= \frac{s}{\gcd(d,s)}u_{\xi^s}a_{\xi^d}.
\end{myeq}
In particular, we have for $k< \frac{n}{d}$, $a_{\xi^{dk}}u_{\xi^d}= k u_{\xi^{dk}}a_{\xi^d}$. For the group $C_{p^m}$, an analogue of \eqref{aureln} was proved in \cite[Theorem 3.5]{HHR17} and called the {\it gold (or au) relation}. 
\begin{exam}
For a proper divisor $d$ of $n$, consider the grading $\xi^d+\xi^{n/d}-2$. We have $m(\xi^d+\xi^{n/d}-2)=n$, so that $\tH^{\xi^d+\xi^{n/d}-2}_G(S^0;\uZ) \cong \Z/n$. Among the monomials in the $a$-classes and the $u$-classes, the only ones in this grading are $a_{\xi^d}u_{\xi^{n/d}}$ and $a_{\xi^{n/d}}u_{\xi^d}$. The relation \eqref{areln} implies that 
$$\frac{n}{d}a_{\xi^d}u_{\xi^{n/d}}=0, ~ d a_{\xi^{n/d}}u_{\xi^d}=0,$$
and \eqref{aureln} does not give any further relation. We prove in Theorem \ref{ringrep} that these generate the cyclic subgroups of order $n/d$ and $d$ respectively in the group $\Z/n$. 

Let $d_1,d_2, d_3$ be proper divisors of $n$ such that $d_1d_2d_3=n$. Consider the grading $\xi^{d_1}+\xi^{d_2}+\xi^{d_3} - 4$, and observe that $m(\xi^{d_1}+\xi^{d_2}+\xi^{d_3}-4)=n$. The monomials in this grading are $a_{\xi^{d_1}}u_{\xi^{d_2}}u_{\xi^{d_3}}$, $u_{\xi^{d_1}}a_{\xi^{d_2}}u_{\xi^{d_3}}$, and $u_{\xi^{d_1}}u_{\xi^{d_2}}a_{\xi^{d_3}}$. From \eqref{areln}, we have 
$$d_2d_3a_{\xi^{d_1}}u_{\xi^{d_2}}u_{\xi^{d_3}}=0,~d_1d_3u_{\xi^{d_1}}a_{\xi^{d_2}}u_{\xi^{d_3}}=0, ~d_1d_2u_{\xi^{d_1}}u_{\xi^{d_2}}a_{\xi^{d_3}}=0,$$
and from \eqref{aureln}, we have 
$$d_2a_{\xi^{d_1}}u_{\xi^{d_2}}u_{\xi^{d_3}}= d_1u_{\xi^{d_1}}a_{\xi^{d_2}}u_{\xi^{d_3}},$$ 
$$d_3 a_{\xi^{d_1}}u_{\xi^{d_2}}u_{\xi^{d_3}}=d_1u_{\xi^{d_1}}u_{\xi^{d_2}}a_{\xi^{d_3}},$$
$$d_3u_{\xi^{d_1}}a_{\xi^{d_2}}u_{\xi^{d_3}}= d_2 u_{\xi^{d_1}}u_{\xi^{d_2}}a_{\xi^{d_3}}.$$
We prove in Theorem \ref{ringrep} that there are no further relations in this grading. More explicitly, $a_{\xi^{d_1}}u_{\xi^{d_2}}u_{\xi^{d_3}}$ generates the cyclic subgroup (of the group $\Z/n$) of order $d_2d_3$, $u_{\xi^{d_1}}a_{\xi^{d_2}}u_{\xi^{d_3}}$ generates the cyclic subgroup of order $d_1d_3$, and $u_{\xi^{d_1}}u_{\xi^{d_2}}a_{\xi^{d_3}}$ generates the cyclic subgroup of order $d_1d_2$.
\end{exam}

The equations \eqref{areln} and \eqref{aureln}  are the only relations in degrees of the form $V-m$ for representations $V$ and we have the following result. 
\begin{theorem}\label{ringrep}
The ring $\tHr(S^0;\uZ)$ is generated by the classes $u_{\xi^d}$, $a_{\xi^d}$ for divisors $d$ of $n$ such that $d\neq n$, subject to the relations 
$$ \frac{n}{d}a_{\xi^d}=0, ~~ \frac{d}{\gcd(d,s)}a_{\xi^s}u_{\xi^d}= \frac{s}{\gcd(d,s)}u_{\xi^s}a_{\xi^d}.$$
\end{theorem}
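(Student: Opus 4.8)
The plan is to identify $\tHr(S^0;\uZ)$ with the abstract commutative graded ring
$$R:=\Z[u_{\xi^d},\,a_{\xi^d}\mid d\mid n,\ d\neq n]\,/\,(\tfrac{n}{d}\,a_{\xi^d},\ \tfrac{d}{\gcd(d,s)}\,a_{\xi^s}u_{\xi^d}-\tfrac{s}{\gcd(d,s)}\,u_{\xi^s}a_{\xi^d}),$$
graded by $\deg u_{\xi^d}=\xi^d-2$ and $\deg a_{\xi^d}=\xi^d$. Here we use that $H\uZ\wedge S^{\xi^e}\simeq H\uZ\wedge S^{\xi^{\gcd(e,n)}}$, so that every grading of the form $V-m$ is, up to a unit, one spanned by the $\xi^d$ with $d\mid n$ (and $u_{\xi^n}=1$, $a_{\xi^n}=0$ since $\xi^n$ is trivial, which is why $d\neq n$ in the generating set). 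Since relations \eqref{areln} and \eqref{aureln} hold in cohomology, there is a graded ring map $\Phi\colon R\to\tHr(S^0;\uZ)$, and the theorem is exactly the assertion that $\Phi$ is bijective in every grading $\alpha$. Writing such an $\alpha$ as $\sum_{d\mid n}f_d\xi^d-2s$ with $f_d\ge 0$ and $0\le s\le\sum_d f_d$, the monomials of $R$ of degree $\alpha$ are precisely the $M_c:=\prod_d u_{\xi^d}^{c_d}\prod_d a_{\xi^d}^{f_d-c_d}$ indexed by integer vectors $c$ with $0\le c_d\le f_d$ and $\sum_d c_d=s$. By \eqref{Zval} the target $\tH^\alpha_G(S^0;\uZ)$ is $\Z$ when $s=\sum_d f_d$ — and this case is precisely Theorem \ref{ringz}, where the unique $u$-monomial $M_c$ is a generator and no relation of $R$ touches it, so $\Phi_\alpha$ is an isomorphism $\Z\to\Z$ — and is the cyclic group $\Z/m(\alpha)$ when $s<\sum_d f_d$. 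So from now on assume $s<\sum_d f_d$.

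For surjectivity, since $m(\alpha)$ is squarefree it suffices by the Chinese remainder theorem to show that for each prime $p_i\mid m(\alpha)$ some $M_c$ maps to a generator of the $\Z/p_i$-summand, and that summand is detected by $\res^G_{C_{p_i}}$, which carries $\tH^\alpha_G(S^0;\uZ)\cong\Z/m(\alpha)$ onto $\tH^{\alpha|_{C_{p_i}}}_{C_{p_i}}(S^0;\uZ)\cong\Z/p_i$. On generators, $\res^G_{C_{p_i}}$ sends $u_{\xi^d}$ and $a_{\xi^d}$ to unit multiples of the $C_{p_i}$-classes $u_\xi$ and $a_\xi$ when $p_i\nmid d$, and to $1$ and $0$ respectively when $p_i\mid d$ (the latter because $\tH^2_{C_{p_i}}(S^0;\uZ)=0$). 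Hence, using \eqref{fer} for $C_{p_i}$, $\res^G_{C_{p_i}}(M_c)$ is a generator of $\Z/p_i$ exactly when $c_d=f_d$ for every $d$ with $p_i\mid d$; and since $p_i\mid m(\alpha)$ is equivalent to $\sum_{p_i\mid d}f_d\le s$, such a $c$ exists (put $c_d=f_d$ for $p_i\mid d$ and spread the remaining $s-\sum_{p_i\mid d}f_d\le\sum_{p_i\nmid d}f_d$ over the other coordinates). This proves $\Phi_\alpha$ is onto.

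For injectivity I localise at each prime $p$. At $p\nmid n$ every relation $\tfrac nd a_{\xi^d}=0$ forces $a_{\xi^d}=0$, so $R_\alpha\otimes\Z_{(p)}$ is spanned by pure $u$-monomials and hence vanishes (as $s<\sum_d f_d$), matching the target. At $p=p_i$: when $p_i\mid d$ the coefficient $\tfrac nd$ is a $p_i$-local unit, so $a_{\xi^d}=0$ in $R_\alpha\otimes\Z_{(p_i)}$ and every $M_c$ with $c_d<f_d$ for some such $d$ dies; when $p_i\nmid d$ one has $v_{p_i}(\tfrac nd)=1$, and since $s<\sum_d f_d$ every surviving $M_c$ necessarily carries at least one $a$-factor on an index prime to $p_i$, hence is annihilated by $p_i$. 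Thus $R_\alpha\otimes\Z_{(p_i)}$ is an $\mathbb{F}_{p_i}$-vector space; moreover the gold relations \eqref{aureln}, whose coefficients $\tfrac{d}{\gcd(d,s)}$ and $\tfrac{s}{\gcd(d,s)}$ are $p_i$-local units whenever $p_i\nmid ds$, make any two surviving monomials $\mathbb{F}_{p_i}$-proportional (they are joined by moves transferring one $a$ past one $u$ among indices prime to $p_i$), so $\dim_{\mathbb{F}_{p_i}}(R_\alpha\otimes\mathbb{F}_{p_i})\le 1$. If $p_i\nmid m(\alpha)$ then $\sum_{p_i\mid d}f_d>s$ forces no $M_c$ to survive, so this dimension is $0$; if $p_i\mid m(\alpha)$ the surjectivity just established forces it to be exactly $1$. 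Hence $\Phi_\alpha\otimes\Z_{(p)}$ is an isomorphism for every $p$, so $\Phi_\alpha$ is an isomorphism, and the theorem follows.

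The main obstacle is the positive-dimensional case: unlike in degree $0$, no single monomial generates $\tH^\alpha_G(S^0;\uZ)$ (a typical piece is $\Z/m(\alpha)$ for a product $m(\alpha)$ of several primes, and each $M_c$ has order only a proper divisor of $m(\alpha)$), so one must show that the \emph{family} $\{M_c\}$ generates and, dually, that the gold relation together with the order relations $\tfrac nd a_{\xi^d}=0$ collapses this family to a cyclic group of order exactly $m(\alpha)$. Both halves are governed by the restriction computations to the subgroups $C_{p_i}$ and by the squarefreeness of $m(\alpha)$, which is what makes the prime-by-prime argument above possible.
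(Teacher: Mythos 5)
Your proof is correct. It has the same overall two-part strategy as the paper's: first bound the size of the abstract graded piece $R_\alpha$ by $m(\alpha)$, then show the classes actually generate $\tHal(S^0;\uZ)\cong\Z/m(\alpha)$. But the tactics are genuinely different in both halves, and your version fills in some details the paper's argument leaves terse.

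For the upper bound, the paper argues directly that $\RR_\alpha$ is cyclic of order the least common multiple of the orders of the constituent monomials, and then shows that lcm divides $m(\alpha)$; the step ``the gold relation makes $\RR_\alpha$ cyclic of order the lcm'' is stated rather than proved. You instead localize at each prime $p$, show $R_\alpha\otimes\Z_{(p)}=0$ for $p\nmid n$, and for $p=p_i$ observe that the surviving monomials (those with $c_d=f_d$ whenever $p_i\mid d$) are pairwise proportional over $\Z_{(p_i)}$ via the gold moves with $p_i$-local-unit coefficients, giving $\dim_{\mathbb{F}_{p_i}}\le 1$. The connectivity-of-moves argument (if $c\neq c'$ with the same total, find $d$ with $c_d<c'_d\le f_d$ and $s$ with $c_s>c'_s\ge 0$, both prime to $p_i$) is exactly the point the paper glosses over, so this is a useful elaboration. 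For surjectivity, the paper runs an induction on the number $\mu$ of $u$-factors, multiplying by $u_{\xi^d}$ and checking that the resulting cyclic subgroups span $\Z/m(\alpha)$ prime by prime; you instead use that the $\Z/p_i$-summand of $\Z/m(\alpha)$ is detected by $\res^G_{C_{p_i}}$, compute that $\res^G_{C_{p_i}}(M_c)$ is a generator of $\Z/p_i$ precisely when $c_d=f_d$ for all $d$ with $p_i\mid d$, and observe such a $c$ exists exactly when $p_i\mid m(\alpha)$, finishing by CRT. Both arguments work; the restriction-to-$C_{p_i}$ viewpoint is arguably more conceptual because it makes explicit where the prime-by-prime structure of $m(\alpha)$ comes from (the Mackey-functor summands $\KK_i\bZpi$ of Theorem \ref{non-zero}), and it bypasses the need for the multiplication-by-$u_{\xi^d}$ injectivity that the paper's induction relies on.

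Two minor points worth tightening in writing: (i) when you say ``$\res^G_{C_{p_i}}$ sends $u_{\xi^d}$ and $a_{\xi^d}$ to unit multiples of $u_\xi$ and $a_\xi$,'' you should note that the target groups are identified using the equivalence $H\uZ\wedge S^{\xi^r}\simeq H\uZ\wedge S^{\xi^{rs}}$; and (ii) in the zero-dimensional base case you should remark that for $\alpha=V-\dim V$ one has $m(\alpha)=n$, which is why the single $u$-monomial is an honest generator rather than an $n/m(\alpha)$-multiple of one, matching Theorem \ref{ringz}.
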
 
\begin{proof}
Let $\RR$ be the ring generated by the $u_{\xi^d}, a_{\xi^d}$ modulo the relations \eqref{areln},\eqref{aureln}. For $\alpha=\sum c_d \xi^d - 2\mu$, we see that the degree $\alpha$ part of $\RR$ is a sum of monomials of $\mu$-fold product of $u_{\xi^d}$s and a $|\alpha|/2$-fold product of $a_{\xi^d}$s.  We assume $|\alpha|>0$, so that the products lie in a cyclic group of order $m(\alpha)$.  Since multiplication by $u_{\xi^d}$ is injective, a monomial $\prod \limits_j u_{\xi^{d_j}}^{c_j} \prod \limits_l a_{\xi^{\delta_l}}^{e_l}$ generates a cyclic subgroup of order $\gcd(\frac{n}{\delta_l})$. Hence, using the gold relation \eqref{aureln}, the homogeneous part of $\RR$ in degree $\alpha$ is a cyclic group of order the least common multiple of the above $\gcd$s. If $p$ is a prime dividing all the $\frac{n}{\delta_l}$, $p$ does not divide the $\delta_l$, so that  
$$|\alpha^{C_p}| = \sum_j (|(\xi^{d_j})^{C_p}| - 2) \leq 0,$$ 
and thus, $p$ divides $m(\alpha)$. It follows that the degree $\alpha$ part of $\RR$ is cyclic of order a divisor of $m(\alpha)$. Hence, it suffices to prove that the classes $u_{\xi^d}$, $a_{\xi^d}$ indeed generate $\tHr(S^0;\uZ)$. 

We start with  $\alpha=\sum c_d \xi^d - 2\mu$, and use induction on $\mu$ to prove that $\tHal(S^0;\uZ)$ is generated by sums of monomials on $u_{\xi^d}$ and $a_{\xi^d}$. If $\mu=0$, the only possible monomial is $\prod \limits_d a_{\xi^d}^{c_d}$ which generates a cyclic group of order $\gcd(\frac{n}{d} \mid c_d>0)=m(\alpha)$. For $\mu>0$, choose a $d$ such that $c_d>0$. Then, $u_{\xi^d} \tH^{\alpha-\xi^d+2}_G(S^0;\uZ)$ is a cyclic subgroup of $\Z/m(\alpha)$ generated by $\frac{m(\alpha)}{m(\alpha-\xi^d +2)}$, and by induction hypothesis they are in the image of $\RR$. As $d$ varies over $c_d>0$, we show that these span the cyclic group $\Z/m(\alpha)$. Suppose for some $p\mid m(\alpha)$ that  $p\mid  \frac{m(\alpha)}{m(\alpha-\xi^d +2)}$ for all $d$. Then $p \nmid m(\alpha -\xi^d +2)$, so that $|\alpha^{C_p}| + 2 - |(\xi^d)^{C_p}| > 0$, which implies $p \nmid d$ for all $d$. This implies that $|\alpha^{C_p}|\leq -2$ (as $\mu \geq 1$), and thus,  $|\alpha^{C_p}| + 2 - |(\xi^d)^{C_p}| \leq 0$. Therefore, for every $p \mid m(\alpha)$, there is a $d$ such that $p\nmid  \frac{m(\alpha)}{m(\alpha-\xi^d +2)}$, so that the sum of cyclic groups equal $\Z/m(\alpha)$, and we are done by induction.
\end{proof}

Finally, we describe the $\tHp(S^0;\uZ)$-module $\tHm(S^0;\uZ)$. As elements in $\tHm(S^0;\uZ)$ multiply to $0$, this will complete the description of the ring $\tHbs(S^0;\uZ)$. From equivariant Anderson duality (Theorem \ref{anders}) we have the following short exact sequence 
\begin{myeq}\label{eqandr} 
0 \to \Ext(\tH^{3-\xi-\alpha}_G(S^0;\uZ), \Z) \to \tHal(S^0;\uZ) \to \Hom(\tH^{2-\xi - \alpha}_G(S^0;\uZ),\Z) \to 0.
\end{myeq}
For $|\alpha|<0$ odd, this gives an isomorphism 
$$\tHal(S^0;\uZ) \cong  \Ext(\tH^{3-\xi-\alpha}_G(S^0;\uZ), \Z) \cong \Hom(\tH^{3-\xi - \alpha}_G(S^0;\uZ),\Q/\Z).$$
The exact sequence \eqref{eqandr} is induced from the short exact sequence of Mackey functors 
$$0\to \uZ \to \uQ \to \uQZ \to 0,$$
 which is actually a short exact sequence of $\uZ$-modules. Therefore, on cohomology  they induce a long exact sequence of $\tHbs(S^0;\uZ)$-modules. Hence we have 
\begin{theorem} \label{ringodd}
$\tHm(S^0;\uZ)$ is isomorphic to $\Sigma^{3-\xi}\Hom(\tHpl(S^0;\uZ),\Q/\Z)$ as $\tHp(S^0;\uZ)$-modules.
\end{theorem}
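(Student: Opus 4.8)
The plan is to run the equivariant Anderson duality sequence \eqref{eqandr} in the gradings supporting $\tHm(S^0;\uZ)$, keeping track of module structures throughout. The first point is that \eqref{eqandr} is not merely an additive short exact sequence: it is obtained by applying $\pi^G_\bs(-)$ to the cofibre sequence \eqref{and-cof}, and, under the identifications $F^G(H\uZ,I^G_\Q)\simeq H\uQ^\ast$, $F^G(H\uZ,I^G_{\Q/\Z})\simeq H\uQZ^\ast$ and $I^G_\Z(H\uZ)\simeq H\uZ^\ast\simeq\Sigma^{2-\xi}H\uZ$ recalled before Theorem \ref{anders}, that cofibre sequence is precisely $\Sigma^{2-\xi}$ of the cofibre sequence of $H\uZ$-module spectra $H\uZ\to H\uQ\to H\uQZ$ arising from the short exact sequence of $\uZ$-module Mackey functors $0\to\uZ\to\uQ\to\uQZ\to 0$. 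Since smashing with $S^{2-\xi}$ takes $H\uZ$-modules and $H\uZ$-module maps to $H\uZ$-modules and $H\uZ$-module maps, applying $\pi^G_\bs$ produces a long exact sequence of graded $\tHbs(S^0;\uZ)$-modules; in particular both maps in \eqref{eqandr} are $\tHp(S^0;\uZ)$-linear.

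Next I would carry out the degree analysis. Fix $\alpha$ with $|\alpha|<0$ odd. Then $|3-\xi-\alpha|=1-|\alpha|$ is even and at least $2$, while $|2-\xi-\alpha|=-|\alpha|$ is odd and positive; so Theorem \ref{zercoh} gives $\tH^{2-\xi-\alpha}_G(S^0;\uZ)=0$, and \eqref{eqandr} collapses to a $\tHp(S^0;\uZ)$-linear isomorphism $\tHal(S^0;\uZ)\cong\Ext(\tH^{3-\xi-\alpha}_G(S^0;\uZ),\Z)$. By \eqref{Zval} the group $\tH^{3-\xi-\alpha}_G(S^0;\uZ)$ is finite, and there is a natural isomorphism $\Ext(A,\Z)\cong\Hom(A,\Q/\Z)$ for finite abelian $A$; composing, we get a $\tHp(S^0;\uZ)$-linear isomorphism $\tHal(S^0;\uZ)\cong\Hom(\tH^{3-\xi-\alpha}_G(S^0;\uZ),\Q/\Z)$, natural in $\alpha$.

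It then remains to assemble these isomorphisms over all gradings. By \eqref{Zval}, $\tHm(S^0;\uZ)$ vanishes outside the gradings $\alpha$ with $|\alpha|<0$ odd and $\tHpl(S^0;\uZ)$ vanishes outside the gradings $\beta$ with $|\beta|>0$ even, and the substitution $\beta=3-\xi-\alpha$ is a bijection between these two sets of gradings because $|3-\xi-\alpha|=1-|\alpha|$. With the standard degree-reversing grading convention for the $\Q/\Z$-dual, the degree-$\alpha$ part of $\Sigma^{3-\xi}\Hom(\tHpl(S^0;\uZ),\Q/\Z)$ is exactly $\Hom(\tH^{3-\xi-\alpha}_G(S^0;\uZ),\Q/\Z)$, and its $\tHp(S^0;\uZ)$-module structure is precomposition with multiplication in $\tHbs(S^0;\uZ)$ — which is precisely the structure transported along the isomorphism of the previous paragraph. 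Hence the degreewise isomorphisms are natural and $\tHp(S^0;\uZ)$-linear, and they assemble into an isomorphism $\tHm(S^0;\uZ)\cong\Sigma^{3-\xi}\Hom(\tHpl(S^0;\uZ),\Q/\Z)$ of graded $\tHp(S^0;\uZ)$-modules.

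The main obstacle is the first step: verifying that \eqref{eqandr} really is a sequence of $\tHbs(S^0;\uZ)$-modules, so that the collapsing of the sequence and the identification $\Ext(-,\Z)\cong\Hom(-,\Q/\Z)$ are maps of modules rather than just of abelian groups. Additively the theorem is nearly immediate from \eqref{Zval}; the entire content lies in this module-linearity, and that is why one must trace \eqref{and-cof} back to the short exact sequence of $\uZ$-module Mackey functors $0\to\uZ\to\uQ\to\uQZ\to 0$. Once that is established, the remaining work is a routine check of which gradings contribute and of the grading conventions for the dual.
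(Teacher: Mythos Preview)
Your proposal is correct and follows essentially the same approach as the paper: both arguments use the Anderson duality sequence \eqref{eqandr}, observe that for $|\alpha|<0$ odd the $\Hom$-term vanishes so the sequence collapses to an $\Ext$-isomorphism, identify $\Ext(-,\Z)$ with $\Hom(-,\Q/\Z)$ on finite groups, and then trace the module structure back to the short exact sequence $0\to\uZ\to\uQ\to\uQZ\to 0$ of $\uZ$-modules. You have simply fleshed out the details (the explicit degree count, the grading bijection $\beta=3-\xi-\alpha$, and the $H\uZ$-module structure on the cofibre sequence) that the paper leaves implicit.
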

This completes the computation of the ring structure of $\tHbs(S^0;\uZ)$.
\end{mysubsection}
\section{The additive structure for $\uA$-coefficients}\label{cohA}

The primary subject of this section is the Mackey functor valued cohomology with $\uA$-coefficients. Computations for $G=C_p$ were made by Stong and Lewis (\cite{Lew88}) and for $C_{pq}$ in \cite{BG19}. The method for $C_{pq}$ involved adding copies of $\xi^d$ ($d=1,$ $p$, or $q$) to negative $\alpha$ values, and subtracting $\xi^d$ ($d=1$, $p$, or $q$) from positive $\alpha$ values, via the exact sequences \eqref{exr}. We take a different approach here by proving that in many cases, $\uHal(S^0;\uA)$ decomposes into a direct sum of copies obtained from $\uHald(S^0;\uZ)$ for various divisors $d$ of $n$. We make the following definition 
\begin{defn}\label{rnz}
We say that $\alpha\in RO(G)$ is {\it non-zero} if all the fixed point dimensions $|\alpha^{C_d}|$ are non-zero.  We say that $\alpha$ is {\it mostly non-zero} if $|\alpha^{C_d}|=0$ implies $|\alpha^{C_{dp}}|\neq 0$ for all $p$ dividing $n$ but not $d$. We say that $\alpha$ has {\it many zeros} if it is not mostly non-zero. 
\end{defn}

\begin{exam}
Let $n=pq$ so that $G=C_{pq}$. Consider $\alpha= \xi^p + \xi^q -2$. The fixed point dimensions are given by
$$|\alpha| = 2, ~ |\alpha^{C_p}|=|\alpha^{C_q}|=0,~ |\alpha^{C_{pq}}|=-2.$$ 
Thus $\alpha$ is not non-zero but it is mostly non-zero. 
\end{exam}
For mostly non-zero $\alpha$, we will write down a formula for $\uHal(S^0;\uA)$. This will prove an {\it ``independence result''} : For $\alpha$ mostly non-zero, the Mackey functor $\uHal(S^0;\uA)$ is determined up to isomorphism by the fixed point dimensions of $\alpha$.

\begin{notation}
For the rest of the section, for $I\subset \uk$, $J$ denotes the complement $\uk \setminus I$. 
\end{notation}

 We now describe the strategy used in the calculations. Note that for each prime $p$, we have the short exact sequences 
\begin{myeq}\label{zses}
0 \to \bZ_p \to \uA_p \to \uZ_p \to 0
\end{myeq}
 and 
\begin{myeq}\label{z*ses}
0 \to \uZ^*_p \to \uA_p \to \bZ_p \to 0
\end{myeq} 
such that the composite $\bZ_p \to \uA_p \to \bZ_p$ of the maps from \eqref{zses} and \eqref{z*ses} is given by multiplication by $p$. Let $j \in I $. From \eqref{zses} and \eqref{z*ses}, we have the following short exact sequences of $G$-Mackey functors (see Definition \ref{submacks}) 
\begin{myeq}\label{tensorex1}
0 \to\bZ_{p_j}\otimes \uA_{I \setminus \{ j\}} \otimes \uZ_J \to \uA_I \otimes \uZ_J \to \uA_{I \setminus \{ j\}}\otimes \uZ_{J \cup \{j\}} \to 0,
\end{myeq} 
and 
\begin{myeq}\label{tensorex2}
0 \to \uZ^*_{p_j}\otimes \uA_{I \setminus \{ j\}}\otimes \uZ_J  \to \uA_I \otimes \uZ_J \to \bZ_{p_j}\otimes \uA_{I \setminus \{ j\}} \otimes \uZ_J \to 0,
\end{myeq}
and that, the composite 
$$\bZ_{p_j}\otimes \uA_{I \setminus \{ j\}} \otimes \uZ_J \to \uA_I \otimes \uZ_J \to \bZ_{p_j}\otimes \uA_{I \setminus \{ j\}} \otimes \uZ_J $$ 
is given by multiplication by $p_j$. We first identify the cohomology of $\uHal(S^0;\bZ_p \boxtimes \uM)$ for $C_{n/p}$-Mackey functors $\uM$. This will allow us to compute $\uHal(S^0;  \uA_I \boxtimes \uZ_J)$ via an inductive process. 
  
\begin{mysubsection} {Identification of $\uHal(S^0;\bZ_p \boxtimes \uM)$} \label{bZtensM}
The direct computation of homotopy groups of fixed point spectra \cite[Proposition V.3.2]{MM02} shows that the fixed point  spectra of Eilenberg-MacLane spectra are Eilenberg-MacLane spectra. In fact we have 
$$\pi_m(H\uN)^K (WK/H) \cong \begin{cases} \uN(G/q^{-1}(H)) & \mbox{ if } ~ m= 0 \\ 
                                          0 & \mbox{if } m\neq 0\end{cases} $$ 
Applying this to $K=C_p$ and $\uN=\bZ_p\otimes \uM$ we obtain 
\begin{myeq} \label{fix_sp} 
H(\bZ_p \boxtimes \uM)^{C_p} \simeq H\uM ~~\mbox{ as } C_{\frac{n}{p}}\mbox{-spectra}.
\end{myeq}

\begin{notation} 
 For a $G$-spectrum $X$ and a divisor $d$ of $n$, let $i_d^*(X)$ be the $C_d$-spectrum obtained by the restriction  of $X$  along the inclusion $C_d \to G$.  
\end{notation}

We conclude easily using \cite[Proposition V.2.3]{MM02} that if $p\nmid d$, 
\begin{myeq}\label{Resd}
 i_d^*(H(\bZ_p \boxtimes \uM_{\frac{n}{p}})) \cong H(\downarrow^G_{C_d}(\bZp \boxtimes \uM)) =0.
\end{myeq}
 We now use the results above to compute $\uHal(S^0; \bZ_p \boxtimes \uM)$, which turns out to depend only on $\alpha^{C_p}.$ We write $d = \frac{n}{p}$, and prove
 \begin{prop}\label{bZM}
 For a $C_d$-Mackey functor $\uM$ and $\alpha \in RO(G),$ we have 
$$\uH^{\alpha}_G(S^0; \bZ_p \boxtimes \uM) \cong \uH^{\alpha^{C_p}}_{C_d}(S^0; \uM)\boxtimes \bZ_p.$$
 \end{prop}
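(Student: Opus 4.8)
The plan is to use the identification \eqref{fix_sp} of the $C_p$-fixed points of $H(\bZ_p\boxtimes\uM)$ together with the vanishing \eqref{Resd} of its restriction to subgroups $C_m$ with $p\nmid m$. Write $d=\frac{n}{p}$ so that $G\cong C_p\times C_d$. First I would recall that $\uH^\alpha_G(S^0;\bZ_p\boxtimes\uM)(G/K)=\{{G/K}_+,S^\alpha\wedge H(\bZ_p\boxtimes\uM)\}^G$, and split the orbits $G/K$ into two families according to whether $C_p\leq K$ or not. If $C_p\not\leq K$, then $K\leq C_{m}$ for some $m\mid n$ with $p\nmid m$ after conjugation; using the change-of-groups isomorphism and \eqref{Resd}, the spectrum $i_m^\ast(H(\bZ_p\boxtimes\uM))$ is trivial, hence $\uH^\alpha_G(S^0;\bZ_p\boxtimes\uM)(G/K)=0$. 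On the other side of the target, $\bZ_p\boxtimes\uM(G/K)=0$ precisely when $C_p\not\leq K$ (since $\bZ_p(C_p/e)=0$), so the two Mackey functors agree (both are zero) at all such orbits.

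Next I would treat the orbits $G/K$ with $C_p\leq K$. Write $\epsilon:G\to G/C_p\cong C_d$ for the quotient, and note $K=\epsilon^{-1}(\bar K)$ for a subgroup $\bar K\leq C_d$, with $G/K\cong\epsilon^\ast(C_d/\bar K)$. Using the fixed-point adjunction $\{\epsilon^\ast Y,X\}^G\cong\{Y,X^{C_p}\}^{C_d}$ from the recollection of \cite[Proposition V.3.10]{MM02}, together with $\left(S^\alpha\wedge H(\bZ_p\boxtimes\uM)\right)^{C_p}\simeq S^{\alpha^{C_p}}\wedge\left(H(\bZ_p\boxtimes\uM)\right)^{C_p}\simeq S^{\alpha^{C_p}}\wedge H\uM$ — the last step by \eqref{fix_sp}, and the commutation of $(-)^{C_p}$ with smashing by a representation sphere $S^\alpha$ whose underlying $C_p$-fixed representation is $\alpha^{C_p}$ — I obtain
$$\uH^\alpha_G(S^0;\bZ_p\boxtimes\uM)(G/K)\cong\{(C_d/\bar K)_+,S^{\alpha^{C_p}}\wedge H\uM\}^{C_d}\cong\uH^{\alpha^{C_p}}_{C_d}(S^0;\uM)(C_d/\bar K).$$
This is exactly the value of $\uH^{\alpha^{C_p}}_{C_d}(S^0;\uM)\boxtimes\bZ_p$ at $G/K=C_p/C_p\times C_d/\bar K$, since $\bZ_p(C_p/C_p)=\Z$ and the box product just tensors with $\Z$ there.

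Finally I would check that these isomorphisms are compatible with the restriction, transfer and conjugation maps, so that they assemble into an isomorphism of Mackey functors. Conjugation and restriction among orbits with $C_p\leq K$ correspond under $\epsilon$ to the structure maps of $\uH^{\alpha^{C_p}}_{C_d}(S^0;\uM)$, which is what the right-hand side records; any restriction or transfer involving an orbit with $C_p\not\leq K$ lands in or starts from a zero group on both sides, so there is nothing to check there beyond naturality of the adjunctions, which is standard. The main obstacle I anticipate is the careful verification that $\left(S^\alpha\wedge H(\bZ_p\boxtimes\uM)\right)^{C_p}\simeq S^{\alpha^{C_p}}\wedge H\uM$ as $C_d$-spectra in a way natural in $\alpha$ — i.e. pinning down the desuspension/shift behaviour of the fixed-point functor on $S^\alpha$ for $\alpha\in RO(G)$ rather than an honest representation — but this follows from \cite[\S V.3]{MM02} (fixed points commute with $\Sigma^V$ up to $\Sigma^{V^{C_p}}$) combined with \eqref{fix_sp}; once that equivalence is in hand the rest is bookkeeping with the two adjunctions already recalled in Section \ref{EqEM}.
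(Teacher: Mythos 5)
Your overall plan --- evaluate both sides orbitwise, using \eqref{Resd} to kill both Mackey functors at orbits $G/K$ with $C_p\not\leq K$, and the fixed-point adjunction plus \eqref{fix_sp} at orbits with $C_p\leq K$ --- uses the same ingredients the paper does, and the half of your argument at orbits not containing $C_p$ is fine. But the other half rests on a step that, as stated, is false. You claim $(S^\alpha\wedge H(\bZ_p\boxtimes\uM))^{C_p}\simeq S^{\alpha^{C_p}}\wedge H(\bZ_p\boxtimes\uM)^{C_p}$ on the grounds that ``fixed points commute with $\Sigma^V$ up to $\Sigma^{V^{C_p}}$'' per \cite[\S V.3]{MM02}. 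That commutation is a theorem about \emph{geometric} fixed points $\Phi^{C_p}$, not about the categorical fixed-point functor $(-)^{C_p}$ appearing in the adjunction $\{\epsilon^* Y, X\}^G\cong\{Y,X^{C_p}\}^{C_d}$. In general $(\Sigma^V X)^{C_p}\not\simeq \Sigma^{V^{C_p}}X^{C_p}$: the point-set identity $(\Sigma^V X)^{C_p}(W)=\Sigma^{V^{C_p}}X^{C_p}(W)$ for $W\subset\UU^{C_p}$ does not descend to the homotopy category, because $(-)^{C_p}$ is a right Quillen functor computed on fibrant objects while $\Sigma^V$ does not preserve fibrancy. The equivalence you want does happen to hold here, because \eqref{Resd} says $H(\bZ_p\boxtimes\uM)$ has trivial restriction to every subgroup not containing $C_p$, so by isotropy separation its geometric and categorical $C_p$-fixed points agree and one can then invoke monoidality of $\Phi^{C_p}$. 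But that is a genuinely different argument from the one you cite, and it needs to be written out; as it stands, the heart of the proposition is asserted on a false general principle.

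The paper sidesteps the issue by re-ordering the steps. It first proves, by a topological argument, that $\uHal(S^0;\bZ_p\boxtimes\uM)\cong\uH^{\alpha^{C_p}}_G(S^0;\bZ_p\boxtimes\uM)$: since $\uHal({G/C_r}_+;\bZ_p\boxtimes\uM)=0$ for $p\nmid r$ by Proposition \ref{orbit} and \eqref{Resd}, the cofibre sequence \eqref{cofd2} shows that adding or subtracting $\xi^r$ to $\alpha$ with $p\nmid r$ leaves the cohomology unchanged, and $\alpha-\alpha^{C_p}$ is a $\Z$-combination of such $\xi^r$. Having reduced to a grading of the form $\alpha^{C_p}=q^*\beta$ with $\beta\in RO(C_d)$, it then places $S^{-\alpha^{C_p}}$ on the \emph{source} side of the adjunction:
$$\{S^{-\alpha^{C_p}}\wedge q^*(C_d/K)_+,\,H(\bZ_p\boxtimes\uM)\}^G \cong \{q^*(S^{-\alpha^{C_p}}\wedge (C_d/K)_+),\,H(\bZ_p\boxtimes\uM)\}^G \cong \{S^{-\alpha^{C_p}}\wedge (C_d/K)_+,\,H\uM\}^{C_d},$$
using only that $q^*$ is strong monoidal with $q^*(S^{-\alpha^{C_p}})\cong S^{-\alpha^{C_p}}$, and then \eqref{fix_sp}. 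No commutation of categorical fixed points with smashing by a representation sphere is ever needed. You should either adopt this reduction before invoking the adjunction, or replace your cited justification with the isotropy-separation/geometric-fixed-point argument sketched above.
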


  \begin{proof}
For $\alpha \in RO(G)$, $\alpha^{C_p}$ is a virtual $C_d$-representation, and we also think of it as a virtual $G$-representation via the map $q: G \to G/C_p \cong C_d$. We first observe that 
 $$\uHal(S^0; \bZ_p \boxtimes \uM) \cong \uH^{\alpha^{C_p}}_G(S^0; \bZ_p \boxtimes \uM).$$
 To see this, consider $r \mid n$ such that $p \nmid r.$ Using Proposition \ref{orbit} and \eqref{Resd}, we have 
$$\uHal({G/C_r}_+; \bZ_p \boxtimes \uM) \cong \uparrow^{G}_{C_r}\uH^{\alpha}_{C_r}(S^0; \downarrow^{G}_{C_r}\bZ_p \boxtimes\uM)=0.$$ 
Next, using the cofibre sequence \eqref{cofd}, we readily deduce  that $\uHal(S(\xi^r)_+; \bZ_p \boxtimes \uM)=0$. Now incorporating this value in the long exact sequence \eqref{exr}, we obtain 
$$\uHal(S^0; \bZ_p \boxtimes \uM) \cong \uH^{\alpha-\xi^r}_G(S^0; \bZ_p \boxtimes \uM).$$ 
Therefore, adding or subtracting a copy of $\xi^r$ does not change the value of $\uHal(S^0; \bZ_p \boxtimes \uM)$ if $p \nmid r$. 
    Observe now that $\alpha - \alpha^{C_p}$ is a $\Z$-linear combination of $\xi^r$ where $p \nmid r$. It follows that 
$\uHal(S^0; \bZ_p \boxtimes \uM) \cong \uH^{\alpha^{C_p}}_{G}(S^0; \bZ_p \boxtimes \uM)$. Finally, we prove 
$$\uH^{\alpha^{C_{p}}}_G(S^0; \bZ_p\boxtimes \uM)\cong \uH^{\alpha^{C_p}}_{C_d}(S^0; \uM)\boxtimes \bZ_p.$$ 
By \eqref{Resd}, both the sides of the above are $0$ at the orbit $G/C_r$ whenever $p\nmid r.$ The remaining orbits are of the form $q^*(C_d/K)$ for $K \leq C_d$. In this case, we compute 
\begin{align*}
 \uH^{\alpha^{C_p}}_G(S^0; \bZ_p\boxtimes \uM)(q^*(C_d/K)) &= \{S^{-\alpha^{C_p}} \wedge q^*(C_d/K)_+, H(\bZ_p\boxtimes \uM)  \}^G\\ 
&\cong \{q^*(S^{-\alpha^{C_p}} \wedge C_d/K_+), H(\bZ_p\boxtimes \uM)  \}^G\\
&\cong \{S^{-\alpha^{C_p}} \wedge C_d/K_+, H(\bZ_p\boxtimes \uM)^{C_p}  \}^{C_d}\\
&\cong \{S^{-\alpha^{C_p}} \wedge C_d/K_+, H\uM  \}^{C_d}\\ 
&= (\uH^{\alpha^{C_p}}_{C_d}(S^0; \uM)\boxtimes \bZ_p)(q^*(C_d/K)).
 \end{align*}
  In the above, the first equivalence stems from  the fact that $q^*(S^{-\alpha^{C_p}}) \cong S^{-\alpha^{C_p}}$, the second equivalence is from \cite[Propostion V.3.10]{MM02}, and the third equivalence follows from \eqref{fix_sp}. Observe also that the equivalence is functorial with respect to maps $C_d/K_1 \to C_d/K_2$ in the $C_d$-Burnside category. Hence the result follows.
 \end{proof}
\end{mysubsection}

\begin{mysubsection}{Computations in the non-zero case} \label{A-nz}
We compute $\uHal(S^0;\uA)$ using the short exact sequences \eqref{tensorex1} and \eqref{tensorex2}. We use Proposition \ref{bZM} to deduce
\begin{prop}\label{cohses}
Let $I$ be a subset $\uk$ and $j \in I$. For $\alpha \in RO(G)$, there is a short exact sequence \begin{myeq}\label{az_ses}
0 \to \uHal(S^0;\bZ_{p_j}\boxtimes \uA_{I \setminus \{ j\}}\boxtimes \uZ_J)  \to \uHal(S^0;\uA_I \boxtimes \uZ_J) \to \uHal(S^0;\uA_{I \setminus \{ j\}} \boxtimes \uZ_{J \cup \{j\}}) \to 0.
\end{myeq}
\end{prop}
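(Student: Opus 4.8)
The plan is to read the asserted sequence off the long exact cohomology sequence attached to \eqref{tensorex1}, and to force the relevant connecting maps to vanish. Write $\uM := \uA_{I\setminus\{j\}}\boxtimes\uZ_J$; since $(I\setminus\{j\})\sqcup J = \uk\setminus\{j\}$ corresponds to the divisor $n/p_j$, this is a $C_{n/p_j}$-Mackey functor, matching the notation of Proposition \ref{bZM} with $p=p_j$. The short exact sequence \eqref{tensorex1} of $G$-Mackey functors realises a cofibre sequence of Eilenberg--MacLane spectra, so applying $\uHbs_G(S^0;-)$ yields a long exact sequence whose relevant segment is
\[
\begin{aligned}
\uH^{\alpha-1}_G\bigl(S^0;\uA_{I\setminus\{j\}}\boxtimes\uZ_{J\cup\{j\}}\bigr)
&\xrightarrow{\ \partial\ } \uHal\bigl(S^0;\bZ_{p_j}\boxtimes\uM\bigr)
\xrightarrow{\ f\ } \uHal\bigl(S^0;\uA_I\boxtimes\uZ_J\bigr)\\
&\longrightarrow \uHal\bigl(S^0;\uA_{I\setminus\{j\}}\boxtimes\uZ_{J\cup\{j\}}\bigr)
\xrightarrow{\ \partial\ } \uH^{\alpha+1}_G\bigl(S^0;\bZ_{p_j}\boxtimes\uM\bigr).
\end{aligned}
\]
The asserted exact sequence is the three middle terms, so it is enough to prove that $\partial=0$ for every $\alpha\in RO(G)$; equivalently, that $f$ is injective for every $\alpha$.

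To force this injectivity I would bring in the second short exact sequence \eqref{tensorex2}. Its long exact sequence supplies a map $g\colon\uHal(S^0;\uA_I\boxtimes\uZ_J)\to\uHal(S^0;\bZ_{p_j}\boxtimes\uM)$ induced by the surjection $\uA_I\boxtimes\uZ_J\twoheadrightarrow\bZ_{p_j}\boxtimes\uM$ of \eqref{tensorex2}, while $f$ is induced by the inclusion $\bZ_{p_j}\boxtimes\uM\hookrightarrow\uA_I\boxtimes\uZ_J$ of \eqref{tensorex1}. Since the composite of these two Mackey functor maps is multiplication by $p_j$ (as recorded after \eqref{tensorex2}), functoriality of $\uHbs_G(S^0;-)$ gives $g\circ f=p_j\cdot\mathrm{id}$ on $\uHal(S^0;\bZ_{p_j}\boxtimes\uM)$. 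Hence $\ker f$ lies in the $p_j$-torsion subgroup of $\uHal(S^0;\bZ_{p_j}\boxtimes\uM)$, and Proposition \ref{bZM} identifies this group with $\uH^{\alpha^{C_{p_j}}}_{C_{n/p_j}}(S^0;\uM)\boxtimes\bZ_{p_j}$. The operation $-\boxtimes\bZ_{p_j}$ merely reproduces the groups of its argument (at certain orbits), so it introduces no torsion not already present; thus everything reduces to showing that $\uH^{\beta}_{C_{n/p_j}}(S^0;\uM)$ has no $p_j$-torsion for every $\beta\in RO(C_{n/p_j})$.

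This torsion-freeness is the main obstacle. It is a $p$-torsion statement for a prime $p_j$ not dividing the group order $n/p_j$, in the spirit of Theorem \ref{ptorsion}, but $\uM=\uA_{I\setminus\{j\}}\boxtimes\uZ_J$ is not one of the coefficient Mackey functors $\uA,\uZ$ treated there. I would establish it by induction on $k$, the number of prime factors: the present proposition, applied over the proper subgroup $C_{n/p_j}$, peels the $\uA$-factors off $\uM$ one at a time through short exact sequences, and Proposition \ref{bZM} rewrites each resulting $\bZ_{p_i}\boxtimes(-)$-term in terms of cohomology over a still smaller group; iterating, every group that occurs is assembled from copies of $\uHbs_{C_l}(S^0;\uZ)$ for divisors $l\mid n/p_j$, tensored with Mackey functors of the form $\bZ_{p_i}$, and $\uHbs_{C_l}(S^0;\uZ)$ has no $p_j$-torsion by Theorem \ref{ptorsion} since $p_j\nmid l$. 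The base case $k=1$ is trivial, as then $\uM$ is an abelian group placed over the trivial group and its cohomology is concentrated in degree $0$. Granting this, $\ker f=0$, every $\partial$ vanishes, and the long exact sequence breaks up into the short exact sequences of the statement.
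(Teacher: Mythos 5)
Your proof follows the paper's argument essentially step for step: take the long exact sequence from \eqref{tensorex1}, show that the first map in the three-term window is injective for every $\alpha$ by observing via \eqref{tensorex2} that post-composing with the quotient map yields multiplication by $p_j$, apply Proposition \ref{bZM} to rewrite $\uHal(S^0;\bZ_{p_j}\boxtimes\uA_{I\setminus\{j\}}\boxtimes\uZ_J)$ as a $\boxtimes$ with $\bZ_{p_j}$ of cohomology over $C_{n/p_j}$, and conclude injectivity from $p_j$-torsion-freeness of the latter.

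The one place you diverge is in justifying this torsion-freeness, and you are right to flag it. The paper simply cites Theorem \ref{ptorsion}, but that theorem is explicitly stated under the standing hypothesis that the coefficient Mackey functor is $\uA$ or $\uZ$, whereas the coefficient appearing here over $C_{n/p_j}$ is the mixed Mackey functor $\uA_{I\setminus\{j\}}\boxtimes\uZ_J$, which is neither (unless $I\setminus\{j\}$ or $J$ is empty). Your fix --- an induction on $k$ that uses the proposition itself over the smaller group $C_{n/p_j}$, Proposition \ref{bZM} to pass to yet smaller groups, and Theorem \ref{ptorsion} for the resulting $\uZ$-coefficient pieces --- is sound: the relevant fact is that torsion-freeness is stable under short exact sequences, so $P(k)$ and a companion torsion-freeness statement $Q(k)$ can be established by a double induction on the number of prime factors and on $\#I$. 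One could alternatively note that the whole of the torsion section of the paper (Proposition \ref{all} through Theorem \ref{ptorsion}) goes through verbatim for the mixed coefficients $\uA_I\boxtimes\uZ_J$, since the inductive inputs over subgroups $C_p$ again reduce to $\uA$ or $\uZ$; the authors seem to intend this implicitly. Either way you obtain the statement; your version makes the dependence explicit, which is a genuine improvement in rigor over the paper's citation.
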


\begin{proof}
The short exact sequences of Mackey functors \eqref{tensorex1} and \eqref{tensorex2} induce cohomology long exact sequences 
\begin{myeq}\label{cohex1}
\cdots  \uHal(S^0; \bZ_{p_j} \boxtimes \uA_{I \setminus \{ j\}}\boxtimes \uZ_J)  \to \uHal(S^0;\uA_I \boxtimes \uZ_J) \to \uHal(S^0;\uA_{I \setminus \{ j\}} \boxtimes \uZ_{J \cup \{j\}})\cdots
\end{myeq}
and 
$$\cdots \uHal(S^0;\uZ^*_{p_j}\boxtimes \uA_{I \setminus \{ j\}}\boxtimes \uZ_J)  \to \uHal(S^0;\uA_I \boxtimes \uZ_J) \to \uHal (S^0; \bZ_{p_j} \boxtimes \uA_{I \setminus \{ j\}} \boxtimes \uZ_{J \cup \{j\}}) \cdots$$ 
such that the composite 
$$\uHal(S^0;\bZ_{p_j}\boxtimes \uA_{I \setminus \{ j\}}\boxtimes \uZ_J)  \to \uHal(S^0;\uA_I \boxtimes \uZ_J) \to \uHal(S^0;\bZ_{p_j}\boxtimes \uA_{I \setminus \{ j\}}\boxtimes \uZ_J)$$ 
is given by multiplication by $p_j$. Using Proposition \ref{bZM} we have 
 $$\uHal(S^0;\bZ_{p_j}\boxtimes \uA_{I \setminus \{ j\}}\boxtimes \uZ_J) \cong \bZ_{p_j} \boxtimes \uH^{\alpha^{C_{p_j}}}_{C_\frac{n}{p_j}}(S^0; \uA_{I \setminus \{ j\}}\boxtimes \uZ_J).$$ 
It follows from  Proposition \ref{ptorsion} that the groups have no $p_j$-torsion.  Therefore, the map 
$$\uHal(S^0;\bZ_{p_j}\boxtimes \uA_{I \setminus \{ j\}}\boxtimes \uZ_J)  \to \uHal(S^0;\uA_I \boxtimes \uZ_J)$$ 
is injective for all $\alpha$. Hence, \eqref{cohex1} reduces to the short exact sequence \eqref{az_ses}.
\end{proof}
 
In fact, we know that multiplication by $p_j$ is an isomorphism on groups which have no $p_j$-torsion. Therefore, the proof of Proposition \ref{cohses} further implies
\begin{prop}\label{bZtors}
The torsion factors in $\uHal(S^0; \bZ_{p_j} \boxtimes \uA_{I\setminus \{j\}}\boxtimes \uZ_{J})$ are a summand of $\uHal(S^0;   \uA_I\boxtimes \uZ_{J})$. 
\end{prop}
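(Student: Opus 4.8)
The plan is to reuse the two comparison maps already produced in the proof of Proposition \ref{cohses}. Write $M := \uHal(S^0;\bZ_{p_j}\boxtimes\uA_{I\setminus\{j\}}\boxtimes\uZ_J)$ and $N := \uHal(S^0;\uA_I\boxtimes\uZ_J)$. From the long exact sequence attached to \eqref{tensorex1} one has the map $a\colon M\to N$, which is injective (this injectivity is precisely what reduces \eqref{cohex1} to the short exact sequence \eqref{az_ses}); from the long exact sequence attached to \eqref{tensorex2} one has a map $c\colon N\to M$. The decisive fact, recorded inside the proof of Proposition \ref{cohses}, is that $c\circ a$ is multiplication by $p_j$ on $M$, since it is induced by the composite of the Mackey functor maps $\bZ_{p_j}\boxtimes\uA_{I\setminus\{j\}}\boxtimes\uZ_J\to\uA_I\boxtimes\uZ_J\to\bZ_{p_j}\boxtimes\uA_{I\setminus\{j\}}\boxtimes\uZ_J$, which is multiplication by $p_j$.

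Next I would make the internal structure of $M$ explicit. Proposition \ref{bZM} gives $M\cong\bZ_{p_j}\boxtimes\uH^{\alpha^{C_{p_j}}}_{C_{n/p_j}}(S^0;\uA_{I\setminus\{j\}}\boxtimes\uZ_J)$, and by the inductive computation of the $RO(C_{n/p_j})$-graded cohomology (induction on the number of prime factors of $n$) the Mackey functor $\uH^{\bs}_{C_{n/p_j}}(S^0;\uA_{I\setminus\{j\}}\boxtimes\uZ_J)$ is a direct sum of standard pieces, each of which is either a torsion Mackey functor or a torsion-free one. Applying the exact additive functor $\bZ_{p_j}\boxtimes(-)$ then displays $M$ as a direct sum $M\cong M_{\mathrm{tf}}\oplus M_{\mathrm{t}}$, where $M_{\mathrm{t}}$ collects the torsion summands; let $\iota\colon M_{\mathrm{t}}\hookrightarrow M$ and $p\colon M\twoheadrightarrow M_{\mathrm{t}}$ be the inclusion and projection, so $p\iota=\mathrm{id}_{M_{\mathrm{t}}}$. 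By Proposition \ref{ptorsion} (applied through Proposition \ref{bZM}), $M$, hence $M_{\mathrm{t}}$, has no $p_j$-torsion, so multiplication by $p_j$ is an automorphism of the torsion object $M_{\mathrm{t}}$.

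With this in hand the splitting is formal: the composite $M_{\mathrm{t}}\xrightarrow{\iota}M\xrightarrow{a}N\xrightarrow{c}M\xrightarrow{p}M_{\mathrm{t}}$ equals $p\circ(c\circ a)\circ\iota=p\circ(p_j\cdot\mathrm{id}_M)\circ\iota=p_j\cdot\mathrm{id}_{M_{\mathrm{t}}}$, an isomorphism. Hence $a\circ\iota\colon M_{\mathrm{t}}\to N$ is a split monomorphism, a retraction being the inverse of $p_j\cdot\mathrm{id}_{M_{\mathrm{t}}}$ composed with $p\circ c$. Therefore $a(M_{\mathrm{t}})$, i.e. the torsion factors of $M$ regarded inside $N$, is a direct summand of $N=\uHal(S^0;\uA_I\boxtimes\uZ_J)$, as claimed. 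As a variant, once the torsion summands are identified as objects of the form $\CC_{n,e}\langle A\rangle\cong\KK_{n,e}\langle A\rangle$ with $p_j\mid e$, one may instead invoke Proposition \ref{maps} in place of this elementary argument.

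I expect the only genuine work to sit in the second paragraph: one must know that $M$ literally splits as a Mackey functor into its torsion and torsion-free parts, which is not a purely formal statement about abelian groups and has to be extracted from the explicit inductive description of $\uH^{\bs}_{C_{n/p_j}}(S^0;-)$. Everything else is bookkeeping around the identity $c\circ a=p_j$ together with the fact that multiplication by $p_j$ is invertible on a $p_j$-torsion-free torsion object.
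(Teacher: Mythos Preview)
Your approach is essentially the paper's own: use the two maps $a\colon M\to N$ and $c\colon N\to M$ coming from \eqref{tensorex1} and \eqref{tensorex2}, together with $c\circ a=p_j$, and exploit that $p_j$ acts invertibly on $M_t$ because $M$ has no $p_j$-torsion. The paper compresses all of this into a single sentence, so your write-up is a faithful unpacking rather than an alternative method.

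One point deserves a caveat. You explicitly isolate the step the paper glosses over, namely that $M$ must split as $M_{\mathrm{tf}}\oplus M_{\mathrm{t}}$ \emph{as a Mackey functor} in order to manufacture the retraction $p\colon M\to M_{\mathrm{t}}$; this is genuinely where the content lies, and you are right to flag it. Your justification, however, appeals to an inductive ``direct sum of standard pieces'' description of $\uH^{\bs}_{C_{n/p_j}}(S^0;\uA_{I\setminus\{j\}}\boxtimes\uZ_J)$ for \emph{all} gradings, and the paper does not establish such a decomposition in the ``many zeros'' regime (only the group values are computed there, cf.\ Corollary~\ref{mzcalc}). This is harmless in practice: every invocation of Proposition~\ref{bZtors} in the paper occurs either for odd $\alpha$ or for non-zero/mostly non-zero $\alpha$ (Theorems~\ref{coh_ar} and \ref{indzero}), where by induction the relevant $C_{n/p_j}$-Mackey functor is either entirely torsion or decomposes explicitly via the inductive form of those theorems; and in Theorem~\ref{mz-tors} only the group-level statement at $G/G$ is needed, where the torsion/torsion-free splitting of a finitely generated abelian group is automatic. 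So your argument is correct for all uses in the paper, but you should phrase the inductive input more carefully rather than asserting a blanket decomposition.
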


We now use these results to complete the calculations in the non-zero case. In the theorem below we use the convention that 
\begin{myeq} \label{conv1} 
\uH^{\alpha}_{C_1}(S^0; \uZ) =\begin{cases} 0 & \mbox{ if } \alpha \neq 0 \\ 
                                                                         \Z  & \mbox{ if } \alpha = 0. \end{cases}
\end{myeq} 
This appears in the case $J=\varnothing$ below.  
 \begin{thm}\label{coh_ar}
 Suppose $I$ and $J$ as in Proposition \ref{cohses} and $\alpha \in RO(G)$ such that $|\alpha^H| \neq 0$  for all $H \leq C_I$. Then  
 $$\uHal(S^0; \uA_I \boxtimes \uZ_J) \cong \bigoplus\limits_{\II \subseteq I} \bZ_{\II} \boxtimes \uH^{\alpha^{C_\II}}_{C_{\frac{n}{\II}}}(S^0; \uZ).$$
 \end{thm}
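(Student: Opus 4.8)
Here is a plan of proof for Theorem~\ref{coh_ar}.

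The plan is to argue by induction on $\#I$, reading the assertion for an arbitrary square-free cyclic group in place of $C_n$ so that the inductive hypothesis may be applied to $C_{n/p_j}$ in the step below. The case $\#I=0$ is immediate: then $\uA_I\boxtimes\uZ_J=\uZ$ and the right-hand side collapses to the single summand $\uHal(S^0;\uZ)$. For the inductive step I would fix $j\in I$ and feed the data into the short exact sequence \eqref{az_ses} of Proposition~\ref{cohses}. Since $C_{I\setminus\{j\}}\le C_I$, the hypothesis $|\alpha^H|\neq 0$ for all $H\le C_I$ is inherited by the pair $(I\setminus\{j\},\,J\cup\{j\})$, so the inductive hypothesis identifies the quotient term $\uHal(S^0;\uA_{I\setminus\{j\}}\boxtimes\uZ_{J\cup\{j\}})$ with $\bigoplus_{\II\subseteq I\setminus\{j\}}\bZ_\II\boxtimes\uH^{\alpha^{C_\II}}_{C_{\frac{n}{\II}}}(S^0;\uZ)$, which is exactly the part of the claimed answer indexed by subsets of $I$ omitting $j$.

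Next I would identify the sub-Mackey functor $\uHal(S^0;\bZ_{p_j}\boxtimes\uA_{I\setminus\{j\}}\boxtimes\uZ_J)$. By Proposition~\ref{bZM} it is $\bZ_{p_j}\boxtimes\uH^{\alpha^{C_{p_j}}}_{C_{n/p_j}}(S^0;\uA_{I\setminus\{j\}}\boxtimes\uZ_J)$. One checks that $\alpha^{C_{p_j}}$, viewed as a virtual $C_{n/p_j}$-representation, still satisfies the non-vanishing hypothesis relative to $C_{I\setminus\{j\}}$, since for $H\le C_{I\setminus\{j\}}$ one has $(\alpha^{C_{p_j}})^{H}=\alpha^{C_{p_j}H}$ with $C_{p_j}H\le C_I$. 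Applying the inductive hypothesis over $C_{n/p_j}$, and then using that $\boxtimes$ is additive and associative, that $\bZ_{p_j}\boxtimes\bZ_\II\cong\bZ_{\{j\}\cup\II}$, that $(\alpha^{C_{p_j}})^{C_\II}=\alpha^{C_{\{j\}\cup\II}}$, and that $(n/p_j)/|\II|=n/|\{j\}\cup\II|$, this sub-Mackey functor becomes $\bigoplus_{\II\subseteq I\setminus\{j\}}\bZ_{\{j\}\cup\II}\boxtimes\uH^{\alpha^{C_{\{j\}\cup\II}}}_{C_{n/|\{j\}\cup\II|}}(S^0;\uZ)$; and since the sets $\{j\}\cup\II$, as $\II$ ranges over subsets of $I\setminus\{j\}$, are precisely the subsets of $I$ containing $j$, this is the part of the claimed answer indexed by those subsets.

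It then remains to split \eqref{az_ses}. Here the point is that every $\uZ$-cohomology group occurring in the sub-Mackey functor is torsion: for $\II\subseteq I$ we have $C_\II\le C_I$, hence $|\alpha^{C_\II}|\neq 0$, and then \eqref{Zval} together with Theorems~\ref{non-zero} and \ref{zercoh} shows $\uH^{\alpha^{C_\II}}_{C_{\frac{n}{\II}}}(S^0;\uZ)$ is a finite direct sum of Mackey functors of the form $\KK_i\bZpi$, or zero; in particular it is torsion, and $\bZ_\II\boxtimes(-)$ sends torsion Mackey functors to torsion Mackey functors. Thus $\uHal(S^0;\bZ_{p_j}\boxtimes\uA_{I\setminus\{j\}}\boxtimes\uZ_J)$ coincides with its own torsion submodule, so Proposition~\ref{bZtors} shows the monomorphism in \eqref{az_ses} is split. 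Combining the three steps, $\uHal(S^0;\uA_I\boxtimes\uZ_J)$ is the direct sum of the sub- and the quotient Mackey functors, which is exactly $\bigoplus_{\II\subseteq I}\bZ_\II\boxtimes\uH^{\alpha^{C_\II}}_{C_{\frac{n}{\II}}}(S^0;\uZ)$.

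I expect the main obstacle to be the bookkeeping in the middle step --- verifying that the non-vanishing condition transports correctly along $(-)^{C_{p_j}}$ and that the nested box products recombine as asserted --- together with the minor but necessary care in setting up the induction over the whole family of square-free cyclic groups rather than over $C_n$ alone. The splitting itself is then a soft consequence of the torsion-freeness results of Section~\ref{cohrep} via Proposition~\ref{bZtors}.
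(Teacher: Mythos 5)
Your proposal is correct and follows essentially the same route as the paper's own proof: induction on $\#I$ via the short exact sequence of Proposition \ref{cohses}, identification of the subobject through Proposition \ref{bZM} together with the inductive hypothesis applied over $C_{n/p_j}$, and a splitting obtained from Proposition \ref{bZtors} once the subobject is seen to be entirely torsion. The only cosmetic difference is that you verify the torsion claim from the explicit inductive formula, whereas the paper first proves "all groups are torsion" as a separate preliminary induction; these are interchangeable.
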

 \begin{proof}
Note that the given non-zero fixed points condition implies that the groups in $\uHal(S^0;\uZ)$ are all torsion \eqref{Zval}. By induction on $\# I$, the cardinality of $I$, we deduce using Proposition \ref{cohses}, that the groups in $\uHal(S^0;\uA_I\boxtimes \uZ_J)$ are all torsion. 

We proceed to the proof of the theorem again by induction on $\# I$. For $\# I=1$ (that is $I=\{i\}$), we use Proposition \ref{bZtors} to deduce that the short exact sequences 
 $$0 \to \bZ_{\{i\}} \boxtimes  \uH^{\alpha^{C_{\{i\}}}}_{C_J}(S^0; \uZ_J) \to \uHal(S^0; \uA_{\{i\}}\boxtimes \uZ_J) \to \uHal(S^0; \uZ) \to 0$$  
are all split. 
In the general case, we know from the induction hypothesis  that 
$$\uHal(S^0;\uA_{I \setminus \{ j\}} \boxtimes \uZ_{J \cup \{j\}}) \cong \bigoplus\limits_{\II \subseteq I \setminus \{ j\}} \bZ_\II \boxtimes \uH^{\alpha^{C_\II}}_{C_{\frac{n}{\II}}}(S^0; \uZ_{\frac{n}{\II}}).$$ 
Proposition \ref{bZM} and the induction hypothesis together allow us to conclude 
$$\uH^{\alpha}_{G}(S^0; \bZ_{p_j}\boxtimes \uA_{I \setminus \{ j\}}\boxtimes \uZ_J) \cong \bigoplus\limits_{\II \subseteq I \setminus \{j \}}\bZ_{\II \cup \{j \}} \boxtimes \uH^{\alpha^{C_{\II \cup \{j \}}}}_{C_{\frac{n}{\II \cup \{j \}}}}(S^0; \uZ_{\frac{n}{\II \cup \{j \}}}).$$  
Now summing the two factors we conclude the result by induction. 
\end{proof}

\begin{exam}\label{compG}
In the case $G=C_p$, Theorem \ref{coh_ar} states that if $|\alpha|\neq 0$ and $|\alpha^{C_p}|\neq 0$, we have
$$\uH^\alpha_{C_p}(S^0;\uA) \cong \uH^\alpha_{C_p}(S^0;\uZ).$$

In the case $G=C_{pq}$, Theorem \ref{coh_ar} states that if $|\alpha|\neq 0$, $|\alpha^{C_p}|\neq 0$, $|\alpha^{C_q}|\neq 0$, and $|\alpha^{C_{pq}}|\neq 0$, we have
$$\uH^\alpha_{C_{pq}}(S^0;\uA) \cong \uH^\alpha_{C_{pq}}(S^0;\uZ) \oplus \bZ_p\boxtimes \uH^{\alpha^{C_p}}_{C_q}(S^0;\uZ) \oplus \bZ_q \boxtimes \uH^{\alpha^{C_q}}_{C_p}(S^0;\uZ).$$
\end{exam}

\vspace*{0.5cm}

An immediate observation from Theorem \ref{coh_ar} (and also using Theorem \ref{zercoh}) is the following Corollary. 
 \begin{cor}\label{Aodd}
 For $\alpha \in RO(G)$ odd, we have 
$$\uHal(S^0; \uA) \cong \bigoplus\limits_{\II \subseteq \uk} \bZ_{\II} \boxtimes \uH^{\alpha^{C_{\II}}}_{C_{\frac{n}{\II}}}(S^0; \uZ_{\frac{n}{\II}}).$$
If further $|\alpha^H| \leq 1$ for all $H\leq G$, $\uHal(S^0;\uA)=0$. 
 \end{cor}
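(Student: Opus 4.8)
The plan is to read off both assertions directly from Theorem~\ref{coh_ar} and Theorem~\ref{zercoh}. The only elementary input is the remark in the footnote to the opening Notation: since $n$ is odd, an odd element $\alpha\in RO(G)$ has $|\alpha^H|$ odd, hence nonzero, for every $H\leq G$. In particular such an $\alpha$ is \emph{non-zero} in the sense of Definition~\ref{rnz}, so the hypothesis of Theorem~\ref{coh_ar} is satisfied with $I=\uk$ and $J=\varnothing$ (then $C_I=G$, and $\uA_I\boxtimes\uZ_J=\uA$). Theorem~\ref{coh_ar} immediately gives
$$\uHal(S^0;\uA)\cong\bigoplus_{\II\subseteq\uk}\bZ_\II\boxtimes\uH^{\alpha^{C_\II}}_{C_{\frac{n}{\II}}}(S^0;\uZ),$$
which is the claimed formula; the summand for $\II=\uk$ is read via the convention~\eqref{conv1} and vanishes anyway, since $|\alpha^{C_n}|$ is odd and so nonzero.

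For the second assertion, suppose in addition $|\alpha^H|\leq 1$ for all $H\leq G$, and fix $\II\subseteq\uk$; I would show the corresponding summand is $0$. Write $\beta:=\alpha^{C_\II}$, viewed as a virtual $C_{\frac{n}{\II}}$-representation. Because $n$ is squarefree, $C_\II$ together with any subgroup of $C_{\frac{n}{\II}}$ generates the corresponding subgroup of $G$, so the total dimension of $\beta$ is $|\beta|=|\alpha^{C_\II}|$ and, for a prime $p_i\mid\frac{n}{|\II|}$, its $C_{p_i}$-fixed dimension is $|\beta^{C_{p_i}}|=|\alpha^{C_{\II\cup\{i\}}}|$. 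By hypothesis, together with the footnote, these are all odd integers $\leq 1$. If $|\beta|=1$, then $|\beta|>0$ is odd and Theorem~\ref{zercoh}(1) yields $\uH^{\beta}_{C_{\frac{n}{\II}}}(S^0;\uZ)=0$; if $|\beta|\leq -1$, then $|\beta|<0$ is odd and $|\beta^{C_{p_i}}|\leq 1$ for each $p_i\mid\frac{n}{|\II|}$, so Theorem~\ref{zercoh}(3) again yields $0$ (the degenerate case $\II=\uk$ being covered by~\eqref{conv1}, as $|\alpha^{C_n}|\neq 0$). Hence every summand vanishes and $\uHal(S^0;\uA)=0$.

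I do not anticipate any real obstacle: this is a genuine corollary. The only points that need attention are the bookkeeping — identifying the summands indexed by $\II$, matching the subgroups $C_\II\leq G$ against the subgroups of the quotient $C_{\frac{n}{\II}}$, and checking that $\alpha^{C_\II}$ restricts to the expected virtual representations — and the use of the ``odd'' hypothesis to upgrade ``$|\alpha^H|\leq 1$'' into ``either $=1$ or $\leq -1$'', which is exactly what makes one of the two vanishing cases of Theorem~\ref{zercoh} applicable for every $\II$.
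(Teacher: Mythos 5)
Your proof is correct and follows exactly the route the paper intends: specialize Theorem~\ref{coh_ar} at $I=\uk$, $J=\varnothing$ using the parity footnote to verify the non-zero hypothesis, and then kill each summand in the $|\alpha^H|\leq 1$ case by applying Theorem~\ref{zercoh} (parts (1) and (3)) over $C_{n/|\II|}$ to $\beta=\alpha^{C_\II}$. The bookkeeping you carry out — $|\beta^{C_{p_i}}|=|\alpha^{C_{\II\cup\{i\}}}|$ and the degenerate $\II=\uk$ case via the convention~\eqref{conv1} — is precisely what the paper leaves implicit when it calls the corollary ``immediate.''
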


\begin{exam}\label{Aoddneg}
The decomposition formula of $\uHal(S^0;\uA)$ in Corollary \ref{Aodd} for odd $\alpha$ may be stated in more explicit terms as (see Theorem \ref{non-zero})
$$\uHal(S^0;\uA) \cong \bigoplus\limits_{i=1}^k \bigoplus\limits_{\stackrel{\II\subset \uk\setminus\{i\},} {\stackrel{|\alpha^{C_\II}|<0,} {\stackrel{|\alpha^{C_{\II p_i}}|> 1}{ }}}}\bZ_\II\boxtimes \uZ_{\frac{n}{p\II}}\boxtimes \bZpi.$$ 
From this equation we see that $\uHal(S^0;\uA)=0$, if $\alpha$ is such that $|\alpha^{C_\II}| < 0$ implies $|\alpha^{C_{\II p}}|\leq 1$ for all $p \nmid |\II|$, which is slightly stronger than the condition in Corollary \ref{Aodd}. For example, in the case $n=pqr$ we may have the following fixed point dimensions for odd $\alpha$
$$ \left\{
\begin{array}{clclc} 
          &  |\alpha| < 0,           \\
|\alpha^{C_p}| =1,   &  |\alpha^{C_q}| <0,   &  |\alpha^{C_r}|=1,  \\
|\alpha^{C_{pr}}| >1, &  |\alpha^{C_{pq}}| =1,  & |\alpha^{C_{qr}}|=1,\\
                             & |\alpha^{C_{pqr}}| \mbox{ any odd value}
\end{array} \right \}, $$
where $\uHal(S^0;\uA)=0$. 
\end{exam}

\vspace*{0.5cm}

Another consequence of Theorem \ref{coh_ar} is 
\begin{cor}\label{Ators}
The torsion elements $a$ of $\uHal(S^0;\uA_I\boxtimes \uZ_J)$ satisfy $na =0$.  
\end{cor}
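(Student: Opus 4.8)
The plan is to prove, by induction on $\#I$, the slightly more flexible assertion that for every square-free divisor $m$ of $n$, every $\alpha\in RO(C_m)$, and every partition $I\sqcup J$ of the set of prime divisors of $m$, the torsion elements of $\uH^\alpha_{C_m}(S^0;\uA_I\boxtimes\uZ_J)$ are annihilated by $m$ (hence by $n$); Corollary \ref{Ators} is then the case $m=n$. Since the coefficient Mackey functors $\uA_I\boxtimes\uZ_J$ over $C_m$ are exactly the ones occurring in Proposition \ref{cohses}, the induction stays within this class.

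For the base case $\#I=0$ the coefficients are the constant Mackey functor $\uZ$ over $C_m$, and I would simply read the answer off Theorems \ref{zero}, \ref{non-zero} and \ref{zercoh}: the value is the torsion-free $\uZ^{\JJ(\alpha)}$ when $|\alpha|=0$, a direct sum of copies of $\KK_i\bZpi$ (whose groups are $0$ or $\Z/p_i$) when $|\alpha|>0$ is even or $|\alpha|<0$ is odd, and $0$ otherwise; in every case the torsion is $m$-torsion.

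For the inductive step I would fix $j\in I$ and feed the short exact sequence \eqref{az_ses},
$$0\to \uH^\alpha_{C_m}(S^0;\bZ_{p_j}\boxtimes\uA_{I\setminus\{j\}}\boxtimes\uZ_J)\to \uH^\alpha_{C_m}(S^0;\uA_I\boxtimes\uZ_J)\to \uH^\alpha_{C_m}(S^0;\uA_{I\setminus\{j\}}\boxtimes\uZ_{J\cup\{j\}})\to 0,$$
abbreviated $0\to L\to M\to R\to 0$, into the inductive hypothesis. The term $R$ has $\uA$-part of size $\#I-1$ over $C_m$, so by induction its torsion is $m$-torsion. For $L$, Proposition \ref{bZM} identifies it with $\bZ_{p_j}\boxtimes\uH^{\alpha^{C_{p_j}}}_{C_{m/p_j}}(S^0;\uA_{I\setminus\{j\}}\boxtimes\uZ_J)$; the inner cohomology has $\uA$-part of size $\#I-1$ over the group $C_{m/p_j}$, so by induction its torsion is $(m/p_j)$-torsion, and since $\bZ_{p_j}=\langle\Z\rangle$ is $\Z$ on $C_{p_j}/C_{p_j}$ and $0$ on $C_{p_j}/e$, every group of $\bZ_{p_j}\boxtimes(-)$ is either $0$ or a group of the second factor; hence the torsion of $L$ is $(m/p_j)$-torsion, a fortiori $m$-torsion.

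The final step, and the only delicate one, is to pass from exactness of $0\to L\to M\to R\to 0$ to the bound on $M$: an extension of an $m$-torsion group by an $m$-torsion group is only guaranteed to be $m^2$-torsion, so a crude argument would lose a factor of $m$ at each stage of the induction. Here I would invoke Proposition \ref{bZtors}, which says the torsion submodule $T$ of $L$ is a direct summand of $M$, write $M\cong T\oplus M'$ with $M'\cong M/T$, and observe that dividing the exact sequence by $T\subseteq L$ yields $0\to L/T\to M'\to R\to 0$ with $L/T$ torsion-free; hence the torsion of $M'$ embeds in the torsion of $R$ and is $m$-torsion, so the torsion of $M\cong T\oplus M'$ is $m$-torsion. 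This closes the induction, and the case $m=n$ is the corollary.
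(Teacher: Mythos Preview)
Your proof is correct and follows essentially the same route as the paper's: induction on $\#I$ via the short exact sequence \eqref{az_ses}, with Proposition~\ref{bZtors} used to split off the torsion of the left term so that the remaining torsion embeds in the right term. The paper's proof is terser---it just says the torsion of $M$ decomposes as the torsion of $L$ direct sum a subgroup of the torsion of $R$---but your explicit handling of the extension via $0\to L/T\to M'\to R\to 0$ is exactly what unpacks that sentence, and your generalization to all $C_m$ with $m\mid n$ makes explicit the passage to smaller groups that the paper leaves implicit when invoking Proposition~\ref{bZM} on the left term.
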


\begin{proof}
The torsion part of $\uHal(S^0;\uA_I\boxtimes \uZ_J)$ breaks up into a direct sum of the torsion part of $\uHal(S^0; \bZ_{p_j} \boxtimes \uA_{I\setminus \{j\}}\boxtimes \uZ_{J})$ and a subgroup of the torsion part of $\uHal(S^0;  \uA_{I\setminus \{j\}}\boxtimes \uZ_{J\cup \{j\}})$ by Proposition \ref{bZtors}. Now the result follows by induction and the computation with $\uZ$-coefficients.  
\end{proof}
\end{mysubsection}

\begin{mysubsection}{Computations in the mostly non-zero case} \label{A-mnz}
We now proceed to extend the arguments of Theorem \ref{coh_ar} to the mostly non-zero case. In this case the short exact sequences \eqref{az_ses} may not be split exact, however we work out the extensions to obtain a closed expression. It follows that the Mackey functor values obtained are determined by the fixed point dimensions of $\alpha$. For $\alpha$ which is not mostly non-zero, such a result cannot be true as observed in \cite{BG19} in the discussion following Theorem 6.5. 
 
\begin{notation} \label{perpnot}
Consider the Mackey functor $\uH^\alpha_{C_d}(S^0;\uZ)$ for some divisor $d$ of $n$ and $\alpha$ even. Assume in addition that $p$ is one of the $p_i$ which divides $d$. Note from Theorem \ref{non-zero} that $\uH^\alpha_{C_d}(S^0;\uZ)$ contains a copy of $\KK_i \bZpi$ if $|\alpha| > 0$ but $|\alpha^{C_p}|\leq 0$. We denote 
$$\uH^\alpha_{C_d}(S^0;\uZ)[p]= \begin{cases} \bigoplus\limits_{\stackrel{|\alpha^{C_{p_i}}|\leq 0,}{  p_i| d, ~ p_i \neq p}}\KK_i\bZpi & \mbox{ if } |\alpha| > 0 \\ 
    0 & \mbox{ if } |\alpha| \leq 0. \end{cases}  $$  
In other words, $\uH^\alpha_{C_d}(S^0;\uZ)[p]$ is obtained from $\uH^\alpha_{C_d}(S^0;\uZ)$ by removing a copy of $\KK_p\bZp$ if it was a summand. We may further extend the definition to $S \subset \uk$ consisting of divisors of $d$ as 
$$\uH^\alpha_{C_d}(S^0;\uZ)[S]= \begin{cases} \bigoplus\limits_{\stackrel{|\alpha^{C_{p_i}}|\leq 0,}  {p_i| d,~ i \notin S }}\KK_i\bZpi & \mbox{ if } |\alpha| > 0 \\ 
    0 & \mbox{ if } |\alpha| \leq 0. \end{cases}  $$  
That is,  $\uH^\alpha_{C_d}(S^0;\uZ)[S]$ is obtained from $\uH^\alpha_{C_d}(S^0;\uZ)$ by removing a copy of each $\KK_p\bZp$ for $p=p_i$ such that $i\in S$ if it was a summand. 
\end{notation}

\vspace*{0.5 cm}

In the following theorem we denote for $\II \subset \uk$ and $\alpha \in RO(G)$, 
$$\zeta_\alpha(\II) =\{ i \in \uk ~|~ i \notin \II, ~ |\alpha^{C_{\II p_i}}|=0\}.$$ 
In the case $J=\varnothing$, we use \eqref{conv1}.   
\begin{thm}\label{indzero}
Suppose that $\alpha$ is mostly non-zero. Then 
$$\uHal(S^0;\uA_I\boxtimes \uZ_J) = \bigoplus_{\II\subset I} \bZ_\II\boxtimes \uH^{\alpha^{C_\II}}_{C_{n/\II}}(S^0;\uZ)[\zeta_\alpha(\II) \cap I].$$
\end{thm}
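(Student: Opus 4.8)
The plan is to run an induction on $\# I$ exactly as in the proof of Theorem \ref{coh_ar}, but now keeping careful track of the nonsplit extensions that arise because $\alpha$ is only mostly non-zero rather than non-zero. The base case $\# I = 0$ is \eqref{conv1} together with the computation of $\uHal(S^0;\uZ_J)$ from Theorems \ref{zero}, \ref{non-zero}, \ref{zercoh}; here $\zeta_\alpha(\varnothing) \cap I = \varnothing$ and the bracket operation does nothing, so the formula reduces to $\uHal(S^0;\uZ_J)$ as required. For the inductive step, pick $j \in I$ and apply Proposition \ref{cohses} to get the short exact sequence
$$0 \to \uHal(S^0;\bZ_{p_j}\boxtimes \uA_{I\setminus\{j\}}\boxtimes \uZ_J) \to \uHal(S^0;\uA_I\boxtimes\uZ_J)\to \uHal(S^0;\uA_{I\setminus\{j\}}\boxtimes\uZ_{J\cup\{j\}})\to 0.$$
By Proposition \ref{bZM} the left-hand term is $\bZ_{p_j}\boxtimes \uH^{\alpha^{C_{p_j}}}_{C_{n/p_j}}(S^0;\uA_{I\setminus\{j\}}\boxtimes\uZ_J)$, and since $\alpha^{C_{p_j}}$ is mostly non-zero as a virtual $C_{n/p_j}$-representation (the fixed-point dimensions of $\alpha^{C_{p_j}}$ are a subcollection of those of $\alpha$, indexed by divisors divisible by $p_j$), the induction hypothesis applies to both flanking terms.

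The heart of the argument is to feed these two inductive descriptions into the short exact sequence and resolve the extension. Write out both flanking terms as sums over subsets $\II$: the quotient contributes $\bigoplus_{\II\subset I\setminus\{j\}} \bZ_\II\boxtimes \uH^{\alpha^{C_\II}}_{C_{n/\II}}(S^0;\uZ)[\zeta_\alpha(\II)\cap(I\setminus\{j\})]$, and the sub contributes, after identifying $\bZ_{p_j}\boxtimes\bZ_\II \cong \bZ_{\II\cup\{j\}}$ and $(\alpha^{C_{p_j}})^{C_\II}=\alpha^{C_{\II\cup\{j\}}}$, the sum $\bigoplus_{\II\subset I\setminus\{j\}}\bZ_{\II\cup\{j\}}\boxtimes\uH^{\alpha^{C_{\II\cup\{j\}}}}_{C_{n/(\II\cup\{j\})}}(S^0;\uZ)[\zeta_{\alpha^{C_{p_j}}}(\II)\cap(I\setminus\{j\})]$. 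Reindexing the sub's summands by $\II' = \II\cup\{j\}$ (subsets of $I$ containing $j$), and the quotient's by $\II'=\II$ (subsets of $I$ not containing $j$), one sees that the two families of subsets $\II'$ partition all subsets of $I$. So the issue reduces to showing: (1) for $\II'\not\ni j$, the summand of the answer indexed by $\II'$ is exactly the quotient's summand, i.e. $\uH^{\alpha^{C_{\II'}}}_{C_{n/\II'}}(S^0;\uZ)[\zeta_\alpha(\II')\cap I]$ vs.\ $[\zeta_\alpha(\II')\cap(I\setminus\{j\})]$ — these differ only by whether a $\KK_j\bZ_{p_j}$-summand is removed, which is governed by whether $|\alpha^{C_{\II' p_j}}|=0$, i.e.\ whether $j\in\zeta_\alpha(\II')$; and (2) for $\II'\ni j$, the bracket sets match after checking $\zeta_{\alpha^{C_{p_j}}}(\II'\setminus\{j\}) = \zeta_\alpha(\II'\setminus\{j\})\setminus\{j\} = \zeta_\alpha(\II')$ up to the indices already handled. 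The nonsplitness of the extension is precisely what realizes the transition from $[\zeta_\alpha(\II')\cap(I\setminus\{j\})]$ in the quotient to $[\zeta_\alpha(\II')\cap I]$ in the total when $j\in\zeta_\alpha(\II')$: in that case $|\alpha^{C_{\II'p_j}}|=0$, so the $\KK_j$-summand in the quotient's $\II'$-term must be "cancelled" by gluing with the corresponding $\bZ_{\II'\cup\{j\}}$-part of the sub, and this gluing is exactly the map $\bZ^*_{p_j}\to\bZ_{p_j}$ becoming part of $\uA_{p_j}$.

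The main obstacle, and where the real work lies, will be step (1): identifying which extensions in \eqref{az_ses} are nonsplit and computing them explicitly. The tool for this is Proposition \ref{mapzI}, which classifies nonzero maps $\bZ_\II\boxtimes\uZ_{J_1}\boxtimes\uZ^*_{J_2}\to\bZ_{\II'}\boxtimes\KK_{p_k}\bZ_{p_k}$ and pins down their kernels; combined with the fact (from \eqref{zses},\eqref{z*ses}) that the relevant connecting maps factor through multiplication-by-$p_j$ compositions $\bZ_{p_j}\to\uA_{p_j}\to\bZ_{p_j}$, this forces the extension class. Concretely, when $|\alpha^{C_{\II'p_j}}|=0$ the summand $\bZ_{\II'\cup\{j\}}\boxtimes\KK\bZ_{p_j}(\cdots)$ of the sub maps onto — or is glued to — the $\KK_j\bZ_{p_j}$ piece of $\bZ_{\II'}\boxtimes\uH^{\alpha^{C_{\II'}}}_{C_{n/\II'}}(S^0;\uZ)$ in the quotient, and the extension assembles these into $\bZ_{\II'}\boxtimes(\text{something})$ with the $\KK_j$-summand deleted, which is the bracket $[\,\cdot\,\cap I]$; in all other cases ($|\alpha^{C_{\II'p_j}}|\neq 0$, or the parity/sign conditions make the $\KK_j$-summand absent to begin with) Proposition \ref{maps} applies as in Theorem \ref{coh_ar} to split the sequence. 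I would organize this as a lemma isolating the single-$j$ extension computation, prove it by the Proposition \ref{mapzI} analysis at each orbit $G/C_d$, and then the theorem follows by bookkeeping the subset sums. A final check is that the answer genuinely depends only on fixed-point dimensions: both $\bZ_\II$ and $\uH^{\alpha^{C_\II}}_{C_{n/\II}}(S^0;\uZ)[\zeta_\alpha(\II)\cap I]$ are manifestly determined by the numbers $|\alpha^{C_d}|$, giving the independence statement quoted as Theorem in the introduction.
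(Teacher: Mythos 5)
Your proposal follows the same strategy as the paper: induct on $\# I$ via the short exact sequence of Proposition \ref{cohses}, expand both flanking terms using the inductive hypothesis and Proposition \ref{bZM}, reindex the summands over subsets of $I$, and identify Proposition \ref{mapzI} as the tool that controls the non-split extensions that arise when some $|\alpha^{C_{\II p_j}}|=0$; your observation that $\alpha^{C_{p_j}}$ inherits the mostly non-zero property is also a correct and necessary check that the paper leaves implicit.

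Two remarks. First, a small slip: the middle equality in $\zeta_{\alpha^{C_{p_j}}}(\II'\setminus\{j\}) = \zeta_\alpha(\II'\setminus\{j\})\setminus\{j\} = \zeta_\alpha(\II')$ is generally false; only the two outer quantities agree, since unwinding the definitions gives $\zeta_{\alpha^{C_{p_j}}}(\II)=\zeta_\alpha(\II\cup\{j\})$, and that is what your reindexing actually uses. Second, and more substantively, the step you correctly flag as ``where the real work lies'' is where the paper expends nearly all of its effort, and your sketch stays a sketch there. The paper resolves the extension by working the long exact sequence of the other coefficient sequence \eqref{tensorex2}, uses Remark \ref{Z*cn} to identify $\uHal(S^0;\uZ^\ast_{p_k}\boxtimes \uA_{I\setminus\{k\}}\boxtimes \uZ_J)$ with the degree-shifted group $\uH^{\alpha + 2 - \xi^{n/p_k}}_G(S^0;\uA_{I\setminus\{k\}}\boxtimes\uZ_{J\cup\{k\}})$ --- which is the same sum as the quotient term $\uHal(S^0;\uA_{I\setminus\{k\}}\boxtimes\uZ_{J\cup\{k\}})$ except that exactly the factors $\bZ_{\II}\boxtimes\KK_{p_k}\bZpk$ with $|\alpha^{C_{\II p_k}}|=0$, $|\alpha^{C_\II}|>0$ are absent --- and then proves the inclusion $\uHal(S^0;\uZ^\ast_{p_k}\boxtimes\cdots)\to\uHal(S^0;\uA_I\boxtimes\uZ_J)$ is split by constructing a retraction factor by factor (invoking Propositions \ref{maps} and \ref{bZtors}, with the boundary case $J=\varnothing$, $\II=I\setminus\{k\}$ requiring Corollary \ref{Ators}). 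Only after all of that does Proposition \ref{mapzI} enter, namely to pin down the kernel of the connecting homomorphism out of $\uHal(S^0;\bZ_{p_k}\boxtimes\cdots)$. So your plan names the right endpoint and the right final lemma, but the intermediate machinery --- the degree shift, the per-factor retraction, and the edge case --- is not in the proposal and would all need to be supplied to turn the sketch into a proof.
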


\begin{proof}
We work the string of short exact sequences \eqref{az_ses} proceeding by induction on $\# I$. Assume by way of induction that the result holds for subsets of $\uk$ of cardinality less than $\# I$. Therefore we have (assuming $k\in I$)
\begin{myeq}\label{Zterm}
\uHal(S^0;\uA_{I\setminus\{k\}}\boxtimes \uZ_{J\cup \{k\}})   = \bigoplus_{\II\subset I\setminus\{k\}} \bZ_\II\boxtimes \uH^{\alpha^{C_\II}}_{C_{n/\II}}(S^0;\uZ)[\zeta_\alpha(\II)\cap I\setminus\{k\}],
\end{myeq}
and using Proposition \ref{bZM} that
$$\uHal(S^0;\bZ_{p_k}\boxtimes \uA_{I\setminus \{k\}}\boxtimes \uZ_J)  = \bigoplus_{\II \subset I\setminus \{k\}} \bZ_{\II p_k} \boxtimes \uH^{\alpha^{C_{\II p_k}}}_{C_{n/\II p_k}}(S^0;\uZ)[\zeta_\alpha(\II\cup \{k\})\cap I] .$$
In order to complete the proof, we observe that the Mackey functor $\uHal(S^0;\uA_I\boxtimes \uZ_J)$ has a summand isomorphic to $\uHal(S^0;\bZ_{p_k}\boxtimes \uA_{I\setminus\{k\}}\boxtimes \uZ_J)$, and a summand which maps isomorphically to the sub-factors of $\uHal(S^0;\uA_{I\setminus\{k\}}\boxtimes \uZ_{J\cup \{k\}})$ leaving out precisely the factors corresponding to $\bZ_\II\boxtimes \KK_{p_k}\bZpk$, where $\II\subset I\setminus \{k\}$ and $|\alpha^{C_{\II p_k}}|=0$.

We work the cohomology exact sequence associated to the coefficient sequence 
$$0 \to \uZ^*_{p_k}\boxtimes \uA_{I\setminus \{k\}}\boxtimes \uZ_J \to \uA_I \boxtimes \uZ_J \to \bZ_{p_k} \boxtimes \uA_{I\setminus \{k\}}\boxtimes \uZ_J \to 0$$
at the index $\alpha$ which is even. From Proposition \ref{bZtors} applied to $\alpha -1$, it follows that 
$$\uHal(S^0;\uZ^*_{p_k}\boxtimes \uA_{I\setminus \{k\}}\boxtimes \uZ_J) \to \uHal(S^0;\uA_I \boxtimes \uZ_J)$$
is injective. We prove that for $\alpha$ mostly non-zero, the inclusion is actually split injective.  We note the formula (Remark \ref{Z*cn})
$$\uHal(S^0;\uZ^*_{p_k}\boxtimes \uA_{I\setminus \{k\}}\boxtimes \uZ_J) \cong \uH^{\alpha+2-\xi^{\frac{n}{p_k}}}_G(S^0;\uZ_{p_k}\boxtimes \uA_{I\setminus \{k\}}\boxtimes \uZ_J),$$
  which implies that  $\uHal(S^0;\uZ^*_{p_k}\boxtimes \uA_{I\setminus \{k\}}\boxtimes \uZ_J) $ is isomorphic to the summand of $\uHal(S^0;\uA_{I\setminus \{k\}}\boxtimes \uZ_{J\cup \{k\}})$ complimentary to the factors 
$$\bigoplus\limits_{\stackrel{\II\subset I\setminus \{k\},}{|\alpha^{C_{\II p_k}}|=0, |\alpha^{C_\II}|>0} } \bZ_\II\boxtimes \KK_{p_k}\bZpk.$$ 
In fact, the composite
$$\uZ^*_{p_k}\boxtimes \uA_{I\setminus \{k\}}\boxtimes \uZ_J \to \uA_I \boxtimes \uZ_J \to \bZ_{p_k} \boxtimes \uA_{I\setminus \{k\}}\boxtimes \uZ_{J\cup \{k\}}$$ 
induces 
$$\uHal(S^0;\uZ^*_{p_k}\boxtimes \uA_{I\setminus \{k\}}\boxtimes \uZ_J) \to \uHal(S^0;\uA_{I\setminus \{k\}}\boxtimes \uZ_{J\cup \{k\}})$$
 which is an isomorphism at $G/H$ for $p_k \nmid |H|$, as 
$$\uHal(S^0;\bZ_{p_k}\boxtimes A_{I\setminus \{k\}} \boxtimes \uZ_J)(G/H)=0.$$ 
To prove that  $\uHal(S^0;\uZ^*_{p_k}\boxtimes \uA_{I\setminus \{k\}}\boxtimes \uZ_J) $ is a retract we note that each factor other than those of the form $ \bZ_\II\boxtimes \KK_{p_k}\bZpk$ are either of the form $\uZ_{p_k} \boxtimes \uM$, or $\uZ_{p_k}^\ast \boxtimes \uM$. The latter factors support a splitting map 
$$\uZ_{p_k} \boxtimes \uM \subset \uHal(S^0;\uZ^*_{p_k}\boxtimes \uA_{I\setminus \{k\}}\boxtimes \uZ_J) \to \uHal(S^0;\uA_{I\setminus \{k\}}\boxtimes \uZ_{J\cup \{k\}}) \to \uZ_{p_k}\boxtimes \uM,$$
with the composite being an isomorphism at $G/H$ as above and hence an isomorphism. The factors of the form $\uZ^\ast_{p_k} \boxtimes \uM$ support a retract by a similar argument. 
 The remaining factors are of the form $\bZ_\II\boxtimes \KK_{p_k}\bZpk$. Now we may apply induction, and deduce that this factor also carries a section unless $J=\varnothing$ and $\II=I\setminus \{k\}$. Otherwise, we may write the factor as $\CC_p\uN$ or $\KK_p\uN$ for some $p$ (among the $p_i$), and we deduce the splitting as in the above argument by changing the $p_k$. In the final case, the group is $\bZpk$, and this has a splitting as by Corollary \ref{Ators}, there is no higher $p_k$ torsion in $\uHal(S^0;\uA_I\boxtimes \uZ_J)$. 

Finally, it remains to prove that the kernel of 
$$\uHal(S^0;\bZ_{p_k}\boxtimes \uA_{I\setminus \{k\}}\boxtimes \uZ_J)  \to \uH^{\alpha+1}_G(S^0;\uZ^*_{p_k}\boxtimes \uA_{I\setminus \{k\}}\boxtimes \uZ_J)$$
is isomorphic to $\uHal(S^0;\bZ_{p_k}\boxtimes \uA_{I\setminus \{k\}}\boxtimes \uZ_J)$. For this, note that the torsion parts are all in the kernel by Proposition \ref{bZtors}. The torsion-free parts are of the form $\bZ_{\II p_k} \boxtimes \uH^{\alpha^{C_{\II p_k}}}_{C_{\frac{n}{\II p_k}}}(S^0;\uZ)$ with $|\alpha^{C_{\II p_k}}|=0$. Now the kernel of 
$$\uH^{\alpha+1}_G(S^0;\uZ^*_{p_k}\boxtimes \uA_{I\setminus \{k\}}\boxtimes \uZ_J) \to \uH^{\alpha+1}_G(S^0;\uA_I \boxtimes \uZ_J)$$
may only contain terms which are $p_k$-torsion, and hence, of the form $\bZ_{\II'}\boxtimes \KK_{p_k}\bZpk$. We finish the proof by observing (Proposition \ref{mapzI}) that a non-trivial map 
$$\bZ_{\II p_k} \boxtimes \uZ \boxtimes \uZ^\ast \to \bZ_{\II'}\boxtimes \KK_{p_k}\bZpk$$
exists if and only if $\II=\II'$ and in this case it is induced by the usual surjection $\Z \to \Z/p_k$. This kernel is clearly isomorphic to  $\bZ_{\II p_k} \boxtimes \uH^{\alpha^{C_{\II p_k}}}_{C_{\frac{n}{\II p_k}}}(S^0;\uZ)$.
\end{proof}

\vspace*{0.5cm}

An immediate corollary of Theorem \ref{indzero} is that in the case $|\alpha|=0$ and the other fixed points non-zero, the formula is not different from the non-zero case. 
\begin{cor}\label{dim0}
Let $\alpha \in RO(G)$ with $|\alpha|=0$ and $|\alpha^H|\neq 0$ for all $H\neq e$. Then, 
$$\uHal(S^0;\uA_I\boxtimes \uZ_J) = \bigoplus_{\II\subset I} \bZ_\II\boxtimes \uH^{\alpha^{C_\II}}_{C_{n/\II}}(S^0;\uZ).$$
\end{cor}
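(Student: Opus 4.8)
The plan is to obtain this as an immediate specialization of Theorem \ref{indzero}. First I would record that under the hypothesis $\alpha$ is \emph{mostly non-zero} (but not, in general, non-zero): the subgroups of $G=C_n$ are precisely the $C_d$ with $d\mid n$, among which $C_1=e$, so since $|\alpha^e|=|\alpha|=0$ while $|\alpha^H|\neq 0$ for every $H\neq e$, the unique divisor $d$ with $|\alpha^{C_d}|=0$ is $d=1$. The mostly-non-zero condition therefore only has to be checked at $d=1$, where it asks that $|\alpha^{C_p}|\neq 0$ for every prime $p\mid n$, and this is part of the hypothesis. Hence Theorem \ref{indzero} applies and gives
$$\uHal(S^0;\uA_I\boxtimes \uZ_J) = \bigoplus_{\II\subset I} \bZ_\II\boxtimes \uH^{\alpha^{C_\II}}_{C_{n/\II}}(S^0;\uZ)[\zeta_\alpha(\II) \cap I].$$

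Next I would show that all the bracket corrections are trivial. For $\II\subseteq I$, by definition $\zeta_\alpha(\II)=\{\,i\in\uk : i\notin\II,\ |\alpha^{C_{\II p_i}}|=0\,\}$; but for $i\notin\II$ the subgroup $C_{\II p_i}$ has order $|\II|\,p_i\geq p_i>1$, hence is not $e$, so $|\alpha^{C_{\II p_i}}|\neq 0$ by hypothesis. Thus $\zeta_\alpha(\II)=\varnothing$, and in particular $\zeta_\alpha(\II)\cap I=\varnothing$, for every $\II\subseteq I$. By Notation \ref{perpnot}, $\uH^{\alpha^{C_\II}}_{C_{n/\II}}(S^0;\uZ)[\varnothing]$ is obtained from $\uH^{\alpha^{C_\II}}_{C_{n/\II}}(S^0;\uZ)$ by removing no summands, so it equals $\uH^{\alpha^{C_\II}}_{C_{n/\II}}(S^0;\uZ)$ itself.

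Substituting this back into the formula from Theorem \ref{indzero} produces exactly $\bigoplus_{\II\subset I} \bZ_\II\boxtimes \uH^{\alpha^{C_\II}}_{C_{n/\II}}(S^0;\uZ)$, which is the claim. There is essentially no obstacle in this argument; the only point that requires care is confirming that the stated hypothesis ($|\alpha^H|\neq 0$ for all $H\neq e$, together with $|\alpha|=0$) is strong enough both to force $\alpha$ to be mostly non-zero and to make every set $\zeta_\alpha(\II)$ empty, so that the ``defect'' terms appearing in Theorem \ref{indzero} contribute nothing.
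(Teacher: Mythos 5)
Your proof is correct and follows exactly the route the paper intends --- the paper itself labels the statement ``an immediate corollary of Theorem \ref{indzero}'' without further elaboration, and your argument supplies precisely the bookkeeping that makes it immediate. Both the verification that $\alpha$ is mostly non-zero (only $d=1$ needs checking, and there the hypothesis gives $|\alpha^{C_{p_i}}|\neq 0$) and the observation that $\zeta_\alpha(\II)=\varnothing$ for every $\II\subset I$ (because $C_{\II p_i}\neq e$) are exactly right. One small point worth flagging: the literal formula in Notation \ref{perpnot} declares $\uH^{\alpha^{C_\II}}_{C_{n/\II}}(S^0;\uZ)[S]=0$ whenever $|\alpha^{C_\II}|\leq 0$, which for $\II=\varnothing$ (where $|\alpha|=0$) would wipe out the only torsion-free summand $\uZ^{\JJ(\alpha)}$ and contradict Theorem \ref{mz-free}. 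You have correctly relied on the subsequent sentence of Notation \ref{perpnot} (``obtained from $\uH^\alpha_{C_d}(S^0;\uZ)$ by removing a copy of each $\KK_p\langle\Z/p\rangle$ \dots if it was a summand''), which is the intended reading; with that reading, $[\varnothing]$ removes nothing and the $\II=\varnothing$ term contributes $\uZ^{\JJ(\alpha)}$ as it should. So your argument is sound, and is essentially the paper's own.
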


\begin{exam}
In the case $G=C_p$, the mostly non-zero cases are $|\alpha|=0$ or $|\alpha^{C_p}|=0$ but not both. For $|\alpha|=0$, Theorem \ref{indzero} says that 
$$\uH^\alpha_{C_p}(S^0;\uA) \cong \uH^\alpha_{C_p}(S^0;\uZ).$$
In the case $|\alpha^{C_p}|=0$, the computation implies 
$$\uH^\alpha_{C_p}(S^0;\uA) \cong \bZ.$$

The case $G=C_{pq}$ has the following cases of mostly non-zero $\alpha$ (using the symmetry between $p$ and $q$) : 
\begin{align*}
&1. ~(|\alpha|=0, |\alpha^H|\neq 0,~ H\neq e) \\
 &2. ~(|\alpha^{C_p}|=0, |\alpha^H|\neq 0,~ H\neq C_p), \\ 
&3. ~(|\alpha^{C_{pq}}|=0, |\alpha^H|\neq 0,~ H\neq C_{pq}),\\
 &4.~ (|\alpha|=0, |\alpha^{C_{pq}}|=0,~ |\alpha^{C_p}|\neq 0,~ |\alpha^{C_q}|\neq 0). 
\end{align*}  
For $\alpha$ as in $1$, Corollary \ref{dim0} implies (Example \ref{compG})
$$\uH^\alpha_{C_{pq}}(S^0;\uA) \cong  \uH^\alpha_{C_{pq}}(S^0;\uZ) \oplus \bZ_p\boxtimes \uH^{\alpha^{C_p}}_{C_q}(S^0;\uZ) \oplus \bZ_q \boxtimes \uH^{\alpha^{C_q}}_{C_p}(S^0;\uZ).$$
For $\alpha$ as in $4$ or as in $3$, Theorem \ref{indzero} implies 
$$\uH^\alpha_{C_{pq}}(S^0;\uA) \cong \uH^\alpha_{C_{pq}}(S^0;\uZ) \oplus \bbZ,$$
with $\bbZ \cong \bZ_p\boxtimes \bZ_q$. For $\alpha$ as in $2$, Theorem \ref{indzero} implies 
$$\uH^\alpha_{C_{pq}}(S^0;\uA) \cong  \uH^\alpha_{C_{pq}}(S^0;\uZ)[p] \oplus \bZ_p\boxtimes \uH^{\alpha^{C_p}}_{C_q}(S^0;\uZ) \oplus \bZ_q \boxtimes \uH^{\alpha^{C_q}}_{C_p}(S^0;\uZ),$$
with 
$$\uH^\alpha_{C_{pq}}(S^0;\uZ)[p] \cong \begin{cases} \KK_q\bZq & \mbox{ if } |\alpha|>0, |\alpha^{C_q}|<0 \\ 
0 & \mbox{ otherwise}. \end{cases}$$
\end{exam}

\end{mysubsection}

\begin{mysubsection}{The groups $\tHal(S^0;\uA)$ in cases with many zeros} \label{A-mz}
The computation of Mackey functor valued cohomology $\uHal(S^0;\uA)$ in cases with many zeros,  depends on the actual expression of $\alpha$, and not just it's fixed points. However, the groups $\tHal(S^0;\uA)$ turn out to depend only on the fixed points. We proceed to compute these finitely generated Abelian groups by describing the torsion-free part and the torsion part separately. 

\begin{thm}\label{mz-free} 
The torsion free part of $\tHal(S^0;\uA_I\boxtimes \uZ_J)$ is isomorphic to the free Abelian group of rank $\#\{\II \subset I | |\alpha^{C_\II} | =0 \}$\footnote{We assume $\alpha^{C_\varnothing}=\alpha$.}.  
\end{thm}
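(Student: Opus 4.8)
The plan is to compute the rational cohomology $\tHal(S^0;\uA_I\boxtimes\uZ_J)\otimes\Q$, since the torsion-free rank equals $\dim_\Q$ of this vector space. The main tool is the rationalization of the short exact sequence of Proposition \ref{cohses}, namely
$$0 \to \tHal(S^0;\bZ_{p_j}\boxtimes \uA_{I \setminus \{ j\}}\boxtimes \uZ_J)\otimes\Q  \to \tHal(S^0;\uA_I \boxtimes \uZ_J)\otimes\Q \to \tHal(S^0;\uA_{I \setminus \{ j\}} \boxtimes \uZ_{J \cup \{j\}})\otimes\Q \to 0,$$
which remains exact after tensoring with $\Q$. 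I would proceed by induction on $\# I$. In the base case $I=\varnothing$, the coefficient Mackey functor is $\uZ_J = \uZ$ (up to the tensor decomposition), and by \eqref{Zval} (together with Theorem \ref{zero}) we have $\tHal(S^0;\uZ)\otimes\Q\cong\Q$ if $|\alpha|=0$ and $0$ otherwise; this matches $\#\{\II\subset\varnothing \mid |\alpha^{C_\II}|=0\}$, which is $1$ exactly when $|\alpha^{C_\varnothing}|=|\alpha|=0$.

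For the inductive step, fix $j\in I$ (say $j$ with index $k$). By Proposition \ref{bZM}, $\tHal(S^0;\bZ_{p_j}\boxtimes \uA_{I \setminus \{ j\}}\boxtimes \uZ_J)$ is the $G/G$-value of $\bZ_{p_j}\boxtimes \uH^{\alpha^{C_{p_j}}}_{C_{n/p_j}}(S^0;\uA_{I\setminus\{j\}}\boxtimes\uZ_J)$; since $\bZ_{p_j}$ is entirely $p_j$-torsion at $G/G$, this group is \emph{all torsion}, so its rationalization vanishes. Hence the rationalized sequence gives
$$\tHal(S^0;\uA_I \boxtimes \uZ_J)\otimes\Q \;\cong\; \tHal(S^0;\uA_{I \setminus \{ j\}} \boxtimes \uZ_{J \cup \{j\}})\otimes\Q,$$
and by the induction hypothesis the right side has dimension $\#\{\II\subset I\setminus\{j\} \mid |\alpha^{C_\II}|=0\}$. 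It remains to check that this count equals $\#\{\II\subset I \mid |\alpha^{C_\II}|=0\}$. This is the one genuinely delicate point: the subsets $\II\subset I$ containing $j$ are exactly the "new" subsets, and I must argue that none of them contribute. But $\II\ni j$ means $p_j\mid|\II|$, and $|\alpha^{C_\II}|=|(\alpha^{C_{p_j}})^{C_{\II/p_j}}|$ would force a zero fixed-point dimension for the coefficient data living over $C_{n/p_j}$ — precisely the summands that Proposition \ref{bZM} shows are torsion and get killed rationally. Concretely, the subsets $\II\subset I$ with $j\in\II$ and $|\alpha^{C_\II}|=0$ are in bijection with subsets $\II'=\II\setminus\{j\}\subset I\setminus\{j\}$ with $|(\alpha^{C_{p_j}})^{C_{\II'}}|=0$, and these index exactly the torsion-free summands of $\tHal(S^0;\bZ_{p_j}\boxtimes\uA_{I\setminus\{j\}}\boxtimes\uZ_J)$ \emph{were it not for the $\bZ_{p_j}$ factor}, which converts them to torsion. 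So the rank genuinely drops by this count when passing from $\uA_I\boxtimes\uZ_J$ to $\uA_{I\setminus\{j\}}\boxtimes\uZ_{J\cup\{j\}}$, which is consistent with the formula.

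The main obstacle I anticipate is bookkeeping: making the bijection in the last paragraph precise and verifying that the induction is set up so that $\alpha$ ranges over \emph{all} of $RO(G)$ (not just mostly non-zero $\alpha$), so that Theorems \ref{coh_ar} and \ref{indzero} are not needed — only the exact sequence \eqref{az_ses}, Proposition \ref{bZM}, and the $\uZ$-coefficient vanishing results \eqref{Zval} are required. One should also double-check the edge case $I=\varnothing$, $J=\varnothing$ via convention \eqref{conv1}, and confirm that no rational extension problems arise (there are none, since we work over $\Q$). Everything else is a routine unwinding of the cited propositions.
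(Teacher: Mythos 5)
Your proposal contains a genuine error that breaks the argument. You claim that ``$\bZ_{p_j}$ is entirely $p_j$-torsion at $G/G$, so this group is all torsion, so its rationalization vanishes.'' This is false: the Mackey functor $\bZ = \langle \Z \rangle$ (over $C_{p_j}$) takes the value $\Z$ at $C_{p_j}/C_{p_j}$ and $0$ at $C_{p_j}/e$; it is the integers at the top orbit, not a torsion group. (You may be conflating $\bZ_{p_j}$, meaning $\langle\Z\rangle$ over $C_{p_j}$, with $\bZpi = \langle \Z/p_i\rangle$, which is indeed torsion.) Consequently $\tHal(S^0;\bZ_{p_j}\boxtimes \uA_{I\setminus\{j\}}\boxtimes \uZ_J)\otimes\Q$ does not vanish in general, and your inductive step collapses the rank to $\#\{\II\subset I\setminus\{j\} \mid |\alpha^{C_\II}|=0\}$, which is too small.

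A concrete counterexample to your claim: take $G=C_p$ (so $I=\{1\}$, $J=\varnothing$, coefficient $\uA$) and $\alpha$ with $|\alpha|\neq 0$ but $|\alpha^{C_p}|=0$. The theorem asserts rank $1$ (indeed $\uH^\alpha_{C_p}(S^0;\uA)\cong\bZ$, whose $G/G$-value is $\Z$), but your argument gives $\tHal(S^0;\uA)\otimes\Q \cong \tHal(S^0;\uZ)\otimes\Q = 0$ since $|\alpha|\neq 0$.

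The paper's proof uses the same rationalized short exact sequence and induction on $\# I$, but \emph{both} terms contribute: by Proposition \ref{bZM}, $\tHal(S^0;\bZ_{p_j}\boxtimes \uA_{I\setminus\{j\}}\boxtimes\uZ_J)\otimes\Q$ has dimension $\#\{\II\subset I\setminus\{j\} \mid |\alpha^{C_{\II p_j}}|=0\}$ (this is exactly the count of subsets $\II\subset I$ containing $j$ with $|\alpha^{C_\II}|=0$), while the quotient term contributes $\#\{\II\subset I\setminus\{j\} \mid |\alpha^{C_\II}|=0\}$ (the subsets not containing $j$). Adding the two recovers $\#\{\II\subset I \mid |\alpha^{C_\II}|=0\}$. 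So your identification of the tools --- exact sequence \eqref{az_ses}, Proposition \ref{bZM}, and \eqref{Zval} --- is right, but the key arithmetic fact is additive, not a collapse.
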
  

\begin{proof} 
In order to compute the torsion-free part, we may tensor with $\Q$. From the short exact sequence \eqref{az_ses} we have 
\begin{align*} 
\dim(\uHal(S^0;\uA_I \boxtimes \uZ_J) \otimes \Q) & = \dim( \uHal(S^0;\bZ_{p_j}\boxtimes \uA_{I \setminus \{ j\}}\boxtimes \uZ_J)  \otimes \Q) \\ 
& + \dim ( \uHal(S^0;\uA_{I \setminus \{ j\}} \boxtimes \uZ_{J \cup \{j\}}) \otimes \Q).
\end{align*} 
We may now proceed by induction on $\# I$ noting that the case $I=\varnothing$ follows from \eqref{Zval}. From the induction hypothesis we have 
$$ \dim ( \uHal(S^0;\uA_{I \setminus \{ j\}} \boxtimes \uZ_{J \cup \{j\}}) \otimes \Q) = \# \{ \II \subset I \setminus \{j \} | ~ |\alpha^{C_\II}|=0\},$$
and from Proposition \ref{bZM},
\begin{align*}
 \dim( \uHal(S^0;\bZ_{p_j}\boxtimes \uA_{I \setminus \{ j\}}\boxtimes \uZ_J)  \otimes \Q) & = \dim( \uH^{\alpha^{C_{p_j}}}_{C_{n/p_j}}(S^0;\uA_{I \setminus \{ j\}}\boxtimes \uZ_J)  \otimes \Q) \\ 
& =  \# \{ \II \subset I \setminus \{j \} | ~ |\alpha^{C_{\II p_j}}|=0\}.
\end{align*}
The result follows from induction. 
\end{proof}

We now complete the calculation by determining the torsion part of $\tHal(S^0;\uA_I\boxtimes \uZ_J)$, that is, by determining the $p$-torsion for every prime divisor $p=p_i$ of $n$. We apply Corollary \ref{Ators} to conclude that the $p$-torsion is a direct sum of copies of $\Z/p$. We denote 
$$\nu_p^{I,J}(\alpha) = \begin{cases} 
                                  \#\{\II \in I | ~ |\alpha^{C_\II}| >0, |\alpha^{C_{\II p}}| \leq 0 \} & \mbox{ if } i \in J \\ 
                                  \#\{\II \in I | ~ |\alpha^{C_\II}| >0, |\alpha^{C_{\II p}}| < 0 \} & \mbox{ if } i \notin J,\\ 
\end{cases}$$
and that $\nu_p^{I,J}(\alpha)=0$ if $p\nmid n$. In terms of this notation we prove, 
\begin{thm} \label{mz-tors}
For a prime $p$ dividing $n$, the $p$-torsion of $\tHal(S^0;\uA_I\boxtimes \uZ_J)$ is $(\Z/p)^{\nu_p^{I,J}(\alpha)}$. 
\end{thm}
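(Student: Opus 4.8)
The plan is to run the same inductive machinery used in Theorem \ref{mz-free}, tracking the $p$-torsion through the short exact sequence \eqref{az_ses} for a fixed prime $p=p_i$. As before I induct on $\#I$, with the base case $I=\varnothing$ being \eqref{Zval}: there $\tHal(S^0;\uZ_J)$ has $p$-torsion $\Z/p$ precisely when $p\mid m(\alpha)$ and $|\alpha|\neq 0$, which matches $\nu_p^{\varnothing,J}(\alpha)$ once one checks the two cases $i\in J$ versus $i\notin J$ against Definition \ref{malph} (for $i\in J$ the relevant condition is $|\alpha^{C_p}|\leq 0$ in the even case, for $i\notin J$ it is $|\alpha^{C_p}|<0$ in the odd case, matching the two lines of the definition of $\nu_p^{I,J}$). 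For the inductive step, pick $j\in I$ and split into the cases $j=i$ (so $p=p_j$) and $j\neq i$.

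The key observation is Proposition \ref{bZtors}: the $p_j$-torsion of $\uHal(S^0;\bZ_{p_j}\boxtimes\uA_{I\setminus\{j\}}\boxtimes\uZ_J)$ is a summand of $\uHal(S^0;\uA_I\boxtimes\uZ_J)$, and by Proposition \ref{bZM} this term is $\bZ_{p_j}\boxtimes\uH^{\alpha^{C_{p_j}}}_{C_{n/p_j}}(S^0;\uA_{I\setminus\{j\}}\boxtimes\uZ_J)$. When computing $p_i$-torsion in the case $j=i$: the left-hand term of \eqref{az_ses} carries \emph{no} $p_i$-torsion (it is a $\bZ_{p_i}\boxtimes(-)$, all of whose orbit groups vanish unless $p_i$ divides the subgroup order, and where it is nonzero the transfer/restriction are multiplication by units prime to $p_i$ — more to the point, Proposition \ref{cohses}'s proof shows the whole sequence sees no $p_i$-torsion in that left term), so the $p_i$-torsion of $\uHal(S^0;\uA_I\boxtimes\uZ_J)$ injects into that of $\uHal(S^0;\uA_{I\setminus\{i\}}\boxtimes\uZ_{J\cup\{i\}})$. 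One must then argue this injection is an isomorphism on $p_i$-torsion, i.e. that nothing in the boundary map kills $p_i$-torsion coming from the quotient; this follows because the connecting map lands in $\uH^{\alpha+1}_G(S^0;\bZ_{p_i}\boxtimes\cdots)$ which is $p_i$-torsion-free (again Proposition \ref{ptorsion}/\ref{bZM}). Then by induction the $p_i$-torsion has rank $\nu_{p_i}^{I\setminus\{i\},J\cup\{i\}}(\alpha)$, and one checks by a bookkeeping argument over subsets $\II\subset I\setminus\{i\}$ that this equals $\nu_{p_i}^{I,J}(\alpha)$: the subsets $\II$ of $I$ containing $i$ contribute nothing to $\nu_{p_i}^{I,J}$ since $|\alpha^{C_{\II p_i}}|=|\alpha^{C_\II}|$ cannot be both $>0$ and $\leq 0$, and moving $i$ from $I$ to $J$ does not change which $\II\subset I\setminus\{i\}$ satisfy the defining inequalities because $\II\not\ni i$.

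In the case $j\neq i$: now $p_i$-torsion can live in \emph{both} outer terms of \eqref{az_ses}. The left term contributes, via Proposition \ref{bZM}, the $p_i$-torsion of $\uH^{\alpha^{C_{p_j}}}_{C_{n/p_j}}(S^0;\uA_{I\setminus\{j\}}\boxtimes\uZ_J)$, which by induction has rank $\nu_{p_i}^{I\setminus\{j\},J}(\alpha^{C_{p_j}})$ — and since $\alpha^{C_{p_j}}$ has the same $C_d$-fixed dimensions as $\alpha$ for all $d$ divisible by $p_j$, while the relevant $\II$ in that $\nu$ now all contain (conceptually) the $p_j$-factor, this counts subsets $\II\subset I$ with $j\in\II$, $|\alpha^{C_\II}|>0$, $|\alpha^{C_{\II p_i}}|\leq 0$ (or $<0$). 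The right term, by induction, has rank $\nu_{p_i}^{I\setminus\{j\},J\cup\{j\}}(\alpha)$, counting $\II\subset I\setminus\{j\}$ with the same inequalities. Since the left term is a summand (Proposition \ref{bZtors}) and the sequence \eqref{az_ses} is short exact, the total $p_i$-torsion rank is the sum, and partitioning subsets $\II\subset I$ according to whether $j\in\II$ shows this sum equals $\nu_{p_i}^{I,J}(\alpha)$. Finally one must verify that the extension does not collapse any $p_i$-torsion, i.e. the $p_i$-torsion is genuinely the direct sum of the two outer contributions rather than fitting into a non-split extension that reduces the count — but since $\uHal(S^0;\uA_I\boxtimes\uZ_J)$ is a finitely generated abelian group whose $p_i$-torsion is elementary (Corollary \ref{Ators}), exactness of \eqref{az_ses} forces the $p_i$-torsion ranks to add, so no further argument is needed here.

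The main obstacle I anticipate is the careful combinatorial bookkeeping identifying $\nu_{p_i}^{I\setminus\{j\},J}(\alpha^{C_{p_j}}) + \nu_{p_i}^{I\setminus\{j\},J\cup\{j\}}(\alpha) = \nu_{p_i}^{I,J}(\alpha)$, in particular getting the $i\in J$ versus $i\notin J$ dichotomy and the $|\alpha^{C_{\II p}}|\leq 0$ versus $<0$ distinction to line up consistently through the induction — and making sure the base case \eqref{Zval} is correctly matched to the $\nu$ formula, since that is where the parity condition ($|\alpha|$ even versus odd) interacts with the sign conditions on fixed-point dimensions. The homological input (injectivity, $p_i$-torsion-freeness of the relevant terms, summand statements) is all already packaged in Propositions \ref{bZM}, \ref{cohses}, \ref{bZtors} and Corollary \ref{Ators}, so that part should be routine.
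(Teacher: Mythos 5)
There is a genuine gap, and it is not merely a missing detail — the approach through the short exact sequence \eqref{az_ses} at the fixed degree $\alpha$ does not work.

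Your key claim is that because the $p_i$-torsion is elementary (Corollary \ref{Ators}), ``exactness of \eqref{az_ses} forces the $p_i$-torsion ranks to add.'' This is false for short exact sequences of abelian groups: $0\to\Z\stackrel{p}{\to}\Z\to\Z/p\to 0$ has elementary $p$-torsion in every term, yet the ranks do not add. And this very extension \emph{actually occurs} in the setting at hand. Take $I=\{i\}$, $J=\uk\setminus\{i\}$, so that $j$ is forced to be $i$, and take $\alpha$ even with $|\alpha|>0$ and $|\alpha^{C_{p_i}}|=0$ (and the other fixed points non-zero). Then at $G/G$ the sequence \eqref{az_ses} reads $0\to\Z\to\tHal(S^0;\uA_{\{i\}}\boxtimes\uZ_J)\to\Z/m(\alpha)\to 0$ with $p_i\mid m(\alpha)$, and by Theorem \ref{indzero} (or by a direct check) the middle group has \emph{no} $p_i$-torsion, even though the right-hand group does. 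So your assertion that the injection $T_{p_i}(\tHal(S^0;\uA_I\boxtimes\uZ_J))\hookrightarrow T_{p_i}(\tHal(S^0;\uA_{I\setminus\{i\}}\boxtimes\uZ_{J\cup\{i\}}))$ is an isomorphism fails outright, and your invocation of the vanishing connecting map does not rescue it (that the sequence is short exact is not in doubt; what is in doubt is whether the $p$-torsion of the quotient lifts to $p$-torsion of the middle, and the example shows it need not).

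The same failure shows up in your combinatorics. For $j=i$ you claim $\nu_{p_i}^{I,J}(\alpha)=\nu_{p_i}^{I\setminus\{i\},J\cup\{i\}}(\alpha)$, arguing that ``moving $i$ from $I$ to $J$ does not change which $\II\subset I\setminus\{i\}$ satisfy the defining inequalities.'' But it does: by definition, with $i\in I$ the condition is $|\alpha^{C_{\II p_i}}|<0$, while with $i\in J$ it is $|\alpha^{C_{\II p_i}}|\leq 0$. In the example above these counts differ by $1$. This is precisely the discrepancy the homological side detects.

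The paper's proof sidesteps both problems by working with the other coefficient short exact sequence \eqref{tensorex2}, whose cohomology long exact sequence has $\uHal(S^0;\uZ^\ast_{p_j}\boxtimes\uA_{I\setminus\{j\}}\boxtimes\uZ_J)$ as the left term. Via Remark \ref{Z*cn}, this equals $\uH^{\alpha+2-\xi^{n/p_j}}_G(S^0;\uA_{I\setminus\{j\}}\boxtimes\uZ_{J\cup\{j\}})$, i.e.\ the same smaller-$I$ coefficient but at a \emph{shifted} degree. The shift adds $2$ to $|\alpha^{C_{\II p_j}}|$ (and leaves $|\alpha^{C_\II}|$ and $|\alpha^{C_{\II p_i}}|$, $i\neq j$, untouched for $\II\subset I\setminus\{j\}$), which is exactly what turns ``$\leq 0$'' into ``$<0$'' in the case $p=p_j$ and makes the identity $\nu_p^{I,J}(\alpha)=\nu_p^{I\setminus\{j\},J\cup\{j\}}(\alpha+2-\xi^{n/p_j})+\nu_p^{I\setminus\{j\},J}(\alpha^{C_{p_j}})$ hold. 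The needed splitting at $p$-torsion is then supplied by applying Proposition \ref{bZtors} to the \emph{right}-hand term $\bZ_{p_j}\boxtimes\cdots$ of \eqref{tensorex2}, whose torsion is a summand of $\uHal(S^0;\uA_I\boxtimes\uZ_J)$ and on which the composite with the inclusion from \eqref{tensorex1} is multiplication by $p_j$, an isomorphism there. You cannot avoid the case $j=i$ because the recursion must eventually hit $I=\{i\}$, so the gap is unavoidable in your scheme without switching to \eqref{tensorex2} and incorporating the degree shift.
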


\begin{proof}
The theorem follows for $I=\varnothing$ from \eqref{Zval}, and also for non-zero $\alpha$ from Theorem \ref{coh_ar}, so it suffices to consider $\alpha$ even. We consider the Mackey functor short exact sequence \eqref{tensorex2} which induces the long exact sequence 
$$\cdots \to \uHal(S^0; \uZ^\ast_{p_j} \boxtimes \uA_{I\setminus \{j\}} \boxtimes \uZ_J) \to \uHal(S^0; \uA_I \boxtimes \uZ_J) \to \uHal(S^0; \bZ_{p_j} \boxtimes \uA_{I\setminus \{j\}} \boxtimes \uZ_J) \to \cdots. $$
Since $\alpha -1 $ is odd, $\uH^{\alpha -1}_G (S^0; \bZ_{p_j} \boxtimes \uA_{I\setminus \{j\}} \boxtimes \uZ_J)$ has no free summand. We now apply Proposition \ref{bZtors} to conclude that 
$$\uHal(S^0; \uZ^\ast_{p_j} \boxtimes \uA_{I\setminus \{j\}} \boxtimes \uZ_J) \to \uHal(S^0; \uA_I \boxtimes \uZ_J)$$ maps the $p$-torsion injectively, and that the map 
$$\uHal(S^0; \uA_I \boxtimes \uZ_J) \to \uHal(S^0; \bZ_{p_j} \boxtimes \uA_{I\setminus \{j\}} \boxtimes \uZ_J) $$
 splits at the $p$-torsion subgroup. Therefore, we have 
\begin{align*} 
p\mbox{-torsion}( \uHal(S^0; \uA_I \boxtimes \uZ_J)) &= p\mbox{-torsion}(\uHal(S^0; \uZ^\ast_{p_j} \boxtimes \uA_{I\setminus \{j\}} \boxtimes \uZ_J)) \\ 
   & \bigoplus p\mbox{-torsion}(\uHal(S^0; \bZ_{p_j} \boxtimes \uA_{I\setminus \{j\}} \boxtimes \uZ_J) ). 
\end{align*} 
We also note that (Remark \ref{Z*cn})
$$\uHal(S^0; \uZ^\ast_{p_j} \boxtimes \uA_{I\setminus \{j\}} \boxtimes \uZ_J) \cong \uH^{\alpha +2 - \xi^{\frac{n}{p_j}}}_G (S^0; \uA_{I\setminus \{j\}} \boxtimes \uZ_{J\cup \{j\}})$$
and (Proposition \ref{bZM})
$$\tHal(S^0; \bZ_{p_j} \boxtimes \uA_{I\setminus \{j\}} \boxtimes \uZ_J) \cong \tH^{\alpha^{C_{p_j}}}_{C_{\frac{n}{p_j}}}(S^0;  \uA_{I\setminus \{j\}} \boxtimes \uZ_J) .$$
Now we proceed by induction on $\# I$, and realize that it suffices to prove 
$$\nu_p^{I,J}(\alpha) =\nu_p^{I\setminus\{j\}, J \cup \{j\}}(\alpha+2-\xi^{\frac{n}{p_j}}) + \nu_p^{I\setminus \{j\},J}(\alpha^{C_{p_j}}).$$
We have $p=p_i$ for some $i$, and that $i$ can occur in three different ways : 1) $i\in J$, 2) $i\in I\setminus \{j\}$, and 3) $i=j$. In case 1),  for $\II \subset I-\{j\}$, 
$$\alpha^{C_\II} = (\alpha+2-\xi^{\frac{n}{p_j}})^{C_\II},~ ~~\alpha^{C_{\II p}} = (\alpha+2-\xi^{\frac{n}{p_j}})^{C_{\II p}},$$
 and
\begin{align*}
\{\II \subset I |~|\alpha^{C_\II}| > 0,~|\alpha^{C_{\II p}}|\leq 0 \} & = \{\II \subset I\setminus \{j\} |~|\alpha^{C_\II}| > 0,~|\alpha^{C_{\II p}}|\leq 0 \} \\ 
                        &  \coprod \{\II \subset I \setminus \{j\} |~|\alpha^{C_{\II p_j}}\}| > 0,~|\alpha^{C_{\II p_j p}}|\leq 0 \}.
\end{align*}
It follows that 
\begin{align*}
& \nu_p^{I\setminus\{j\}, J \cup \{j\}}(\alpha+2-\xi^{\frac{n}{p_j}}) + \nu_p^{I\setminus \{j\},J}(\alpha^{C_{p_j}}) \\
 & =  \nu_p^{I\setminus\{j\}, J \cup \{j\}}(\alpha) + \nu_p^{I\setminus \{j\},J}(\alpha^{C_{p_j}}) \\
& =\#  \{\II \subset I\setminus \{j\} |~|\alpha^{C_\II}| > 0,~|\alpha^{C_{\II p}}|\leq 0 \}  + \#  \{\II \subset I \setminus \{j\} |~|\alpha^{C_{\II p_j}}\}| > 0,~|\alpha^{C_{\II p_j p}}|\leq 0 \} \\ 
& = \# \{\II \subset I |~|\alpha^{C_\II}| > 0,~|\alpha^{C_{\II p}}|\leq 0 \} \\
& = \nu_p^{I,J}(\alpha).
\end{align*}

The case 2) is almost the same as case 1) replacing $|\alpha^{C_\II}|\leq 0$ with $|\alpha^{C_\II}|<0$. Finally in case 3), we have $p=p_j$ and $ \nu_p^{I\setminus \{j\},J}(\alpha^{C_{p_j}}) =0$. We also have $\II \subset I-\{j\}$, 
$$\alpha^{C_\II} = (\alpha+2-\xi^{\frac{n}{p_j}})^{C_\II},~~ \alpha^{C_{\II p}} + 2= (\alpha+2-\xi^{\frac{n}{p_j}})^{C_{\II p}}, $$ 
so that, 
\begin{align*}
& \{\II \subset I |~|\alpha^{C_\II}| > 0,~|\alpha^{C_{\II p}}|< 0 \} \\ 
& = \{\II \subset I\setminus \{j\} |~|\alpha^{C_\II}| > 0,~|\alpha^{C_{\II p}}+2|\leq 0 \} \\ 
                         & = \{\II \subset I \setminus \{j\} |~|(\alpha+2-\xi^{\frac{n}{p_j}})^{C_\II}\}| > 0,~|(\alpha + 2 - \xi^{\frac{n}{p_j}}|^{C_{\II p}}|\leq 0 \}.
\end{align*}
It follows that $\nu_p^{I\setminus\{j\},J\cup \{j\}}(\alpha + 2 - \xi^{\frac{n}{p_j}})= \nu_p^{I,J}(\alpha)$. 
\end{proof}
We summarize the results of Theorem \ref{mz-free} and Theorem \ref{mz-tors} in the following corollary.
\begin{cor}\label{mzcalc}
For $\alpha \in RO(G)$,
$$\tHal(S^0;\uA_I\boxtimes \uZ_J) \cong \Z^{\#\{\II \subset I |~ |\alpha^{C_\II}|=0 \}} \oplus \bigoplus_i (\Z/p_i)^{\nu_{p_i}^{I,J}(\alpha)}. $$ 
\end{cor}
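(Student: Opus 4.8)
The plan is to assemble Theorems \ref{mz-free} and \ref{mz-tors} together with the torsion bound of Corollary \ref{Ators}. The group $\tHal(S^0;\uA_I\boxtimes\uZ_J) = \uHal(S^0;\uA_I\boxtimes\uZ_J)(G/G)$ is finitely generated, since it is built from finitely many extensions of finitely generated abelian groups via the short exact sequences \eqref{az_ses}, bottoming out at the values recorded in \eqref{Zval}. Hence it decomposes as the direct sum of a free part and a finite torsion part. Theorem \ref{mz-free} identifies the free part: it is free of rank $\#\{\II\subset I \mid |\alpha^{C_\II}|=0\}$, which accounts for the summand $\Z^{\#\{\II\subset I\mid|\alpha^{C_\II}|=0\}}$.

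For the torsion part $T$, I would first invoke Corollary \ref{Ators}, which asserts that every torsion element $a$ satisfies $na=0$; equivalently $nT=0$. Because $n=p_1\cdots p_k$ is squarefree, the primary decomposition of $T$ reads $T\cong\bigoplus_{i=1}^k T_{(p_i)}$, and each $p_i$-primary component $T_{(p_i)}$ is annihilated by $p_i$, hence is elementary abelian: $T_{(p_i)}\cong(\Z/p_i)^{m_i}$ for some $m_i\geq 0$. Theorem \ref{mz-tors} computes this exponent, $m_i=\nu_{p_i}^{I,J}(\alpha)$ (with the convention $\nu_p^{I,J}(\alpha)=0$ when $p\nmid n$, which matches the statement). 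Combining the free and torsion parts gives the claimed isomorphism.

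All of the real content sits in Theorems \ref{mz-free} and \ref{mz-tors}, so for the corollary itself there is no genuine obstacle. The single point requiring a moment's care is that the torsion group splits as a sum of cyclic groups of prime order rather than of prime-power order: this is exactly where the squarefreeness of $n$ enters, through Corollary \ref{Ators}. Without that hypothesis the conclusion would have to be stated in terms of the $p$-torsion subgroups rather than explicit elementary abelian summands, so I would be careful to cite Corollary \ref{Ators} at precisely this step.
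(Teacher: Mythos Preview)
Your proposal is correct and matches the paper's approach exactly: the corollary is presented there simply as a summary of Theorems~\ref{mz-free} and~\ref{mz-tors}, and you have filled in precisely the routine details (finite generation via iterated extensions, and Corollary~\ref{Ators} to ensure all torsion is supported at primes dividing $n$). The only slight redundancy is that Theorem~\ref{mz-tors} already asserts the $p$-torsion is $(\Z/p)^{\nu_p^{I,J}(\alpha)}$, so you need Corollary~\ref{Ators} only to rule out torsion at primes not dividing $n$, not to deduce that each $p_i$-primary part is elementary abelian.
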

\end{mysubsection}

\section{$G$-spaces that have free cohomology}\label{freenessthm}

The description of $\uHbs(S^0;\uA)$ may be used to prove structural results for the cohomology of $G$-spaces $X$. In this section, we derive sufficient conditions under which the cohomology becomes a free module over $\uHbs(S^0; \uA)$. The approach followed here is a generalization of the results of Lewis \cite{Lew88} for the group $C_p$. 

The spaces which we consider are formed by attaching disks in $G$-representations via identification maps on their boundaries. These were defined as {\it generalized cell complexes} in \cite{FL04}. Such cell complexes arise naturally in equivariant Morse theory \cite{Was69}. We briefly recall their definition. 
\begin{defn} 
A generalized $G$-cell complex $X$ is a $G$-space with a filtration $\{X_n\}_{n\geq 0}$ of subspaces such  that \\
(a) $X_0$ is a finite $G$-set. \\ 
(b) For each $n$, $X_{n+1}$ is formed from $X_n$ by attaching cells of the form $G\times_H D(V)$ where $H\leq G$ and $V$ is an $H$-representation. \\ 
(c) $X = \cup_n X_n$ has the colimit topology. 
\end{defn}

The $G$-sphere $S^V$ may be written as $G/G \cup G\times_G D(V)$, so it is a generalized cell complex. It is also clear that homotopy cofibres of maps between representation spheres are generalized cell complexes. Since the cohomology of $S^V$ is a $V$-fold suspension of $\uHbs(S^0;\uA)$, it is free on a generator in degree $V$. Thus, it is natural to explore the free cohomology question among these complexes. We prove that if the cells are even dimensional, and if the cell attachments satisfy some condition (``{\it even type}'' defined below), then the cohomology is a free module.  In the following definition note that for $H\leq G$ (which is an inclusion of cyclic groups), $RO(G) \to RO(H)$ is surjective, so, all the cells are of the form $G\times_H D(V)$ for $V\in RO(G)$. 
\begin{defn}
a) Let $V$ be a $G$-representation. A cell of type $G \times_H D(V)$ is called even if $V$ is an even element in $RO(G)$. \\
b) Let $V$ and $W$ be two $G$-representations. We say $W \ll V$ if $|W^S| < |V^S|$ for some subgroup $S$ of $G$ implies $|W^T|\leq |V^T|$ for all subgroups $T$ containing $S$. \\ 
c) A generalized $G$-cell complex $X$ is said to be of {\it even type} if every cell is even, and if the cell $G \times_H D(W)$ is attached before $G \times_H D(V)$, then $W \ll V$. 
\end{defn}

The freeness of the cohomology is proved inductively by showing that all the attaching maps induce the zero map on cohomology. This is implied by the two cell case which we prove in the lemma below. 
\begin{lemma}\label{ll}
If $W \ll V,$ any $\uHbs(S^0)$-module map 
$$\uH^{\bs - W}_G(G/K_+;\uA) \to \uH^{\bs - 1+V}_{G}(G/K^{\prime}_+;\uA)$$ 
is zero.
\end{lemma}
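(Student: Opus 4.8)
The plan is to reduce the statement to the vanishing of a single $\uA$-cohomology group of a point over a subquotient of $G$, and then to read that vanishing off the additive computations of Section \ref{cohA}. Write $K = C_a$, $K' = C_{a'}$ and set $d = \gcd(a,a')$. Since $\downarrow^G_{C_a}\uA = \uA$, Proposition \ref{orbit} identifies the source $\uH^{\bs-W}_G({G/K}_+;\uA)$ with the induced module $\uparrow^G_{C_a}\uH^{\bs-W}_{C_a}(S^0;\uA)$, and $\uH^{\bs-W}_{C_a}(S^0;\uA)=\Sigma^{W}\uHbs_{C_a}(S^0;\uA)$ is the free rank-one $\downarrow^G_{C_a}\uHbs_G(S^0;\uA)$-module on a fundamental class $\iota_W$ in grading $W$. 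Hence, by the adjunction $\uparrow^G_{C_a}\dashv\downarrow^G_{C_a}$ of Proposition \ref{epadj} (extended degreewise to modules), a $\uHbs(S^0;\uA)$-module map as in the statement is determined by the image of $\iota_W$, which lies in $\uH^{\bs-1+V}_G({G/K'}_+;\uA)(G/C_a)$ in grading $W$, i.e.\ in $\tilde{H}^{W-1+V}_G({G/C_a}_+\wedge {G/K'}_+;\uA)$. As $G$ is cyclic, ${G/C_a}_+\wedge{G/K'}_+$ is a wedge of $n/\mathrm{lcm}(a,a')$ copies of ${G/C_d}_+$, so this group is a finite sum of copies of $\tHal(S^0;\uA)$ for the group $C_d$ and the single grading $\alpha$ obtained by restricting $V$, $W$ (together with the degree shift by $\pm1$) to $C_d$. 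It therefore suffices to prove $\tHal(S^0;\uA)=0$ over $C_d$ for this $\alpha$.

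The vanishing rests on a parity observation combined with the inequality defining $W\ll V$. Because $X$ is of even type, $V$ and $W$ are even elements of $RO(G)$, so (as $n$ is odd) all of $|V^{C_\II}|$, $|W^{C_\II}|$ are even; consequently $|\alpha^{C_\II}| = |V^{C_\II}|+|W^{C_\II}|\pm1$ is odd, in particular non-zero, for every divisor $\II$ of $d$. Thus $\alpha$ is an odd class over $C_d$ with no zero fixed-point dimensions, and Corollary \ref{Aodd} gives
$$ \tHal(S^0;\uA) \cong \bigoplus_{\II \mid d} \bZ_\II\boxtimes \uH^{\alpha^{C_\II}}_{C_{\frac{d}{\II}}}(S^0;\uZ). $$
By Theorem \ref{non-zero} (and Theorem \ref{zercoh}), the $\II$-th summand is non-zero only if $|\alpha^{C_\II}|<0$ and there is a prime $p$ dividing $\tfrac{d}{\II}$ with $|\alpha^{C_{\II p}}|>1$. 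Translating these two inequalities back through the formula for $\alpha$: the first forces $V^{C_\II}=W^{C_\II}=0$, whence $V^{C_{\II p}}\subseteq V^{C_\II}=0$ and $W^{C_{\II p}}=0$ since $C_{\II p}\supseteq C_\II$, so $|\alpha^{C_{\II p}}|=-1\not>1$; more generally, in the borderline situations the two inequalities say that $|V^{C_\II}|$ strictly dominates $|W^{C_\II}|$ at $C_\II$ while the opposite strict inequality holds at the larger subgroup $C_{\II p}$, which is exactly the configuration that $W\ll V$ forbids. Hence every summand vanishes, so $\iota_W\mapsto 0$ and the module map is zero. (One may equally organise this orbit by orbit, reducing to the orbit ${G/C_d}$ via the injectivity of the appropriate restriction maps on the target module, cf.\ Propositions \ref{orbit} and \ref{maps}.)

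The step that needs care is the reduction in the first paragraph: one must track the grading shift through the change-of-groups and orbit-decomposition identifications so that the inequality produced by Theorem \ref{non-zero} is the one actually ruled out by $W\ll V$ (and is not vacuous), and one must justify that passing from $\uHbs(S^0;\uA)$-module maps of the Mackey-functor-valued modules down to a single group $\tHal(S^0;\uA)$ over $C_d$ is legitimate — this is precisely what the description of the source as an induced free rank-one module plus Proposition \ref{epadj} supply. Once that is in place, the remainder is the direct comparison of fixed-point dimensions of $V$ and $W$ carried out above, using only the additive computations of Section \ref{cohA} and the evenness of the cells.
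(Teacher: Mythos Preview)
Your reduction is essentially the paper's: both recognize the source as induced from $K$ and reduce (via Frobenius reciprocity / restriction to orbits below $K$, combined with the decomposition of $G/K \times G/K'$ as a $K$-set) to the vanishing of a single group $\tH^\alpha_{C_d}(S^0;\uA)$, then invoke Corollary~\ref{Aodd} together with Theorem~\ref{non-zero}. The paper runs this case by case on whether $K$, $K'$ equal $G$; you package it uniformly through the adjunction of Proposition~\ref{epadj}, but it is the same argument.

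Where things go wrong is the grading. The target in the lemma should read $\uH^{\bs+1-V}_G$ --- this is what the paper's own proof uses and what the connecting map in the proof of Theorem~\ref{free} actually produces --- so the class parametrizing the module map lives in degree $\alpha = W+1-V$, not $W+V-1$. With the correct $\alpha$, your first vanishing argument (``the first forces $V^{C_\II}=W^{C_\II}=0$'') does not apply: $|\alpha^{C_\II}|<0$ now says $|W^{C_\II}| < |V^{C_\II}|$, not that both vanish. It is precisely your ``more generally'' clause that gives the right argument, and it is not a generalization of the preceding case but the entire content: $|\alpha^{C_\II}|<0$ together with $|\alpha^{C_{\II p}}|>1$ translate to $|W^{C_\II}|<|V^{C_\II}|$ while $|W^{C_{\II p}}|>|V^{C_{\II p}}|$, exactly the configuration $W\ll V$ excludes. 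The tell that you had the wrong $\alpha$ is that your first argument never uses the hypothesis $W\ll V$; for $\alpha = V+W-1$ the group vanishes for any pair of even representations, hypothesis or not. So keep your reduction, correct the sign to $\alpha = W+1-V$, drop the first vanishing argument, and make the ``more generally'' sentence the main one.
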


\begin{proof}
The proof follows the method of \cite[Lemma 8.3]{BG19}. If both $K$ and $K^{\prime}$ are $G$, then the $\uHbs(S^0;\uA)$-module map $\uH^{\bs - W}_G(S^0;\uA) \to \uH^{\bs + 1-V}_G(S^0;\uA)$ is determined by an element of group $\tH^{W + 1-V}_G(S^0;\uA)$. Now note that the given conditions imply that $|W+1-V|$ is odd, and its fixed point dimensions are $\leq 1$. Therefore, the cohomology group is zero by Corollary \ref{Aodd}. 

We next consider $K =G$, so that a $\uHbs(S^0;\uA)$-module map $\uH^{\bs - W}_G(S^0;\uA) \to \uH^{\bs +1-V}_G(G/K^{\prime}_+;\uA)$ is determined by the image of $1$. We compute 
$$\tH^{W +1-V}_G(G/K^{\prime}_+;\uA) \cong \tH^{W +1-V}_{K^{\prime}}(S^0;\uA),$$ 
which is zero for all subgroups $K^{\prime}$ of $G$ by Corollary \ref{Aodd}.

Finally, it remains to prove the case where $K$ is a proper subgroups of $G$. The  $\uHbs(S^0;\uA)$-module map 
$$\uH^{\bs -W}_G(G/{C_d}_+;\uA) \to \uH^{\bs +1-V}_G(G/{C_s}_+;\uA),$$ 
yields (and is determined by)  a $\uH^{\bs}_{C_d}(S^0;\uA)$-module map (via restriction to orbits of the form $G\times_{C_d} C_d/K$ for $K\leq C_d$)
$$\uH^{\bs -W}_{C_d}(S^0;\uA) \to \uH^{\bs +1-V}_{C_d}(G/{C_s}_+;\uA) \cong \bigoplus_{\frac{n\gcd(d,s)}{ds}}\uH^{\bs +1-V}_{C_{\gcd(d,s)}}(S^0;\uA).$$ 
The last equivalence comes from the fact that $G/C_s$ decomposes as a $C_d$-set into a disjoint union of $\frac{n\gcd(d,s)}{ds}$ copies of $C_d/C_{\gcd(d,s)}$. It follows that this map is determined by a tuple of elements in the group $\uH^{W +1-V}_{C_{(d,s)}}(S^0;\uA)$ and which is zero by Corollary \ref{Aodd}. Hence the result follows.
\end{proof}

Lemma \ref{ll} is now used to prove the freeness result by concluding the attaching maps induce the zero map on cohomology. The following theorem is proved in exactly the same manner as \cite[Theorem 8.4]{BG19}.
\begin{thm}\label{free}
Let $X$ be a generalized $G$-cell complex of even type in which the spaces $X_n$ of the filtration have finitely many cells. 
Then $\uHbs(X_+;\uA)$ is a free $\uHbs(S^0;\uA)$-module with summands consisting of  $\uHbs ({X_0}_+;\uA)$ and one copy of $\Sigma^{V} \uHbs({G/K}_+;\uA)$ for each cell of type $G_+ \wedge _K D(V)$ occurring in the cell structure of $X$.
\end{thm}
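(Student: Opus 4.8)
\textbf{Proof strategy for Theorem \ref{free}.}
The plan is to induct on the number of cells, following the scheme of \cite[Theorem 8.4]{BG19}. The base case is $X=X_0$, a finite $G$-set, where $\uHbs({X_0}_+;\uA)$ is by definition a direct sum of copies of $\uHbs({G/K}_+;\uA)$, and there is nothing to prove. For the inductive step, suppose $X'$ is obtained from $X$ by attaching a single even cell $G_+\wedge_K D(V)$ along a map $\phi: G_+\wedge_K S(V) \to X$. (Finiteness of each $X_n$ lets us build up one cell at a time through the filtration.) Since attaching $D(V)$ along $S(V)$ realizes the homotopy cofibre, we have a cofibre sequence $G_+\wedge_K S(V) \xrightarrow{\phi} X_+ \to X'_+ \to G_+\wedge_K S(V)[1] \to \cdots$, which after smashing with $S^V$ and using $S^V\wedge(G_+\wedge_K S(V))\simeq G_+\wedge_K S(V\oplus V)\simeq$ (up to the standard cell structure) yields a long exact sequence in $\uHbs(-;\uA)$ relating $\uHbs(X'_+;\uA)$, $\uHbs(X_+;\uA)$, and $\Sigma^{V}\uHbs({G/K}_+;\uA)$ via the connecting map, which is the map $\phi$ induces on cohomology.

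The crux is to show $\phi^\ast = 0$ on cohomology. Because $\phi$ factors through the skeleton of $X$ built from cells attached \emph{before} $G_+\wedge_K D(V)$, and $\uHbs(-;\uA)$ sends the cell filtration to a (possibly non-split a priori) filtration, it is enough to check that each component map
$$\uH^{\bs - V}_G({G/K}_+;\uA) \longrightarrow \uH^{\bs - 1 + W}_G({G/K'}_+;\uA)$$
coming from a previously-attached cell $G_+\wedge_{K'} D(W)$ is zero; here the shift by $\bs - V$ on the source and $\bs - 1 + W$ on the target records the respective cell dimensions and the degree-$1$ shift in the cofibre sequence. Since the complex is of even type, $W \ll V$, so Lemma \ref{ll} applies verbatim (it is an $\uHbs(S^0;\uA)$-module map between exactly these groups) and gives $0$. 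By induction over the cells attached before our cell, $\phi^\ast$ is the zero map.

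With $\phi^\ast=0$, the long exact sequence breaks into short exact sequences
$$0 \to \Sigma^{V}\uHbs({G/K}_+;\uA) \to \uHbs(X'_+;\uA) \to \uHbs(X_+;\uA)\to 0$$
of $\uHbs(S^0;\uA)$-modules. By the inductive hypothesis $\uHbs(X_+;\uA)$ is free on the cells of $X$, hence projective, so the sequence splits as $\uHbs(S^0;\uA)$-modules and $\uHbs(X'_+;\uA)$ is free with the asserted summands: one $\Sigma^{W'}\uHbs({G/K'}_+;\uA)$ for each cell $G_+\wedge_{K'}D(W')$ of $X$, plus $\uHbs({X_0}_+;\uA)$, plus the new summand $\Sigma^{V}\uHbs({G/K}_+;\uA)$. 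Passing to the colimit over $n$ (using that $\uHbs$ commutes with the relevant colimits since each $X_n$ is finite) completes the proof. The main obstacle is the bookkeeping that lets one reduce $\phi^\ast=0$ to the two-cell statement of Lemma \ref{ll}: one must be careful that the module structure is respected and that the grading shifts are exactly those for which Lemma \ref{ll} is stated, but this is handled exactly as in \cite[Theorem 8.4]{BG19}.
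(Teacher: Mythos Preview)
Your proposal is essentially the paper's approach: induct on cells, use Lemma \ref{ll} to kill the connecting homomorphism, and split the resulting short exact sequence using projectivity of the free module obtained inductively; the paper itself just says ``proceed exactly as in \cite[Theorem 8.4]{BG19}.'' One small point of care: the component maps you need to kill go from the summands of $\uHbs(X_+;\uA)$ (indexed by the old cells $W$) to $\uH^{\bs+1-V}_G({G/K}_+;\uA)$ (the new cell), i.e.\ $\uH^{\bs-W}_G({G/K'}_+;\uA)\to\uH^{\bs+1-V}_G({G/K}_+;\uA)$, which is precisely the form in (the proof of) Lemma \ref{ll}; your display has the roles of $V$ and $W$ and the sign on the $1$ swapped, but as you note this is exactly the bookkeeping handled in \cite[Theorem 8.4]{BG19}.
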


\begin{rmk}
Using Example \ref{Aoddneg} instead of Corollary \ref{Aodd}, we see that the condition $W\ll V$ may be weakened slightly to $W\tilde{\ll} V$ which states $|W^{C_\II}|< |V^{C_\II}|$ implies $|W^{C_{\II p_j}}| \leq |V^{C_{\II p_j}}|$ for all $j \notin \II$. 
\end{rmk}

Theorem \ref{free} has applications to the cohomology of certain equivariant complex projective spaces and Grassmannians. We refer to \cite[Section 8.1]{BG19} for explicit $G$-cell complex structures on these. We write $\UU(m)= \oplus_{r=0}^m \xi^r$. 
The complex projective space $\C P(\UU(m))$ is a generalized cell complex with a cell  $D(W_r)$ for each $0\leq r \leq m$ and $W_r= \xi^{-r} \UU(r-1)$. These cells are even dimensional and have fixed point dimensions 
$|W_r^{C_\II}| = 2\lfloor \frac{r}{|\II|}\rfloor,$
and therefore, are of even type. Hence we have 
\begin{thm}\label{cohcp}
For $1\leq m \leq \infty$, the cohomology $\uHbs(\C P(\UU(m));\uA)$ is isomorphic to $\oplus_{r=0}^n \uH^{\bs-W_r}_G(S^0 ; \uA)$.  
\end{thm}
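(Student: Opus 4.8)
The plan is to apply Theorem \ref{free} to the explicit generalized cell structure on $\C P(\UU(m))$ and verify that its hypotheses are met. First I would recall the cell structure: $\C P(\UU(m))$ is built with one cell $D(W_r)$ attached at the stage indexed by $r$, for $0\le r\le m$, where $W_r=\xi^{-r}\UU(r-1)=\bigoplus_{s=0}^{r-1}\xi^{s-r}$. (For $m=\infty$ one takes the colimit, and the finiteness-of-each-$X_n$ hypothesis of Theorem \ref{free} still holds since $X_r$ has $r+1$ cells.) Each cell is attached as $G\times_G D(W_r)$, i.e.\ all cells have isotropy $G$, so the module summands appearing in Theorem \ref{free} will be copies of $\Sigma^{W_r}\uHbs(S^0;\uA)$, one for each $r$, together with $\uHbs({X_0}_+;\uA)=\uHbs(S^0;\uA)$ corresponding to $r=0$ (so $W_0=0$).

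The substantive point is to check that the complex is of \emph{even type}. Evenness of each cell is immediate: $W_r$ is a sum of $r$ copies of realified one-dimensional complex representations $\xi^{s-r}$, hence $|W_r|=2r$ is even, and all fixed-point dimensions are even since $G=C_n$ is odd. For the ordering condition I would compute the fixed-point dimensions: restricting $\xi^{j}$ to a subgroup $C_d$, the representation $\xi^{j}$ has nonzero $C_d$-fixed points exactly when $d\mid j$, so
$$|W_r^{C_d}| \;=\; 2\,\#\{\,0\le s< r : d\mid (s-r)\,\}\;=\;2\Big\lfloor \tfrac{r}{d}\Big\rfloor,$$
matching the formula quoted in the excerpt (with $d=|\II|$). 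Now if $W_r$ is attached before $W_{r'}$, i.e.\ $r<r'$, then $\lfloor r/d\rfloor \le \lfloor r'/d\rfloor$ for every $d\mid n$, so $|W_r^{C_d}|\le |W_{r'}^{C_d}|$ for \emph{all} subgroups $C_d$ simultaneously. In particular the defining implication for $W_r\ll W_{r'}$ (if $|W_r^{S}|<|W_{r'}^{S}|$ for some $S$, then $|W_r^{T}|\le |W_{r'}^{T}|$ for all $T\supseteq S$) holds vacuously-strongly, since the conclusion $|W_r^{T}|\le|W_{r'}^{T}|$ holds for every $T$ regardless of the hypothesis. Hence $W_r\ll W_{r'}$ whenever $r<r'$, and the complex is of even type.

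With both hypotheses verified, Theorem \ref{free} applies directly and gives that $\uHbs(\C P(\UU(m))_+;\uA)$ is free on summands $\Sigma^{W_r}\uHbs({G/G}_+;\uA)=\Sigma^{W_r}\uHbs(S^0;\uA)$ for $0\le r\le m$ (the $X_0$ contribution being the $r=0$ term), which is precisely the asserted isomorphism $\uHbs(\C P(\UU(m));\uA)\cong\bigoplus_{r=0}^{m}\uH^{\bs-W_r}_G(S^0;\uA)$. I do not expect a genuine obstacle here: the only things to be careful about are matching the indexing conventions (unreduced vs.\ reduced cohomology, absorbing the $0$-skeleton into the $r=0$ summand) and confirming that the cell-attachment data of \cite[Section 8.1]{BG19} indeed realizes $\C P(\UU(m))$ with cells $D(W_r)$ in the order $r=0,1,2,\dots$; once the fixed-point dimension computation $|W_r^{C_d}|=2\lfloor r/d\rfloor$ is in hand, the even-type condition is essentially automatic because the $\lfloor\cdot\rfloor$ function is monotone. (The typo "$\oplus_{r=0}^n$" in the statement should read $\oplus_{r=0}^m$; I would state the result with upper index $m$.)
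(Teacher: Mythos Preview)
Your proposal is correct and follows essentially the same route as the paper: verify that the standard cell decomposition of $\C P(\UU(m))$ with cells $D(W_r)$ is of even type via the fixed-point computation $|W_r^{C_d}|=2\lfloor r/d\rfloor$ and monotonicity of the floor function, then invoke Theorem~\ref{free}. You also correctly flag the typo in the upper summation index.
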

We may also consider the complex Grassmannian $G(\UU(l),m)$ which has a Schubert cell decomposition \cite[Chapter 7]{FL04} with cells $D(W_{\ua})$ for Sch\"{u}bert symbols $\ua=(0\leq a_1\leq a_2\leq \cdots \leq a_m \leq l-m)$ and 
$$W_{\ua}= \bigoplus_{i=1}^m \bigoplus_{\stackrel{j=1}{j\notin \{a_1+1,\cdots, a_{i-1}+i-1\}}}^{a_i+i-1} \xi^{-(a_i+i)}\otimes \xi^j. $$
The fixed point dimensions of $W_{\ua}$ are given by 
$$|W_{\ua}^{C_\II}| = 2 \sum_{i=1}^m \lfloor \frac{a_i}{|\II|}\rfloor.$$ 
Therefore, $G(\UU(l),m)$ is also of even type, and we have 
\begin{thm}\label{cohgr}
For $1\leq l \leq \infty$ and $m\leq l$, the cohomology $\uHbs(G(\UU(l),m);\uA)$ is isomorphic to a free $\uHbs(S^0;\uA)$-module with one generator each in dimension $W_{\ua}$ for each Sch\"{u}bert symbol $\ua$.  
\end{thm}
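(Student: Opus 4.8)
\textbf{Proof proposal for Theorem \ref{cohgr}.}

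The plan is to verify that the Schubert cell structure on $G(\UU(l),m)$ satisfies the hypotheses of Theorem \ref{free}, namely that it is a generalized $G$-cell complex of even type (with finitely many cells in each filtration stage), and then invoke Theorem \ref{free} directly. First I would recall from \cite[Chapter 7]{FL04} that $G(\UU(l),m)$ carries a filtration whose cells are indexed by Schubert symbols $\ua = (0\leq a_1\leq \cdots \leq a_m\leq l-m)$, with the cell attached at stage $\ua$ being $D(W_{\ua})$ for the $G$-representation $W_{\ua}=\bigoplus_{i=1}^m \bigoplus_{j\notin\{a_1+1,\dots,a_{i-1}+i-1\}}^{a_i+i-1}\xi^{-(a_i+i)}\otimes\xi^j$; the ordering of attachment is the standard partial order on Schubert symbols refined to a total order. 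Each $W_{\ua}$ is a sum of representations of the form $\xi^{-(a_i+i)}\otimes\xi^j$, each of which is a (possibly trivial) $2$-dimensional real representation, so $W_{\ua}$ is even in $RO(G)$; thus every cell is even. The finiteness of cells in each $X_n$ is clear since there are finitely many Schubert symbols when $l<\infty$, and for $l=\infty$ one takes the colimit over the finite Grassmannians, so each filtration stage is still finite.

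The substance of the argument is the verification of the condition $W_{\ub}\ll W_{\ua}$ whenever the cell $D(W_{\ub})$ is attached before $D(W_{\ua})$, i.e.\ whenever $\ub\leq\ua$ in the Schubert order (equivalently $b_i\leq a_i$ for all $i$). Here I would use the given formula for the fixed point dimensions,
$$|W_{\ua}^{C_\II}| = 2\sum_{i=1}^m \left\lfloor \frac{a_i}{|\II|}\right\rfloor,$$
for each $\II\subseteq\uk$. Suppose $|W_{\ub}^{C_\II}| < |W_{\ua}^{C_\II}|$ for some subgroup $C_{|\II|}$, so $\sum_i\lfloor b_i/|\II|\rfloor < \sum_i\lfloor a_i/|\II|\rfloor$. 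I must show $|W_{\ub}^{C_{\II'}}|\leq |W_{\ua}^{C_{\II'}}|$ for every $\II'\supseteq\II$ (corresponding to subgroups $C_{|\II'|}$ of $C_{|\II|}$, since larger $\II$ gives smaller subgroup — one must be careful here with the indexing convention: subgroups of $G$ correspond to divisors $|\II|$ and ``containing $C_{|\II|}$'' means $|\II'|$ divides $|\II|$). In fact the cleanest route is to observe that $\ub\leq\ua$ in the Schubert order already forces $b_i\leq a_i$ for \emph{all} $i$, whence $\lfloor b_i/t\rfloor\leq\lfloor a_i/t\rfloor$ for \emph{every} positive integer $t$, and summing gives $|W_{\ub}^{C_S}|\leq|W_{\ua}^{C_S}|$ for \emph{every} subgroup $S$ of $G$ simultaneously. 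This is even stronger than the hypothesis $W_{\ub}\ll W_{\ua}$ requires, so the condition holds trivially. (A parallel remark applies to $\C P(\UU(m))$ in Theorem \ref{cohcp}, where $|W_r^{C_\II}|=2\lfloor r/|\II|\rfloor$ is monotone in $r$.)

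Having established that $G(\UU(l),m)$ is of even type with finitely many cells per stage, Theorem \ref{free} applies and gives that $\uHbs(G(\UU(l),m)_+;\uA)$ is a free $\uHbs(S^0;\uA)$-module with summands $\Sigma^{W_{\ua}}\uHbs({G/K_{\ua}}_+;\uA)$ for each Schubert cell $G\times_{K_{\ua}}D(W_{\ua})$. Since the Schubert cells for the full Grassmannian $G(\UU(l),m)$ are of the form $G\times_G D(W_{\ua})$ (i.e.\ $K_{\ua}=G$, the stabilizer being all of $G$), each summand is simply $\Sigma^{W_{\ua}}\uHbs(S^0;\uA)$, giving one free generator in degree $W_{\ua}$ for each Schubert symbol $\ua$, as claimed. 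The main obstacle in this argument is purely bookkeeping: correctly matching the Schubert order on symbols with the attachment order of cells (so that the hypothesis of Theorem \ref{free} is genuinely the relevant comparison), and correctly tracking the isotropy groups $K_{\ua}$ of the cells so that the summands collapse to suspensions of $\uHbs(S^0;\uA)$ rather than of $\uHbs({G/K}_+;\uA)$ for proper $K$; both of these are handled by the explicit cell structure of \cite[Chapter 7]{FL04}, which I would cite for the combinatorial input.
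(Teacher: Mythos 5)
Your proof takes essentially the same route as the paper: verify that the Schubert cell structure from \cite[Chapter 7]{FL04} makes $G(\UU(l),m)$ a generalized $G$-cell complex of even type and then invoke Theorem~\ref{free}. You are more explicit than the paper on the key point, namely that $\ub\leq\ua$ (coordinatewise) forces $\lfloor b_i/t\rfloor\leq\lfloor a_i/t\rfloor$ for all $t$, hence $|W_{\ub}^{S}|\leq|W_{\ua}^{S}|$ for \emph{all} subgroups $S$, which is stronger than $W_{\ub}\ll W_{\ua}$; the paper just asserts the even-type conclusion from the fixed-point formula. Your observation about tracking the isotropy ($K_{\ua}=G$, so each summand is $\Sigma^{W_{\ua}}\uHbs(S^0;\uA)$) is also correct and is left implicit in the paper.

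One remark on the bookkeeping point you yourself flag: the literal reading of \emph{even type} requires $W_{\ub}\ll W_{\ua}$ whenever $\ub$ is attached before $\ua$, and for a dimension-graded filtration this can fail for \emph{incomparable} Schubert symbols. For instance with $m=2$, $n=15$, $\ub=(0,3)$ and $\ua=(2,2)$, one has $|W_{\ub}|=6<8=|W_{\ua}|$ but $|W_{\ub}^{C_3}|=2>0=|W_{\ua}^{C_3}|$, so $W_{\ub}\not\ll W_{\ua}$ although $\dim W_{\ub}<\dim W_{\ua}$. What saves the argument (both in the paper and in your proposal) is that the attaching map of the $\ua$-cell only meets cells in the Schubert closure of $\ua$, i.e.\ $\ub'\leq\ua$, and you correctly verify $\ll$ precisely on that set; so the hypothesis of Theorem~\ref{free} should be, and implicitly is, read relative to the closure partial order rather than the ambient filtration. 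You acknowledge this in your final paragraph, which is the right instinct; it would strengthen the write-up to state it as a definite claim rather than a caveat.
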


\vspace*{.5cm}

\noindent{\small S}{\scriptsize TAT- }{\small M}{\scriptsize ATH }{\small U}{\scriptsize NIT,} {\small I}{\scriptsize NDIAN }{\small S}{\scriptsize TISTICAL }{\small I}{\scriptsize NSTITUTE, }{\small K}{\scriptsize OLKATA }{\footnotesize 700108, }{\small I}{\scriptsize NDIA}.\\
{\it E-mail address} : \texttt{samik.basu2@gmail.com}, \,\,\texttt{samikbasu@isical.ac.in}\\[0.2cm]

\noindent{\small D}{\scriptsize EPARTMENT OF }{\small M}{\scriptsize ATHEMATICS, }{\small U}{\scriptsize NIVERSITY OF }{\small H}{\scriptsize AIFA, }{\small H}{\scriptsize AIFA }{\footnotesize 3498838, }{\small I}{\scriptsize SRAEL.}\\
{\it E-mail address} : \texttt{surojitghosh89@gmail.com},\,\,\,\,\texttt{surojit@math.haifa.ac.il}\\[0.2cm]

\mbox{ }\\


\begin{thebibliography}{10}

\bibitem{Ada84}
{\sc J.~F. Adams}, {\em Prerequisites (on equivariant stable homotopy) for
  {C}arlsson's lecture}, in Algebraic topology, {A}arhus 1982 ({A}arhus, 1982),
  vol.~1051 of Lecture Notes in Math., Springer, Berlin, 1984, pp.~483--532.

\bibitem{And}
{\sc D.~W. Anderson}, {\em Universal coefficient theorems for {$K$}-theory},
  mimeographed notes http://faculty.tcu.edu/gfriedman/notes/Anderson-UCT.pdf,
  (1969).

\bibitem{BG19}
{\sc S.~Basu and S.~Ghosh}, {\em Computations in {$C_{pq}$}-{B}redon
  cohomology}, Math. Z., 293 (2019), pp.~1443--1487.

\bibitem{Bre67}
{\sc G.~E. Bredon}, {\em Equivariant cohomology theories}, Lecture Notes in
  Mathematics, No. 34, Springer-Verlag, Berlin-New York, 1967.

\bibitem{Car00}
{\sc J.~L. Caruso}, {\em Operations in equivariant {${\bf Z}/p$}-cohomology},
  Math. Proc. Cambridge Philos. Soc., 126 (1999), pp.~521--541.

\bibitem{Dre72}
{\sc A.~W.~M. Dress}, {\em Contributions to the theory of induced
  representations}, in Algebraic {$K$}-theory, {II}: ``{C}lassical'' algebraic
  {$K$}-theory and connections with arithmetic ({P}roc. {C}onf., {B}attelle
  {M}emorial {I}nst., {S}eattle, {W}ash., 1972), 1973, pp.~183--240. Lecture
  Notes in Math., Vol. 342.

\bibitem{Fer00}
{\sc K.~K. Ferland}, {\em On the {RO}({G})-graded equivariant ordinary
  cohomology of generalized {G}-cell complexes for {G} = {Z}/p}, ProQuest LLC,
  Ann Arbor, MI, 1999.
\newblock Thesis (Ph.D.)--Syracuse University.

\bibitem{FL04}
{\sc K.~K. Ferland and L.~G. Lewis, Jr.}, {\em The {$R{\rm O}(G)$}-graded
  equivariant ordinary homology of {$G$}-cell complexes with even-dimensional
  cells for {$G={\Bbb Z}/p$}}, Mem. Amer. Math. Soc., 167 (2004), pp.~viii+129.

\bibitem{Gh19}
{\sc S.~Ghosh}, {\em Structure of {$C_{pq}$}-cohomology of points and slice
  tower}, arXiv preprint arXiv:1912.00136,  (2019).

\bibitem{GM95a}
{\sc J.~P.~C. Greenlees and J.~P. May}, {\em Equivariant stable homotopy
  theory}, in Handbook of algebraic topology, North-Holland, Amsterdam, 1995,
  pp.~277--323.

\bibitem{GM95}
\leavevmode\vrule height 2pt depth -1.6pt width 23pt, {\em Generalized {T}ate
  cohomology}, Mem. Amer. Math. Soc., 113 (1995), pp.~viii+178.

\bibitem{HHR16}
{\sc M.~A. Hill, M.~J. Hopkins, and D.~C. Ravenel}, {\em On the nonexistence of
  elements of {K}ervaire invariant one}, Ann. of Math. (2), 184 (2016),
  pp.~1--262.

\bibitem{HHR17}
{\sc M.~A. Hill, M.~J. Hopkins, and D.~C. Ravenel}, {\em The slice spectral
  sequence for certain {$RO(C_{p^n})$}-graded suspensions of {$H\underline{\bf
  Z}$}}, Bol. Soc. Mat. Mex. (3), 23 (2017), pp.~289--317.

\bibitem{HM17}
{\sc M.~A. Hill and L.~Meier}, {\em The {$C_2$}-spectrum {${\rm Tmf}_1(3)$} and
  its invertible modules}, Algebr. Geom. Topol., 17 (2017), pp.~1953--2011.

\bibitem{LMM81}
{\sc G.~Lewis, J.~P. May, and J.~McClure}, {\em Ordinary {$RO(G)$}-graded
  cohomology}, Bull. Amer. Math. Soc. (N.S.), 4 (1981), pp.~208--212.

\bibitem{Lew88}
{\sc L.~G. Lewis, Jr.}, {\em The {$R{\rm O}(G)$}-graded equivariant ordinary
  cohomology of complex projective spaces with linear {${\bf Z}/p$} actions},
  in Algebraic topology and transformation groups ({G}\"{o}ttingen, 1987),
  vol.~1361 of Lecture Notes in Math., Springer, Berlin, 1988, pp.~53--122.

\bibitem{Lew92}
\leavevmode\vrule height 2pt depth -1.6pt width 23pt, {\em The equivariant
  {H}urewicz map}, Trans. Amer. Math. Soc., 329 (1992), pp.~433--472.

\bibitem{LMS86}
{\sc L.~G. Lewis, Jr., J.~P. May, M.~Steinberger, and J.~E. McClure}, {\em
  Equivariant stable homotopy theory}, vol.~1213 of Lecture Notes in
  Mathematics, Springer-Verlag, Berlin, 1986.
\newblock With contributions by J. E. McClure.

\bibitem{MM02}
{\sc M.~A. Mandell and J.~P. May}, {\em Equivariant orthogonal spectra and
  {$S$}-modules}, Mem. Amer. Math. Soc., 159 (2002), pp.~x+108.

\bibitem{MMSS01}
{\sc M.~A. Mandell, J.~P. May, S.~Schwede, and B.~Shipley}, {\em Model
  categories of diagram spectra}, Proc. London Math. Soc. (3), 82 (2001),
  pp.~441--512.

\bibitem{May96}
{\sc J.~P. May}, {\em Equivariant homotopy and cohomology theory}, vol.~91 of
  CBMS Regional Conference Series in Mathematics, Published for the Conference
  Board of the Mathematical Sciences, Washington, DC; by the American
  Mathematical Society, Providence, RI, 1996.
\newblock With contributions by M. Cole, G. Comeza\~{n}a, S. Costenoble, A. D.
  Elmendorf, J. P. C. Greenlees, L. G. Lewis, Jr., R. J. Piacenza, G.
  Triantafillou, and S. Waner.

\bibitem{Ric16}
{\sc N.~Ricka}, {\em Equivariant {A}nderson duality and {M}ackey functor
  duality}, Glasg. Math. J., 58 (2016), pp.~649--676.

\bibitem{Sch18}
{\sc S.~Schwede}, {\em Global homotopy theory}, vol.~34 of New Mathematical
  Monographs, Cambridge University Press, Cambridge, 2018.

\bibitem{TW95}
{\sc J.~Th\'{e}venaz and P.~Webb}, {\em The structure of {M}ackey functors},
  Trans. Amer. Math. Soc., 347 (1995), pp.~1865--1961.

\bibitem{TW90}
{\sc J.~Th\'{e}venaz and P.~J. Webb}, {\em Simple {M}ackey functors}, in
  Proceedings of the {S}econd {I}nternational {G}roup {T}heory {C}onference
  ({B}ressanone, 1989), no.~23, 1990, pp.~299--319.

\bibitem{Was69}
{\sc A.~G. Wasserman}, {\em Equivariant differential topology}, Topology, 8
  (1969), pp.~127--150.

\end{thebibliography}
\end{document}